\newtheorem{theorem}{Theorem}[section]
\newtheorem{definition}{Definition}[section]
\newtheorem{lemma}{Lemma}[section]
\newtheorem{remark}{Remark}[section]
\numberwithin{equation}{section}
\begin{document}
\title[Navier-Stokes-Boltzmann equations]{Existence results and blow-up criterion of    compressible  radiation hydrodynamic equations  }

\author{yachun li }
\address[Y. C. Li]{Department of Mathematics and Key Lab of Scientific and Engineering Computing (MOE), Shanghai Jiao Tong University,
Shanghai 200240, P.R.China} \email{\tt ycli@sjtu.edu.cn}

\author{Shengguo Zhu}
\address[S. G. Zhu]{Department of Mathematics, Shanghai Jiao Tong University,
Shanghai 200240, P.R.China; School of Mathematics, Georgia Tech,
Atlanta 30332, U.S.A.}
\email{\tt zhushengguo@sjtu.edu.cn}

\begin{abstract}In this paper, we consider the  $3$D compressible  radiation hydrodynamic (RHD) equations with  thermal conductivity  in a bounded domain. The  existence of unique local strong solutions with vacuum  is firstly established  when the initial data are  arbitrarily large and satisfy some  initial layer compatibility condition.   Moreover, we show that if the initial vacuum domain is not so irregular, then the  compatibility condition is necessary and sufficient to guarantee the existence of the unique strong solution.  Finally, a Beal-Kato-Majda type  blow-up criterion is shown in terms of  $(\nabla I,\rho,\theta)$.
\end{abstract}

\date{May. 14, 2012}
 \subjclass{Primary: 35A09, 35B44, 35M31; Secondary: 35Q31, 35Q35}
\keywords{Radiation, Navier-slip, Strong solution, Vacuum, Blow-up criterion.
}

\maketitle

\section{Introduction}
The purpose of our paper is to provide a  local theory of strong solutions    with vacuum  to the RHD systm in the framework of Sobolev space. 
This system appears in various astrophysical contexts \cite{kr} and high-temperature plasma physics \cite{sjx}. 
Suppose that  the matter is in local thermodynamical equilibrium,  the coupled system  of the RHD equations with heat conduction for the mass density $\rho(t,x)$, the fluid velocity $u(t,x)=(u^{(1)},u^{(2)},u^{(3)})$, the specific internal energy $e(t,x)$, and the specific radiation intensity $I(v,\Omega,t,x)$ in a domain  $\mathbb{V} \subset \mathbb{R}^3$ reads as  \cite{gp}:
\begin{equation}
\label{eq:1.1}
\begin{cases}
\displaystyle
\frac{1}{c}I_t+\Omega\cdot\nabla I=A_r,\\[10pt]
\displaystyle
\rho_t+\text{div}(\rho u)=0,\\[10pt]
\displaystyle
\Big(\rho u+\frac{1}{c^{2}}F_r\Big)_t+\text{div}(\rho u\otimes u+P_r)
  +\nabla P_m =\text{div}\mathbb{T},\\[10pt]
\displaystyle
\big(\rho E_m+E_r\big)_t+
\text{div}\big((\rho E_m+P_m)u+F_r\big)
=\text{div} (u \mathbb{T})+\kappa\triangle \theta.
\end{cases}
\end{equation}

In this system, $t\geq 0$ is the time; $x\in\mathbb{V}$ is the spatial coordinate; $v\in \mathbb{R}^+$ is the frequency of photon;  $\Omega\in S^2 $ is the travel direction of photon,  here $S^2$ is the unit sphere in $\mathbb{R}^3$; $E_m=\frac{1}{2} u^2+e$ is the specific total material energy;
$P_m$ is the material pressure satisfying the following equations of state
\begin{equation}
\label{eq:1.3}
P_m=R\rho \theta=(\gamma-1)\rho e, \quad e=c_v \theta,
\end{equation}
where $R$ and $c_v$ are both positive constants, $\gamma > 1 $ is the adiabatic index and  $\theta$ is the absolute temperature.
$\mathbb{T}$ is the viscosity stress tensor given  by
\begin{equation}
\label{eq:1.4}
\mathbb{T}=2\mu D(u)+\lambda \text{div}u \mathbb{I}_3, \quad D(u)=\frac{\nabla u+(\nabla u)^\top}{2},
\end{equation}
where $D(u)$ is the deformation tensor,  $\mathbb{I}_3$ is the $3\times 3$ unit matrix,
$\mu$ is the shear viscosity, $\lambda+\frac{2}{3}\mu$ is the bulk viscosity, $\kappa$ is the thermal conductivity, $(\mu,\lambda,\kappa)$ are  constants satisfying
\begin{equation}
\label{eq:1.5}
  \mu > 0, \quad 3\lambda+2\mu \geq 0,\quad \kappa>0.\end{equation}
 
The radiation energy density $E_r$, the radiation flux $F_r$, and  the radiation pressure tensor $P_r$ in (\ref{eq:1.1}) are defined by
\begin{equation}\label{fan111}
\begin{cases}
\displaystyle
E_r=\frac{1}{c}\int_0^\infty \int_{S^{2}} I(v,\Omega,t,x)\text{d}\Omega \text{d}v ,\\[8pt]
\displaystyle
F_r=\int_0^\infty \int_{S^{2}}  I(v,\Omega,t,x)\Omega \text{d}\Omega \text{d}v,\\[8pt]
\displaystyle
P_r=\frac{1}{c}\int_0^\infty \int_{S^{2}}  I(v,\Omega,t,x)\Omega\otimes\Omega\text{d}\Omega \text{d}v.
\end{cases}
\end{equation}
The collision term in radiation transfer equation $(\ref{eq:1.1})_1$ reads:
\begin{equation}\label{fan222}
\begin{split}
A_r=S-\sigma_aI
+\int_0^\infty \int_{S^{2}} \Big(\frac{v}{v'}\sigma_sI'
-\sigma'_sI\Big) \text{d}\Omega' \text{d}v',
\end{split}
\end{equation}
where $I'=I(v',\Omega',t,x)$;  $S=S(v,t,x)\geq 0$ is the rate of energy emission due to spontaneous process; $\sigma_a=\sigma_a(v,t,x,\rho,\theta)\geq 0$ denotes the absorption coefficient that may also depend on the mass density $\rho$ and the temperature $\theta $ of the matter; $\sigma_s$ is the ``differential scattering coefficient'' such that the probability of a photon being
scattered from $v'$ to $v$ contained in $\text{d}v$,  from $\Omega'$ to
$\Omega$ contained in $\text{d}\Omega$, and travelling a distance $\text{d}s$ is
given by $\sigma_s(v' \rightarrow v,\Omega'\cdot\Omega)\text{d}v \text{d}\Omega
\text{d}s$, and
\begin{equation*}
\begin{split}
\sigma_s\equiv  \sigma_s(v' \rightarrow v, \Omega'\cdot\Omega,\rho)=O(\rho),\ \sigma'_s\equiv  \sigma_s(v \rightarrow v', \Omega\cdot\Omega',\rho)=O(\rho).
\end{split}
\end{equation*}

When there is no radiation effect,  the existence of unique local strong solutions with vacuum has been solved by many papers, and we refer the readers to \cite{CK3}\cite{CK}.  For the existence of solutions with arbitrary data in $3$D space, the main breakthrough is due to Lions \cite{lion}, where he established the global existence of weak solutions for $\mathbb{R}^3$, perodic domains or bouned domains with homogenous Dirichlet boundary conditions provided $\gamma >9/5$. The restriction on $\gamma$ is improved to $\gamma>3/2$ by Feireisl \cite{fu1}. Recently, Huang-Li-Xin obtained the global well-posedness of classical solutions with large oscillations and vacuum to Cauchy problem \cite{HX1} for  isentropic flow with small energy.

In general, studying the  radiation hydrodynamic equations is challenging because of its complexity and mathematical difficulty. For Euler-Boltzmann equations, recently, Jiang-Zhong \cite{sjx} obtained the local existence of $C^1$ solutions for the Cauchy problem  away from vacuum.  Jiang-Wang \cite{pjd} showed that some $ C^1$ solutions  will blow up in  finite time, regardless of the size of the initial disturbance. Li-Zhu \cite{sz1} established the local existence of Makino-Ukai-Kawashima type's regular solution (see \cite{tms1}) with vacuum, and also proved that the regular solutions with compact density will blow up in finite time.

For Navier-Stokes-Boltzmann equations,  Ducomet-Ne$\check{\text{c}}$asov$\acute{\text{a}}$ \cite{BD}\cite{BS}  studied the global weak solutions to the Navier-Stokes-Boltzmann equations and its large time behavior in 1-D space. Li-Zhu \cite{sz2} considered the formation of singularities to classical solutions with compact density.  Some special phenomenon has been observed, for example, it is known in contrast with the second law of thermodynamics, the associated entropy equation may contain a negative production term for RHD system (see also Buet-Despr$\acute{\text{e}}$s \cite{add1}). Moreover, from  Ducomet-Feireisl-Ne$\check{\text{c}}$asov$\acute{\text{a}}$ \cite{add2},  in which they  obtained the existence of global weak solution for some RHD model,  we know that the velocity field $u$ may develop uncontrolled time oscillations on  vacuum zones.

However, in this paper,  due to the radiation transfer equation $ (\ref{eq:1.1})_1$,  (\ref{eq:1.3}) and (\ref{fan111})-(\ref{fan222}), system (\ref{eq:1.1}) can be written as
\begin{equation}
\label{eq:1.2}
\begin{cases}
\displaystyle
\frac{1}{c}I_t+\Omega\cdot\nabla I=A_r,\\[6pt]
\displaystyle
\rho_t+\text{div}(\rho u)=0,\\[6pt]
\displaystyle
(\rho u)_t+\text{div}(\rho u\otimes u)
  +\nabla P_m +Lu=-\frac{1}{c}\int_0^\infty \int_{S^2}A_r\Omega \text{d}\Omega \text{d}v,\\[10pt]
\displaystyle
(\rho \theta)_t+
\text{div}(\rho \theta u)+\frac{1}{c_v}(P_m\text{div} u-\kappa\triangle \theta)
=\frac{1}{c_v}(Q( u)+N_r),
\end{cases}
\end{equation}
where $N_r$, $Lu$ and $Q( u)$ are defined by
\begin{equation*}
\begin{cases}
\displaystyle
N_r=\int_0^\infty \int_{S^2} \Big(1-\frac{u\cdot \Omega}{c}\Big)A_r \text{d}\Omega \text{d}v,\\[8pt]
\displaystyle
Lu=-\mu\triangle u-(\lambda+\mu)\nabla \text{div} u,\ Q( u)=\frac{\mu}{2}|\nabla u+(\nabla u)^\top|^2+\lambda |\text{div} u|^2.
\end{cases}
\end{equation*}
Now we consider the initial-boundary value problem (IBVP) for system (\ref{eq:1.2}) with initial data
\begin{equation} \label{eq:2.2hh}
I|_{t=0}=I_0(v,\Omega,x),\ (\rho,u,\theta)|_{t=0}=(\rho_0(x),u_0(x), \theta_0(x)), \  (v,\Omega, x) \in \mathbb{R}^+ \times S^2 \times \mathbb{V},
\end{equation}
and one of the following two types of boundary conditions:

(1) Transparency, Dirichlet and Neumann   boundary conditions for $(I,u,\theta) $: $\mathbb{V} \subset \mathbb{R}^3$ is a bounded   smooth domain and

\begin{equation}\label{fan1}
\begin{split}
I|_{\partial \mathbb{V}}=0,\ \Omega \cdot n\leq 0;\quad   u|_{\partial \mathbb{V}}=0; \quad  \nabla \theta \cdot n|_{ \partial \mathbb{V}}=0,
\end{split}
\end{equation}
where $n = (n_1, n_2, n_3)$ is the unit outward normal to $\partial \mathbb{V}$.

(2) Transparency, Navier-slip and Neumann   boundary conditions for $(I,u,\theta) $: $\mathbb{V} \subset \mathbb{R}^3$  is bounded, simply connected,
￼smooth domain, and
\begin{equation}
\label{fan2}
\begin{split}
I|_{\partial \mathbb{V}}=0,\ \Omega \cdot n\leq 0;\quad (u\cdot n, (\nabla \times u) \cdot n)|_{\partial \mathbb{V}}=(0,0);\quad \nabla \theta \cdot n |_{\partial \mathbb{V}}=0.
\end{split}
\end{equation}

 The first  condition in (\ref{fan2})   is the physical non-penetration  boundary condition for radiation flow.  While the first part $u\cdot n=0 $ of second one in (\ref{fan2}) is  the non-penetration  boundary condition for the fluid, and $(\nabla \times u) \cdot n=0$ is also known in the following form
$$
(D(u)\cdot n)_\tau =−k_\tau u_\tau, 
$$
where $k_\tau$ is the corresponding principal curvature of $\mathbb{V}$.  Then the second one in  (\ref{fan2}) implies the tangential component of $D(u) \cdot n$ vanishes on flat portions of the boundary  $\partial \mathbb{V}$.


Throughout this paper, we adopt the following simplified notations for the standard homogeneous and inhomogeneous Sobolev space:
\begin{equation*}\begin{split}
&  \|f\|_{W^{m,r}}=\|f\|_{W^{m,r}(\mathbb{V})},\ \     \|f\|_s=\|f\|_{H^s(\mathbb{V})},\ \   |f|_p=\|f\|_{L^p(\mathbb{V})},\  \  \|(f,g)\|_X=\|f\|_X+\|g\|_X, \\[6pt]
&D^{k,r}=\{f\in L^1_{loc}(\mathbb{V}): |\nabla^kf|_{r}<+\infty\},\quad D^k=D^{k,2}, \quad |f|_{D^k}=\|f\|_{D^k(\mathbb{V})},\\[6pt]
&
  |f|_{D^{k,r}}=\|f\|_{D^{k,r}(\mathbb{V})},\quad \int_0^\infty \int_{S^2}\int_0^\infty \int_{S^2}  f(v,\Omega,v',\Omega',t,x) \text{d}\Omega' \text{d}v'\text{d}\Omega \text{d}v=\int_\mathbb{I} f \text{d}\mathbb{I}.
\end{split}
\end{equation*}
 A detailed study of homogeneous Sobolev spaces could be found in \cite{gandi}.

Next we make some assumptions for the physical coefficients $\sigma_a$ and  $\sigma_s$. First, let
$$ \sigma_s=\overline{\sigma}_s(v' \rightarrow v, \Omega'\cdot\Omega,t,x)\rho=\overline{\sigma}_s\rho,\quad \sigma'_s=\overline{\sigma}'_s(v \rightarrow v', \Omega\cdot\Omega',t,x)\rho=\overline{\sigma}'_s\rho,$$
where the functions $\overline{\sigma}_s\geq 0$ and $\overline{\sigma}'_s\geq 0$ are $C^1$ for $(v',v, \Omega',\Omega,t,x)$, and  satisfy
\begin{equation}\label{zhen1}
\begin{cases}
\displaystyle
\int_0^\infty \int_{S^{2}}\Big(\int_{0}^{\infty}\int_{S^{2}} \Big|\frac{v}{v'}\Big|^2\big\|(\overline{\sigma}_s,\nabla_{t,x} \overline{\sigma}_s)\big\|^2_{C([0,T]\times \mathbb{V})}\text{d}\Omega' \text{d}v'\Big)^{\lambda_1}\text{d}\Omega \text{d}v\leq \alpha,\\[12pt]
\displaystyle
\int_0^\infty \int_{S^{2}}\Big(\int_{0}^{\infty}\int_{S^{2}} \big\|(\overline{\sigma}'_s,\nabla_{t,x}\overline{\sigma}'_s)\big\|^2_{C([0,T]\times \mathbb{V})}\text{d}\Omega' \text{d}v'\Big)^{\lambda_2}\text{d}\Omega \text{d}v\leq \alpha,\\[12pt]
\displaystyle
\int_{0}^{\infty}\int_{S^{2}} \big\|(\overline{\sigma}'_s, \nabla_{t,x} \overline{\sigma}'_s)\big\|_{C([0,T]\times \mathbb{V})}\text{d}\Omega' \text{d}v'\leq \alpha,\\[12pt]
\displaystyle
S(v,t,x)|_{\partial \mathbb{V}}=\overline{\sigma}_s(v' \rightarrow v, \Omega'\cdot\Omega,t,x)\big|_{\partial \mathbb{V}}=0, \quad \text{when} \quad n\cdot \Omega < 0 
\end{cases} 
\end{equation}
for $(t,x) \in [0,T] \times \mathbb{V}$, where $\lambda_1=1$ or $\frac{1}{2}$,  $\lambda_2=1$ or $2$, and $\alpha>1$  is a fixed constant.

Second, let 
$$\sigma_a=\sigma(v,\Omega,t,x,\rho,\theta)\rho=\sigma \rho,$$
then for $(\rho^i(t,x),\theta^i(t,x))$ $(i=1,2)$ satisfying 
\begin{equation*}
\begin{cases}
\displaystyle
\|\rho^i(t,x)\|_{C([0,T]; W^{1,q}(\mathbb{V}))}+\|\rho^i_{t}(t,x)\|_{C([0,T]; L^q(\mathbb{V}))}=\Lambda_1<+\infty,\\[8pt]
\displaystyle
\|\theta^i(t,x)\|_{C([0,T];H^2(\mathbb{V}))}+\|\theta^i_{t}(t,x)\|_{L^2([0,T];D^1(\mathbb{V}))}=\Lambda_2<+\infty,
\end{cases}
\end{equation*}
where $\Lambda_1$ and $\Lambda_2$ are both positive constants,  we assume that
\begin{equation}\label{jia345}
\begin{cases}
\displaystyle
 \|\sigma^i\|_{L^\infty\cap L^2(\mathbb{R}^+\times S^2;C([0,T]\times \mathbb{V}))}
\leq  \beta,\\[8pt]
\displaystyle
 \|\sigma^i\|_{L^\infty\cap L^2(\mathbb{R}^+\times S^2; D^{1,q} ( \mathbb{V}))}
\leq  M(|\rho^i|_\infty+|\theta^i|_\infty)(|\rho|_{D^{1,q}} +|\theta|_{D^{1,q}} +1),\\[8pt]
\displaystyle
\| \sigma^i_t\|_{L^2 (\mathbb{R}^+\times S^2; L^2(\mathbb{V}))}
\leq  M(|\rho^i|_\infty+|\theta^i|_\infty)(|\rho_t|_2+|\theta_t|_2+1),\\[8pt]
\displaystyle
 |\sigma(v,\Omega,t,x,\rho^i,\theta^1)-\sigma(v,\Omega,t,x,\rho^i,\theta^2)|\leq  \overline{\sigma}(v,\Omega,t,x)|\theta^1-\theta^2|, \\[8pt]
\displaystyle
 |\sigma(v,\Omega,t,x,\rho_1,\theta^i)-\sigma(v,\Omega,t,x,\rho_2,\theta^i)|\leq  \overline{\sigma}(v,\Omega,t,x)|\rho^1-\rho^2|, \\[8pt]
\displaystyle
 \|\overline{\sigma}(v,\Omega,t,x)\|_{L^\infty\cap L^2(\mathbb{R}^+\times S^2;L^\infty( \mathbb{V}))}\leq M(|(\rho^1,\rho^2)|_\infty+|(\theta^1,\theta^2)|_\infty)
\end{cases}
\end{equation}
for $t\in [0,T]$ and $r \in [2,q]$, where $\beta>1$ is a fixed positive constant independent of $\Lambda_i$ ($i=1,2$), $M=M(\cdot)$ denotes a strictly increasing continuous function from $[0,\infty)$ to $[1,\infty)$,
 and $\sigma(v,\Omega,t,x,\rho^i,\theta^i)\in C([0,T]; L^2(\mathbb{R}^+\times S^2; L^{\infty}\cap D^{1,q}(\mathbb{V}) ))$.
\begin{remark}\label{con}
Assumptions (\ref{zhen1})-(\ref{jia345}) are similar to those of \cite{sjx} \cite{sz1} for the  existence theory to  Euler-Boltzmann equations.  The evaluation of these physical coefficients is a difficult problem of  quantum mechanics and their general form is not known. An general expression of $\sigma_a$ and $\sigma_s$ used for describing Compton Scattering process  can be given as
\begin{equation}\label{kk}
\begin{split}
&\sigma_a(v,t,x,\rho,\theta)=D_1\rho \theta^{-\frac{1}{2}}\exp\Big(-\frac{D_2}{\theta^{\frac{1}{2}}}\Big(\frac{v-v_0}{v_0}\Big)^2\Big),\  \sigma_s=\overline{\sigma}_s(v \rightarrow v', \Omega\cdot\Omega',t,x)\rho,
\end{split}
\end{equation}
where $v_0$ is the fixed frequency, $D_i(i=1,2)$ are positive constants (see \cite{gp}).
\end{remark}

The rest of  this paper is organized as follows. Our main results will be shown in Section $2$.  In Section $3$, we give some important lemmas which will be used frequently in our proofs.
In Section $4$, we prove the existence of  the unique local strong solution under the boundary conditions  (\ref{fan2}) via establishing  a priori estimate which is independent of the lower bound of $\rho_0$. In Section $5$, we make a discusssion  on the necessity and sufficiency of the initial layer compatibility condition. Finally in  Section $6$,  we give the proof for the blow-up criterion that we claimed in Section $ 2.2$.

\section{Main results}
\subsection{Existence results for strong solutions with vacuum}\ \\
First, we will give the definition of strong solutions to IBVP (\ref{eq:1.2})-(\ref{eq:2.2hh}) with (\ref{fan1}) or (\ref{fan2}).

\begin{definition}[\textrm{Strong solutions with vacuum to RHD}]\label{strong}
Functions  $(I,\rho,u,\theta)$ is called  a strong solution on $\mathbb{R}^+\times S^2\times [0,T]\times\mathbb{V} $ to IBVP (\ref{eq:1.2})-(\ref{eq:2.2hh}) with (\ref{fan1}) or (\ref{fan2}), if 
\begin{enumerate}
\item
$(I,\rho,u,\theta)$  satisfies the  system (\ref{eq:1.2}) a.e. in $\mathbb{R}^+\times S^2\times (0,T)\times \mathbb{V}$;
\item
$(I,\rho,u,\theta)$ belongs to the following class with some regularities:
\begin{equation}\label{reg}\begin{split}
\Phi=&\{ (I,\rho,u,\theta)| 0\leq  I\in L^2(\mathbb{R}^+\times S^2;C([0,T]; W^{1,q})),\\
&I_t\in L^2(\mathbb{R}^+\times S^2;C([0,T]; L^{q})),\quad  0\leq \rho\in C([0,T]; W^{1,q}),\\
&  \rho_t\in C([0,T]; L^{q}),\quad  (\theta,u)\in C([0,T];H^2)\cap  L^2([0,T];D^{2,q}),\\
&(\theta_t,u_t)\in  L^2([0,T];H^1), \ (\sqrt{\rho}\theta_t,\sqrt{\rho}u_t)\in  L^\infty([0,T];L^2)\};
\end{split}
\end{equation}
\item
$ (I,\rho,u,\theta)$ satifies the corresponding initial  conditions a.e. in $\mathbb{R}^+\times S^2\times \{t=0\} \times \mathbb{V}$, and also satisfis  the corresponding boundary  conditions in the sense of traces.
\end{enumerate}
\end{definition}
As has been observed in Navier-Stokes equations \cite{CK}, the lack of a positive lower bound of $\rho_0$ should be compensated with some initial layer compatibility conditions on the initial data. Similarly, via denoting  $P^0_m=R\rho_0 \theta_0$, and
\begin{equation*}
\begin{split} A^0_r=&S(v,0,x)-\sigma_a(v,\Omega,0,x,\rho_0,\theta_0)I_0
+\int_0^\infty \int_{S^{2}} \Big(\frac{v}{v'}\sigma_s(0)I'_0
-\sigma'_s(0)I_0\Big) \text{d}\Omega' \text{d}v',
\end{split}
\end{equation*}
where $\sigma_s(0)=\overline{\sigma}_s(t=0)\rho_0$ and   $\sigma'_s(0)=\overline{\sigma}'_s(t=0)\rho_0$, we have the following  theorem:

\begin{theorem}
\label{th1}
Let assumptions (\ref{zhen1})-(\ref{jia345}) hold, and
$$\|S(v,t,x)\|_{ L^2(\mathbb{R}^+;C^1([0,\infty); W^{1,q}(\mathbb{V}))\cap C^1([0,\infty); L^1(\mathbb{R}^+; L^2(\mathbb{V})))}< +\infty.$$
\begin{enumerate}
\item
Assume the initial data $(I_0, \rho_0,u_0,\theta_0)$ satisfies the regularity condition
\begin{equation}\label{gogo}
\begin{split}
&0\leq  I_0(v,\Omega,x)\in L^2(\mathbb{R}^+\times S^2;  W^{1,q}),\ 0\leq \rho_0\in  W^{1,q},\  u_0 \in H^1_0\cap H^2,\ \theta_0 \in H^2,
\end{split}
\end{equation}
and the initial layer compatibility conditions
\begin{equation}\label{kkk}
\begin{split}
Lu_0+\nabla P^0_m+\frac{1}{c}\int_0^\infty \int_{S^2}A^0_r\Omega \text{d}\Omega \text{d}v=&\rho^{\frac{1}{2}}_0 g_1,\\
-\frac{1}{c_v}(k\triangle \theta_0+Q( u_0))-\int_0^\infty \int_{S^{2}}\frac{1}{c_v} \Big(1-\frac{u_0\cdot \Omega}{c}\Big)A^0_r \text{d}\Omega \text{d}v=&\rho^{\frac{1}{2}}_0 g_2
\end{split}
\end{equation}
for some $(g_1, g_2) \in L^2$.
Then there exists a time $T_*$ and a unique strong solution $(I,\rho,u,\theta)$ on $\mathbb{R}^+\times S^2\times [0,T_*]\times\mathbb{V} $ to IBVP (\ref{eq:1.2})-(\ref{eq:2.2hh}) with (\ref{fan1}).
\item
Assume the initial data $(I_0, \rho_0,u_0,\theta_0)$ satisfies the regularity condition
\begin{equation}\label{gogogo}
\begin{split}
& 0\leq I_0(v,\Omega,x)\in L^2(\mathbb{R}^+\times S^2;  W^{1,q}),\ 0\leq \rho_0\in  W^{1,q},\ u_0 \in  H^2,\ \theta_0 \in H^2,
\end{split}
\end{equation}
and  (\ref{kkk}).
Then there exists a  time $T_*$ and a unique strong solution $(I,\rho,u,\theta)$ on $\mathbb{R}^+\times S^2\times [0,T_*]\times\mathbb{V} $ to IBVP (\ref{eq:1.2})-(\ref{eq:2.2hh}) with (\ref{fan2}).
\end{enumerate}
\end{theorem}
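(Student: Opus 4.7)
The plan is to adapt the Cho--Kim approach for the compressible Navier--Stokes equations with vacuum to the present radiation-coupled setting, handling both boundary conditions (\ref{fan1}) and (\ref{fan2}) in parallel since they differ only in the elliptic theory used for the velocity. I would proceed by (a) regularizing the initial density, (b) solving a linearized iteration, (c) proving $\delta$-uniform a priori estimates in the class $\Phi$, and then (d) passing to the limit. Specifically, for $\delta\in(0,1)$ set $\rho_0^\delta=\rho_0+\delta$ and let $(u_0^\delta,\theta_0^\delta)$ be standard mollifications compatible with the respective boundary conditions, chosen so that both sides of the compatibility condition (\ref{kkk}) converge in $L^2$ as $\delta\to 0^+$.

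For the iteration, given $(w,\psi)$ from the previous step (started with $(0,\theta_0^\delta)$), I would solve in order: first the linear transport equation
\begin{equation*}
\tfrac{1}{c} I^k_t+\Omega\cdot\nabla I^k=A_r(I^{k-1},w,\psi,\rho^{k-1})
\end{equation*}
along characteristics, using the transparency condition on the inflow part of $\partial\mathbb{V}$; then the linear continuity equation $\rho^k_t+\dd(\rho^k w)=0$ by the method of characteristics, which preserves the lower bound $\rho^k\ge\delta/C$ on $[0,T]$; then the linear parabolic momentum equation
\begin{equation*}
\rho^{k-1}u^k_t+\rho^{k-1}(w\cdot\nabla)u^k+Lu^k=-\nabla P_m(\rho^{k-1},\psi)-\tfrac{1}{c}\!\int_0^\infty\!\!\int_{S^2}\!A_r^{k-1}\Omega\,\dif\Omega\,\dif v
\end{equation*}
under (\ref{fan1}) or (\ref{fan2}); and finally the linear parabolic temperature equation for $\theta^k$, whose source terms $Q(u^k)$ and $N_r^{k-1}$ are known from the previous steps. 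Standard linear theory guarantees solvability at each step since $\rho^{k-1}$ is bounded away from zero.

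The heart of the argument is to close $\delta$-independent a priori bounds in the norm of $\Phi$ on a short time interval $[0,T_*]$. For $I$, multiplying the transport equation by $|I|^{q-2}I$ and by $|\nabla I|^{q-2}\nabla I$ and using (\ref{zhen1})--(\ref{jia345}) to control the collision operator in $L^2(\mathbb{R}^+\times S^2;W^{1,q})$ is routine. For $\rho$ in $W^{1,q}$ one uses the Beale-Kato-Majda-type transport estimate against $\nabla w$. For $(u,\theta)$, the crucial bounds are on $|\sqrt{\rho}u_t|_2$ and $|\sqrt{\rho}\theta_t|_2$: differentiating the momentum equation in $t$, multiplying by $u_t$, and using elliptic regularity for the Lam\'e operator $L$ with Dirichlet (respectively Navier-slip) boundary conditions, one obtains an inequality of the type $\frac{d}{dt}|\sqrt{\rho}u_t|_2^2+\mu|\nabla u_t|_2^2\le C\varphi(E)$ where $E(t)$ gathers all norms, and similarly for $\theta_t$. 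It is precisely at $t=0$ that the initial layer compatibility (\ref{kkk}) is used: it gives $\sqrt{\rho_0}u_t(0), \sqrt{\rho_0}\theta_t(0)\in L^2$ uniformly in $\delta$, which is indispensable since $\rho_0$ may vanish. The a priori estimates then close on a time $T_*$ depending only on the initial norms and on $\alpha,\beta,M$.

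With uniform estimates in hand, contraction of the iteration in a weaker norm (e.g.\ $L^\infty_t L^2_x$ for $(I,\rho,u,\theta)$ and $L^2_t H^1_x$ for $(u,\theta)$) gives, for each fixed $\delta>0$, a strong solution $(I^\delta,\rho^\delta,u^\delta,\theta^\delta)$. Lower-semicontinuity of norms and weak-$*$ compactness allow passage to the limit $\delta\to 0^+$, yielding a solution in $\Phi$ that satisfies the initial conditions strongly and the boundary conditions in the trace sense. Uniqueness is then proved by standard energy estimates for the difference of two solutions, weighted by $\sqrt{\rho}$ in the momentum and temperature equations. The two cases (1) and (2) of the theorem differ only in the elliptic regularity theorem invoked for $Lu$: Agmon--Douglis--Nirenberg with Dirichlet data for case (1), and the corresponding estimate for the Lam\'e system under Navier-slip boundary conditions (using simple connectedness) for case (2).

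The main obstacle I expect is the treatment of the nonlocal radiation sources, namely the integrals $\int_0^\infty\!\int_{S^2}A_r\Omega\,\dif\Omega\,\dif v$ and $N_r$ appearing in the momentum and temperature equations. Their estimation requires the delicate weighted integrability hypotheses (\ref{zhen1})--(\ref{jia345}) with the exponents $\lambda_1,\lambda_2$, together with Minkowski's inequality to exchange the $v,\Omega$-integration with spatial $L^p$ norms, and one must verify that these couplings do not destroy the closure of the a priori estimates, particularly for the top-order quantities $|\sqrt{\rho}u_t|_2$ and $|\sqrt{\rho}\theta_t|_2$. A secondary difficulty is to ensure, when regularizing $\rho_0$ and $(u_0,\theta_0)$, that the approximate data continue to satisfy analogues of (\ref{kkk}) with uniform $L^2$-bounds on the approximate $(g_1,g_2)$; this is handled by the now-standard choice of $(g_1^\delta,g_2^\delta)$ obtained by solving auxiliary elliptic problems on the regularized data.
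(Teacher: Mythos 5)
Your proposal follows essentially the same route as the paper: the Cho--Kim scheme of lifting the density by $\delta$, solving linearized problems, closing $\delta$-independent a priori estimates in which the compatibility condition (\ref{kkk}) is used precisely to bound $\sqrt{\rho}u_t$ and $\sqrt{\rho}\theta_t$ at $t=0$, contracting the iteration in low-order ($L^2$-type) norms, and passing to the limit, with the Lam\'e elliptic theory adapted to Dirichlet or Navier-slip data. The only differences are organizational: the paper removes the vacuum already at the level of the linearized problem (with explicit formulas for $g_1^\delta,g_2^\delta$ rather than auxiliary elliptic problems, and without mollifying $u_0,\theta_0$) and then iterates the vacuum linearized problems, which is an inessential reordering of the same ingredients.
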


Our second result can be seen as an explanation for  the compatibility  between  (\ref{gogo}) and  (\ref{kkk}). We denote V the initial vacuum set, i.e, the interior of the zero-set of the initial density in $\mathbb{V}$. 
Then when the initial vacuum domain $V$ is not so irregular, we have
\begin{theorem}
\label{th2}
Let conditions supposed in Theorem \ref{th1} hold. We assume that the initial vacuum  only appears in the far field, or $V$  has zero  $3$-D Lebesgue measure,  or the elliptic system
\begin{equation}\label{zhen101}
\begin{cases}
-\mu\triangle \phi-(\lambda+\mu)\nabla \text{div}\phi=0\quad  \text{in}\ V,\\[8pt]
\qquad \ \quad \ -\kappa\triangle h-Q( \phi)=0 \quad  \text{in}\ V
\end{cases}
\end{equation}
has only zero solution $(\phi,h)$  in $D^1_0(V)\cap D^2(V)$. Then there exists a unique (local) strong solution $(I,\rho,u,\theta)$ with the regularity shown in Definition \ref{strong} such that
\begin{equation}\label{coco}
\begin{split}
&\|I(t)-I_0\|_{ W^{1,q}(\mathbb{V})}\rightarrow 0, \quad \text{as}\ t\rightarrow 0, \ \forall \ (v,\Omega) \in \mathbb{R}^+\times S^2,\\
&\|\rho(t)-\rho_0\|_{ W^{1,q}(\mathbb{V})}+\|(u(t)-u_0,\theta(t)-\theta_0)\|_{ H^2(\mathbb{V})}\rightarrow 0, \quad \text{as}\ t\rightarrow 0,
\end{split}
\end{equation}
if and only if  initial data $(I_0,\rho_0,u_0,\theta_0)$ satisfies the  compatibility condition (\ref{kkk}).
\end{theorem}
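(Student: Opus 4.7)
The statement is a biconditional, so the two directions should be treated separately, with most of the substance in the necessity direction.

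\textbf{Sufficiency, $(\ref{kkk}) \Rightarrow (\ref{coco})$.} Given $(\ref{kkk})$, Theorem $\ref{th1}$ produces a unique local strong solution in the class $\Phi$ of Definition $\ref{strong}$. The convergence $(\ref{coco})$ is then automatic from this regularity, since $\rho, I \in C([0, T_*]; W^{1,q})$ and $(u, \theta) \in C([0, T_*]; H^2)$. The hypothesis on $V$ plays no role in this direction.

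\textbf{Necessity, $(\ref{coco}) \Rightarrow (\ref{kkk})$.} Suppose a strong solution obeying $(\ref{coco})$ exists. Using $\rho_t + \dd(\rho u) = 0$, I would first rewrite the momentum equation in the non-conservative form
\[
\rho\,u_t = -\rho\, u\cdot\nabla u - Lu - \nabla P_m - \tfrac{1}{c}\!\int_0^\infty\!\!\int_{S^2} A_r\,\Omega\,\dif\Omega\,\dif v,
\]
and analogously isolate $\rho\theta_t$ in the energy equation. Let $F_1^0, F_2^0$ denote the left-hand sides of $(\ref{kkk})$. By $(\ref{coco})$ combined with the regularity of the data, every explicit term on the right of the displayed identity converges strongly in $L^2$ as $t\downarrow 0$, giving $\rho(t)\,u_t(t) \to -\rho_0\, u_0\cdot\nabla u_0 - F_1^0$ in $L^2$, with the analogous convergence for $\rho\theta_t$. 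On the other hand, the class $\Phi$ provides $\sqrt{\rho}\,u_t,\, \sqrt{\rho}\,\theta_t \in L^\infty(0, T; L^2)$, so along a subsequence $t_n\downarrow 0$ these converge weakly in $L^2$ to some $g$ and $\tilde g$. Together with the uniform convergence $\sqrt{\rho(t)} \to \sqrt{\rho_0}$ (via $W^{1,q} \hookrightarrow L^\infty$ for $q > 3$), this upgrades to $\rho(t_n)\,u_t(t_n) \rightharpoonup \sqrt{\rho_0}\,g$ in $L^2$, and likewise for $\rho\theta_t$. Matching the two limits and absorbing the polynomial-in-$\rho_0$ terms into the $\sqrt{\rho_0}$-factor yields $F_1^0 = \sqrt{\rho_0}\,g_1$ and $F_2^0 = \sqrt{\rho_0}\,g_2$ with $g_i\in L^2$, which is exactly $(\ref{kkk})$.

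\textbf{Role of the vacuum hypothesis and the main obstacle.} The factorization $F_i^0 = \sqrt{\rho_0}\,g_i$ forces $F_i^0 \equiv 0$ in the interior vacuum set $V$. Since each of $\sigma_a, \sigma_s, \sigma'_s$ carries a factor of $\rho_0$ and $\nabla P_m^0$ vanishes on $V$, the identities $F_i^0|_V = 0$ reduce to the homogeneous elliptic system $(\ref{zhen101})$ (the $S$-source contributions being absorbed through the boundary behaviour built into $(\ref{zhen1})$) for $(u_0, \theta_0)|_V$. The three alternative assumptions on $V$ are precisely the conditions that prevent this interior system from producing an obstruction: zero measure or far-field placement makes the constraint vacuous, while the triviality of $(\ref{zhen101})$ in $D_0^1(V) \cap D^2(V)$ rules out any nonzero interior restriction, leaving the factorization admissible. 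The principal analytic difficulty is the limit passage itself: without a pointwise lower bound on $\rho$ only weak $L^2$-compactness of $\sqrt{\rho}\,u_t$ and $\sqrt{\rho}\,\theta_t$ is available, and it must be converted into the correct strong factorization through $\sqrt{\rho_0}$ by exploiting the uniform convergence of $\sqrt{\rho(t)}$ afforded by the higher integrability exponent $q > 3$.
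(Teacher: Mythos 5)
Your necessity argument is essentially the paper's own: rewrite the momentum and temperature equations so that the left-hand sides of (\ref{kkk}) evaluated along the solution equal $\sqrt{\rho}(t)\,G^i(t)$ with $G^1=\sqrt{\rho}\,(-u_t-u\cdot\nabla u)$ and $G^2=\sqrt{\rho}\,(-\theta_t-u\cdot\nabla\theta-R\theta\,\mathrm{div}\,u)$, use the $L^\infty([0,T];L^2)$ bounds from the class $\Phi$ to extract a weak($*$) limit $(f,g)$ along a sequence $t_k\to 0$, and pass to the limit in the left-hand sides using (\ref{coco}). That direction is fine and matches the paper.

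The genuine gap is in the sufficiency direction and in where you locate the hypothesis on $V$. You assert that (\ref{coco}) is ``automatic from the regularity'' and that ``the hypothesis on $V$ plays no role in this direction''; this is exactly backwards. The strong solution produced by Theorem \ref{th1} attains the prescribed data only through $\rho u(0)=\rho_0u_0$ and $\rho\theta(0)=\rho_0\theta_0$ (Remark \ref{initial}); on the vacuum set $V$ these identities are vacuous, so $u(0,\cdot),\theta(0,\cdot)$ need not coincide with $u_0,\theta_0$ there, and $(u,\theta)\in C([0,T_*];H^2)$ only gives $u(t)\to u(0)$, not $u(t)\to u_0$. The paper closes precisely this gap: it applies the necessity argument to the solution's trace at $t=0$, so that both $(u_0,\theta_0)$ and $(u(0),\theta(0))$ satisfy compatibility identities; on $V$, where $\rho_0=\rho(0)=0$ annihilates $\nabla P^0_m$, $\sigma_a$, $\sigma_s$, $\sigma'_s$ and the $\sqrt{\rho_0}$ right-hand sides, the difference $(\overline{u}_0,\overline{\theta}_0)=(u_0-u(0,\cdot),\theta_0-\theta(0,\cdot))$ belongs to $D^1_0(V)\cap D^2(V)$ (it vanishes wherever $\rho_0>0$, hence on $\partial V$) and solves the homogeneous system (\ref{zhen101}); the triviality hypothesis then yields $u(0)=u_0$, $\theta(0)=\theta_0$ on $V$, and (\ref{coco}) follows from time-continuity. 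Your alternative placement of the hypothesis — making the factorization $F^0_i=\sqrt{\rho_0}\,g_i$ ``admissible'' in the necessity direction — does not work: in necessity the factorization is obtained directly as a weak limit and needs no admissibility check, and the uniqueness assumption for (\ref{zhen101}) in $D^1_0(V)\cap D^2(V)$ cannot be applied to $(u_0,\theta_0)|_V$ themselves, which have no reason to vanish on $\partial V$ (that reading would force $u_0\equiv 0$ on $V$); it is applied only to the difference of two fields that agree off the vacuum region.
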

\begin{remark}\label{initial}
Because   $(I,\rho,u,\theta)$ only satisfies (\ref{eq:1.2})-(\ref{eq:2.2hh}) in the sense of distribution ,  we know
$$I(v,\Omega,0,x)=I_0,\ \rho(0,x)=\rho_0,\  \rho u(0,x)=\rho_0 u_0,\ \rho \theta(0,x)=\rho_0 \theta_0.$$
 In the vacuum domain $V$, the relations $u(t=0,x)=u_0$ and $\theta(t=0,x)=\theta_0$  maybe not hold.
The conclusions obtained in Theorem \ref{th2} tells us  that if the vacuum domain $V$ has a sufficient simple geometry, for instance, the  Lipschitz continuous domain, we have $u(t=0,x)=u_0$ and $\theta(t=0,x)=\theta_0$ a.e. in $V$.
\end{remark}

\subsection{  Beal-Kato-Majda Blow-up criterion}\ \\
Next we naturally  consider  that the local  strong solutions to IBVP (\ref{eq:1.2})-(\ref{eq:2.2hh}) with (\ref{fan1}) or (\ref{fan2})  may cease to exist globally (see \cite{sz2}),  or what is the key point to make sure that the solution obtained in Theorem \ref{th1} could become a  global one?
The similar question has been studied for the incompressible Euler equation  by Beale-Kato-Majda (BKM) in their pioneering work \cite{TBK}, which showed that the $L^\infty$-bound of vorticity $\nabla \times u$ must blow up. Later, Ponce  \cite{pc} rephrased the BKM-criterion in terms of the deformation tensor $D(u)$. The same result as \cite{pc} has been proved by Huang-Li-Xin \cite{hup} for the $3$D compressible isentropic Navier-Stokes equations, which can be shown: if  $0 < \overline{T} < +\infty$  is the
maximum time for the strong solutions, then
\begin{equation}\label{kaka1}
\lim \sup_{ T \rightarrow \overline{T}} \int_0^T |D( u)|_{ L^\infty(\Omega)}\text{d}t=\infty.
\end{equation}
Later on, under the physical assumption  (\ref{eq:1.5})  and $\lambda <7\mu$,  Sun-Wang-Zhang \cite{zif} improved this result based on some inequalities in BMO space such that 
$$
\lim \sup_{ T \rightarrow \overline{T}} |\rho|_{ L^\infty([0,T];L^\infty(\Omega))}=\infty,
$$
which has been extended to non-isentropic flow in Wen-Zhu \cite{jiangzhu}.

 Our main result in the following   theorem shows  that the $L^\infty$ norms of $(\rho,\theta)$ and $L^2$ norm of $\nabla I$ control the possible blow-up  (see \cite{sz2}) for strong solutions, which  means that if a solution of the compressible RHD equations is initially regular and loses its regularity at some later time, then the formation of singularity must be caused  by  losing  the bound of  $\nabla I$, $\rho$ or $\theta$ as the critical time approaches. We first assume that 
\begin{equation}\label{kll}
\begin{cases}
\sigma_a=\sigma(v,\Omega,\theta)\rho=\sigma \rho,\ \sigma_\theta=\frac{\partial \sigma}{\partial \theta},\\[8pt]
 \|(\sigma,\sigma_\theta)(v,\Omega,\theta)\|_{L^2 \cap L^\infty(\mathbb{R}^+\times S^2;  L^\infty([0,T]\times \mathbb{V}))}\leq \beta,
\end{cases}
\end{equation}
which is similar to (\ref{kk}), and obviously satisfies the assumption  (\ref{jia345}).
\begin{theorem} \label{th3}
Let (\ref{gogo})-(\ref{kkk}) and (\ref{kll}) hold,  $(\mu, \lambda)$ satisfy
\begin{equation}\label{ass}
 \mu > 0, \quad 3\lambda+2\mu \geq 0,\quad \lambda < 3 \mu.\end{equation}
 If  $(I,\rho,u,\theta)$  is a strong solution obtained in Theorem \ref{th1} to IBVP  (\ref{eq:1.2})-(\ref{eq:2.2hh}) with boundary condition (\ref{fan1}),   and $0< \overline{T} <\infty$ is the maximal time of  its existence, then we have
\begin{equation}\label{eq:2.91}
\lim \sup_{ T \rightarrow \overline{T}}(|\nabla I|_{L^2(\mathbb{R}^+\times S^2;L^\infty [0,T]; L^2( \mathbb{V})))}+| (\rho,\theta)|_{L^\infty([0,T]\times \mathbb{V})})=\infty.
\end{equation}
\end{theorem}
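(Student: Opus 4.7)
The plan is to argue by contradiction: suppose that
$$\Psi(T) := |\nabla I|_{L^2(\mathbb{R}^+\times S^2; L^\infty([0,T]; L^2( \mathbb{V})))}+| (\rho,\theta)|_{L^\infty([0,T]\times \mathbb{V})} \leq C_0$$
stays bounded as $T \nearrow \overline{T}$. The goal is then to produce a priori bounds on $(I,\rho,u,\theta)$ that belong to the regularity class $\Phi$ of Definition \ref{strong} uniformly up to $t = \overline{T}$, so that Theorem \ref{th1} applied at a time slightly before $\overline{T}$ extends the solution strictly beyond $\overline{T}$, contradicting maximality. Throughout, I would use the simplified form (\ref{kll}) so that the pointwise bound on $\rho,\theta$ immediately yields $L^\infty$ control of $\sigma_a$ and $\sigma_\theta$, while the scattering coefficients are already controlled by (\ref{zhen1}).

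The estimates would be built up in the natural order. First, using the transport equation $(\ref{eq:1.2})_1$ with characteristics and the $L^\infty$ bound on $\rho,\theta$, I would propagate bounds on $I$ and $\nabla I$ in $L^2(\mathbb{R}^+\times S^2;L^\infty(0,T;L^q))$, noting that the radiation source terms $N_r$ and $\int A_r\Omega\,\dif\Omega\dif v$ in the momentum and temperature equations are controlled by $|\nabla I|_{L^2}$ through integration by parts (e.g.\ writing $\Omega\cdot\nabla I$ appearing implicitly in $A_r$). Then I would run a basic energy estimate on $(\ref{eq:1.2})_{3,4}$ to bound $\sqrt{\rho}u,\sqrt{\rho}\theta$ in $L^\infty L^2$ and $\nabla u,\nabla\theta$ in $L^2 L^2$, since the radiation source has the required quadratic smallness. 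Differentiating the momentum equation in time and testing by $u_t$ gives $\sqrt{\rho}u_t \in L^\infty L^2$ and $\nabla u_t \in L^2 L^2$; analogously for $\theta_t$ using the energy equation. Standard elliptic regularity for the Lamé operator and for $-\kappa\triangle$ with Neumann data then upgrades $(u,\theta)$ to $L^\infty H^2\cap L^2 D^{2,q}$, as soon as $\nabla\rho$ is controlled in $L^q$.

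The heart of the argument, and the main obstacle, is the bound on $\int_0^T|\nabla u|_\infty\,\dif t$, which is required both for the continuity equation to propagate $|\nabla\rho|_{L^q}$ along characteristics and for the higher-order elliptic closure. For this I would borrow the Sun-Wang-Zhang / Wen-Zhu strategy: introduce the effective viscous flux $F = (\lambda+2\mu)\dd u - P_m - \tfrac{1}{3}\mathrm{tr}\,P_r$ and the vorticity $\omega = \nabla\times u$; both satisfy elliptic equations whose right-hand sides are controlled in $L^2$ by $\sqrt{\rho}u_t$ and by the radiation source, so $F$ and $\omega$ lie in BMO with quantitative norm. The Brezis--Wainger/logarithmic Sobolev inequality then yields
$$|\nabla u|_\infty \leq C\bigl(1+\|(F,\omega)\|_{\mathrm{BMO}}\bigr)\log\bigl(e+\|\nabla u\|_{W^{1,q}}\bigr) + \text{l.o.t.},$$
and the hypothesis $\lambda < 3\mu$ enters exactly at the step where one converts the BMO estimate on $F,\omega$ into a Lebesgue-type bound on the symmetric part $D(u)$, allowing the logarithmic factor to be absorbed. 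A Gronwall argument on the combined quantity $\log(e+|\nabla\rho|_{L^q}+|\nabla u|_{W^{1,q}})$ then closes the bootstrap.

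With $\int_0^{\overline T}|\nabla u|_\infty\,\dif t$ under control, I would transport $|\nabla\rho|_{L^q}$ along the flow, use $(\ref{eq:1.2})_2$ and the momentum equation to obtain $\rho_t\in L^\infty L^q$, and return to the previous elliptic step to promote $(u,\theta)$ to the full class $\Phi$ up to $t=\overline T$. At the same time, compatibility conditions at a time $T_0 < \overline T$ close to $\overline T$ are automatic since $(\sqrt{\rho}u_t,\sqrt{\rho}\theta_t)\in L^\infty L^2$ gives the required $L^2$ traces. Applying Theorem \ref{th1} at $t = T_0$ produces a strong solution on $[T_0, T_0 + T_*]$ with $T_0 + T_* > \overline T$ (since the lifespan in Theorem \ref{th1} depends only on the size of the data in $\Phi$, which is now uniformly bounded), contradicting the maximality of $\overline{T}$ and proving (\ref{eq:2.91}).
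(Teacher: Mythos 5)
There is a genuine gap, and it sits exactly at what you call ``the heart of the argument.'' Under the contradiction hypothesis you only control $\rho,\theta$ in $L^\infty$ and $\nabla I$ in $L^2(\mathbb{R}^+\times S^2;L^\infty_tL^2_x)$; you have no a priori control of $u$ beyond the basic energy estimate. Consequently your claim that ``differentiating the momentum equation in time and testing by $u_t$ gives $\sqrt{\rho}u_t\in L^\infty L^2$'' (and the analogous step for $\theta_t$) does not close: the convective term $\rho u\cdot\nabla u$, the term $\int_{\mathbb{V}}(P_m)_t\,G\,\dif x$ coming from the pressure, and the quadratic source $Q(u)$ in the temperature equation all require a bound of the type $\||u|\,|\nabla u|\|_{L^2([0,T]\times\mathbb{V})}$, which the basic energy estimate does not provide when vacuum is allowed. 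The paper's proof therefore inserts, before any $u_t$ or $\dot u$ estimate, the weighted estimate $\int_{\mathbb{V}}\rho|u|^4\dif x+\int_0^T\!\!\int_{\mathbb{V}}|u|^2|\nabla u|^2\dif x\dif t\le C$ (Lemma \ref{abs3}), obtained by testing the momentum equation with $r|u|^{r-2}u$, $r=4$; only after that can one bound $|\nabla u|_{L^\infty_tL^2_x}$ and $\sqrt{\rho}u_t\in L^2_tL^2_x$ (Lemma \ref{abs4}), handle $(P_m)_tG$ through the total energy equation (Remark \ref{rrr2} III), and proceed to the material-derivative and higher-order estimates.

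Relatedly, you misattribute where the hypothesis $\lambda<3\mu$ is used. It plays no role in the BMO/logarithmic-Sobolev (or, in the paper, the Beale--Kato--Majda inequality of Lemma \ref{tvd10}) step for $|\nabla u|_\infty$: that step needs only (\ref{eq:1.5}). The restriction $\lambda<3\mu$ enters precisely in the $r=4$ weighted estimate just described, where positivity of the quadratic form produced by $\mu|\nabla u|^2$, $(\mu+\lambda)|\dd u|^2$ and the cross term $\dd u\,|u|^{r-3}u\cdot\nabla|u|$ must be ensured; the naive Cauchy--Schwarz treatment gives the Sun--Wang--Zhang condition $\lambda<7\mu$, and the paper relaxes it to $\lambda<3\mu$ via the pointwise splitting $|\nabla u|^2=|u|^2|\nabla(u/|u|)|^2+|\nabla|u||^2$ and the dichotomy (\ref{ghukk11}) with the function $\phi(\epsilon_0,\epsilon_1,r)$ (Remark \ref{rrr2} II). Since your outline never establishes the $L^4$-type weighted bound and invokes $\lambda<3\mu$ at a step where it is not needed, the subsequent bootstrap (including the transport of $|\nabla\rho|_{L^q}$, which needs $\int_0^T|\nabla u|_\infty\dif t$) cannot be carried out as written. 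The rest of your skeleton (contradiction setup, elliptic regularity, log-Gronwall for $|\nabla\rho|_q$, extension beyond $\overline T$ by Theorem \ref{th1}) is consistent with the paper once this missing ingredient is supplied.
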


\begin{remark}\label{rrr2}
We introduce the main ideas of our proof   for Theorems \ref{th1}-\ref{th3}, some of which are inspired by  the arguments used  in \cite{oula}\cite{CK}\cite{huangtao}\cite{hup}\cite{zif}\cite{jiangzhu}.

\text{I}) In Theorem \ref{th1},  in order to get a a priori estimate which is independent of the lower bound of $\rho_0$ under  boundary condition  (\ref{fan2}),  some new arguments have been introduced compared with \cite{CK}. First, due to  $I=0$  on $\partial \mathbb{V}$ when $n\cdot \Omega \leq 0$, we  observe that 
$$
I_t=0,\ \text{and}\ \nabla I=(\nabla I \cdot n)n, \ \text{on} \  \partial \mathbb{V}\  \text{when}\  n\cdot \Omega \leq 0,
$$
which will be used to deal with  boundary terms for the  estimate on  $\nabla I$. Second, in order to deal with terms $(|\theta_t|_6, |\theta|_6)$ under (\ref{fan2}), a  Poincar$\acute{\text{e}}$ type inequality is introduced as
\begin{equation*}\begin{split}
|\theta_t|_6\leq C(|\sqrt{\rho}\theta_t|_2+(1+|\rho|_2)|\nabla \theta_t|_2).
\end{split}
\end{equation*}
Finally, the  Minkowski's inequality (see Section $4.1$) and some regualrity  theory (see Section $3$) introduced in \cite{oula} for incompressible Euler equations has been applied to the energy estimate for the velocity $u$ of the fluid  under the Navier-slip boundary condition.

\text{II}) In Theorem \ref{th3},  in order to get a restriction of $\mu$ and $\lambda$ as better as possible,
  the crucial ingredient to relax the additional restrictions to
$\lambda < 3\mu$    has been observed that 
\begin{equation}\label{gpkk}
\displaystyle
|\nabla u|^2=|u|^2\Big| \nabla \Big(\frac{u}{|u|}\Big)\Big|^2+\big| \nabla  |u|\big|^2
\end{equation}
for  $ |u|>0$, and thus we deduce that 
\begin{equation}\label{ghukk22}
\begin{split}
\int_{\mathbb{V} \cap |u|>0}  |u|^{r-2} | \nabla  u|^2\text{d}x\geq (1+ \phi(\epsilon_0,\epsilon_1,r) )\int_{\mathbb{V} \cap |u|>0}  |u|^{r-2} \big| \nabla  |u|\big|^2\text{d}x,
\end{split}
\end{equation}
if the following assumption holds:
\begin{equation}\label{ghukk11}
\begin{split}
&\int_{\mathbb{V} \cap |u|>0}  |u|^r \Big| \nabla \Big(\frac{u}{|u|}\Big)\Big|^2\text{d}x\geq  \phi(\epsilon_0,\epsilon_1,r)\int_{\mathbb{V} \cap |u|>0}  |u|^{r-2} \big| \nabla  |u|\big|^2\text{d}x
\end{split}
\end{equation}
for some positive function $\phi(\epsilon_0,\epsilon_1,r)$ near $r=4$. The details can be seen in Lemma \ref{abs3}.

\text{III}) As it was pointed out in \cite{jiangzhu}, we need to deal with the essential difficulties caused by the nonlinear term $Q(u)$ in $(\ref{eq:1.1})_4$.  However, an important fact has been observed that
$$(P_m)_t=(\rho E_m)_t-\frac{1}{2}\big(\rho |u|^2\big)_t.$$
Then we have
\begin{equation*}
\begin{split}
-\int_{\mathbb{V}} (P_m)_t G\text{d}x=&-\int_{\mathbb{V}} (\rho E_m)_t G\text{d}x+...=-\int_{\mathbb{V}} \mathbb{T}: \nabla G\text{d}x
\leq C||u||\nabla u||_2 |\nabla G|_2+...,
\end{split}
\end{equation*}
where $G=(2\mu+\lambda)\text{div}u-P_m$  is the effective viscous flux, which plays an important role in our proof for the blow-up criterion.

\end{remark}

\section{Preliminary}

In this section, we give some important lemmas which will be used frequently in our proof.
The first  one comes from  Gagliardo-Nirenberg inequality and Poincar$\acute{\text{e}}$ inequality:

\begin{lemma}\label{gag}
For  $q\in (3,6]$, there exist constants $C> 0$ (depend  on $q$) and $C_1>0$ (depending on $\mathbb{V}$) such that for 
$$f\in D^1_0(\mathbb{V}),\quad g\in D^1_0\cap D^2(\mathbb{V}),\quad h \in W^{1,q}(\mathbb{V})$$ and $w\in H^1(\mathbb{V})$ with $w\cdot n|_{\partial \mathbb{V}}=0$, we have
\begin{equation}\label{gaga1}
\begin{split}
|f|_6\leq C|f|_{D^1_0},\quad |g|_{\infty}\leq C|g|_{D^1_0\cap D^2}, \quad |h|_{\infty}\leq C\|h\|_{W^{1,q}},\quad \|w\|_1\leq C_1|\nabla w|_2.
\end{split}
\end{equation}
\end{lemma}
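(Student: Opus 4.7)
The plan is to verify each of the four inequalities in turn; the first three follow from classical Sobolev embedding theory on bounded smooth domains in $\mathbb{R}^3$, while the fourth requires a Poincar\'e-type argument that exploits the non-penetration boundary condition.

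For the first inequality, since $f\in D^1_0(\mathbb{V})$ has zero trace on $\partial\mathbb{V}$, it extends by zero to a function in $D^1(\mathbb{R}^3)$, and the classical Gagliardo-Nirenberg-Sobolev inequality $|f|_6\leq C|\nabla f|_2$ on $\mathbb{R}^3$ (valid because $6$ is the critical Sobolev exponent in three dimensions) transfers directly to yield the bound on $\mathbb{V}$. For the second inequality, Poincar\'e's inequality applied to $g\in D^1_0(\mathbb{V})$ on the bounded $\mathbb{V}$ gives $|g|_2\leq C|\nabla g|_2$, so $g\in H^2(\mathbb{V})$; then the Sobolev embedding $H^2(\mathbb{V})\hookrightarrow L^\infty(\mathbb{V})$ (valid since $2>3/2$ in dimension three) yields $|g|_\infty\leq C\|g\|_2\leq C(|\nabla g|_2+|\nabla^2 g|_2)=C|g|_{D^1_0\cap D^2}$. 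The third inequality is Morrey's embedding $W^{1,q}(\mathbb{V})\hookrightarrow C^{0,1-3/q}(\overline{\mathbb{V}})\hookrightarrow L^\infty(\mathbb{V})$, which applies since $q>3=\dim\mathbb{V}$ and $\partial\mathbb{V}$ is smooth.

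For the fourth inequality, it suffices to prove $|w|_2\leq C|\nabla w|_2$, since then $\|w\|_1^2=|w|_2^2+|\nabla w|_2^2\leq(C^2+1)|\nabla w|_2^2$. I would argue by contradiction: suppose there is a sequence $\{w_k\}\subset H^1(\mathbb{V})$ with $w_k\cdot n|_{\partial\mathbb{V}}=0$, $|w_k|_2=1$, and $|\nabla w_k|_2\to 0$. This sequence is bounded in $H^1$, so by the Rellich-Kondrachov compactness theorem a subsequence converges strongly in $L^2$ and weakly in $H^1$ to some $w_\infty$ with $|w_\infty|_2=1$ and $\nabla w_\infty=0$ a.e.; hence $w_\infty$ is a constant vector $c$. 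The trace operator $H^1(\mathbb{V})\to H^{1/2}(\partial\mathbb{V})$ is weakly continuous, so $c\cdot n|_{\partial\mathbb{V}}=0$. Since $\mathbb{V}$ is bounded with smooth boundary, the outward normal field $n$ spans $\mathbb{R}^3$ as $x$ varies over $\partial\mathbb{V}$ (otherwise $\mathbb{V}$ would be invariant under translation in some nonzero direction, contradicting boundedness), forcing $c=0$ and contradicting $|w_\infty|_2=1$.

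The only genuine obstacle is the fourth inequality; the key ingredients are the compactness of the embedding $H^1(\mathbb{V})\hookrightarrow L^2(\mathbb{V})$ for bounded smooth $\mathbb{V}$, and the geometric fact that the outward normal of a bounded smooth domain cannot be orthogonal to any fixed nonzero vector.
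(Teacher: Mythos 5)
Your proof is correct, and it takes the route the paper intends: Lemma \ref{gag} is stated there without proof as a standard consequence of the Gagliardo--Nirenberg/Sobolev embeddings and a Poincar\'e-type inequality, and your arguments (zero extension plus the critical Sobolev inequality for $f$, Poincar\'e plus $H^2\hookrightarrow L^\infty$ for $g$, Morrey's embedding for $h$, and the compactness--contradiction proof that $w\cdot n|_{\partial\mathbb{V}}=0$ excludes nonzero constant limits because the outward normal of a bounded smooth domain attains every direction) supply exactly those standard facts with constants depending only on $q$ and $\mathbb{V}$. The only cosmetic remark is that your translation-invariance justification could be replaced by the cleaner observation that at a boundary point maximizing $x\cdot c$ the outward normal is $c/|c|$, so $c\cdot n=|c|$ there, forcing $c=0$.
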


In order to dealing with the Neumann   boundary condition for $\theta$,  we also need another Poincar$\acute{\text{e}}$ type inequality  (see Chapter $8$ in \cite{lion}):
\begin{lemma}\label{pang}\cite{lion}
There exists a constant $C$ depending only on $\mathbb{V}$  and $|\rho|_{r}$ $(r\geq 1)$ ($\rho\geq 0$  is a  real funciton satisfying $|\rho|_1>0$), 
such that for every $F\geq 0$ satisfying  
$$\rho F \in L^{1}(\mathbb{V}),\quad   \sqrt{\rho} F\in L^2(\mathbb{V}),\quad \nabla F\in  L^2(\mathbb{V}),$$ we have
\begin{equation*}
|F|_{6} \leq C\big( |\sqrt{\rho} F|_{2} +(1+|\rho|_2) |\nabla F|_{2} \big).
\end{equation*}
\end{lemma}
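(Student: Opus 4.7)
My plan is to reduce the claimed estimate to the standard Sobolev embedding $H^{1}(\mathbb{V})\hookrightarrow L^{6}(\mathbb{V})$ combined with a weighted Poincar\'e--Wirtinger argument. Since $\mathbb{V}\subset\mathbb{R}^{3}$ is a bounded smooth domain, that embedding yields $|F|_{6}\le C(|F|_{2}+|\nabla F|_{2})$, so the whole task reduces to controlling the \emph{unweighted} $L^{2}$-norm $|F|_{2}$ by $|\sqrt{\rho}F|_{2}$ and $(1+|\rho|_{2})|\nabla F|_{2}$. Note that $|\nabla F|_{2}$ is already on the right-hand side, so only the zeroth-order part requires work.

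To bound $|F|_{2}$, I would introduce the unweighted mean $\overline{F}:=\frac{1}{|\mathbb{V}|}\int_{\mathbb{V}}F\,\mathrm{d}x$ and split $F=(F-\overline{F})+\overline{F}$. The standard Poincar\'e--Wirtinger inequality on the bounded smooth domain $\mathbb{V}$ gives $|F-\overline{F}|_{2}\le C_{\mathbb{V}}|\nabla F|_{2}$, so it only remains to estimate the constant $\overline{F}$. Multiplying by $\rho$ and integrating yields the identity
$$|\rho|_{1}\,\overline{F}=\int_{\mathbb{V}}\rho F\,\mathrm{d}x-\int_{\mathbb{V}}\rho(F-\overline{F})\,\mathrm{d}x.$$
Applying Cauchy--Schwarz to the first term in the form $\bigl|\int\rho F\bigr|\le|\sqrt{\rho}|_{2}\,|\sqrt{\rho}F|_{2}=|\rho|_{1}^{1/2}|\sqrt{\rho}F|_{2}$, and to the second in the form $\bigl|\int\rho(F-\overline{F})\bigr|\le|\rho|_{2}\,|F-\overline{F}|_{2}\le C_{\mathbb{V}}|\rho|_{2}\,|\nabla F|_{2}$, and then using $|\rho|_{1}>0$ to divide out, produces
$$|\overline{F}|\le\frac{|\sqrt{\rho}F|_{2}}{|\rho|_{1}^{1/2}}+\frac{C_{\mathbb{V}}|\rho|_{2}}{|\rho|_{1}}|\nabla F|_{2}.$$
Multiplying by $|\mathbb{V}|^{1/2}$ and adding the Poincar\'e--Wirtinger bound for $|F-\overline{F}|_{2}$ gives $|F|_{2}\le C\bigl(|\sqrt{\rho}F|_{2}+(1+|\rho|_{2})|\nabla F|_{2}\bigr)$, with $C$ depending only on $\mathbb{V}$ and on the strictly positive scalar $|\rho|_{1}$; inserting this into the Sobolev embedding above gives the claim.

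The essentially only obstacle is arranging the constant to depend only on $\mathbb{V}$ and on $|\rho|_{r}$ in the manner advertised, and this is precisely why the hypothesis $|\rho|_{1}>0$ is indispensable: it appears as a denominator when inverting the identity for $\overline{F}$. Since $|\rho|_{2}$ is already allowed to appear explicitly as the factor $(1+|\rho|_{2})$ on the right-hand side, no finer control of $\rho$ beyond $|\rho|_{1}>0$ is needed, and the argument goes through verbatim for any nonnegative $\rho$ with positive integral. The positivity condition $F\ge 0$ is not used anywhere in this route, which is consistent with the fact that $F$ and $|F|$ satisfy the same right-hand side.
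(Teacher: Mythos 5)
Your argument is correct, and it is essentially the standard proof of this weighted Poincar\'e-type inequality; note that the paper itself offers no proof at all here, it simply cites Lions (Chapter 8 of \cite{lion}), and the mean/oscillation splitting you use -- Sobolev embedding $H^1\hookrightarrow L^6$, Poincar\'e--Wirtinger for $F-\overline{F}$, and control of the constant $\overline{F}$ by integrating against $\rho$ and using $\int_{\mathbb{V}}\rho\,\mathrm{d}x=|\rho|_1>0$ -- is precisely the argument behind the cited result. Two small points are worth making explicit. First, your constant involves $|\rho|_1^{-1/2}$ and $|\rho|_1^{-1}$, so ``$C$ depends on $|\rho|_1$'' really means $C$ depends on a positive lower bound for $|\rho|_1$; this is exactly how the lemma is used in the paper (mass conservation fixes $\int\rho=\int\rho_0>0$), but it deserves a sentence. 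Second, you implicitly assume that $\overline{F}$ is well defined and that Poincar\'e--Wirtinger applies, i.e.\ that $F\in L^1(\mathbb{V})$ (indeed $F\in L^2$); this does follow on a bounded smooth connected domain from $F\in L^1_{\mathrm{loc}}$ and $\nabla F\in L^2$, but it is a step you should state rather than use silently. With those caveats recorded, the proof goes through as written, and your observation that $F\ge 0$ is never needed is accurate.
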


Next we consider the following homogenous Dirichlet  boundary value problem for the Lam$\acute{\text{e}}$ operator $L$: let $U = (U^1, U^2, U^3), \ F = (F^1, F^2, F^3)$ and 
\begin{equation}
\label{tvd}
Lu=-\mu \triangle U-(\mu+\lambda)\nabla \text{div} U=F\ \text{in}\ \Omega,\quad U|_{ \partial \mathbb{V}}=0.
\end{equation}
 If  $F \in W^{-1,2}(\mathbb{V})$, then there exists a unique weak solution  $U \in H^1_0(\mathbb{V})$. We begin with recalling various estimates for this system in $L^l(\mathbb{V})$ spaces, which can be seen in \cite{dd9}.

\begin{lemma}\label{tvd1}\cite{dd9} Let  $(\ref{eq:1.5})$ hold and  $\mathbb{V}$ be a bounded, smooth domain and  $l \in (1, +\infty)$. There exists a constant $C$ depending only on $\lambda$, $\mu$, $l$ and $\mathbb{V}$ such that:

(1)if $F\in L^l(\mathbb{V})$, then we have
\begin{equation}\label{tvd2}
\|U\|_{W^{2,l}}\leq C|F|_l;
\end{equation}

(2)
 if $F \in W^{-1,l}(\mathbb{V})$ (i.e., $F =\text{div}f$ with $f =(f_{ij})_{3\times3}$,  $f_{ij} \in L^l(\mathbb{V})$), then we have
\begin{equation}\label{tvd3}
\| U \|_{W^{1,l}} \leq C|f|_l.
\end{equation}

Morevoer, if $\triangle U=F$ with $\nabla U(t,x)\cdot n |_{ \partial \mathbb{V}}=0$, for weak solution $U\in H^1$,  we also have
\begin{equation}\label{tvd2kll}
\|U\|_{W^{2,l}}\leq C|F|_l,\quad \text{for}\ F\in L^l(\mathbb{V}).
\end{equation}

\end{lemma}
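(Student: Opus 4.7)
The plan is to reduce this to the classical Agmon--Douglis--Nirenberg (ADN) $L^{l}$ regularity theory for strongly elliptic systems, which delivers all three estimates within one framework. First I would verify strong ellipticity of $L$: its principal symbol is $\mu|\xi|^{2}I_{3}+(\mu+\lambda)\xi\otimes\xi$, with eigenvalues $\mu|\xi|^{2}$ (multiplicity two) and $(2\mu+\lambda)|\xi|^{2}$. Under (\ref{eq:1.5}), $\mu>0$ and $3\lambda+2\mu\ge 0$ force $2\mu+\lambda\ge\tfrac{4}{3}\mu>0$, so both eigenvalues are strictly positive and $L$ is strongly elliptic in the ADN sense.

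For part (1), I would first settle the $l=2$ case by Lax--Milgram applied to the coercive bilinear form $a(U,V)=\mu\int_{\mathbb{V}}\nabla U:\nabla V\,\mathrm{d}x+(\mu+\lambda)\int_{\mathbb{V}}\text{div}\,U\,\text{div}\,V\,\mathrm{d}x$ on $H^{1}_{0}(\mathbb{V})^{3}$ (coercivity from the ellipticity bound and Poincar\'e), followed by standard boundary $H^{2}$ regularity for $F\in L^{2}$. For general $l\in(1,\infty)$, I would verify that the homogeneous Dirichlet condition satisfies the Lopatinski--Shapiro complementing condition for $L$---an automatic computation for Dirichlet data on any strongly elliptic system after straightening and freezing coefficients at a boundary point---and then invoke the ADN a priori estimate $\|U\|_{W^{2,l}}\le C(|F|_{l}+|U|_{l})$. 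The lower-order tail $|U|_{l}$ is absorbed via the Fredholm alternative together with the $L^{2}$ uniqueness from Lax--Milgram, since the kernel of $L$ with zero Dirichlet data is trivial.

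For part (2) with $F=\text{div}\,f$, I would combine duality with a Green-tensor/Calder\'on--Zygmund argument. For any test field $\phi\in C^{\infty}_{c}(\mathbb{V})^{3}$, solve $L\psi=\phi$ with $\psi|_{\partial\mathbb{V}}=0$; part (1) gives $\|\psi\|_{W^{2,l'}}\le C|\phi|_{l'}$, and integration by parts yields $\int_{\mathbb{V}}U\cdot\phi\,\mathrm{d}x=-\int_{\mathbb{V}}f:\nabla\psi\,\mathrm{d}x$, so $|U|_{l}\le C|f|_{l}$ by duality. The gradient bound $|\nabla U|_{l}\le C|f|_{l}$ then follows from the Green tensor $\Gamma(x,y)$ for $L$ on $\mathbb{V}$ (vanishing in $y$ on $\partial\mathbb{V}$): after one integration by parts, $\partial_{x_{k}}U^{i}(x)=-\int_{\mathbb{V}}\partial_{x_{k}}\partial_{y_{m}}\Gamma^{ij}(x,y)f^{jm}(y)\,\mathrm{d}y$, and the kernel $\partial_{x_{k}}\partial_{y_{m}}\Gamma$ is of Calder\'on--Zygmund type in three dimensions, hence bounded on $L^{l}$ for every $l\in(1,\infty)$. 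Part (3) is obtained by the same scheme with $L$ replaced by $-\Delta$ and Dirichlet replaced by Neumann: the Neumann complementing condition for the Laplacian is standard, and the estimate (\ref{tvd2kll}) follows from ADN modulo the constant normalization already encoded in the $H^{1}$ weak formulation.

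The principal obstacle in a fully self-contained exposition is the passage from the $L^{2}$ case to general $L^{l}$---the ADN estimate up to the boundary---which is either invoked as a black box or, more concretely, built by constructing the Green tensor of $L$ on $\mathbb{V}$ (Kelvin fundamental solution plus boundary corrector) and then bounding the resulting boundary singular integrals via Calder\'on--Zygmund theory with care near $\partial\mathbb{V}$. Since the statement is taken verbatim from \cite{dd9}, where these estimates are carried out for exactly this Lam\'e operator and for both boundary conditions used here, I would simply cite \cite{dd9} rather than reproduce the ADN step.
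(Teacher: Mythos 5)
The paper gives no proof of this lemma---it is quoted directly from \cite{dd9}---and your outline (strong ellipticity of $L$ under (\ref{eq:1.5}), Lax--Milgram plus $H^2$ regularity for $l=2$, the ADN complementing-condition estimates for general $l\in(1,\infty)$ with the lower-order term absorbed by uniqueness, duality for the divergence-form right-hand side, and the Neumann Laplacian treated by the same theory) is precisely the standard argument behind that citation, so deferring to \cite{dd9} as you propose matches the paper. The only point worth flagging is the one you already note: for the Neumann problem the estimate (\ref{tvd2kll}) can only hold modulo constants (or after a normalization such as zero mean for $U$), an imprecision present in the paper's own statement rather than in your argument.
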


%
%

In the next lemma, we give the following Beale-Kato-Majda type inequality, which was proved in \cite{serr}, and will be used later to estimate $|\nabla u|_\infty$  and $ |\nabla \rho|_{q}$. 
\begin{lemma}\cite{serr}\label{tvd10} Let $\mathbb{V}$ be a bounded smooth domain and $\nabla u  \in L^2\cap D^{1,q} (\mathbb{V})$ with $q\in  (3, \infty)$. There exists a constant $C$ depending on $q$  such that
\begin{equation}\label{tvd9}
\begin{split}
\|\nabla u\|_{L^\infty(\mathbb{V})} \leq& C\big(     |\text{div}u|_\infty+|\nabla \times u|_\infty \big) \text{ln} (e+|\nabla^2  u|_q) +C|\nabla u|_2+C.
\end{split}
\end{equation}
\end{lemma}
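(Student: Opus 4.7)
The plan is to reduce the inequality to an endpoint $L^\infty$-bound for Calder\'on--Zygmund-type operators applied to $\mathrm{div}\, u$ and $\nabla \times u$. First I would extend $u$ from $\mathbb{V}$ to all of $\mathbb{R}^3$ via a standard bounded Sobolev extension, producing a field $\tilde u$ with $|\nabla \tilde u|_{L^2(\mathbb{R}^3)}\leq C|\nabla u|_2$ and $|\nabla^2\tilde u|_{L^q(\mathbb{R}^3)}\leq C|\nabla^2 u|_q$, modified if necessary so that $|\mathrm{div}\,\tilde u|_\infty$ and $|\nabla\times\tilde u|_\infty$ are also controlled by the corresponding norms on $\mathbb{V}$. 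On $\mathbb{R}^3$ the identity $-\Delta u=\nabla\times(\nabla\times u)-\nabla(\mathrm{div}\,u)$ then yields
$$\nabla \tilde u = R_1(\mathrm{div}\,\tilde u) + R_2(\nabla\times \tilde u),$$
where $R_1,R_2$ are matrices of compositions of Riesz transforms, i.e.\ classical Calder\'on--Zygmund operators with convolution kernels homogeneous of degree $-3$.

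The core ingredient is then the logarithmic endpoint estimate: for such a singular integral operator $T$ and any $f\in L^2(\mathbb{R}^3)\cap W^{1,q}(\mathbb{R}^3)$ with $q>3$,
$$|Tf|_\infty \leq C|f|_\infty \log\bigl(e+|\nabla f|_q\bigr) + C|f|_2 + C.$$
I would prove this by a three-scale decomposition of the kernel $K$ at the point $x$. On the near region $|x-y|<r$, writing $f(y)-f(x)=\int_0^1\nabla f(x+t(y-x))\cdot(y-x)\,dt$ and using the usual cancellation of $K$ together with H\"older inequality gives a contribution of order $r^{1-3/q}|\nabla f|_q$. On the far region $|x-y|>R$, Cauchy--Schwarz with $|K(x-y)|\leq C|x-y|^{-3}$ gives a bound of order $R^{-3/2}|f|_2$. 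On the annulus $r<|x-y|<R$, the $L^\infty$ bound on $f$ combined with $|K|\leq C|x-y|^{-3}$ produces at most $C|f|_\infty\log(R/r)$. Choosing $R$ of order one and $r$ so that $r^{3/q-1}\sim |\nabla f|_q$ yields the logarithmic estimate. Applying this to $f=\mathrm{div}\,\tilde u$ and $f=\nabla\times\tilde u$, and using $|\nabla(\mathrm{div}\,\tilde u,\nabla\times\tilde u)|_q\leq C|\nabla^2\tilde u|_q$, delivers the claimed inequality on $\mathbb{V}$.

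The main obstacle I anticipate is the extension step, since a generic Sobolev extension does not preserve the $L^\infty$ bounds on divergence and curl. A cleaner alternative would be to avoid extension altogether and work directly on the bounded smooth domain $\mathbb{V}$, using the Green's representation for the div-curl system $(\mathrm{div}\,u,\nabla\times u)=(d,\omega)$ subject to appropriate boundary conditions; the resulting Green's kernel has the same singular Calder\'on--Zygmund structure as the Riesz transforms, while the boundary integrals arising from integration by parts contribute only terms controlled by $|\nabla u|_2$, which can be absorbed into the additive $C|\nabla u|_2+C$ on the right-hand side. Once such a representation is in hand, the three-scale splitting above applies essentially verbatim, and the final inequality follows.
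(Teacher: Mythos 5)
The paper does not prove this lemma at all: it is quoted verbatim from the reference \cite{serr}, so your proposal has to be judged on its own merits. The whole-space half of your argument is sound and is exactly the classical Beale--Kato--Majda mechanism: the representation $\nabla u=R_1(\text{div}\,u)+R_2(\nabla\times u)$ by compositions of Riesz transforms, together with the three-scale kernel splitting (near region $r^{1-3/q}|\nabla f|_q$ by cancellation and H\"older, annulus $|f|_\infty\log(R/r)$, far region $R^{-3/2}|f|_2$) and the optimization in $r$, does give the logarithmic endpoint bound. The problem is that the lemma is stated on a bounded smooth domain, and both of your routes from $\mathbb{R}^3$ back to $\mathbb{V}$ have a genuine gap.

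The extension route fails for the reason you yourself flag: any Sobolev extension controls $|\nabla\tilde u|_2$ and $|\nabla^2\tilde u|_q$, but $|\text{div}\,\tilde u|_\infty$ and $|\nabla\times\tilde u|_\infty$ outside $\mathbb{V}$ are only controlled by the full gradient of the extension, which is precisely the quantity you are trying to estimate, so the argument is circular. Your fallback is not yet a proof either: the lemma assumes no boundary condition on $u$, so the div-curl system does not determine $u$ and you must use a Green representation for the Laplacian, whose boundary integrals involve the traces of $u$ and $\nabla u$ against kernels of the type $|\nabla_x\nabla_y G(x,y)|\sim |x-y|^{-3}$; for $x$ close to $\partial\mathbb{V}$ these surface integrals are \emph{not} bounded by the interior norm $|\nabla u|_2$ plus a constant, and asserting that they are is exactly the hard step you have skipped. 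In the literature (and in the source cited here) the bounded-domain version is obtained differently: either by decomposing $u$ through an auxiliary elliptic boundary-value problem for which $W^{1,p}$/BMO regularity is available (this is why results such as Acquistapace's BMO estimates for elliptic systems appear in this circle of papers), or by combining the Kozono--Taniuchi/Brezis--Gallouet type inequality $\|f\|_{L^\infty}\leq C\|f\|_{BMO}\ln(e+\|f\|_{W^{1,q}})+C\|f\|_{L^2}+C$ with the BMO bound $\|\nabla u\|_{BMO}\leq C(|\text{div}\,u|_\infty+|\nabla\times u|_\infty+|\nabla u|_2)$, which is valid up to the boundary. To complete your argument you would need to substitute one of these mechanisms (or a careful reflection/localization analysis of the boundary kernel) for the unproven claim about the boundary terms.
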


The next two lemmas will be used to deal with the Navier-slip boundary condtions.
\begin{lemma}\cite{oula}\label{gag11}
Let $\mathbb{V}$ be a bounded smooth domain and  $u\in H^s$ be a vector valued function satisfying $u\cdot n|_{\partial \mathbb{V}}=0$, where $n$ is the unit outer normal of $\partial \mathbb{V}$, then
\begin{equation}\label{gaga1aa}
\begin{split}
\|u\|_s\leq C(\|\text{div}u\|_{s-1}+\|\text{rot}u\|_{s-1}+\|u\|_{s-1}),
\end{split}
\end{equation}
for $s\geq 1$ and the constant $C$ only depends on $s$ and $\mathbb{V}$.
\end{lemma}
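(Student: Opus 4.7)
The plan is to reduce to the fundamental Gaffney-type identity at level $s=1$ and then bootstrap upward by induction on $s$. The starting point is the pointwise identity in $\mathbb{R}^3$
\begin{equation*}
|\nabla u|^2 = |\text{div}\, u|^2 + |\text{rot}\, u|^2 + \sum_{i,j}\partial_i u^j \partial_j u^i - (\text{div}\, u)^2,
\end{equation*}
obtained from a short index computation. Integrating over $\mathbb{V}$ and performing two integrations by parts on the last two terms gives
\begin{equation*}
\int_\mathbb{V}|\nabla u|^2\,\text{d}x = \int_\mathbb{V}\bigl(|\text{div}\, u|^2+|\text{rot}\, u|^2\bigr)\text{d}x - \int_{\partial\mathbb{V}} (\text{div}\, u)(u\cdot n)\,\text{d}S + \int_{\partial\mathbb{V}}\sum_{i,j}u^j n_i \partial_j u^i\,\text{d}S.
\end{equation*}
Using $u\cdot n=0$ on $\partial\mathbb{V}$, the first boundary integral vanishes; and, extending $n$ smoothly to a neighborhood of $\partial\mathbb{V}$ and exploiting the tangentiality of $u$, the integrand of the second rewrites as $-u^i u^j \partial_j n_i$, namely the second fundamental form of $\partial\mathbb{V}$ evaluated on $u|_{\partial\mathbb{V}}$. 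Smoothness of $\mathbb{V}$ together with the trace interpolation $|u|_{L^2(\partial\mathbb{V})}^2\leq C\|u\|_1|u|_2$ bounds this term by $\varepsilon\|u\|_1^2+C_\varepsilon|u|_2^2$, and absorption into the left-hand side establishes the case $s=1$.

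For integer $s\geq 2$, I would proceed by induction on $s$. Cover $\mathbb{V}$ by finitely many charts: on interior ones the estimate is standard interior $H^s$ elliptic regularity for the vector Laplacian $-\Delta u = -\nabla\text{div}\,u + \text{rot}(\text{rot}\,u)$. In a boundary chart flattening $\partial\mathbb{V}$, apply a smooth tangential vector field $\tau$ to the equation: the identity $(\tau u)\cdot n = -u\cdot(\tau n)$ preserves the hypothesis $(\tau u)\cdot n=0$ modulo a lower-order term bounded by $\|u\|_{s-1}$, while $\text{div}(\tau u)$ and $\text{rot}(\tau u)$ equal $\tau(\text{div}\,u)$ and $\tau(\text{rot}\,u)$ plus commutators of order $\leq s-1$. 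Applying the inductive hypothesis to $\tau u$ bounds the full $H^s$-norm of all tangential derivatives; top-order normal derivatives are recovered algebraically by solving for $\partial_n u$ from $\text{div}\,u$ and $\text{rot}\,u$, using the ellipticity of the $(\text{div},\text{rot})$ system transverse to $\partial\mathbb{V}$. Non-integer $s$ then follows by interpolation, and a partition of unity gives the global estimate with a constant depending only on $s$ and the $C^s$-geometry of $\mathbb{V}$.

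The principal technical obstacle is the bookkeeping of commutators in the induction step: the curvature of $\partial\mathbb{V}$ generates lower-order terms involving $(s-1)$-order derivatives of $u$ that must be shown to be absorbed by $\|u\|_{s-1}$ rather than bleeding into the top-order norm. Once the $s=1$ Gaffney identity is in hand, however, the inductive structure is routine, and the proof essentially reproduces the classical Friedrichs--Gaffney inequality; the cited reference \cite{oula} carries out this program in the incompressible Euler context from which the estimate is borrowed.
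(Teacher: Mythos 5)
Your sketch is correct: the $s=1$ Gaffney-type identity with the boundary term rewritten via the second fundamental form (using $u\cdot n=0$ and the trace interpolation), followed by induction in $s$ through tangential differentiation, commutator control, and algebraic recovery of the normal derivatives from the elliptic $(\text{div},\text{rot})$ system, is precisely the classical Friedrichs--Gaffney/Bourguignon--Br\'ezis argument. The paper gives no proof of this lemma --- it simply cites \cite{oula} --- and your proposal reproduces the approach of that reference, so the two are essentially the same.
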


\begin{lemma}\cite{wv}\label{gag22}
Let  $\mathbb{V}$ be a bounded smooth domain and $u\in D^1$ be a vector valued function satisfying 
$$u\cdot n|_{\partial \mathbb{V}}=0\quad \text{or} \quad u\times n|_{\partial \mathbb{V}}=0,$$ where $n$ is the unit outer normal of $\partial \mathbb{V}$, then for any $l\in (1,+\infty)$, there exists a constant $C$ only depdends on $l$ and $\mathbb{V}$:
\begin{equation}\label{gaga1bb}
\begin{split}
|\nabla u|_l\leq C(|\text{div}u|_{l}+|\nabla \times u|_{l}).
\end{split}
\end{equation}
\end{lemma}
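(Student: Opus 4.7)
The plan is to view the system $\text{div}\,u = f$, $\nabla\times u = g$ together with one of the given boundary conditions as an elliptic boundary value problem in the sense of Agmon--Douglis--Nirenberg (ADN), and then extract the first-order bound on $u$ by $L^l$ elliptic regularity.

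First I would exploit the vector identity $-\Delta u = \nabla\times(\nabla\times u) - \nabla(\text{div}\,u)$ to rewrite $u$ as a solution of the second order elliptic equation
\begin{equation*}
-\Delta u = \nabla\times g - \nabla f \quad \text{in } \mathbb{V},
\end{equation*}
with $f := \text{div}\,u$ and $g := \nabla\times u$ treated as data. Coupled with either $u\cdot n = 0$ or $u\times n = 0$ on $\partial \mathbb{V}$, the natural companion conditions (respectively $(\nabla\times u)\times n = 0$ and $\text{div}\,u = 0$ on the boundary, arising from the integration-by-parts pairing between the first-order operators div and curl) are precisely the \emph{absolute} or \emph{relative} boundary conditions for the Hodge Laplacian. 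These satisfy the Lopatinskii--Shapiro complementing condition, so the classical $L^l$ theory for elliptic boundary value problems (Calder\'on--Zygmund / ADN) gives
\begin{equation*}
|\nabla u|_l \leq C\bigl(|\text{div}\,u|_l + |\nabla\times u|_l + |u|_l\bigr).
\end{equation*}

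Second, to remove the lower-order term $|u|_l$ I would argue by compactness and contradiction. Suppose the sharper estimate fails; then there exists a sequence $u_k$ with $|\nabla u_k|_l = 1$ and $|\text{div}\,u_k|_l + |\nabla\times u_k|_l \to 0$. This sequence is bounded in $W^{1,l}(\mathbb{V})$, so by Rellich compactness it subconverges in $L^l$ to some $u_\infty$ which is a harmonic vector field ($\text{div}\,u_\infty = 0$ and $\nabla\times u_\infty = 0$) inheriting the same boundary condition. On a smooth bounded domain the space of such harmonic fields is finite-dimensional (of cohomological type); under the topological hypotheses implicit in \cite{wv} (e.g.\ simple connectedness for the $u\cdot n = 0$ case) this kernel is trivial, so $u_\infty = 0$. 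Combining with the $L^l$ convergence and passing to the limit in the ADN estimate applied to $u_k$ yields $|\nabla u_k|_l \to 0$, contradicting the normalization.

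The main obstacle is the ellipticity/Lopatinskii--Shapiro verification in the first step, since the boundary conditions $u\cdot n = 0$ and $u\times n = 0$ are not of standard componentwise Dirichlet or Neumann type for the vector Laplacian and must be handled as mixed boundary conditions for the full Hodge system. Once this is carried out, the singular integral estimate in $L^l$ and the compactness-based removal of the lower-order term are routine.
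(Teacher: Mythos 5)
The paper does not prove this lemma at all: it is imported from von Wahl \cite{wv} (as Lemma \ref{gag11} is imported from \cite{oula}), so your attempt can only be measured against the standard proofs in the literature. Your second step is the standard device and is essentially sound: the kernel of the problem consists of harmonic fields ($\text{div}\,u=0$, $\nabla\times u=0$ with the given trace condition), and you are right that a topological hypothesis is genuinely needed to kill it --- on a solid torus with $u\cdot n=0$ a nontrivial harmonic Neumann field makes the right-hand side of (\ref{gaga1bb}) vanish while $\nabla u\neq 0$, so the inequality cannot hold on arbitrary bounded smooth domains; in this paper it is only invoked for a simply connected domain, where the kernel is trivial. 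Two small repairs are needed there: with the normalization $|\nabla u_k|_l=1$ you do not yet know that $u_k$ is bounded in $W^{1,l}$; either normalize $\|u_k\|_{W^{1,l}}=1$ and run the contradiction for the full norm, or first invoke a Poincar\'e inequality for fields with vanishing normal (or tangential) trace (the $L^2$ case is the last inequality in Lemma \ref{gag}), which itself follows from the same compactness scheme.

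The genuine gap is in your first step. A field satisfying only $u\cdot n=0$ (resp. $u\times n=0$) does not satisfy the homogeneous companion condition $(\nabla\times u)\times n=0$ (resp. $\text{div}\,u=0$) on $\partial\mathbb{V}$: the absolute (resp. relative) boundary conditions for the Hodge Laplacian consist of both conditions, and the second is an additional constraint, not a consequence of the first. Hence $u$ is not a solution of the boundary value problem you set up. If you instead carry the inhomogeneous data $(\nabla\times u)\times n=g\times n$ (resp. $\text{div}\,u=f$ on $\partial\mathbb{V}$), then the ADN estimate for $\|u\|_{W^{1,l}}$ needs this Neumann-type datum in $W^{-1/l,l}(\partial\mathbb{V})$, and there is no trace map from $L^l(\mathbb{V})$ into that space, so the boundary datum is not controlled by $|\text{div}\,u|_l+|\nabla\times u|_l$ and the estimate does not close. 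The usual fixes work by a different mechanism: either use the weak (variational) form of $-\triangle u=\nabla\times g-\nabla f$ and the Gaffney-type identity showing that, when $u\cdot n=0$ (or $u\times n=0$), the boundary contribution $\partial_n u-g\times n-fn$ reduces to a zeroth-order curvature operator acting on $u$, which is then absorbed as a lower-order term; or route the bound through the $L^l$ Helmholtz--Weyl decomposition (Fujiwara--Morimoto \cite{fuji}) together with scalar $W^{2,l}$ estimates for the Neumann/Dirichlet Laplacian and a vector potential for the solenoidal part, which is close to von Wahl's own argument. With the first step repaired in one of these ways, your compactness step does complete the proof.
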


Finally we show  $W^{2,p}$-estimate for Lam$\acute{\text{e}}$ operator  under  Navier-slip boundary condition.
\begin{lemma}\cite{huangtao}\label{tslip} 
For any simply connected, bounded and smooth domain $\mathbb{V}\subset \mathbb{R}^3$, $1<p<+\infty$, and   if $f \in L^p(\mathbb{V}; \mathbb{R}^3)$.  If $u\in H^2(\mathbb{V}; \mathbb{R}^3)$ is a weak solution of 
\begin{equation}\label{naslip}
L u=f\quad \text{in} \ \mathbb{V},\quad
(u\cdot n, (\nabla \times u) \cdot n)|_{\partial \mathbb{V}}=(0,0),
\end{equation}
then $u\in W^{2,p}(\mathbb{V})$, and there exists $C>0$ dependeing on $p$, $\mathbb{V}$ and $L$ such that 
\begin{equation}\label{tvd2kkkk}
|u|_{D^{2,p}}\leq C(|f|_{p}+| \nabla u|_{2}\big).
\end{equation}
\end{lemma}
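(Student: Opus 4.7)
My plan is to decouple the Lamé system into two scalar Poisson problems — one for $v := \text{div}\, u$ and one for $w := \nabla \times u$ — apply the classical $W^{2,p}$ elliptic theory to each, and recombine via the div–curl estimate of Lemma \ref{gag22}. Using the identity $\Delta u = \nabla \text{div}\, u - \nabla \times (\nabla \times u)$, I would first rewrite $Lu = f$ as
\[
-(2\mu+\lambda)\nabla v + \mu\, \nabla \times w = f \quad \text{in}\ \mathbb{V}.
\]
Taking divergence and curl of this identity respectively (and using $\text{div}\, w = 0$ automatically) decouples it into the scalar Poisson equations
\[
-(2\mu+\lambda)\Delta v = \text{div}\, f \quad\text{and}\quad -\mu \Delta w = \nabla \times f \quad \text{in } \mathbb{V}.
\]

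Next I would identify the boundary conditions. The Navier-slip condition $(\nabla \times u)\cdot n = 0$ immediately gives $w \cdot n = 0$ on $\partial \mathbb{V}$; together with $\text{div}\, w = 0$ and the simple-connectedness of $\mathbb{V}$, this supplies the data for the vector Poisson problem for $w$. For $v$, the condition $u\cdot n = 0$, combined with the normal projection of the decomposed equation on $\partial \mathbb{V}$ and the curvature identity $(D(u)\cdot n)_\tau = -k_\tau u_\tau$ recalled in the introduction, would produce a Neumann-type condition $\partial_n v|_{\partial \mathbb{V}} = g(u, w, f)$ whose right-hand side is one order lower than $v$ itself. Applying Lemma \ref{tvd1} (specifically the Neumann bound $\|U\|_{W^{2,l}} \leq C|F|_l$ for $\Delta U = F$ with $\nabla U\cdot n|_{\partial \mathbb{V}} = 0$) to $v$, and the analogous vector-Poisson regularity to $w$, I would then get
\[
|\nabla v|_p + |\nabla w|_p \leq C\bigl(|f|_p + |\nabla u|_2\bigr),
\]
where Sobolev interpolation against the a priori $H^1$ bound absorbs the lower-order contributions.

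Finally, I would rearrange $Lu = f$ as $-\mu \Delta u = f + (\lambda+\mu)\nabla v$ to deduce $|\Delta u|_p \leq C(|f|_p + |\nabla u|_2)$. Combining this with Lemma \ref{gag22} (applied to $u$ itself, using $u \cdot n = 0$) and standard $W^{2,p}$ regularity for the vector Laplacian under Navier-slip — which can be reduced to interior Calderón–Zygmund estimates via a partition of unity together with an even/odd reflection of the components of $u$ across a locally flattened boundary — would yield the desired bound $|u|_{D^{2,p}} \leq C(|f|_p + |\nabla u|_2)$.

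The hard part will be producing the effective Neumann condition for $v$ in closed form: since the Navier-slip condition supplies only the scalar traces $u\cdot n = 0$ and $(\nabla \times u)\cdot n = 0$, one must combine these with the boundary restriction of the decomposed equation and the second fundamental form of $\partial \mathbb{V}$ in such a way that no term of order comparable to $|\nabla^2 u|_p$ appears on the right-hand side. The reflection-based verification — checking that the extension of $u$ by $(u^{(1)}, u^{(2)})$ even and $u^{(3)}$ odd across a locally flattened boundary preserves both the Lamé operator and the Navier-slip traces — offers one way to bypass this, at the cost of carefully tracking the curvature corrections arising from the flattening diffeomorphism.
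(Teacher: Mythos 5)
You should first note that the paper itself gives no proof of Lemma \ref{tslip}: it is quoted from \cite{huangtao}, so your argument has to stand on its own, and as written it has a genuine gap at exactly the point you flag as ``the hard part''. The algebraic decomposition $-(2\mu+\lambda)\nabla v+\mu\,\nabla\times w=f$ and the distributional Poisson equations for $v=\text{div}\,u$ and $w=\nabla\times u$ are fine, but the two problems never actually decouple at top order: pointwise $\nabla v=\tfrac{1}{2\mu+\lambda}(\mu\,\nabla\times w-f)$, the curl datum of the div--curl system you invoke for $w$ is $\tfrac1\mu\big(f+(2\mu+\lambda)\nabla v\big)$, and the Neumann datum you would assign to $v$ is $\partial_n v=\tfrac{1}{2\mu+\lambda}\big(\mu(\nabla\times w)\cdot n-f\cdot n\big)$, where $(\nabla\times w)\cdot n$ on $\partial\mathbb{V}$ is a surface divergence of $w\times n$, i.e.\ tangential derivatives of $w=\nabla\times u$ --- second derivatives of $u$, the very quantity to be estimated, not data ``one order lower''. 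The boundary conditions $u\cdot n=0$ and $(\nabla\times u)\cdot n=0$ control neither $\partial_n u_\tau$ nor the tangential derivatives of $w_\tau$, so no cancellation of this top-order boundary term is available from them, and ``Sobolev interpolation against the $H^1$ bound'' cannot absorb it. This circularity is the actual content of the cited lemma, and your proposal does not resolve it.

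The fallback reflection argument also fails for this boundary condition. On a flattened boundary $\{x_3=0\}$ the conditions read $u^{(3)}=0$ and $\partial_1u^{(2)}-\partial_2u^{(1)}=0$; they say nothing about $\partial_3u^{(1)},\partial_3u^{(2)}$. The even extension of $u^{(1)}$ produces, in the distributional Laplacian, a boundary-supported term $2\,\partial_3u^{(1)}|_{x_3=0}\,\delta_{\{x_3=0\}}$ (similarly for $u^{(2)}$), so the extended field does not solve the Lam\'e system with an $L^p$ right-hand side and interior Calder\'on--Zygmund estimates cannot be invoked. The even/odd reflection is adapted to the full free-slip condition $u\cdot n=0$, $(\nabla\times u)\times n=0$ (which on a flat piece gives $\partial_3u^{(1)}=\partial_3u^{(2)}=0$), not to the present condition $(\nabla\times u)\cdot n=0$, which is a single scalar constraint mixing tangential derivatives of the tangential components. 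A correct proof must exploit $(\nabla\times u)\cdot n=0$ differently --- for instance it is what legitimizes applying the gradient div--curl estimate of Lemma \ref{gag22} to $\omega=\nabla\times u$ itself (since $\omega\cdot n=0$, $\text{div}\,\omega=0$) --- together with an argument that genuinely breaks the coupling between $\nabla\,\text{div}\,u$ and $\nabla\times\omega$; that is the substance of the result in \cite{huangtao}, and it is missing here.
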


\section{Well-posedness of strong solutions}
In this section, we always assume that
$$\|S(v,t,x)\|_{ L^2(\mathbb{R}^+;C^1([0,T]; W^{1,q}))\cap C^1([0,T]; L^1(\mathbb{R}^+; L^2))}< +\infty.$$
In order to prove Theorem \ref{th1},  it is sufficient to prove the exitsence of  IBVP (\ref{eq:1.2})-(\ref{eq:2.2hh}) with (\ref{fan2}).  Now we need to consider the  following linearized problem:
\begin{equation}
\label{eq:1.weeer}
\begin{cases}
\displaystyle
\rho_t+\text{div}(\rho w)=0,\\[5pt]
\displaystyle
\frac{1}{c}I_t+\Omega\cdot\nabla I=\overline{A}_r,\\[5pt]
\displaystyle
(\rho \theta)_t+
\text{div}(\rho \theta w)+\frac{1}{c_v}(P_m\text{div} w-k\triangle \theta)
=\frac{1}{c_v}(Q( w)+\overline{N}_r),\\[8pt]
\displaystyle
(\rho u)_t+\text{div}(\rho w\otimes u)
  +\nabla P_m +Lu=-\frac{1}{c}\int_0^\infty \int_{S^2}\overline{C}_r\Omega \text{d}\Omega \text{d}v,
\end{cases}
\end{equation}
where the terms $\overline{A}_r$, $\overline{B}_r$, $\overline{C}_r$ and $\overline{N}_r$ are given by
\begin{equation*}
\begin{split}
\overline{A}_r=&S-\sigma_a(\rho,\phi)I+\int_0^\infty \int_{S^2} \Big(\frac{v}{v'}\sigma_s(\rho)\psi-\sigma'_s(\rho)I \Big)\text{d}\Omega' \text{d}v',\\
\overline{B}_r=&S-\sigma_a(\rho,\phi)I+\int_0^\infty \int_{S^2} \Big(\frac{v}{v'}\sigma_s(\rho)I'-\sigma'_s(\rho)I \Big)\text{d}\Omega' \text{d}v',\\
\overline{C}_r=&S-\sigma_a(\rho,\theta)I+\int_0^\infty \int_{S^2} \Big(\frac{v}{v'}\sigma_s(\rho)I'-\sigma'_s(\rho)I \Big)\text{d}\Omega' \text{d}v',\\
\overline{N}_r=&\int_0^\infty \int_{S^2} \Big(1-\frac{w\cdot \Omega}{c}\Big) \overline{B}_r \text{d}\Omega \text{d}v,\ \sigma_a(\rho,\phi)=\sigma_a(v,\Omega,t,x,\rho,\phi),
\end{split}
\end{equation*}
and $w(t,x)\in \mathbb{R}^3$ is a known vector, $(\phi(t,x),\ \psi(v',\Omega',t,x))$ are known functions. Assume
\begin{equation}\label{tyuu}
\begin{split}
& (w,\phi,\psi)|_{t=0}=(u_0,\theta_0,I_0), \\
& (w,\phi)\in C([0,T];H^2)\cap  L^2([0,T];D^{2,q}), \  (w_t,\phi_t)\in  L^2([0,T];H^1),\\
&0\leq \psi\in L^2(\mathbb{R}^+\times S^2;C([0,T]; W^{1,q})),\ \psi_t\in L^2(\mathbb{R}^+\times S^2;C([0,T]; L^{q})),
\end{split}
\end{equation}
and  $\mathbb{V} \subset \mathbb{R}^3$  is bounded, simply connected, 
￼smooth domain with (\ref{fan2}) and
\begin{equation}
\label{py4}
\begin{split}
\psi|_{\partial \mathbb{V}}=&0,\ \Omega \cdot n\leq 0;\quad (w\cdot n, (\nabla \times w) \cdot n)|_{\partial \mathbb{V}}=(0,0);\quad \nabla \phi \cdot n |_{\partial \mathbb{V}}=0.
\end{split}
\end{equation}

\subsection{A priori estimate to the linearized problem away from vacuum} We immediately  have the global existence of a unique strong solution $(I,\rho,u,\theta)$ to  (\ref{eq:1.weeer})-(\ref{py4}) by the standard methods at least for the case that $\rho_0$ is away from vacuum.

 \begin{lemma}\label{lem1}
 Assume in addition to (\ref{tyuu}) that $\rho_0\geq \delta$ for some constant $\delta>0$ and (\ref{kkk})-(\ref{gogogo}).
Then there exists a unique strong solution $(I,\rho,u,\theta)$ in $\mathbb{R}^+\times S^2\times [0,T] \times \mathbb{V}$ to IBVP  (\ref{eq:1.weeer})-(\ref{py4}) with (\ref{fan2}) and 
\begin{equation*}\begin{split}
& I\in  C([0,T]; L^2(\mathbb{R}^+\times S^2;  L^{q}(\mathbb{V}))),\  \text{and} \ \rho \geq \underline{\delta}
\end{split}
\end{equation*}
for some constant $\underline{\delta}>0$.
\end{lemma}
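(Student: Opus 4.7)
The plan is to exploit the triangular structure of the linearized system (4.1): with $w$, $\phi$, $\psi$ given, one can solve successively for $\rho$, then for $I$, then for $\theta$ and $u$ as decoupled linear parabolic problems with positive-density coefficients.

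First, I would solve the continuity equation $\rho_t + \text{div}(\rho w) = 0$ by the method of characteristics. Since $w\in C([0,T];H^2)\hookrightarrow C([0,T];C^{1,\alpha})$ and $w\cdot n|_{\partial \mathbb{V}}=0$, the flow $X(t,x)$ defined by $\partial_t X=w(t,X)$, $X(0,x)=x$, is a $C^1$-diffeomorphism of $\mathbb{V}$, and along characteristics
\begin{equation*}
\rho(t,X(t,x))=\rho_0(x)\exp\Big(-\int_0^t \text{div}\, w(s,X(s,x))\,ds\Big),
\end{equation*}
so $\rho\geq \underline{\delta}:=\delta\exp(-T\|\text{div}\, w\|_{L^1_t L^\infty_x})>0$. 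Differentiating this representation yields $\rho\in C([0,T];W^{1,q})$, and the equation itself gives $\rho_t\in C([0,T];L^q)$.

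Second, with $\rho$ known, the radiation transfer equation becomes the linear transport problem
\begin{equation*}
\frac{1}{c}I_t+\Omega\cdot\nabla I+\big(\sigma_a(\rho,\phi)+\sigma'_s(\rho)\big)I=S+\frac{v}{v'}\sigma_s(\rho)\psi
\end{equation*}
in $(t,x)$ for each fixed $(v,\Omega)$, with zero inflow boundary data $I|_{\partial \mathbb{V}}=0$ on $\{n\cdot\Omega\leq 0\}$. Classical transport theory gives a unique strong solution for each $(v,\Omega)$; assumption (1.12) on $\overline{\sigma}_s,\overline{\sigma}'_s$, combined with Minkowski's inequality, is designed precisely to propagate $L^2(\mathbb{R}^+\times S^2)$-integrability through the scattering integrals, yielding both $I\in L^2(\mathbb{R}^+\times S^2;C([0,T];W^{1,q}))$ and $I\in C([0,T];L^2(\mathbb{R}^+\times S^2;L^q))$, together with $I_t\in L^2(\mathbb{R}^+\times S^2;C([0,T];L^q))$.

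Third, with $\rho\geq\underline{\delta}>0$ and $I$ in hand, the temperature equation reduces to the uniformly parabolic linear problem
\begin{equation*}
c_v\rho\theta_t+c_v\rho w\cdot\nabla\theta-\kappa\Delta\theta=-P_m\text{div}\,w+Q(w)+\overline{N}_r,\qquad \nabla\theta\cdot n|_{\partial\mathbb{V}}=0,
\end{equation*}
whose right-hand side lies in $L^2([0,T];L^2)$ by the regularity of $(w,\rho,\phi,\psi,I)$. Standard $L^2$-parabolic theory then produces a unique $\theta\in C([0,T];H^2)\cap L^2([0,T];D^{2,q})$ with $\theta_t\in L^2([0,T];H^1)$. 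Analogously, the momentum equation is a linear parabolic system for $u$ with the Navier-slip boundary condition; Lemma \ref{tslip} supplies the required $W^{2,p}$-estimates and hence existence and uniqueness in the same class. Uniqueness of the full quadruple $(I,\rho,u,\theta)$ then follows from an energy estimate on the difference of two solutions, which is elementary because the system is linear in the unknown with $\rho$-determined coefficients bounded away from zero.

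The main obstacle is the careful tracking of $L^2(\mathbb{R}^+\times S^2)$-integrability for $I$ through the nonlocal scattering integrals: the delicate assumption (1.12), with its two choices of exponents $\lambda_1,\lambda_2$, is tailored for this purpose and must be applied via Minkowski's inequality in the spirit of \cite{sjx}. Everything else is classical linear theory, once one observes that the quasilinear coupling between the four equations has been broken by the linearization.
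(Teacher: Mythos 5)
Your proposal follows essentially the same route as the paper: solve $\rho$ by characteristics (giving the lower bound $\underline{\delta}$), then treat the radiation equation as a linear transport problem with known coefficients whose $L^2(\mathbb{R}^+\times S^2)$-integrability is propagated via assumption (\ref{zhen1}) and Minkowski's inequality, and finally solve $\theta$ and $u$ from uniformly parabolic linear equations after dividing by $\rho\geq\underline{\delta}$. The only detail the paper adds that you omit is the nonnegativity of $I$, obtained by integrating along photon paths, which is a one-line consequence of the same characteristic representation.
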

\begin{proof}
First, 
the existence  of a unique solution $\rho$ to $ (\ref{eq:1.weeer})_1$ can be obtained by the standard theory of transport equation (see Lemma 6 in  \cite{CK}), and  $\rho$ can be written as
\begin{equation}
\label{weq:bb1}
\rho(t,x)=\rho_0(U(0;t,x))\exp\Big(-\int_{0}^{t}\textrm{div} w(s,U(s;t,x))\text{d}s\Big),
\end{equation}
where  $U\in C([0,T]\times[0,T]\times \mathbb{V})$ is the solution to the initial value problem
\begin{equation}
\label{eq:bb1}
\begin{cases}
\displaystyle
\frac{d}{ds}U(s;t,x)=w(s,U(s;t,x)),\quad 0\leq s\leq T,\\[8pt]
\displaystyle
U(t;t,x)=x, \quad x\in \mathbb{V},\quad 0\leq t\leq T.
\end{cases}
\end{equation}
So we can get the lower bound of $\rho$ easily.

Second, $(\ref{eq:1.weeer})_2$ can be written into
\begin{equation}\label{rst}
\frac{1}{c}I_t+\Omega\cdot\nabla I+\Big(\sigma_a+\int_0^\infty \int_{S^2} \sigma'_s(\rho)\text{d}\Omega' \text{d}v'\Big)I=F(v,\Omega,t,x),
\end{equation}
where
\begin{equation}\label{biao1}F=S+\int_0^\infty \int_{S^2} \frac{v}{v'}\sigma_s(\rho)\psi\text{d}\Omega' \text{d}v'\in L^2(\mathbb{R}^+\times S^2;C([0,T]; W^{1,q})),
\end{equation}
then we easily get the existence and regularity of a unique solution $I$ to (\ref{rst}) that
 $$
 I\in L^2(\mathbb{R}^+\times S^2;C([0,T]; W^{1,q})),\ I_t\in L^2(\mathbb{R}^+\times S^2;C([0,T]; L^{q})).
 $$
According to the classical imbedding theory for Sobolev spaces, we  have
$$ I\in C([0,T]; L^2(\mathbb{R}^+\times S^2;  L^{q}(\mathbb{V}))).$$
Next the radiation transfer equation $ (\ref{eq:1.weeer})_2$ can be written as 
\begin{equation}\label{trans}
\frac{1}{c}I_t+c\Omega \cdot \nabla I+\mathbb{H}I=F,\quad \mathbb{H}=\sigma_a+\int_0^\infty \int_{S^2} \sigma'_s(\rho)\text{d}\Omega' \text{d}v'.
\end{equation}
We denote by $ x(t;x_0)$ the photon path starting from $x_0$ when $t=\tau$, i.e.,
      $$   \frac{d}{dt} x(t;\tau,x_0)=c\Omega ,     \quad  x(\tau;\tau,x_0)= x_0 .                   $$
Along the photon path,
we have
     \begin{equation}\begin{split} \label{eq:**}
&I(v,\Omega,t,x(t;\tau,x_0 ))=\exp\Big(\int_\tau^t -c\mathbb{H}(v,\Omega,s,x(s;\tau,x_0 ),\rho,\theta)\text{d}s\Big)\Big(I(v,\Omega,\tau,x_0)\\
&\qquad+\int_\tau^t F(v,\Omega,s,x(s;\tau,x_0 ),\rho,\theta)\exp\Big(\int_\tau^s c\mathbb{H}(v,\Omega,l,x(l;\tau,x_0 ),\rho,\theta)\text{d}l\Big)\text{d}s \Big)\geq 0
\end{split}
\end{equation}
for  $x_0=x-c\Omega (t-\tau) $,  which implies  that $I$ is nonnegative.

Finally,   it is not difficult to solve $(\theta, u)$ from the linear parabolic equations
\begin{equation}\label{rst1}
\begin{split}
&\theta_t+
 w\cdot \nabla\theta +\frac{R}{c_v}\theta\text{div} w-\frac{\kappa}{c_v}\rho^{-1}\triangle \theta
=\frac{1}{c_v}\rho^{-1}(Q( w)+\overline{N}_r),\\
\displaystyle
&u_t+ w\cdot\nabla u
 +\rho^{-1}Lu= -\rho^{-1}\nabla P_m -\frac{1}{c}\rho^{-1}\int_0^\infty \int_{S^2}\overline{C}_r\Omega \text{d}\Omega \text{d}v,
\end{split}
\end{equation}
to complete the proof of this lemma.  Here we omit the details.
\end{proof}

Now we will give a priori estimates for the solution $(I,\rho,u,\theta)$ obtained in Lemma \ref{lem1}, which is independent of the lower bound of $\rho_0$.
We first fix a positive constant $c_0$ that
\begin{equation*}\begin{split}
&2+\|I_0\|_{L^2(\mathbb{R}^+\times S^2;  W^{1,q}) } +\|\rho_0\|_{ W^{1,q}}+\|(\theta_0,u_0)\|_{2}\\
\displaystyle
& +|(g_1,g_2)|_2+\|S(v,t,x)\|_{ L^2(\mathbb{R}^+;C^1([0,T]; W^{1,q}))\cap C^1([0,T]; L^1(\mathbb{R}^+; L^2))}\leq c_0,
\end{split}
\end{equation*}
and positive constants $c_i$ ($i=1,2,...,5$) that
\begin{equation}\label{mini}\begin{split}
\|\psi\|^2_{L^2(\mathbb{R}^+\times S^2;C([0,T^*]; W^{1,q}))}\leq c^2_1,\quad \|\psi_t\|^2_{L^2(\mathbb{R}^+\times S^2;C([0,T^*]; L^{q}))}\leq & Cc^2_2,\\
\sup_{0\leq t\leq T^*}\|w(t)\|^2_{1}+\int_{0}^{T^*}\Big(|w|^2_{D^{2,q}}+|w|^2_{D^{2}}+|w_t|^2_{D^1}\Big)\text{d}t\leq c^2_2,\quad \sup_{0\leq t\leq T^*}|w(t)|^2_{D^2}\leq& c^2_3,\\
 \sup_{0\leq t\leq T^*}\|\phi(t)\|^2_{1}+\int_{0}^{T^*}\Big(|\phi|^2_{D^{2,q}}+|\phi|^2_{D^{2}}+|\phi_t|^2_{D^1}\Big)\text{d}t\leq c^2_4,\quad \sup_{0\leq t\leq T^*}|\phi(t)|^2_{D^2}\leq& c^2_5,
\end{split}
\end{equation}
for some time $T^*\in (0,T)$ and constants $c_i$ ($i=1,2,3,4,5$) such that 
$$
1<c_0<c_1<c_2<c_3<c_4<c_5.
$$
The constants $c_i$ ($i=1,2,3,4,5$) and $T^*$ will be determined later and depend only on $c_0$ and the fixed constants $\alpha$, $\beta$, $\kappa$, q, $R$, $c_v$, $\mu$, $\lambda$,  $c$, $|\mathbb{V}| $ and $T$.
In subsections $4.1$-$4.3$, $M=M(\cdot): [0,+\infty) \rightarrow[1,+\infty)$ still denotes a strictly increasing continuous function, and $C\geq 1$  denotes  a generic  constant. Both $M(\cdot)$ and $C$    depend only on fixed constants $\alpha$, $\beta$, $\kappa$, q, $R$, $c_v$, $\mu$, $\lambda$,  $c$,  $|\mathbb{V}| $ and $T$.
 We start with the estimates for $\rho$.

\begin{lemma}\label{lem:2}
$$
\|\rho(t)\|_{ W^{1,q}}\leq Cc_0,  \quad  |\rho_t(t)|_{ q}\leq Cc_0c_3
$$
for  $0\leq t \leq T_1=\min(T^*,(1+c^2_3)^{-1})$.
 \end{lemma}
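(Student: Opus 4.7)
The plan is to apply standard $L^q$ energy estimates to the linear transport equation $(\ref{eq:1.weeer})_1$ for $\rho$, with the coefficient $w$ controlled by the \emph{a priori} bounds (\ref{mini}), and then read off $\rho_t$ directly from the equation.

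First I would rewrite the continuity equation in nonconservative form $\rho_t + w\cdot\nabla\rho + \rho\,\text{div}\,w=0$ and differentiate it once in $x$ to produce an evolution equation for $\partial_i\rho$:
$$(\partial_i\rho)_t + w\cdot\nabla(\partial_i\rho) = -(\partial_i w)\cdot\nabla\rho - (\partial_i\rho)\,\text{div}\,w - \rho\,\partial_i\text{div}\,w.$$
Multiplying by $|\partial_i\rho|^{q-2}\partial_i\rho$ and integrating over $\mathbb{V}$ (the boundary contribution from the transport term vanishes because $w\cdot n=0$ on $\partial \mathbb{V}$ by (\ref{py4})), and combining with the analogous estimate for $|\rho|_q$, yields
$$\frac{d}{dt}\|\rho\|_{W^{1,q}} \le C\,|\nabla w|_\infty\,\|\rho\|_{W^{1,q}} + C\,|\rho|_\infty\,|w|_{D^{2,q}}.$$
Since $q>3$, Morrey's embedding gives $|\nabla w|_\infty\le C\|w\|_{W^{2,q}}$ and $|\rho|_\infty\le C\|\rho\|_{W^{1,q}}$, so Grönwall's inequality implies
$$\|\rho(t)\|_{W^{1,q}} \le \|\rho_0\|_{W^{1,q}}\exp\Big(C\int_0^t\|w(s)\|_{W^{2,q}}\,\mathrm{d}s\Big).$$

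The key point, and the delicate step, is to show that this exponent stays $O(1)$ on $[0,T_1]$. By Cauchy--Schwarz together with the Sobolev bound $\|w\|_{W^{2,q}}^2\le C(\|w\|_1^2+|w|_{D^{2,q}}^2)$, the assumptions (\ref{mini}) give
$$\int_0^t\|w\|_{W^{2,q}}\,\mathrm{d}s \le C\sqrt{t}\,\Big(\int_0^t (\|w\|_1^2 + |w|_{D^{2,q}}^2)\,\mathrm{d}s\Big)^{1/2} \le C c_2\sqrt{t(1+t)}.$$
For $t\le T_1\le (1+c_3^2)^{-1}$, and using $c_2\le c_3$, one has $c_2\sqrt{T_1}\le 1$, so the exponent is uniformly bounded and $\|\rho(t)\|_{W^{1,q}}\le Cc_0$ follows. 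This is precisely why the cutoff time is taken to be $\min(T^*,(1+c_3^2)^{-1})$.

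For the time derivative I would use the equation to write $\rho_t=-w\cdot\nabla\rho-\rho\,\text{div}\,w$, whence
$$|\rho_t|_q \le |w|_\infty\,|\nabla\rho|_q + |\rho|_\infty\,|\text{div}\,w|_q.$$
The density factors are bounded by $Cc_0$ from the previous step, while the velocity factors are controlled via the embedding $|w|_\infty+|\nabla w|_q\le C\|w\|_2\le C(\|w\|_1+|w|_{D^2})\le Cc_3$ (for $q\in(3,6]$; for larger $q$, interpolate against $|w|_{D^{2,q}}$). This gives $|\rho_t|_q\le Cc_0c_3$. The only non-routine point in the whole argument is the balancing of $c_2,c_3$ against $T_1$ so that the Grönwall factor does not degrade the $c_0$-linear bound on $\|\rho\|_{W^{1,q}}$; everything else reduces to the standard transport-equation energy method together with the Sobolev embeddings collected in Lemma~\ref{gag}.
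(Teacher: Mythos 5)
Your argument is correct and is essentially the paper's own proof: the paper likewise invokes the standard $W^{1,q}$ transport estimate (using $w\cdot n|_{\partial\mathbb{V}}=0$), bounds $\int_0^t\|\nabla w\|_{W^{1,q}}\,\mathrm{d}s$ via H\"older and (\ref{mini}) so that the Gr\"onwall exponent stays $O(1)$ for $t\le T_1=\min(T^*,(1+c_3^2)^{-1})$, and then reads $|\rho_t|_q\le C(|\rho|_\infty|\nabla w|_q+|w|_\infty|\nabla\rho|_q)\le Cc_0c_3$ directly from the continuity equation. Your aside about interpolating for $q>6$ is unnecessary, since the paper works with $q\in(3,6]$ as in Lemma \ref{gag}.
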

\begin{proof}
From the standard energy estimate for transport equation and (\ref{fan2}), we have
\begin{equation*}\begin{split}
\|\rho(t)\|_{W^{1,q}}\leq \|\rho_0\|_{W^{1,q}} \exp\Big(C\int_0^t \|\nabla w(s)\|_{W^{1,q}}\text{d}s\Big),
\end{split}
\end{equation*}
where we have used the fact that $w\cdot n|_{\partial \mathbb{V}}=0$.
Therefore, observing that
$$
\int_0^t \|\nabla w(s)\|_{W^{1,q}}\text{d}s\leq t^{\frac{1}{2}}\Big(\int_0^t \|\nabla w(s)\|^2_{W^{1,q}}\text{d}s\Big)^{\frac{1}{2}}\leq C(c_3t+c_3t^{\frac{1}{2}})\leq C,
$$
for $0\leq t\leq T_1=\min(T^*,(1+c^2_3)^{-1})$, then  the desired estimate for $\rho$ is available. 

For the term $\rho_t$,   from the continuity equation $ (\ref{eq:1.weeer})_1$,
we get
\begin{equation*}\begin{split}
|\rho_t|_q\leq C(|\rho|_\infty|\nabla w|_q+|w|_\infty|\nabla \rho|_q)\leq Cc_0|| w||_2\leq Cc_0c_3.
\end{split}
\end{equation*}

\end{proof}
Next we show the estimates for $I$.
\begin{lemma}\label{lem:3}
$$
\|I\|^2_{L^2(\mathbb{R}^+\times S^2;C([0,T_2]; W^{1,q}))}\leq Cc^2_0 ,\quad  \|I_t\|^2_{L^2(\mathbb{R}^+\times S^2;C([0,T_2]; L^{q}))}\leq Cc_0c_1,
$$
for  $T_2= \min(T^*,(1+M(c_5)c^4_5)^{-1})$.
 \end{lemma}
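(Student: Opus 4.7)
The plan is to read \eqref{rst} as the linear transport equation
$$\tfrac{1}{c}I_t + \Omega\cdot\nabla I + \mathbb{H}I = F$$
at each fixed $(v,\Omega)$, with $\mathbb{H} = \sigma_a(\rho,\phi) + \int_0^\infty\!\!\int_{S^2}\sigma'_s(\rho)\,\text{d}\Omega'\text{d}v' \ge 0$ and with $F$ defined by \eqref{biao1}. The first step is to differentiate in $x$, multiply by $|\nabla I|^{q-2}\nabla I$, and integrate over $\mathbb{V}$. The transport part produces the boundary integral $\tfrac{1}{q}\int_{\partial \mathbb{V}}|\nabla I|^q\,\Omega\cdot n\,\text{d}S$. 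On the inflow portion $\{\Omega\cdot n\le 0\}$, the hypothesis $I|_{\partial\mathbb{V}}=0$ forces the tangential derivatives to vanish so this contribution has the good sign; on the outflow portion the sign is manifestly nonnegative. Hence the boundary term can simply be dropped, leaving a pointwise-in-$(v,\Omega)$ identity for $\tfrac{d}{dt}|\nabla I|_q^q$.

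Next, the right-hand side of that identity is estimated. The absorption/scattering coefficients and their $x$-gradients enter through $\mathbb{H}I$, $(\nabla\mathbb{H})I$, and $\nabla F$; using \eqref{zhen1}--\eqref{jia345} together with the bounds $\|\rho\|_{W^{1,q}}\le Cc_0$ from Lemma \ref{lem:2} and the a priori bounds \eqref{mini} on $(\phi,\psi)$, one obtains
$$\frac{d}{dt}\|I(v,\Omega,t)\|_{W^{1,q}}^q \le M(c_5)c_5^4\,\|I(v,\Omega,t)\|_{W^{1,q}}^q + G(v,\Omega,t),$$
where $G$ collects the $S$-contribution in $F$ and the scattering-in contribution of the form $\int\tfrac{v}{v'}\overline{\sigma}_s\rho\,\psi\,\text{d}\Omega'\text{d}v'$. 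Gronwall in $t$ then gives, for $0\le t\le T_2$ with $T_2\,M(c_5)c_5^4\le 1$,
$$\sup_{0\le s\le t}\|I(v,\Omega,s)\|_{W^{1,q}}^q \le C\Bigl(\|I_0(v,\Omega,\cdot)\|_{W^{1,q}}^q + \int_0^{T_2}G(v,\Omega,s)\,\text{d}s\Bigr).$$

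To pass to the $L^2(\mathbb{R}^+\times S^2)$ norm in $(v,\Omega)$, the previous inequality is raised to the power $2/q$ and integrated. The only delicate point is the scattering-in piece of $G$: via Minkowski's inequality in $(v',\Omega')$ and the exponent $\lambda_1$ in \eqref{zhen1}, the quadratic coupling $\int\tfrac{v}{v'}\overline{\sigma}_s\psi\,\text{d}\Omega'\text{d}v'$ is bounded by the product of the $\lambda_1$-moment of $\overline{\sigma}_s$ (finite by \eqref{zhen1}) and $\|\psi\|_{L^2(\mathbb{R}^+\times S^2;W^{1,q})}\le c_1$. Combined with the data bound $\|I_0\|_{L^2(\mathbb{R}^+\times S^2;W^{1,q})}\le c_0$ and the choice of $T_2$, this yields the first claim $\|I\|_{L^2(\mathbb{R}^+\times S^2;C([0,T_2];W^{1,q}))}^2\le Cc_0^2$. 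The bound on $I_t$ is then read off the equation itself, $\tfrac{1}{c}I_t = F - \Omega\cdot\nabla I - \mathbb{H}I$; each term is estimated in $L^q(\mathbb{V})$, and then in $L^2(\mathbb{R}^+\times S^2)$, using \eqref{zhen1}, \eqref{jia345}, the $W^{1,q}$ bound for $I$ just established, and \eqref{mini}. The dominant contribution $Cc_0c_1$ arises from the $\overline{\sigma}_s\rho\,\psi$ scattering piece of $\overline{A}_r$.

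The main obstacle is the two-level integrability in $(v,\Omega)$ and $x$: the $W^{1,q}(\mathbb{V})$ differential inequality is pointwise in $(v,\Omega)$, so its right-hand side contains a convolution-type integral in $(v',\Omega')$ against $\psi$, and transferring this to $L^2$ in $(v,\Omega)$ without losing control of $\|\psi\|_{L^2(\mathbb{R}^+\times S^2;W^{1,q})}$ is precisely what the exponents $\lambda_1,\lambda_2$ in \eqref{zhen1} are designed for. The careful Minkowski rearrangement in that step, together with the choice $T_2=\min(T^*,(1+M(c_5)c_5^4)^{-1})$ to keep the Gronwall factor bounded, is what closes the estimate.
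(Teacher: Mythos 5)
There is a genuine gap in your treatment of the boundary term for the gradient estimate. After differentiating in $x$ and testing with $q|\partial^\zeta_x I|^{q-2}\partial^\zeta_x I$, the transport term produces $\int_{\partial\mathbb{V}}|\partial^\zeta_x I|^q\, n\cdot\Omega\,\text{d}\nu$ as in (\ref{thytffg}). On the outflow portion this is indeed nonnegative and harmless, but on the inflow portion $\{n\cdot\Omega\le 0\}$ your sign argument fails: the condition $I|_{\partial\mathbb{V}}=0$ only kills the \emph{tangential} derivatives, so $\nabla I=(\nabla I\cdot n)n$ there, and $|\nabla I|^q=|\nabla I\cdot n|^q$ has no reason to vanish. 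Multiplied by $n\cdot\Omega\le 0$, this is a contribution of the \emph{unfavorable} sign (after moving it to the right-hand side it adds an uncontrolled nonnegative quantity to $\frac{d}{dt}|\nabla I|^q_q$), so it cannot "simply be dropped." This inflow term is exactly the difficulty the paper singles out in Remark \ref{rrr2}(I).

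The paper closes this by an exact cancellation rather than a sign argument: on the inflow boundary one has (\ref{ppy1}), i.e. $I_t=I=0$ and $\nabla I=(\nabla I\cdot n)n$, and then evaluating the transport equation $(\ref{eq:1.weeer})_2$ on the boundary, together with the structural hypothesis in the last line of (\ref{zhen1}) that $S$ and $\overline{\sigma}_s$ vanish on $\partial\mathbb{V}$ when $n\cdot\Omega<0$ (and $I=0$ kills the remaining terms of $\overline{A}_r$), gives $\Omega\cdot\nabla I=0$ there, see (\ref{ppy2}). Hence the inflow boundary integrand can be rewritten as $|\nabla I|^{q-2}[\nabla I\cdot\Omega][\nabla I\cdot n]$ and vanishes identically, which is (\ref{ppy3}). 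Your proposal never invokes this boundary-vanishing assumption on $S$ and $\overline{\sigma}_s$, and without it (or some substitute control of $|\partial_n I|^q$ on the inflow set) the $W^{1,q}$ differential inequality you write down is not justified. The remaining steps of your argument --- estimating the right-hand side via (\ref{zhen1})--(\ref{jia345}), Lemma \ref{lem:2} and (\ref{mini}), Gronwall on $[0,T_2]$, integration in $(v,\Omega)$ by Minkowski using the $\lambda_1,\lambda_2$ moments, and reading the $I_t$ bound off the equation --- do follow the paper's line and are fine once the boundary term is handled correctly.
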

\begin{proof}First, multiplying $ (\ref{eq:1.weeer})_2$ by $q|I|^{q-2}I$ and integrating over $\mathbb{V}$,  from H\"older's inequality, we have
\begin{equation}\label{thyt}
\begin{split}
&\frac{d}{dt}|I|^q_{q}+\int_{\partial \mathbb{V}}|I|^q n \cdot \Omega \text{d}\nu
\leq   C|S|_{q}|I|^{q-1}_{q}
+C|\rho|_\infty|I|^{q-1}_{q}\int_0^\infty \int_{S^2} \frac{v}{v'}|\psi|_{q} |\overline{\sigma}_s|_\infty \text{d}\Omega' \text{d}v',
\end{split}
\end{equation}
where we  used the fact $\sigma_a\geq 0$ and $\sigma'_s\geq 0$.

Second, differentiating $(\ref{eq:1.weeer})_2$ $\zeta$-times ($|\zeta|=1$) with respect to $x$, multiplying the resulting equation by $q|\partial^\zeta_xI|^{q-2} \partial^\zeta_xI$ and  integrating over $\mathbb{V}$,  we have
\begin{equation}\label{thytffg}
\begin{split}
&\frac{1}{c}\frac{d}{dt}|\partial^\zeta_x I|^q_{q}+\int_{\partial \mathbb{V}}|\partial^\zeta_x I |^q n \cdot \Omega \text{d}\nu\\
\leq& C\Big(|\partial^\zeta_x S|_{q}+|\sigma_a|_{D^{1,q}}|I|_{\infty}\Big)|\partial^\zeta_x I|^{q-1}_{q}+C|\nabla\rho|_{q}|I|_{\infty}|\partial^\zeta_x I|^{q-1}_{q}\int_0^\infty \int_{S^2} |\overline{\sigma}'_s|_\infty\text{d}\Omega' \text{d}v'\\
&+C|\rho|_{\infty}|\partial^\zeta_x I|^{q-1}_{q}\Big(|I|_{q}\int_0^\infty \int_{S^2} |\partial^\zeta_x\overline{\sigma}'_s|_\infty\text{d}\Omega' \text{d}v'+\int_0^\infty \int_{S^2} \frac{v}{v'}|\psi|_{q}  |\partial^\zeta_x\overline{\sigma}_s|_\infty \text{d}\Omega' \text{d}v'\Big)\\
&+C|\partial^\zeta_x I|^{q-1}_{q}\int_0^\infty \int_{S^2} \frac{v}{v'}|\overline{\sigma}_s|_\infty\big(|\psi|_{D^{1,q}} |\rho|_{\infty} 
+|\psi|_{\infty} |\nabla \rho|_{q}\big) \text{d}\Omega' \text{d}v',
\end{split}
\end{equation}
where we also  used  $\sigma_a\geq 0$ and $\sigma'_s\geq 0$. Now considering the boundary term $I_{\partial \mathbb{V}}=\int_{\partial \mathbb{V} \cap \{n\cdot \Omega \leq 0\}}\|  I \|^q_{W^{1,q}} n \cdot \Omega \text{d}\nu$,
due to  $I=0$  on $\partial \mathbb{V}$ when $n\cdot \Omega \leq 0$, we  obtain that 
\begin{equation}\label{ppy1}
I_t=I= 0,\quad  \nabla I=(\nabla I \cdot n)n \ \text{on} \  \partial \mathbb{V},\  \text{when}\  n\cdot \Omega \leq 0,
\end{equation}
which, together with the assumption (\ref{zhen1}) and  $(\ref{eq:1.weeer})_2$, implies that
\begin{equation}\label{ppy2}
\Omega \cdot \nabla I=S+\int_0^\infty \int_{S^2} \frac{v}{v'}\overline{\sigma}_s \rho\psi\text{d}\Omega' \text{d}v'=0, \ \text{on} \  \partial \mathbb{V}\  \text{when}\  n\cdot \Omega \leq 0.
\end{equation}
Then, via (\ref{ppy1}), $I_{\partial \mathbb{V}}$ can be written as
\begin{equation}\label{ppy3}
\begin{split}
I_{\partial \mathbb{V}}=&\int_{\partial \mathbb{V} \cap \{n\cdot \Omega \leq 0\}}|\nabla I |^q n \cdot \Omega \text{d}\nu
=\int_{\partial \mathbb{V} \cap \{n\cdot \Omega \leq 0\}}|\nabla I |^{q-2} [\nabla I \cdot \Omega][\nabla I \cdot n] \text{d}\nu=0.
\end{split}
\end{equation}

Then from (\ref{thyt})-(\ref{ppy1}), (\ref{ppy3}) and  assumptions (\ref{zhen1})-(\ref{jia345}), we have
\begin{equation}\label{thytq}
\begin{split}\frac{d}{dt}\|I\|^2_{W^{1,q}}
\leq&  C\big(1+c_0\alpha+|\sigma_a|_{D^{1,q}}\big)\|I\|^2_{W^{1,q}}+C\| S\|^2_{W^{1,q}}\\
&+Cc^2_0\int_0^\infty \int_{S^2} \Big|\frac{v}{v'}\Big|^2 |\partial^\zeta_x\overline{\sigma}_s|^2_\infty\text{d}\Omega' \text{d}v'\cdot\int_0^\infty \int_{S^2} |\psi|^2_{q}  \text{d}\Omega' \text{d}v'\\
&+Cc^2_0\int_0^\infty \int_{S^2} \Big|\frac{v}{v'}\Big|^2|\overline{\sigma}_s|^2_\infty\text{d}\Omega' \text{d}v'\cdot\int_0^\infty \int_{S^2} \|\psi\|^2_{W^{1,q}} \text{d}\Omega' \text{d}v'\\
\leq&M(c_5)c_0c_5\|I\|^2_{W^{1,q}}+C\|S\|^2_{W^{1,q}}+Cc^4_1\int_0^\infty \int_{S^2} \Big|\frac{v}{v'}\Big|^2 \|\overline{\sigma}_s\|^2_{W^{1,\infty}} \text{d}\Omega' \text{d}v',
\end{split}
\end{equation}
where we have used the fact that 
$$
|\sigma_a|_{D^{1,q}}\leq \big( |\rho|_{\infty} |\nabla \sigma|_q +|\sigma|_{\infty} |\nabla \rho|_q\big)\leq M(c_5)c_0c_5.
$$
From Gronwall's inequality and (\ref{thytq}), we have
\begin{equation}\label{fude}
\begin{split}
&\|I(v,\Omega,t,x)\|^2_{C([0,T_2]; W^{1,q})}\leq \exp (M(c_5)c^2_5T_2)\|I_0\|^2_{W^{1,q}}\\
&+ \exp (M(c_5)c^2_5T_2)\Big(\int_0^{T_2}\|S\|^2_{W^{1,q}}\text{d}s+c^4_1T_2\int_0^\infty \int_{S^2} \Big|\frac{v}{v'}\Big|^2  \|\overline{\sigma}_s\|^2_{W^{1,\infty}} \text{d}\Omega' \text{d}v'\Big)
\end{split}
\end{equation}
for $ T_2= \text{min}(T^*,(1+M(c_5)c^4_5)^{-1})$.
Then integrating above inequality in $\mathbb{R}^+\times S^2$ with respect to $(v,\Omega)$, via (\ref{zhen1})-(\ref{jia345}), we have
$$
\|I\|^2_{L^2(\mathbb{R}^+\times S^2;C([0,T_2]; W^{1,q}))}\leq Cc^2_0,\quad 
\text{for}\quad 0\leq t\leq T_2.
$$

Finally,
due to $ I_t=-c\Omega\cdot\nabla I+c\overline{A}_r$ and Minkowski's inequality, we have
\begin{equation*}\begin{split}
& \|I_t\|_{L^2(\mathbb{R}^+\times S^2;C([0,T_2]; L^{q}))}\\
\leq & C\|\nabla I\|_{L^2(\mathbb{R}^+\times S^2;C([0,T_2]; L^{q}))}+C\|\overline{A}_r\|_{L^2(\mathbb{R}^+\times S^2;C([0,T_2]; L^{q}))}\\
\leq &Cc_0+C\|S\|_{L^2(\mathbb{R}^+;C([0,T_2]; L^{q}))}+Cc_0|I|_{L^2(\mathbb{R}^+\times S^2;C([0,T_2]; L^{q}))}\|\sigma\|_{ L^2(\mathbb{R}^+\times S^2;C([0,T]\times \mathbb{V}))}\\
&+Cc_0|\psi|_{L^2(\mathbb{R}^+\times S^2;C([0,T_2]; L^{q}))}\int_0^\infty \int_{S^{2}}\Big(\int_{0}^{\infty}\int_{S^{2}} \Big|\frac{v}{v'}\Big|^2|\overline{\sigma}_s|^2_\infty\text{d}\Omega' \text{d}v'\Big)^{\frac{1}{2}}\text{d}\Omega \text{d}v\\
&+Cc_0|I|_{L^2(\mathbb{R}^+\times S^2;C([0,T_2]; L^{q}))}\Big(\int_{0}^{\infty}\int_{S^{2}}\Big(\int_{0}^{\infty}\int_{S^{2}} |\overline{\sigma}'_s|_\infty\text{d}\Omega' \text{d}v'\Big)^2\text{d}\Omega \text{d}v\Big)^{\frac{1}{2}} \leq Cc_0c_1.
\end{split}
\end{equation*}
\end{proof}

Next we give the estimate for $\theta$.
\begin{lemma}\label{lem:3ejia}
\begin{equation*}
\begin{split}
\|\theta(t)\|^2_{1}+|\sqrt{\rho}\theta(t)|^2_2+\int_{0}^{t}|\sqrt{\rho}\theta_t(s)|^2_2\text{d}s\leq Cc^7_2c_3,
\end{split}
\end{equation*}
for $  0\leq t \leq T_3=\min(T^*,(1+M(c_5)c^8_5)^{-1})$.
 \end{lemma}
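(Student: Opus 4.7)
The plan is to apply the standard two-step energy method adapted to the degenerate (vacuum-allowing) setting: first test the temperature equation with $\theta$ to bound $|\sqrt{\rho}\theta|_2$ together with $\int|\nabla\theta|_2^2\,ds$; then test with $\theta_t$ to bound $|\nabla\theta(t)|_2$ and $\int|\sqrt{\rho}\theta_t|_2^2\,ds$; combine the two and close via Gronwall on $[0,T_3]$; finally invoke the weighted Poincar\'e inequality of Lemma \ref{pang} to recover the missing $L^2$ part of $\|\theta\|_1$ from $|\sqrt{\rho}\theta|_2$ and $|\nabla\theta|_2$.

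Concretely, after using the continuity equation to rewrite $(\ref{eq:1.weeer})_3$ in the transport form
$$\rho\theta_t + \rho w\cdot\nabla\theta + \tfrac{R}{c_v}\rho\theta\,\mathrm{div}\,w - \tfrac{\kappa}{c_v}\Delta\theta = \tfrac{1}{c_v}\bigl(Q(w) + \overline{N}_r\bigr),$$
multiplication by $\theta$ and integration (using $w\cdot n|_{\partial\mathbb{V}}=0$ and $\nabla\theta\cdot n|_{\partial\mathbb{V}}=0$) yields, after cancellation of the $\rho_t$-contributions against the transport term,
$$\tfrac{1}{2}\tfrac{d}{dt}|\sqrt{\rho}\theta|_2^2+\tfrac{\kappa}{c_v}|\nabla\theta|_2^2 \;=\; -\tfrac{R}{c_v}\!\int_{\mathbb{V}}\!\rho\theta^2\,\mathrm{div}\,w\,dx+\tfrac{1}{c_v}\!\int_{\mathbb{V}}\!(Q(w)+\overline{N}_r)\theta\,dx.$$
Each right-hand-side term is controlled by H\"older's inequality together with Lemmas \ref{lem:2}--\ref{lem:3} and the a priori bounds (\ref{mini}): any $|\theta|_6$ norm that appears is converted into $|\sqrt{\rho}\theta|_2+(1+|\rho|_2)|\nabla\theta|_2$ via Lemma \ref{pang}, while Minkowski's inequality together with (\ref{zhen1})--(\ref{jia345}) bounds the radiation quantity $|\overline{N}_r|_{6/5}$ by products of the norms of $\rho$, $I$, $\psi$ and $S$. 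Gronwall then provides the bound on $|\sqrt{\rho}\theta(t)|_2^2$ and $\int_0^t|\nabla\theta|_2^2\,ds$.

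The $\theta_t$-test produces $|\sqrt{\rho}\theta_t|_2^2+\tfrac{\kappa}{2c_v}\tfrac{d}{dt}|\nabla\theta|_2^2$ on the left. The transport contributions $\int\rho w\cdot\nabla\theta\,\theta_t\,dx$ and $\int\rho\theta\,\mathrm{div}\,w\,\theta_t\,dx$ split via Cauchy-$\epsilon$ into a piece $\epsilon|\sqrt{\rho}\theta_t|_2^2$ (absorbed) and a piece bounded by the already-controlled quantities (again via Lemma \ref{pang} for the $|\theta|_6$ factor). The delicate piece is $\int Q(w)\theta_t\,dx$ (and the $S$-contribution to $\int\overline{N}_r\,\theta_t\,dx$), which has no $\sqrt{\rho}$ factor to absorb into $|\sqrt{\rho}\theta_t|_2^2$; here I integrate by parts in time,
$$\int_0^t\!\!\int_{\mathbb{V}}Q(w)\theta_t\,dx\,ds \;=\; \int_{\mathbb{V}}Q(w(t))\theta(t)\,dx - \int_{\mathbb{V}}Q(u_0)\theta_0\,dx - \int_0^t\!\!\int_{\mathbb{V}}\partial_sQ(w)\,\theta\,dx\,ds.$$
The instantaneous term is bounded by $\epsilon|\nabla\theta(t)|_2^2+C_\epsilon\|w(t)\|_2^4\|\theta(t)\|_1^2$ (absorbable thanks to the first-step $\theta$-estimate), and the time-integrated term by $C\int_0^t|\nabla w|_\infty|\nabla w_s|_2|\theta|_2\,ds$, which is $L^1_t$ since $|\nabla w|_\infty \leq C|w|_{D^{2,q}}$ is in $L^2_t$ and $|\nabla w_s|_2\in L^2_t$ by (\ref{mini}).

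Adding the two energy estimates, choosing $\epsilon$ small, and applying Gronwall on $[0,T_3]$ with $T_3=\min(T^*,(1+M(c_5)c_5^8)^{-1})$ (the exponent $8$ absorbs the various powers of $c_5$ that accumulate in the constants $M(c_5)c_5^k$) yields the asserted bound on $\|\theta(t)\|_1^2+|\sqrt{\rho}\theta(t)|_2^2+\int_0^t|\sqrt{\rho}\theta_t|_2^2\,ds$. The main obstacle is precisely the $\int Q(w)\theta_t\,dx$ term: the absence of any positive lower bound on $\rho$ precludes a direct Cauchy-Schwarz estimate, and the integration-by-parts-in-time trick -- combined with the $L^2_t$-integrability of $|w|_{D^{2,q}}$ and $|w_t|_{D^1}$ supplied by (\ref{mini}) -- is exactly what makes the estimate close.
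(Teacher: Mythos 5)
Your proposal follows essentially the same route as the paper: test the temperature equation first with $\theta$ and then with $\theta_t$, handle the troublesome $\int Q(w)\theta_t\,\text{d}x$ (and the $S$-part of $\int \overline{N}_r\theta_t\,\text{d}x$) by integrating by parts in time exactly as in the paper's identity $\int Q(w)\theta_t\,\text{d}x=\frac{d}{dt}\int Q(w)\theta\,\text{d}x-\int Q(w)_t\theta\,\text{d}x$, absorb the remaining radiation terms through the $\rho$-factor in $\sigma_a,\sigma_s,\sigma_s'$ against $|\sqrt{\rho}\theta_t|_2$, invoke the weighted Poincar\'e inequality of Lemma \ref{pang} for the $|\theta|_6$, $|\theta_t|_6$ and $|\theta|_2$ factors, and close with Gronwall on $[0,T_3]$. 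The only cosmetic difference is your bound $|\nabla w|_\infty|\nabla w_s|_2|\theta|_2$ for the $\partial_s Q(w)\,\theta$ term versus the paper's $|\nabla w_t|_2|\nabla w|_3|\theta|_6$, which is an equally valid H\"older split.
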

\begin{proof}

\underline{Step 1}. 
Multiplying $(\ref{eq:1.weeer})_3$ by $\theta$ and integrating over $\mathbb{V}$, we have
\begin{equation}\label{zhou111}
\begin{split}
&\frac{\kappa}{c_v} \int_{\mathbb{V}}|\nabla \theta |^2 \text{d}x+\frac{1}{2}\frac{d}{dt}\int_{\mathbb{V}}\rho |\theta|^2 \text{d}x\\
\leq& C \int_{\mathbb{V}} \Big(\rho|\theta|^2 |\text{div}w|   +|\nabla w|^2| \theta|+|\overline{N}_r| |\theta|\Big)\text{d}x\\
\leq& C\Big(|\rho|^{\frac{1}{2}}_\infty|\sqrt{\rho}\theta|_2|\nabla w|_3+|\nabla w|_2|\nabla w|_3+|\overline{N}_r|_{6/5}\Big)|\theta|_6\\
\leq &\frac{\kappa}{20c_vc^2_0}(|\sqrt{\rho}\theta|^2_2+c^2_0|\nabla \theta|^2_2)+Cc^5_3|\sqrt{\rho}\theta|^2_2+Cc^8_3,
\end{split}
\end{equation}
where we have used  the Poincar$\acute{\text{e}}$ type inequality for $\theta$ in  Lemma \ref{pang},
and the fact:

\begin{equation}\label{zhouzhan}
\begin{split}
|\overline{N}_r|_{6/5}\leq& C|(1+|w|)\overline{B}_r|_{6/5} 
\leq C(1+|w|_\infty) \Big(\int_0^\infty \int_{S^2} \Big(|S|_{6/5}+|\sigma|_\infty|\rho|_3|I|_2\Big) \text{d}\Omega \text{d}v\\
&+|\rho|_3\int_0^\infty \int_{S^2} \Big(\int_0^\infty \int_{S^2}\Big( \frac{v}{v'}|\overline{\sigma}_s|_\infty|I'|_2+|\overline{\sigma}'_s|_\infty |I|_2 \Big)\text{d}\Omega' \text{d}v\Big) \text{d}\Omega \text{d}v\Big)\\
\leq & C(1+|w|_\infty)\Big(|S|_{L^1(\mathbb{R}^+;L^{\frac{6}{5}})}+(\alpha+\beta)|\rho |_3|I|_{L^2(\mathbb{R}^+\times S^2;L^2)}\Big)\leq Cc^3_3.
\end{split}
\end{equation}

Then from Gronwall's inequality and  (\ref{zhou111}), we have
\begin{equation}\label{zhouzhan1}\begin{split}
\int_0^t |\nabla \theta|^2_2 \text{d}s+|\sqrt{\rho} \theta|^2_2\leq C(c^3_0+c^8_3t)\exp (Cc^5_3t)\leq Cc^3_0
\end{split}
\end{equation}
for $0\leq t\leq T_3= \min(T^*,(1+M(c_5)c^8_5)^{-1})$.

\underline{Step 2}. 
Multiplying $(\ref{eq:1.weeer})_3$ by $\theta_t$ and integrating over $\mathbb{V}$, we have
\begin{equation}\label{zhou122}
\begin{split}
&\frac{\kappa}{2c_v}\frac{d}{dt} \int_{\mathbb{V}}|\nabla \theta |^2 \text{d}x+\int_{\mathbb{V}}\rho |\theta_t|^2 \text{d}x\\
\leq&  \int_{\mathbb{V}} \Big(C\rho|\theta| |\text{div}w| |  \theta_t|  + C\rho |w|  | \nabla \theta| |  \theta_t|  +Q(w) \theta_t+\overline{N}_r\theta_t\Big)\text{d}x\\
\leq & C|\sqrt{\rho}\theta_t|_2|\rho|^{\frac{1}{2}}_\infty|\theta|_6|\nabla w|_3+C|\sqrt{\rho}\theta_t|_2|\rho|^{\frac{1}{2}}_\infty|\nabla \theta|_2| w|_\infty+ \int_{\mathbb{V}} \Big(Q(w) \theta_t+\overline{N}_r\theta_t\Big)\text{d}x.
\end{split}
\end{equation}
For the last term on the right-hand side of (\ref{zhou122}), we have
\begin{equation}\label{zhou133}
\begin{split}
\int_{\mathbb{V}} \Big(Q(w) \theta_t+\overline{N}_r\theta_t\Big)\text{d}x=&\frac{d}{dt}\int_{\mathbb{V}} Q(w) \theta\text{d}x-\int_{\mathbb{V}} Q(w)_t \theta\text{d}x+\int_{\mathbb{V}}\overline{N}_r\theta_t\text{d}x\\
\leq &\frac{d}{dt}\int_{\mathbb{V}} Q(w) \theta\text{d}x+C|\nabla w_t|_2|\nabla w|_3 |\theta|_6+\sum_{i=1}^4R_{i}.
\end{split}
\end{equation}
For  terms $R_{1}$-$R_4$,  from H\"older's inequality, we have
\begin{equation}\label{zhou155}
\begin{split}
R_{1}=&\frac{1}{c_v}\int_0^\infty \int_{S^2} \int_{\mathbb{V}} \Big(1-\frac{w\cdot \Omega}{c}\Big) S\theta_t \text{d}x \text{d}\Omega \text{d}v\\
=&\frac{1}{c_v}\frac{d}{dt}\int_0^\infty \int_{S^2} \int_{\mathbb{V}} \Big(1-\frac{w\cdot \Omega}{c}\Big) S\theta \text{d}x \text{d}\Omega \text{d}v\\
&-\frac{1}{c_v}\int_0^\infty \int_{S^2} \int_{\mathbb{V}} \Big(\Big(1-\frac{w\cdot \Omega}{c}\Big) S_t-\frac{w_t\cdot \Omega}{c}S\Big)\theta \text{d}x \text{d}\Omega \text{d}v\\
\leq & \frac{1}{c_v}\frac{d}{dt}\int_0^\infty \int_{S^2} \int_{\mathbb{V}} \Big(1-\frac{w\cdot \Omega}{c}\Big) S\theta \text{d}x \text{d}\Omega \text{d}v\\
&+C|\theta|_6(1+|w|_3)\|S_t\|_{L^1(\mathbb{R}^+\times S^2; L^2)}+C|\theta|_6|w_t|_3\|S\|_{L^1(\mathbb{R}^+; L^2)},\\
R_{2}=&-\frac{1}{c_v}\int_0^\infty \int_{S^2} \int_{\mathbb{V}} \Big(1-\frac{w\cdot \Omega}{c}\Big) \sigma_a I\theta_t \text{d}x \text{d}\Omega \text{d}v\\
\leq &  C(1+|w|_{3})|\rho|^{\frac{1}{2}}_\infty|\sqrt{\rho}\theta_t|_{2}\int_0^\infty \int_{S^2} |\sigma|_\infty|I|_6\text{d}\Omega \text{d}v\\
\leq & C\beta(1+|w|_{3})|\rho|^{\frac{1}{2}}_\infty|\sqrt{\rho}\theta_t|_{2}\|I\|_{L^2(\mathbb{R}^+\times S^2;L^6)}\leq Cc^{\frac{5}{2}}_3|\sqrt{\rho}\theta_t|_{2},\\
R_{3}=&\frac{1}{c_v}\int_\mathbb{I}\int_{\mathbb{V}}\frac{v}{v'}\Big(1-\frac{w\cdot \Omega}{c}\Big)\sigma_s I'
\theta_t\text{d}x \text{d}\mathbb{I}\\
 \leq &C(1+|w|_{3})|\rho|^{\frac{1}{2}}_\infty|\sqrt{\rho}\theta_t|_{2}\int_\mathbb{I}  \frac{v}{v'}
|\overline{\sigma}_s|_\infty |I'|_6\text{d}\mathbb{I}\\
\leq & C\alpha(1+|w|_{3})|\rho|^{\frac{1}{2}}_\infty|\sqrt{\rho}\theta_t|_{2}\|I\|_{L^2(\mathbb{R}^+\times S^2;L^6)}\leq Cc^{\frac{5}{2}}_3|\sqrt{\rho}\theta_t|_{2},\\
R_{4}=&-\frac{1}{c_v}\int_\mathbb{I} \int_{\mathbb{V}}\Big(1-\frac{w\cdot \Omega}{c}\Big)\sigma'_s I
\theta_t\text{d}x \text{d}\mathbb{I}\\
\leq &C(1+|w|_{3})|\rho|^{\frac{1}{2}}_\infty|\sqrt{\rho}\theta_t|_{2} \int_\mathbb{I}
|\overline{\sigma}'_s|_\infty |I|_6  \text{d}\mathbb{I}\\
\leq &C\alpha (1+|w|_{3})|\rho|^{\frac{1}{2}}_\infty|\sqrt{\rho}\theta_t|_{2}\|I\|_{L^2(\mathbb{R}^+\times S^2;L^6)}\leq Cc^{\frac{5}{2}}_3|\sqrt{\rho}\theta_t|_{2}.
\end{split}
\end{equation}
 
Combining (\ref{zhou122})-(\ref{zhou155}), from Lemma \ref{pang},  Young's inequality and (\ref{zhouzhan1}), we have
\begin{equation}\label{zhou188}
\begin{split}
&\frac{\kappa}{2c_v}\frac{d}{dt} \int_{\mathbb{V}}|\nabla \theta |^2 \text{d}x+\frac{1}{2}\int_{\mathbb{V}}\rho |\theta_t|^2 \text{d}x-\frac{d}{dt}\int_{\mathbb{V}} Q(w) \theta\text{d}x\\
\leq&  \frac{1}{c_v}\frac{d}{dt}\int_0^\infty \int_{S^2} \int_{\mathbb{V}} \Big(1-\frac{w\cdot \Omega}{c}\Big) S\theta \text{d}x \text{d}\Omega \text{d}v+C|\nabla w_t|^2_2+Cc^5_3|\nabla \theta |^2_2+Cc^6_3.
\end{split}
\end{equation}
It is not hard to see
\begin{equation*}\begin{split}
\int_{\mathbb{V}} Q(w) \theta\text{d}x\leq &C|\theta|_6 |\nabla w|_2|\nabla w|_3\leq C(|\sqrt{\rho}\theta|_2+c_0|\nabla \theta|_2)|\nabla w|^{\frac{3}{2}}_2|\nabla w|^{\frac{1}{2}}_6\\
\leq & C(c_0|\nabla \theta|_2+c^{\frac{3}{2}}_0)(c^{2}_2+c^{\frac{3}{2}}_2c^{\frac{1}{2}}_3)\leq \frac{\kappa}{4c_v}|\nabla \theta|^2_2+Cc^5_2c_3,\\
\frac{1}{c_v} \int_0^\infty \int_{S^2} \int_{\mathbb{V}} & \Big(1-\frac{w\cdot \Omega}{c}\Big) S\theta \text{d}x \text{d}\Omega \text{d}v\leq C(1+|w|_3)|\theta|_6|S|_{L^1(\mathbb{R}^+;L^2)}\\
\leq &Cc^2_2(c_0|\nabla \theta |_2+c^{\frac{3}{2}}_0)\leq \frac{\kappa}{4c_v}|\nabla \theta|^2_2+Cc^6_2,
\end{split}
\end{equation*}
which, along with (\ref{zhou188}) and  Gronwall's inequality, implies that 
$$
|\nabla \theta|^2_2+\int_0^t |\sqrt{\rho} \theta_t|^2_2\text{ds}\leq C(c^5_2c_3+c^6_3t)\exp\big(c^5_3 t\big)\leq Cc^5_2c_3,
$$
for $  0\leq t \leq T_3$, which implies that 
$$
|\theta|_2\leq C|\theta|_6\leq C(|\sqrt{\rho}\theta|_2+c_0|\nabla \theta|_2)\leq Cc^{\frac{7}{2}}_2c^{\frac{1}{2}}_3.
$$

\end{proof}

\begin{lemma}\label{lem:3e}
\begin{equation*}
\begin{split}
|\theta(t)|^2_{D^2}+|\sqrt{\rho}\theta_t(t)|^2_{2}+\int_{0}^{t}\big(|\theta(s)|^2_{D^{2,q}}+|\theta_t(s)|^2_{D^1}\big)\text{d}s\leq Cc^{12}_3,
\end{split}
\end{equation*}
for  $0\leq t \leq T_4=\min(T^*,(M(c_5)c^{21}_5)^{-1})$.
 \end{lemma}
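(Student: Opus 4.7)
The plan is to combine a time-differentiated energy estimate for $\theta_t$ with elliptic regularity for $\theta$ viewed as the solution of a Neumann--Poisson problem. The compatibility condition $(\ref{kkk})_2$ supplies the initial control on $\sqrt{\rho_0}\,\theta_t(0)$, and the smallness of $T_4$ absorbs powers of $c_5$ so that the final bound depends only on $c_3$.

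\emph{Step 1 (time-differentiated energy identity).} Rewrite $(\ref{eq:1.weeer})_3$ in non-divergence form as
\begin{equation*}
\rho\theta_t+\rho w\cdot\nabla\theta+\tfrac{R}{c_v}\rho\theta\,\text{div}\,w-\tfrac{\kappa}{c_v}\triangle\theta=\tfrac{1}{c_v}\bigl(Q(w)+\overline{N}_r\bigr),
\end{equation*}
differentiate in $t$, multiply by $\theta_t$, and integrate over $\mathbb{V}$. Using $\rho_t=-\text{div}(\rho w)$ to symmetrise the term $(\rho\theta_t)_t\,\theta_t$ and integrating the $\triangle\theta_t$ term by parts with the help of the inherited condition $\nabla\theta_t\cdot n|_{\partial\mathbb{V}}=0$, one obtains
\begin{equation*}
\tfrac{1}{2}\frac{d}{dt}|\sqrt{\rho}\,\theta_t|_2^{2}+\tfrac{\kappa}{c_v}|\nabla\theta_t|_2^{2}\leq \mathcal{R},
\end{equation*}
where $\mathcal{R}$ is a sum of commutator terms built from $\rho_t,w_t,\phi_t,\theta_t,\nabla\theta$ and the radiation derivative $\overline{N}_{r,t}$.

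\emph{Step 2 (bounding $\mathcal{R}$ and Gronwall).} Each piece of $\mathcal{R}$ is estimated by H\"older's inequality, the Sobolev embedding, and the Poincar\'e-type bound of Lemma \ref{pang},
\begin{equation*}
|\theta_t|_{6}\leq C\bigl(|\sqrt{\rho}\,\theta_t|_2+(1+|\rho|_2)|\nabla\theta_t|_2\bigr),
\end{equation*}
together with the a priori hypotheses (\ref{mini}) on $(w,\phi,\psi)$ and the prior estimates of Lemmas \ref{lem:2}--\ref{lem:3ejia}. The $\overline{N}_{r,t}$ contribution splits into four pieces analogous to $R_1$--$R_4$ in the proof of Lemma \ref{lem:3ejia}; each now carries an extra time derivative falling on $S$, $w$, $I$, $\psi$, or on $\sigma$ (producing $\sigma_\phi\phi_t$ and $\sigma_\rho\rho_t$ by the chain rule), and these are tamed via $(\ref{jia345})_3$, Lemma \ref{lem:3}, and Minkowski's inequality to exchange the $(v,\Omega)$- and $x$-integrations. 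Absorbing $\tfrac{\kappa}{2c_v}|\nabla\theta_t|_2^{2}$ into the left-hand side gives a differential inequality of the form
\begin{equation*}
\frac{d}{dt}|\sqrt{\rho}\,\theta_t|_2^{2}+|\nabla\theta_t|_2^{2}\leq M(c_5)c_5^{p}\bigl(|\sqrt{\rho}\,\theta_t|_2^{2}+1\bigr),
\end{equation*}
for some finite $p\leq 21$. Evaluating the Step~1 equation at $t=0$ and using $(\ref{kkk})_2$ yields $|\sqrt{\rho_0}\,\theta_t(0)|_2\leq Cc_0$, and Gronwall on $[0,T_4]$ with $T_4=\min(T^*,(M(c_5)c_5^{21})^{-1})$ then produces the claimed bounds on $|\sqrt{\rho}\,\theta_t(t)|_2^{2}$ and $\int_0^{t}|\nabla\theta_t|_2^{2}\,\text{d}s$; the factor $M(c_5)c_5^{p}T_4\leq 1$ is what collapses the final constant down to a power of $c_3$.

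\emph{Step 3 (elliptic regularity and expected obstacle).} Reading the Step~1 equation as $-\triangle\theta=F$ with Neumann data $\nabla\theta\cdot n|_{\partial\mathbb{V}}=0$, the regularity bound (\ref{tvd2kll}) of Lemma \ref{tvd1} gives $|\theta|_{D^{2,l}}\leq C|F|_l$ for every $l\in(1,+\infty)$. Pointwise in $t$, taking $l=2$ and bounding $|F|_2$ through $|\rho|_\infty|\sqrt{\rho}\,\theta_t|_2$ (from Step~2) together with Lemmas \ref{lem:2}--\ref{lem:3ejia} yields $|\theta|_{D^2}^{2}\leq Cc_3^{12}$; taking $l=q$ and integrating in $t$, with $|\theta_t|_q\leq C|\theta_t|_6\leq C(|\sqrt{\rho}\,\theta_t|_2+|\nabla\theta_t|_2)$, yields the $\int_0^{t}|\theta|_{D^{2,q}}^{2}\,\text{d}s$ bound. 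I expect the delicate part to be the radiation bookkeeping in Step~2: the chain-rule factors $\sigma_\phi\phi_t$ and $\sigma_\rho\rho_t$ sitting inside the $(v,\Omega)$-integral have to be controlled uniformly in $(v,\Omega)$ via (\ref{jia345}) so that, after Minkowski, the $x$-norm carries only powers of $c_5$ that $T_4$ can absorb.
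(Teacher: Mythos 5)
Your proposal follows essentially the same route as the paper's proof: differentiate $(\ref{eq:1.weeer})_3$ in time, test with $\theta_t$, estimate the commutator and radiation terms via H\"older, Lemma \ref{pang}, assumptions (\ref{zhen1})--(\ref{jia345}) and Minkowski's inequality, use the compatibility condition (\ref{kkk}) to bound $|\sqrt{\rho}\theta_t|_2$ as $t\to 0$, apply Gronwall on $[0,T_4]$, and then recover $|\theta|_{D^2}$ and $\int_0^t|\theta|^2_{D^{2,q}}$ from the elliptic (Neumann) estimate (\ref{tvd2kll}) applied to $-\triangle\theta$ expressed from the equation. The only cosmetic difference is that the paper handles the initial value by a $\limsup_{\tau\to 0}$ argument on $|\sqrt{\rho}\theta_t(\tau)|_2$ (legitimate here since $\rho\geq\underline{\delta}>0$ in this a priori step) rather than a direct evaluation at $t=0$, but the idea is the same.
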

\begin{proof}
First differentiating $(\ref{eq:1.weeer})_3$ with respect to $t$,  we have
\begin{equation}\label{pengyue1}
\begin{split}\rho \theta_{tt}-\frac{\kappa}{c_v}\triangle \theta_t
=-\rho_t \theta_{t}
-(\rho w\cdot  \nabla \theta)_t-\frac{1}{c_v}\Big((P_m\text{div} w)_t-Q( w)_t-\big(\overline{N}_r\big)_t\Big).\end{split}
\end{equation}
Multiplying (\ref{pengyue1}) by $\theta_t$ and integrating over $\mathbb{V}$, we have
\begin{equation}\label{zhou1}
\begin{split}
&\frac{1}{2}\frac{d}{dt} \int_{\mathbb{V}}\rho |\theta_t|^2 \text{d}x+\frac{\kappa}{c_v}\int_{\mathbb{V}}|\nabla \theta_t|^2 \text{d}x\\
\leq& C \int_{\mathbb{V}} \Big(|\rho_t| |w| | \nabla \theta|  + \rho |w_t|  | \nabla \theta|  +\rho |w|  | \nabla \theta_t| +|(P_m)_t|  |\text{div} w|
\Big) | \theta_t|   \text{d}x\\
&+\int_{\mathbb{V}} \Big(  \rho |\theta|  | \text{div} w_t| +|\nabla w||\nabla w_t|+|\overline{N}_r|_t\Big) | \theta_t|  \text{d}x
\equiv:\sum_{i=1}^{6}I_i+E_I,
\end{split}
\end{equation}
where the radiation source term:
\begin{equation*}
\begin{split}
E_I=\frac{1}{c_v}\int_0^\infty \int_{S^2} \int_{\mathbb{V}}\Big( \Big(1-\frac{w\cdot \Omega}{c}\Big) (\overline{B}_r)_t \theta_t+ \Big(-\frac{w_t\cdot \Omega}{c}\Big) \overline{B}_r\theta_t\Big) \text{d}x \text{d}\Omega \text{d}v=\sum_{j=1}^{8} J_j.
\end{split}
\end{equation*}

Second, we estimate $\sum_{i=1}^{6}I_i$.  According to Lemmas \ref{gaga1}-\ref{pang},  \ref{lem:2}-\ref{lem:3ejia}, H\"older's  and Young's inequality, we have
\begin{equation*}
\begin{split}
I_1\leq& C|\rho_t|_{3} |w|_{\infty} |\nabla \theta|_{2} |  \theta_t|_{6}\leq Cc^{16}_3+\frac{\kappa}{20c^2_0c_v}(|\sqrt{\rho}\theta_t|^2_2+c^2_0|\nabla \theta_t|^2_2),\\
I_2+I_5\leq& C|\rho|^{\frac{1}{2}}_{\infty} |\sqrt{\rho}\theta_t|^{\frac{1}{2}}_{2}|\sqrt{\rho}\theta_t|^{\frac{1}{2}}_{6}( | w_t|_{6} |\nabla \theta|_{2}+ |\nabla w_t|_{2} | \theta|_{6})\\
\leq& C c^{21}_3|\sqrt{\rho}\theta_t|^2_{2}+\frac{\kappa}{20c^2_0c_v}(|\sqrt{\rho}\theta_t|^2_2+c^2_0|\nabla \theta_t|^2_2)+C|\nabla w_t|^2_{2}, \\
I_3\leq& C|\rho|^{\frac{1}{2}}_{\infty} |w|_{\infty} |\nabla \theta_t|_{2} |\sqrt{\rho}\theta_t|_{2}\leq Cc^3_3|\sqrt{\rho}\theta_t|^2_{2}+\frac{\kappa}{20c_v}|\nabla \theta_t|^2_2,\\
I_4\leq& C|(P_m)_t|_{2} |\nabla w|_{3} |\theta_t|_{6}\leq C(|\rho|^{\frac{1}{2}}_{\infty}|\sqrt{\rho}\theta_t|_2 +|\rho_t|_3|\theta|_6)|\nabla w|_{3} |\theta_t|_{6}\\
\leq& Cc^5_3|\sqrt{\rho}\theta_t|^2_{2}+\frac{\kappa}{20c^2_0c_v}(|\sqrt{\rho}\theta_t|^2_2+c^2_0|\nabla \theta_t|^2_2)+Cc^{16}_3,\\
I_6\leq& C|\theta_t|_{6} |\nabla w_t|_{2} |\nabla w|_{3}\leq \frac{\kappa}{20c^2_0c_v}(|\sqrt{\rho}\theta_t|^2_2+c^2_0|\nabla \theta_t|^2_2)+Cc^4_3|\nabla w_t|^2_{2},
\end{split}
\end{equation*}

For the term $E_I$. From H\"older's inequality,  (\ref{zhen1})-(\ref{jia345}) and Young's inequality, we have
\begin{equation}\label{op11}
\begin{split}
J_1=&\frac{1}{c_v}\int_0^\infty \int_{S^2} \int_{\mathbb{V}} \Big(1-\frac{w\cdot \Omega}{c}\Big) S_t\theta_t \text{d}x \text{d}\Omega \text{d}v\\
\leq &C\big(1+|w|_{3}\big)|\theta_t|_{6}\int_0^\infty \int_{S^2} |S_t |_{2}\text{d}\Omega \text{d}v
\leq  \frac{\kappa}{20c^2_0c_v}(|\sqrt{\rho}\theta_t|^2_2+c^2_0|\nabla \theta_t|^2_2)+Cc^6_2,\\
J_2=&-\frac{1}{c_v}\int_0^\infty \int_{S^2} \int_{\mathbb{V}} \Big(1-\frac{w\cdot \Omega}{c}\Big) (\sigma_a I)_t\theta_t \text{d}x \text{d}\Omega \text{d}v\\
\leq &  C(1+|w|_{\infty})\int_0^\infty \int_{S^2} \big(|\rho|^{\frac{1}{2}}_\infty|\sqrt{\rho}\theta_t|_{2}|\sigma_t|_2|I|_\infty+|\sigma|_\infty|I|_3|\rho_t|_2|\theta_t|_6\big)\text{d}\Omega \text{d}v\\
&+ C(1+|w|_{\infty})|\rho|^{\frac{1}{2}}_\infty|\sqrt{\rho}\theta_t|_{2}\int_0^\infty \int_{S^2} |\sigma|_\infty|I_t|_{2}\text{d}\Omega\\
\leq &\frac{\kappa}{20c^2_0c_v}(|\sqrt{\rho}\theta_t|^2_2+c^2_0|\nabla \theta_t|^2_2)+C(\epsilon)c^{10}_3(|\sqrt{\rho}\theta_t|^2_{2}+1)+\epsilon \int_0^\infty \int_{S^2} |\sigma_t|^2_{2}\text{d}\Omega \text{d}v,
\end{split}
\end{equation}
where we  used $(\sigma_a)_t=\rho \sigma_t+\rho_t \sigma$ and $\epsilon$ is a sufficiently small constant.
Similarly, 
\begin{equation}\label{op2}
\begin{split}
J_3=&\frac{1}{c_v}\int_\mathbb{I}\int_{\mathbb{V}}\frac{v}{v'}\Big(1-\frac{w\cdot \Omega}{c}\Big)(\sigma_s I')_t
\theta_t\text{d}x \text{d}\mathbb{I}\\
 \leq &C(1+|w|_{\infty})\int_\mathbb{I}  \frac{v}{v'}\big(|\theta_t|_{6}
|(\sigma_s)_t|_2 |I'|_3+|\rho|^{\frac{1}{2}}_\infty|\sqrt{\rho}\theta_t|_{2}
|\overline{\sigma}_s|_\infty |I'_t|_2\big)\text{d}\mathbb{I}\\
\leq &C\alpha^2 c^{10}_3\|I'\|^2_{L^2(\mathbb{R}^+\times S^2; H^1)}
+C\alpha^2 c^3_3\|I'_t\|^2_{L^2(\mathbb{R}^+\times S^2; L^2)}\\
&+C|\sqrt{\rho}\theta_t|^2_{2}+\frac{\kappa}{20c^2_0c_v}(|\sqrt{\rho}\theta_t|^2_2+c^2_0|\nabla \theta_t|^2_2)\leq \frac{\kappa}{20c_v}|\nabla \theta_t|^2_2+C(|\sqrt{\rho}\theta_t|^2_{2}+c^{12}_3),\\
\end{split}
\end{equation}
\begin{equation}\label{op3}
\begin{split}
J_4=&\frac{1}{c_v}\int_\mathbb{I} \int_{\mathbb{V}}-\Big(1-\frac{w\cdot \Omega}{c}\Big)(\sigma'_s I)_t
\theta_t\text{d}x \text{d}\mathbb{I}\\
\leq &C(1+|w|_{\infty}) \int_\mathbb{I}\big(|\theta_t|_{6}
|(\sigma'_s)_t|_2|I|_3 +|\rho|^{\frac{1}{2}}_\infty|\sqrt{\rho}\theta_t|_{2}
|\overline{\sigma}'_s|_\infty |I_t|_2\big) \text{d}\mathbb{I}\\
\displaystyle
\leq &C\alpha^2 c^{10}_3\|I\|^2_{L^2(\mathbb{R}^+\times S^2; H^1)}
+C\alpha^2 c^3_3\|I_t\|^2_{L^2(\mathbb{R}^+\times S^2; L^2)}
\\
&+C|\sqrt{\rho}\theta_t|^2_{2}+\frac{\kappa}{20c^2_0c_v}(|\sqrt{\rho}\theta_t|^2_2+c^2_0|\nabla \theta_t|^2_2)
\leq  \frac{\kappa}{20c_v}|\theta_t|^2_{D^1}+C|\sqrt{\rho}\theta_t|^2_{2}+Cc^{12}_3,\\
J_5=&\frac{1}{c_v}\int_0^\infty \int_{S^2} \int_{\mathbb{V}} \frac{-w_t \cdot \Omega}{c} S\theta_t \text{d}x \text{d}\Omega \text{d}v\\
\leq&  C|w_t|_{3}|\theta_t|_{6}\int_0^\infty \int_{S^2} | S|_{2}\text{d}\Omega \text{d}v
 \leq \frac{\kappa}{20c^2_0c_v}(|\sqrt{\rho}\theta_t|^2_2+c^2_0|\nabla \theta_t|^2_2)+Cc^{4}_0|w_t|^2_{D^1},\\
J_6=&\frac{1}{c_v}\int_\mathbb{I} \int_{\mathbb{V}} \frac{w_t \cdot \Omega}{c}
\sigma'_s I\theta_t\text{d}x \text{d}\mathbb{I}
\leq C|w_t|_{6}|\rho|^{\frac{1}{2}}_\infty|\sqrt{\rho}\theta_t|_{2}
\int_\mathbb{I}
|\overline{\sigma}'_s|_\infty |I|_3\text{d}\mathbb{I}\\
\leq &C|\sqrt{\rho}\theta_t|^2_{2}+C\alpha^2 |w_t|^2_{D^1}|\rho|_\infty\|I\|^2_{L^2(\mathbb{R}^+\times S^2; H^1)}
\leq  C|\sqrt{\rho}\theta_t|^2_{2}+Cc^{3}_0|w_t|^2_{D^1},\\
J_7=&\frac{1}{c_v}\int_0^\infty \int_{S^2} \int_{\mathbb{V}} \frac{w_t \cdot \Omega}{c}  \sigma_a I\theta_t \text{d}x \text{d}\Omega \text{d}v\\
\leq& C|w_t|_{6}|\rho|^{\frac{1}{2}}_\infty|\sqrt{\rho}\theta_t|_{2}\int_0^\infty \int_{S^2} |\sigma|_\infty\|I\|_{3}\text{d}\Omega \text{d}v\\
\leq &C|\sqrt{\rho}\theta_t|^2_{2}+Cc_0|w_t|^2_{D^1}\|\sigma\|^2_{L^2(\mathbb{R}^+\times S^2; L^\infty)}\|I\|^2_{L^2(\mathbb{R}^+\times S^2; H^1)}\\
\leq& C|\sqrt{\rho}\theta_t|^2_{2}+Cc^{3}_0|w_t|^2_{D^1},\\
J_8=&\frac{1}{c_v}\int_\mathbb{I} \int_{\mathbb{V}} -\frac{v}{v'}\frac{w_t \cdot \Omega}{c}
\sigma_s I'\theta_t\text{d}x \text{d}\mathbb{I}
\leq C|w_t|_{6}|\rho|^{\frac{1}{2}}_\infty|\sqrt{\rho}\theta_t|_{2}
\int_\mathbb{I} \frac{v}{v'}
|\overline{\sigma}_s|_\infty |I'|_3\text{d}\mathbb{I}\\
\leq &C\alpha^2 |w_t|^2_{D^1}|\rho|_\infty\|I'\|^2_{L^2(\mathbb{R}^+\times S^2; H^1)}+C|\sqrt{\rho}\theta_t|^2_{2}
\leq  C|\sqrt{\rho}\theta_t|^2_{2}+Cc^{3}_0|w_t|^2_{D^1},
\end{split}
\end{equation}
Then combining the above estimates for $I_i$ and $J_j$,  from (\ref{zhou1}) we quickly have
\begin{equation}\label{thyter}
\begin{split}
&\frac{1}{2}\frac{d}{dt} \int_{\mathbb{V}}\rho |\theta_t|^2 \text{d}x+ \int_{\mathbb{V}}|\nabla \theta_t|^2 dx\\
\leq&  C(\epsilon)c^{21}_3|\sqrt{\rho}\theta_t|^2_2+\epsilon\int_0^\infty \int_{S^2} |\sigma_t|^2_{2}\text{d}\Omega \text{d}v+C(\epsilon)(c^{16}_3+c^4_3| w_t|^2_{D^1}).
\end{split}
\end{equation}
Notice that, via the assumption (\ref{jia345}) and Lemma \ref{pang}, we have
$$\epsilon \int_0^\infty \int_{S^2} |\sigma_t|^2_{2}\text{d}\Omega \text{d}v\leq \epsilon M(c_5)(|\rho_t|^2_2+|\theta_t|^2_2) \leq \epsilon M(c_5)(c^4_2+|\sqrt{\rho}\theta_t|^2_2+c^2_1|\nabla \theta_t|_2),$$
then integrating (\ref{thyter}) over $(\tau,t)$ with $\tau\in (0,t)$, letting $\epsilon$ be sufficiently small, we  have
 \begin{equation}\label{nv4}
\begin{split}
&|\sqrt{\rho}\theta_t(t)|^2_{2}+\int_{\tau}^{t}|\theta_t|^2_{D^1}\text{d}s
\leq |\sqrt{\rho}\theta_t(\tau)|^2_{2}+Cc^{21}_3\int_{\tau}^{t}|\sqrt{\rho}\theta_t|^2_2\text{d}s+Cc^{16}_3t+Cc^6_3.
\end{split}
\end{equation}
From $(\ref{eq:1.weeer})_3$, via  letting $
\Psi=\kappa\triangle \theta+Q( w)+ \overline{N}_r
$, we have
\begin{equation}\label{nv5}
\begin{split}
&|\sqrt{\rho}\theta_t|^2_{2}\leq |\rho|_{\infty} \|\nabla w \|^2_1 |\nabla \theta|^2_2+ \int_{\mathbb{V}}|\Psi|^2/ \rho \text{d}x,
\end{split}
\end{equation}

Via assumptions (\ref{zhen1})-(\ref{jia345}),  Lemma \ref{lem1},  the regularity of $S(v,t,x)$ and Minkowski's inequality,  we easily have
\begin{equation*}
\begin{split}
&\lim_{t \mapsto 0} \int_{\mathbb{V}}\Big(\frac{|\Psi(t)|^2}{\rho}-\frac{|\Psi(0)|^2}{\rho_0}\Big)\text{d}x\\
\leq&
\lim_{t \mapsto 0}\Big(\frac{1}{\underline{\delta}} \int_{\mathbb{V}}|\Psi(t)-\Psi(0)|^2\text{d}x+\frac{1}{\delta\underline{\delta}}|\rho(t)-\rho_0|_\infty \int_{\mathbb{V}}|\Psi(0)|^2\text{d}x\Big)
=0.
\end{split}
\end{equation*}
According to the compatibility condition (\ref{kkk}) and equation $(\ref{eq:1.weeer})_3$, we have
\begin{equation}\label{nv6}
\begin{split}
\limsup_{\tau \rightarrow 0}|\sqrt{\rho}\theta_t(\tau)|^2_{2}\leq |\rho_0|_{\infty} \|\nabla w_0 \|^2_1 |\nabla \theta_0|^2_2+|g_1|^2_2\leq Cc^5_0.
\end{split}
\end{equation}
Therefore, letting $\tau\rightarrow 0$ in (\ref{nv4}), we have
\begin{equation}\label{ghj}
\begin{split}
&|\sqrt{\rho}\theta_t(t)|^2_{2}+\int_{0}^{t}|\theta_t|^2_{D^1}\text{d}s
\leq Cc^{21}_3\int_{0}^{t}|\sqrt{\rho}\theta_t|^2_2\text{d}s+Cc^6_3
\end{split}
\end{equation}
for $0\leq t \leq T_3$.
Then according to Gronwall's inequality, we have
\begin{equation*}
\begin{split}
&|\sqrt{\rho}\theta_t(t)|^2_{2}+\int_{0}^{t}|\theta_t|^2_{D^1}\text{d}s
\leq Cc^6_3\exp\big(c^{21}_3 t\big)\leq Cc^6_3,
\end{split}
\end{equation*}
for  $0\leq t \leq T_4=\min(T^*,(M(c_5)c^{21}_5)^{-1})$.

 The further estimates can be obtained by Lemma \ref{tvd1}. From
\begin{equation}\label{tuo1}
\begin{split}
-\frac{\kappa}{c_v}\triangle \theta=-\rho \theta_t-\rho w\cdot \nabla \theta+\frac{1}{c_v}\Big(P_m\text{div} w+Q( w)+\overline{N}_r\Big),
\end{split}
\end{equation}
and Minkowski's inequality, we have
\begin{equation}\label{nv8}
\begin{split}
| \theta|_{D^2}\leq& C\big(|\rho \theta_t|_{2}+|\rho w\cdot \nabla \theta|_{2}+|\rho \theta\text{div} w|_{2}+|Q( w)|_{2}+ |\overline{N}_r|_{2})\leq Cc^6_3.
\end{split}
\end{equation}

Similarly,  for $0\leq t\leq T_4$,
we  also have
\begin{equation}\label{nv11}
\begin{split}
\int_0^t | \theta|^2_{D^{2,q}}\text{d}s\leq&C\int_0^t \Big(|\rho \theta_t|_{q}+|\rho w\cdot \nabla \theta|_{q}+|P_m\text{div}w|_{q}+|Q(w)|_{q}+|\overline{N}_r|_{q}\Big)^2\text{d}s
\leq Cc^{12}_3.
\end{split}
\end{equation}

According to $P_m=R\rho \theta$, for $0\leq t\leq T_4$, we easily obtain that
\begin{equation}\label{nv15}
\begin{split}
| \nabla P_m|_{2}\leq Cc^{\frac{9}{2}}_2c^{\frac{1}{2}}_3, \quad | \nabla P_m|_{q}\leq Cc^{7}_3, \quad |  (P_m)_t|_{2}\leq Cc^{8}_3.
\end{split}
\end{equation}
\end{proof}

Next we give the estimate for the velocity $u$.
\begin{lemma}\label{lem:4}
\begin{equation*}
\begin{split}
\|u(t)\|^2_{1}+|\sqrt{\rho}u_t(t)|^2_{2}+\int_{0}^{t}\big(|u|^2_{D^{2,q}}+|u_t|^2_{D^1}\big)\text{d}s\leq Cc^{7}_0, \ |u(t)|^2_{D^2}\leq Cc^{9}_2c_3,
\end{split}
\end{equation*}
for  $0\leq t \leq T_5=\min(T^*,(M(c_5)c^{25}_5)^{-1})$.
\end{lemma}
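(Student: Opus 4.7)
The strategy closely mirrors the derivation of Lemma \ref{lem:3e} for $\theta$, but with two new technical features: every integration by parts on the $L u$-term must respect the Navier-slip conditions $(u\cdot n,(\nabla\times u)\cdot n)|_{\partial\mathbb{V}}=(0,0)$, and the $W^{2,p}$ elliptic gain comes from Lemma \ref{tslip} instead of the Neumann estimate in Lemma \ref{tvd1}. Throughout I will write $(\ref{eq:1.weeer})_4$ in the ``transport form''
\begin{equation*}
\rho u_t+\rho(w\cdot\nabla)u+\nabla P_m+Lu=-\tfrac1c\!\int_0^\infty\!\!\int_{S^2}\overline C_r\Omega\,\mathrm d\Omega\,\mathrm d v,
\end{equation*}
obtained by using $(\ref{eq:1.weeer})_1$ to cancel $\rho_t u+u\,\text{div}(\rho w)$.

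\textbf{Step 1 (basic energy).} Test the above equation against $u$. Because $u\cdot n=0$ and $(\nabla\times u)\cdot n=0$, the identity $Lu=\mu\nabla\times(\nabla\times u)-(2\mu+\lambda)\nabla\,\text{div}\,u$ gives
\begin{equation*}
\int_{\mathbb V} Lu\cdot u\,\mathrm dx=\mu|\nabla\times u|_2^2+(2\mu+\lambda)|\text{div}\,u|_2^2,
\end{equation*}
with all boundary contributions vanishing, and Lemma \ref{gag22} upgrades this into a genuine $|\nabla u|_2^2$-coercivity. The $\nabla P_m$-term is integrated by parts against $\text{div}\,u$, the radiation term is dominated exactly like $|\overline N_r|_{6/5}$ in \eqref{zhouzhan} using $|u|_6\le C|\nabla u|_2$, and the convective term is absorbed using Lemma \ref{lem:2} and the a priori bounds on $w$. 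A Gronwall argument then yields
$\sup_{[0,T_3]}\|u\|_1^2+\int_0^{T_3}|\nabla u|_2^2\,\mathrm dt\le Cc_0^3$.

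\textbf{Step 2 (estimate for $\sqrt\rho u_t$).} Differentiate the momentum equation in $t$, test against $u_t$, and integrate over $\mathbb V$. The leading dissipation produced by $L u_t$ is, as in Step 1, $\mu|\nabla\times u_t|_2^2+(2\mu+\lambda)|\text{div}\,u_t|_2^2\gtrsim |\nabla u_t|_2^2$. The convective and pressure pieces $\int \rho_t|u_t|^2$, $\int(\rho w\cdot\nabla u)_t\cdot u_t$ and $\int\nabla(P_m)_t\cdot u_t$ are controlled by the norms of $\rho_t,w,w_t,\theta_t$ listed in Lemmas \ref{lem:2}--\ref{lem:3e}, exactly in the spirit of the $I_1$--$I_6$ estimates of Lemma \ref{lem:3e}; the radiation derivative $(\overline C_r)_t$ produces eight terms entirely analogous to $J_1$--$J_8$ there, using the assumptions \eqref{zhen1}--\eqref{jia345} and the bound on $I_t$ from Lemma \ref{lem:3}. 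All cross terms are split with Young's inequality so that the $|\nabla u_t|_2^2$ and $\epsilon\int|\sigma_t|_2^2\,\mathrm d\Omega\,\mathrm dv$ pieces are absorbed by the dissipation and by the $M(c_5)$-controlled bound on $\sigma_t$.

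\textbf{Step 3 (initial layer).} The limit $\limsup_{\tau\to 0}|\sqrt\rho u_t(\tau)|_2^2$ is controlled by evaluating the momentum equation at $t=0$: since $\rho u_t|_{t=0}=-L u_0-\nabla P_m^0-\frac1c\!\int\!\!\int A_r^0\Omega$, the compatibility condition \eqref{kkk} gives $|\sqrt\rho u_t(0)|_2\le|g_1|_2\le Cc_0$. A justification identical to the one preceding \eqref{nv6} (continuity in $t$ of $|\Psi|^2/\rho$ away from $t=0$, using $\rho\ge\underline\delta$ from Lemma \ref{lem1}) lets us pass $\tau\to 0$ in the differential inequality from Step 2, and Gronwall on $[0,T_5]$ with $T_5=\min(T^*,(M(c_5)c_5^{25})^{-1})$ produces the claimed bound on $|\sqrt\rho u_t|_2^2+\int_0^t|\nabla u_t|_2^2$.

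\textbf{Step 4 ($D^2$ and $D^{2,q}$ bounds).} Rewriting the momentum equation as
\begin{equation*}
L u=-\rho u_t-\rho(w\cdot\nabla)u-\nabla P_m-\tfrac1c\!\int_0^\infty\!\!\int_{S^2}\overline C_r\Omega\,\mathrm d\Omega\,\mathrm d v
\end{equation*}
and noting that $u$ satisfies the Navier-slip conditions, Lemma \ref{tslip} converts $L^2$- and $L^q$-bounds of the right-hand side into $D^2$- and $D^{2,q}$-bounds of $u$, precisely as \eqref{nv8}--\eqref{nv11} did for $\theta$. The right-hand side is estimated term by term using Steps 1--3, Lemma \ref{lem:3} for $I$, and the bound $|\nabla P_m|_r\le C(|\rho|_\infty|\nabla\theta|_r+|\theta|_\infty|\nabla\rho|_r)$, yielding $|u(t)|_{D^2}^2\le Cc_2^9 c_3$ and $\int_0^t|u|_{D^{2,q}}^2\,\mathrm ds\le Cc_0^7$.

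\textbf{Main obstacle.} The delicate point is Step 2: one has to organize the time derivatives of the radiation source so that the $\sigma_t$-contribution is produced with a small enough coefficient to be re-absorbed via the assumption \eqref{jia345}, while simultaneously keeping the coercive $|\nabla u_t|_2^2$ on the left; the Navier-slip geometry enters only through Lemmas \ref{gag22} and \ref{tslip}, so once Step 1 and the initial-layer matching of Step 3 are in place, the scheme proceeds in parallel with Lemma \ref{lem:3e}.
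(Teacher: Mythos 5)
Your overall route coincides with the paper's: differentiate $(\ref{eq:1.weeer})_4$ in $t$, test with $u_t$, obtain coercivity of $Lu_t$ from the curl--div decomposition together with Lemma \ref{gag22}, treat the radiation terms as in Lemma \ref{lem:3e}, use the compatibility condition \eqref{kkk} to control $\limsup_{\tau\to0}|\sqrt{\rho}u_t(\tau)|_2$, and recover the $D^2$ and $D^{2,q}$ bounds from the elliptic estimate under the Navier-slip condition. There is, however, a genuine gap in your Step 2, precisely at the term you treat as routine: $\int_{\mathbb V}\rho\, w_t\cdot\nabla u\cdot u_t\,\mathrm dx$ (the paper's $I_8$). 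Estimating it requires $|\nabla u|_3\le C\|\nabla u\|_1$, hence $|u|_{D^2}$, which in your ordering only appears in Step 4, \emph{after} the Gronwall closure; and since only $\int_0^{T^*}|\nabla w_t|_2^2\,\mathrm dt\le c_2^2$ is known (no pointwise-in-time bound on $w_t$), putting $|\nabla w_t|_2^2$ into the Gronwall coefficient with an $O(1)$ constant produces a factor $\exp(Cc_2^2)$, which is not a fixed power of $c_0$ and wrecks the bookkeeping \eqref{dingyi} that the claimed bound $Cc_0^7$ must feed. The paper closes this loop by splitting $I_8\le C\eta^{-1}c_0\|\nabla u\|_1^2+C\eta|\nabla w_t|_2^2|\sqrt{\rho}u_t|_2^2$ with a free parameter $\eta$, substituting the elliptic bound \eqref{nv980}, $|u|_{D^2}\le Cc_2^4(|\sqrt{\rho}u_t|_2+|\nabla u|_2)+Cc_2^{9/2}c_3^{1/2}$, into the differential inequality \emph{before} Gronwall (see \eqref{nv7hj}--\eqref{nv98nn}), and finally choosing $\eta=c_3^{-14}$ so that $\eta\int_0^t|\nabla w_t|_2^2\,\mathrm ds\le \eta c_2^2$ is harmless while the $\eta^{-1}$-terms are compensated by the smallness of $T_5\le(M(c_5)c_5^{25})^{-1}$. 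Your sketch contains no equivalent device, and singling out the $\sigma_t$-term as the main obstacle misplaces where the argument actually threatens to fail.

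Two smaller inaccuracies: testing against $u$ in your Step 1 gives $\sup_t|\sqrt{\rho}u|_2^2+\int_0^t|\nabla u|_2^2\,\mathrm ds$, not $\sup_t\|u\|_1^2$ (the paper obtains $\sup_t|\nabla u|_2^2$ only at the $u_t$-level, in \eqref{nv7hj}); and the initial-layer value is $|\sqrt{\rho}u_t(0)|_2\le |g_1|_2+|\sqrt{\rho_0}\,u_0\cdot\nabla u_0|_2$, not $|g_1|_2$ alone. Neither is fatal, but both should be stated correctly before the scheme can be said to reproduce the lemma.
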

\begin{proof}
Differentiating $ (\ref{eq:1.weeer})_4$ with respect to $t$, we have
\begin{equation}\label{pengyue3}
\rho u_{tt}+Lu_t=-\rho_tu_t-(\rho w\cdot\nabla u)_t-(\nabla P_m)_t-\frac{1}{c}\int_0^\infty \int_{S^2}(\overline{C}_r)_t\Omega \text{d}\Omega \text{d}v,
\end{equation}
multiplying (\ref{pengyue3}) by $u_t$ and integrating over $\mathbb{V}$,  via 
 $\triangle u=\nabla\text{div}u-\nabla\times \text{curl}u$, 
 boundary condition (\ref{fan2}) and Lemma \ref{gag22},
we have
\begin{equation}\label{zhou2}
\begin{split}
&\frac{d}{dt} \int_{\mathbb{V}}\rho |u_t|^2 \text{d}x+ \int_{\mathbb{V}}|\nabla  u_t|^2 \text{d}x\\
\leq& C \int_{\mathbb{V}} \Big(|\rho_t w \cdot \nabla u \cdot u_t|+|\rho w_t \cdot \nabla u \cdot u_t|+|\rho w \cdot \nabla u_t \cdot u_t| 
\\
&+|(P_m)_t \text{div} u_t|\Big)\text{d}x-\frac{1}{c}\int_{\mathbb{V}} \Big(\int_0^\infty \int_{S^2}  (\overline{C}_r)_t u_t\cdot \Omega\text{d}\Omega \text{d}v\Big)\text{d}x\equiv:\sum_{i=7}^{10}I_i+E_{II},
\end{split}
\end{equation}
where the radiation source term:
\begin{equation*}
\begin{split}
E_{II}=&-\frac{1}{c}\int_{\mathbb{V}} \Big(\int_0^\infty \int_{S^2}  (\overline{C}_r)_t u_t\cdot \Omega\text{d}\Omega \text{d}v\Big)\text{d}x=\sum_{j=9}^{12}\frac{}{} J_j.
\end{split}
\end{equation*}

First, we estimate $\sum_{i=7}^{10}I_i$.  According to Lemma \ref{gaga1},  \ref{lem:2}-\ref{lem:3e}, H\"older's inequality, Gagliardo-Nirenberg inequality and Young's inequality, we have
\begin{equation*}
\begin{split}
I_7\leq& C|\rho_t|_{3} |w|_{\infty} |\nabla u|_{2} |  u_t|_{6}\leq C(\epsilon)c^6_3|\nabla u|^2_{2}+\epsilon|  \nabla u_t|^2_{2},\\
I_8\leq& C|\rho|^{\frac{1}{2}}_{\infty} | w_t|_{6} |\nabla u|_{3} |\sqrt{\rho}u_t|_{2}
\leq C\frac{1}{\eta} c_0\|\nabla u\|^2_{1}+C\eta|\nabla w_t|^2_{2}|\sqrt{\rho}u_t|^2_{2},\\
I_9\leq& C|\rho|^{\frac{1}{2}}_{\infty} |w|_{\infty} |\nabla u_t|_{2} |\sqrt{\rho}u_t|_{2}\leq C(\epsilon)c^3_3|\sqrt{\rho}u_t|^2_{2}+\epsilon|  \nabla u_t|^2_{2},\\
I_{10}\leq& C|(P_m)_t|_{2} |\nabla u_t|_{2}\leq C(\epsilon)c^{16}_3+\epsilon|  \nabla u_t|^2_{2},
\end{split}
\end{equation*}
where  $\epsilon$ and $\eta$ are both positive constants.
For radiation term $ E_{II}$, we have
\begin{equation*}
\begin{split}
J_9= &-\frac{1}{c} \int_0^\infty \int_{S^2} \int_{\mathbb{V}}  S_t u_t\cdot \Omega \text{d}x \text{d}\Omega \text{d}v
\leq C|u_t|_{2}\int_0^\infty \int_{S^2} |S_t |_{2}\text{d}\Omega \text{d}v
\leq \epsilon|  \nabla u_t|^2_{2}+C(\epsilon)c^2_0,\\
J_{10}=&\frac{1}{c} \int_0^\infty \int_{S^2} \int_{\mathbb{V}}  (\sigma_a I)_t u_t\cdot \Omega \text{d}x \text{d}\Omega \text{d}v\\
\leq& C\int_0^\infty \int_{S^2} \Big(|\sqrt{\rho}u_t |_{2}|\sigma_t|_2|I|_\infty|\rho|^{\frac{1}{2}}_\infty+|\rho_t|_2|\sigma|_\infty|I|_3|u_t|_6\Big)\text{d}\Omega \text{d}v\\
&+C|\rho|^{\frac{1}{2}}_\infty|\sqrt{\rho}u_t|_{2}\int_0^\infty \int_{S^2} |\sigma |_{\infty}|I_t|_2\text{d}\Omega \text{d}v\\
 \leq& \epsilon|  \nabla u_t|^2_{2} +C\Big(\frac{1}{\eta}c^3_3+1\Big)|\sqrt{\rho}u_t|^2_{2}+\eta \int_0^\infty \int_{S^2} |\sigma_t|^2_{2}\text{d}\Omega \text{d}v+C(\epsilon)c^{6}_3,
\end{split}
\end{equation*}
where we   used   the fact $(\sigma_a)_t=\rho \sigma_t+\rho_t \sigma$. Similarly
\begin{equation*}
\begin{split}
J_{11}= &-\frac{1}{c}\int_\mathbb{I} \int_{\mathbb{V}}\frac{v}{v'}(\sigma_s I')_t
u_t\cdot \Omega\text{d}x \text{d}\mathbb{I}\\
\leq &C|u_t|_{6} \int_\mathbb{I}
|(\sigma_s)_t|_2 |I'|_3\text{d}\mathbb{I}+C|\rho|^{\frac{1}{2}}_\infty|\sqrt{\rho}u_t|_{2}\int_\mathbb{I} \frac{v}{v'}
|\overline{\sigma}_s|_\infty |I'_t|_2\text{d}\mathbb{I}\\
\leq &C(\epsilon)\alpha^2 c^4_3\|I'\|^2_{L^2(\mathbb{R}^+\times S^2; H^1)} +C(\epsilon)\alpha^2 c_0\|I'_t\|^2_{L^2(\mathbb{R}^+\times S^2; L^2)}+\epsilon|u_t|^2_{D^1}+C|\sqrt{\rho}u_t|^2_{2}\\
\leq&\epsilon |u_t|^2_{D^1}+C|\sqrt{\rho}u_t|^2_{2}+C(\epsilon)c^{6}_3,\\
\end{split}
\end{equation*}
\begin{equation*}
\begin{split}
J_{12}= &\frac{1}{c}\int_\mathbb{I} \int_{\mathbb{V}}(\sigma'_s I)_t
u_t \cdot \Omega\text{d}x \text{d}\mathbb{I}
\leq C|u_t|_{6} \int_\mathbb{I}
|(\sigma'_s)_t|_2 |I|_3\text{d}\mathbb{I}+C|\rho|^{\frac{1}{2}}_\infty|\sqrt{\rho}u_t|_{2}\int_\mathbb{I}
|\overline{\sigma}'_s|_\infty |I_t|_2\text{d}\mathbb{I}\\
\leq &C(\epsilon)\alpha c^4_3\|I\|^2_{L^2(\mathbb{R}^+\times S^2; H^1)}
+C\alpha c_0\|I_t\|^2_{L^2(\mathbb{R}^+\times S^2; L^2)}+\epsilon|u_t|^2_{D^1}+C|\sqrt{\rho}u_t|^2_{2}\\
\leq& \epsilon|u_t|^2_{D^1}+C|\sqrt{\rho}u_t|^2_{2}+C(\epsilon)c^{6}_3.
\end{split}
\end{equation*}
Then combining  estimates for $I_i$ and $J_j$, letting $\epsilon$ be sufficiently small,  from (\ref{zhou2}) we  have
\begin{equation*}
\begin{split}
&\frac{1}{2}\frac{d}{dt} \int_{\mathbb{V}}\rho |u_t|^2 \text{d}x+\frac{1}{2} \int_{\mathbb{V}}|\nabla u_t|^2\text{d}x\\
\leq& Cc^{16}_3+C\Big(c^3_3\frac{1}{\eta}+\eta|\nabla w_t|^2_{2}+1\Big)|\sqrt{\rho}u_t|^2_{2}+\eta \int_0^\infty \int_{S^2} |\sigma_t|^2_{2}\text{d}\Omega \text{d}v+C\frac{1}{\eta} c_0\|\nabla u\|^2_{1}.
\end{split}
\end{equation*}
Similarly to prove (\ref{ghj}), via the compatibility condition (\ref{kkk}),  we have
\begin{equation}\label{nv7hj}
\begin{split}
&|\nabla u(t)|^2+|\sqrt{\rho}u_t (t)|^2_{2}+\frac{1}{2}\int_{0}^{t}| u_t|^2_{D^1}\text{d}s\\
\leq & C\frac{1}{\eta} c_0\int_{0}^{t}| u|^2_{D^2}\text{d}s+C\int_{0}^{t}\Big(c^3_3\frac{1}{\eta}+\eta|\nabla w_t|^2_{2}+1\Big)(|\sqrt{\rho}u_t|^2_{2}+|\nabla u|^2_2))\text{d}s\\
&+M(c_5)\eta \int_0^t (|\theta_t|^2_2+|\rho_t|^2_2)\text{d}s +Cc^{16}_3t+Cc^5_0.
\end{split}
\end{equation}
For $| u|_{D^2}$, due to Lemma \ref{tvd1} and  Minkowski's inequality, we have
\begin{equation}\label{nv980}
\begin{split}
|u|_{D^2}\leq & C\Big(|\rho u_t|_2+|\rho w\cdot\nabla u|_2+|\nabla P_m|_2+\int_0^\infty \int_{S^2}|\overline{C}_r|_2 \text{d}\Omega \text{d}v\Big)\\
\leq & C(|\rho|^{\frac{1}{2}}_\infty|\sqrt{\rho}u_t|_{2}+|\rho|_\infty|w|_6|\nabla u|_{3}+|\rho|_\infty|\nabla \theta|_2+Cc^{2}_0)\\
\leq & Cc^4_2(|\sqrt{\rho}u_t|_{2}+|\nabla u|_2)+\frac{1}{2}|u|_{D^2}+Cc^{\frac{9}{2}}_2c^{\frac{1}{2}}_3.
\end{split}
\end{equation}
According to Lemma \ref{pang}, we also have
\begin{equation}\label{zao}
|\theta_t|_2\leq C|\theta_t|_6\leq C(|\sqrt{\rho}\theta_t|_2+c_0|\nabla \theta_t|_2)\leq C(c^6_3+c_0|\nabla \theta_t|_2).
\end{equation}

Therefore, from (\ref{nv7hj})-(\ref{zao}), we have
\begin{equation}\label{nv7mm}
\begin{split}
&|\nabla u(t)|^2+|\sqrt{\rho}u_t(t)|^2_{2}+\frac{1}{2}\int_{0}^{t}| u_t|^2_{D^1}\text{d}s\leq Cc^{16}_3t+Cc^5_0+C\frac{1}{\eta}c^{11}_3t\\
&  +C\int_{0}^{t}\Big(c^9_3\frac{1}{\eta}+\eta|\nabla w_t|^2_{2}+1\Big)(|\sqrt{\rho}u_t|^2_{2}+|\nabla u|^2_2))\text{d}s+\eta M(c_5)(c^{12}_3t+c^{14}_3)
\end{split}
\end{equation}
for $0\leq t \leq T_4$.
From Gronwall's inequality and (\ref{nv7mm}), we have
\begin{equation}\label{nv98nn}
\begin{split}
&| u(t)|^2_{D^1}+|\sqrt{\rho}u_t (t)|^2_{2}+\mu\int_{0}^{t}| u_t|^2_{D^1}\text{d}s \\
\leq &C\Big(\eta M(c_5)(c^{12}_3t+c^{14}_3)+c^{16}_3t+c^5_0+\frac{1}{\eta}c^{11}_3t\Big)\exp\Big(\int_{0}^{t}\Big(c^5_3\frac{1}{\eta}+\eta|\nabla w_t|^2_{2}+1\Big)\text{d}s\Big)\\
\leq &C\Big(\eta M(c_5)(c^{12}_3t+c^{14}_3)+c^{16}_3t+c^5_0+\frac{1}{\eta}c^{11}_3t+t\Big)\exp\Big( c^5_3t\frac{1}{\eta}+\eta c^2_{2}+t\Big)\leq Cc^5_0
\end{split}
\end{equation}
for $0\leq t \leq T_5=\min(T^*,(M(c_5)c^{25}_5)^{-1})$ and   $\eta=\frac{1}{c^{14}_3}$. Then from  (\ref{nv980}), we have
$$
|u|_{D^2}\leq  Cc^4_2(|\sqrt{\rho}u_t|_{2}+|\nabla u|_2)+Cc^{\frac{9}{2}}_2c^{\frac{1}{2}}_3\leq Cc^{6}_2c^{\frac{1}{2}}_3.
$$

 Then similarly, we have
\begin{equation*}
\begin{split}
\int_{0}^{t}|u|^2_{D^{2,q}}\text{d}s
\leq& \int_{0}^{t}\Big( |\rho u_t+\rho w\cdot\nabla u+\nabla P_m|^2_q+\Big(\int_0^\infty \int_{S^2}|\overline{C}_r|_q\text{d}\Omega \text{d}v\Big)^2 \Big)\text{d}s\leq Cc^{7}_0.
\end{split}
\end{equation*}
\end{proof}

Based on the a priori estimates of Lemmas \ref{lem:2}-\ref{lem:4}, we conclude that
\begin{equation}\label{jkkll}
\begin{split}
\|\rho(t)\|_{ W^{1,q}}\leq Cc_0, \quad  |\rho_t(t)|_{ q}\leq& Cc_0c_3,\\
\|I\|^2_{L^2(\mathbb{R}^+\times S^2;C([0,T_2]; W^{1,q}))}\leq Cc^2_0 ,\quad  \|I_t\|^2_{L^2(\mathbb{R}^+\times S^2;C([0,T_2]; L^{q}))}\leq& Cc_0c_1,\\
\|\theta(t)\|^2_{2}+|\sqrt{\rho}\theta_t(t)|^2_{2}+\int_{0}^{t}\big(|\theta(s)|^2_{D^{2,q}}+|\theta_t(s)|^2_{D^1}\big)\text{d}s\leq & Cc^{12}_3,\\
\|u(t)\|^2_{1}+|\sqrt{\rho}u_t(t)|^2_{2}+\int_{0}^{t}\Big(|u(s)|^2_{D^{2,q}}+|u_t(s)|^2_{D^1}\Big)\text{d}s\leq Cc^{7}_0,\quad |u(t)|^2_{D^2}\leq & Cc^{12}_2c_3
\end{split}
\end{equation}
for $0 \leq t \leq \min(T^*,(M(c_5)c^{25}_5)^{-1})$. Therefore, if we define the constants $c_i$ ($i=1,2,3,4,5$) and $T^*$ by
\begin{equation}\label{dingyi}
\begin{split}
&c_1=C^{\frac{1}{2}}c_0, \quad  c_2=C^{\frac{1}{2}}c^{\frac{7}{2}}_0,\quad c_3=Cc^{12}_2=C^{7}c^{42}_0,\\
& c_4= c_5=C^{\frac{1}{2}}c^{6}_3=C^{\frac{85}{2}}c^{252}_0, \quad \text{and} \quad T^*=\min (T, (M(c_5)c^{25}_5)^{-1}),
\end{split}
\end{equation}
then we deduce that
\begin{equation}\label{pri}
\begin{split}
\|I\|^2_{L^2(\mathbb{R}^+\times S^2;C([0,T^*]; W^{1,q}))}\leq c^2_1,\quad \|I_t\|^2_{L^2(\mathbb{R}^+\times S^2;C([0,T^*]; L^{q}))}\leq& c^2_2,\\
\sup_{0\leq t\leq T^*}\|u(t)\|^2_{1}+\int_{0}^{T^*}\Big(|u|^2_{D^{2,q}}+|u|^2_{D^{2}}+|u_t|^2_{D^1}\Big)\text{d}t\leq c^2_2,\quad \sup_{0\leq t\leq T^*}|u(t)|^2_{D^2}\leq& c^2_3,\\
 \sup_{0\leq t\leq T^*}\|\theta(t)\|^2_{1}+\int_{0}^{T^*}\Big(|\theta|^2_{D^{2,q}}+|\theta|^2_{D^{2}}+|\theta_t|^2_{D^1}\Big)\text{d}t\leq& c^2_4,\\
 \sup_{0\leq t\leq T^*}|\theta(t)|^2_{D^2}\leq c^2_5,\quad  \sup_{0\leq t\leq T^*}(\|\rho(t)\|_{ W^{1,q}}+ |\rho_t(t)|_{ q})\leq& c^2_3.
\end{split}
\end{equation}

\subsection{The unique solvability of  the linearized problem with vacuum}Now we give the key lemma to prove our main result.

\begin{lemma}\label{lemk1}
Let (\ref{tyuu}) and (\ref{kkk})-(\ref{gogogo}) hold, 
then $\exists$ unique strong solution $(I,\rho,u,\theta)$ on $\mathbb{R}^+\times S^2\times [0,T^*] \times \mathbb{V}$ to  (\ref{eq:1.weeer})-(\ref{py4}).
Moreover, $(I,\rho,u,\theta)$ satisfies   estimates (\ref{pri}).
\end{lemma}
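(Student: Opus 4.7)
The plan is to remove the positivity assumption $\rho_0\geq \delta$ from Lemma \ref{lem1} by a density approximation argument, then use the $\delta$-independent a priori bounds of Lemmas \ref{lem:2}--\ref{lem:4} to pass to the limit. For each small $\delta>0$, I would set $\rho_0^\delta=\rho_0+\delta\in W^{1,q}$. To preserve the compatibility condition (\ref{kkk}), I would then solve the two auxiliary elliptic problems
\begin{equation*}
Lu_0^\delta=-\nabla(R\rho_0^\delta\theta_0)-\tfrac{1}{c}\int_0^\infty\!\!\int_{S^2}A_r^{0,\delta}\Omega\,\text{d}\Omega\text{d}v+(\rho_0^\delta)^{\frac{1}{2}}g_1,
\end{equation*}
\begin{equation*}
-\tfrac{\kappa}{c_v}\triangle\theta_0^\delta=\tfrac{1}{c_v}Q(u_0^\delta)+\int_0^\infty\!\!\int_{S^2}\tfrac{1}{c_v}\!\left(1-\tfrac{u_0^\delta\cdot\Omega}{c}\right)\!A_r^{0,\delta}\,\text{d}\Omega\text{d}v+(\rho_0^\delta)^{\frac{1}{2}}g_2,
\end{equation*}
with boundary conditions from (\ref{fan2}), where $A_r^{0,\delta}$ uses $\rho_0^\delta$. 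Standard elliptic theory gives $(u_0^\delta,\theta_0^\delta)\in H^2$ converging to $(u_0,\theta_0)$ in $H^2$ as $\delta\to 0$, while by construction the pair $(\rho_0^\delta,u_0^\delta,\theta_0^\delta)$ satisfies the analogue of (\ref{kkk}) exactly with the same $(g_1,g_2)$.

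Next, I would apply Lemma \ref{lem1} with this regularized data, together with the given known functions $(w,\phi,\psi)$ satisfying (\ref{mini}), to obtain a unique global strong solution $(I^\delta,\rho^\delta,u^\delta,\theta^\delta)$ on $[0,T]$ with $\rho^\delta\geq\underline{\delta}>0$. The crucial point is that the a priori estimates in Lemmas \ref{lem:2}--\ref{lem:4} were derived with constants depending only on $c_0$ and the fixed physical constants, with no dependence on the lower bound of $\rho_0$; therefore the solutions $(I^\delta,\rho^\delta,u^\delta,\theta^\delta)$ satisfy the bounds (\ref{pri}) uniformly in $\delta$ on the common time interval $[0,T^*]$ defined in (\ref{dingyi}).

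With uniform bounds in hand, I would extract a subsequence converging in appropriate weak/weak-$*$ topologies, and use Aubin--Lions type compactness (exploiting the uniform control of time derivatives $\rho_t^\delta,\sqrt{\rho^\delta}u_t^\delta,\sqrt{\rho^\delta}\theta_t^\delta,I_t^\delta$) to obtain strong convergence of $(\rho^\delta,u^\delta,\theta^\delta)$ in $C([0,T^*];L^r)$ for suitable $r$, and of $I^\delta$ in $L^2(\mathbb{R}^+\times S^2;C([0,T^*];L^q))$ after integrating out frequency and direction. This suffices to pass to the limit in every term of (\ref{eq:1.weeer}), including the scattering integrals and the $\overline{N}_r,\overline{C}_r$ terms, thanks to the integrability assumptions (\ref{zhen1})--(\ref{jia345}). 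Lower semicontinuity transfers the bounds (\ref{pri}) to the limit, and the initial conditions (including the vacuum-respecting identities $\rho u|_{t=0}=\rho_0 u_0$, $\rho\theta|_{t=0}=\rho_0\theta_0$) hold by strong convergence.

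For uniqueness, suppose $(I^i,\rho^i,u^i,\theta^i)$, $i=1,2$ are two solutions to the linear problem with the same data, and set $(\overline I,\overline\rho,\overline u,\overline\theta)=(I^1-I^2,\rho^1-\rho^2,u^1-u^2,\theta^1-\theta^2)$. Since the system is linear in $(I,\rho,u,\theta)$ with given coefficients $(w,\phi,\psi)$, the differences satisfy a linear system with zero initial data, and a standard energy estimate in $L^2$ (for $\overline\rho$, $\sqrt{\rho^1}\overline u$, $\sqrt{\rho^1}\overline\theta$, and $\overline I$), combined with Gronwall's inequality and the uniform regularity afforded by (\ref{pri}), forces all differences to vanish. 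The main obstacle I anticipate is the limit passage in the temperature equation, specifically in the product $\rho^\delta\theta^\delta_t$ and the radiation source $\overline N_r^\delta$: these require combining the strong convergence of $\rho^\delta$ with only weak-$*$ convergence of $\sqrt{\rho^\delta}\theta_t^\delta$, handled by writing $\rho^\delta\theta_t^\delta=\sqrt{\rho^\delta}\cdot\sqrt{\rho^\delta}\theta_t^\delta$ and using strong $L^\infty$-type convergence of $\sqrt{\rho^\delta}$.
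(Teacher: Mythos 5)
Your overall skeleton (regularize the density by $\rho_0^\delta=\rho_0+\delta$, invoke Lemma \ref{lem1}, use the $\delta$-independent bounds of Lemmas \ref{lem:2}--\ref{lem:4} to get (\ref{pri}) uniformly, pass to the limit by weak/strong compactness and lower semicontinuity, and prove uniqueness by a linear energy--Gronwall argument) is exactly the paper's. The genuine problem is in how you preserve the compatibility condition. You keep $(g_1,g_2)$ fixed and redefine the initial data $(u_0^\delta,\theta_0^\delta)$ by solving two elliptic problems, but the second of these is a pure Neumann problem: $-\frac{\kappa}{c_v}\triangle\theta_0^\delta=\text{RHS}^\delta$ with $\nabla\theta_0^\delta\cdot n|_{\partial\mathbb V}=0$ is solvable only if $\int_{\mathbb V}\text{RHS}^\delta\,\text{d}x=0$. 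That mean-zero condition holds for $\delta=0$ (because $\theta_0$ itself satisfies the identity, so the divergence-theorem identity forces it), but for $\delta>0$ the right-hand side is perturbed by the $\delta$-dependent terms and there is no reason its average vanishes; so in general $\theta_0^\delta$ does not exist as you define it. Moreover, the unique solvability you invoke for the Lam\'e system under the Navier-slip conditions $(u\cdot n,(\nabla\times u)\cdot n)=(0,0)$ is not provided by the paper's tools (Lemma \ref{tslip} is only an a priori $W^{2,p}$ estimate with $|\nabla u|_2$ on the right), and changing $(u_0,\theta_0)$ to $(u_0^\delta,\theta_0^\delta)$ breaks the matching requirement $(w,\phi,\psi)|_{t=0}=(u_0,\theta_0,I_0)$ in (\ref{tyuu}), which is used precisely when one lets $\tau\to0$ in the estimates for $|\sqrt{\rho}\theta_t(\tau)|_2$ and $|\sqrt{\rho}u_t(\tau)|_2$ (cf.\ (\ref{nv6}) and the analogous step in Lemma \ref{lem:4}); you would have to quantify and absorb these mismatches, which you do not do.

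The paper avoids all of this by doing the opposite: it keeps $(I_0,u_0,\theta_0)$ and the equations in (\ref{kkk}) unchanged, and simply reads off new functions $g_1^\delta,g_2^\delta$ (explicitly $g_1^\delta=(\rho_0/\rho_0^\delta)^{1/2}g_1+R\delta\nabla\theta_0/(\rho_0^\delta)^{1/2}-\dots$, and similarly for $g_2^\delta$), which are uniformly bounded in $L^2$ for small $\delta$, so the constant $c_0$ and hence the uniform estimates (\ref{pri}) are unaffected and Lemma \ref{lem1} applies directly. If you replace your elliptic construction by this observation, the rest of your argument (compactness via Aubin--Lions/Simon, lower semicontinuity, linear uniqueness) goes through; you should also add the time-continuity step (recovering $C([0,T^*];W^{1,q})$ for $\rho$, $I$ and $C([0,T^*];H^2)$ for $(u,\theta)$ from the $L^\infty$-in-time bounds, as in Step 3 of the paper), since (\ref{pri}) alone only gives $L^\infty$ in time while the conclusion requires the regularity class of Definition \ref{strong}.
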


\begin{proof}\underline{Step 1}: Existence of strong solution.
We define $\rho_0=\rho_0+\delta$ for each $\delta\in (0,1)$. Then from the compatibility conditions (\ref{kkk}), we have
\begin{equation*}
\begin{split}
&Lu_0+R\nabla (\rho^\delta_0 \theta_0)+\frac{1}{c}\int_0^\infty \int_{S^2}A^{0,\delta}_r\Omega \text{d}\Omega \text{d}v=(\rho^\delta_0)^{\frac{1}{2}}_0 g^\delta_1,\\
&-\frac{1}{c_v}(\kappa\triangle \theta_0+Q( u_0))-\int_0^\infty \int_{S^{2}}\frac{1}{c_v} \Big(1-\frac{u_0\cdot \Omega}{c}\Big)A^{0,\delta}_r \text{d}\Omega \text{d}v=(\rho^\delta_0)^{\frac{1}{2}} g^\delta_2,
\end{split}
\end{equation*}
where
\begin{equation*}\begin{split}
g^\delta_1=&\Big(\frac{\rho_0}{\rho^\delta_0}\Big)^{\frac{1}{2}}g_1+R\delta\frac{\nabla \theta_0}{(\rho^\delta_0)^{\frac{1}{2}}}-\frac{1}{c}\int_0^\infty \int_{S^{2}}\frac{(A^{0}_r-A^{0,\delta}_r)}{(\rho^\delta_0)^{\frac{1}{2}}}\Omega\text{d}\Omega \text{d}v,\\
g^\delta_2=&\Big(\frac{\rho_0}{\rho^\delta_0}\Big)^{\frac{1}{2}}g_2+\int_0^\infty \int_{S^{2}}\frac{1}{c_v} \Big(1-\frac{u_0\cdot \Omega}{c}\Big)\frac{(A^{0}_r-A^{0,\delta}_r)}{(\rho^\delta_0)^{\frac{1}{2}}}\text{d}\Omega \text{d}v.
\end{split}
\end{equation*}
Then according to the assumption (\ref{jia345}), for all  $\delta> 0$ small enough,
\begin{equation*}\begin{split}
&1+\|\rho^\delta_0\|_{ W^{1,q}}+\|(\theta_0,u_0)\|_{2}+|(g^\delta_1,g^\delta_2)|_2+\|I_0\|_{L^2(\mathbb{R}^+\times S^2;  W^{1,q})}\\
& +\|S(v,t,x)\|_{ L^2(\mathbb{R}^+;C^1([0,T]; W^{1,q}))\cap C^1([0,T]; L^1(\mathbb{R}^+; L^2))}\leq c_0.
\end{split}
\end{equation*}
Therefore, corresponding to the initial data $(I_0,\rho^\delta_0,\theta_0,u_0)$, there exists a unique strong solution $(I^\delta,\rho^\delta,u^\delta,\theta^\delta)$ satisfying  (\ref{pri}). Then there exists a subsequence of solutions $(I^\delta,\rho^\delta,u^\delta,\theta^\delta)$ converges to a limit $(I,\rho,u,\theta)$ in weak or weak* sense. Due to the compact property in \cite{jm}, there exists a subsequence of solutions $(I^\delta,\rho^\delta,u^\delta,\theta^\delta)$ satisfying:
\begin{equation}\label{ert}\begin{split}
& I^\delta\rightarrow  I\ \text{weakly}\  \text{in}\  L^2(\mathbb{R}^+\times S^2\times [0,T^*]\times \mathbb{V}),\\
&\rho^\delta\rightarrow \rho \  \text{in }\ C([0,T^*];L^2(K)),\ (u^\delta,\theta^\delta)\rightarrow (u,\theta)\ \text{in } \ C([0,T^*];H^1(K)),
\end{split}
\end{equation}
where $K$ is any compact subset of $\mathbb{V}$. 
Combining the lower semi-continuity of norms and (\ref{ert}), we know that $(I,\rho,u,\theta)$ also satisfies the local estimates (\ref{pri}).
So it is easy to show that $(I,\rho,u,\theta)$ is a weak solution in the sense of distribution  and  satisfies
\begin{equation}\label{yuo}
\begin{split}
&I\in L^2(\mathbb{R}^+\times S^2;L^\infty([0,T^*]; W^{1,q})),\ I_t\in L^2(\mathbb{R}^+\times S^2;L^\infty([0,T^*]; L^{q})),\\
& \rho\in L^\infty([0,T^*]; W^{1,q})),\quad \rho_t\in L^2\in L^\infty([0,T^*]; L^{q}),\\
&(\theta,u)\in L^\infty([0,T^*];H^2)\cap  L^2([0,T^*];D^{2,q}),\\
&(\theta_{t},u_{t})\in  L^2([0,T^*];H^1),\ (\sqrt{\rho}\theta_{t},\sqrt{\rho}u_{t})\in  L^\infty([0,T^*];L^2).
\end{split}
\end{equation}

\underline{Step 2}: Uniqueness  can be obtained by the same method used in  Lemma \ref{lem1}.

\underline{Step 3}: Time-continuity. The continuity of $\rho$ can be obtained by the same argument as in Lemma \ref{lem1}. For $I$,
due to (\ref{yuo}), for $\forall$ $(v,\Omega)\in R^+\times S^2$, we have
$$
I(v,\Omega,\cdot,\cdot)\in C([0,T^*];  L^q) \cap C([0,T^*]; W^{1,q}-\text{weak}).
$$
According to ({\ref{fude}}), we have
\begin{equation}\label{thymm}
\begin{split}
\limsup_{t\rightarrow 0}\|I(v,\Omega,\cdot,\cdot)\|^2_{W^{1,q}}\leq \|I_0\|^2_{W^{1,q}},
\end{split}
\end{equation}
which implies that $I(v,\Omega,t,x)$ is right-continuous at $t=0$ (see \cite{teman}). So we easily get the desired  conclusion for $I$ from the reversibility on the time to equation $(\ref{eq:1.weeer})_1$. Similarly, from (\ref{yuo}), we have
$$
(u,\theta)\in C([0,T^*]; H^1) \cap C([0,T^*]; D^2-\text{weak}).
$$
From equations (\ref{eq:1.weeer}) and (\ref{yuo}), we know that
$$
(\rho \theta_t, \rho u_t)\in L^2([0,T^*]; L^2), \ \text{and}\ ((\rho \theta_t)_t, (\rho u_t)_t)\in L^2([0,T^*]; H^{-1}),
$$
via Aubin-Lions lemma, we have  $(\rho \theta_t, \rho u_t)\in C([0,T^*]; L^2)$. 
Due to (\ref{tuo1}), 
\begin{equation}\label{thghj}
Lu=-\rho u_t-\rho (w\cdot \nabla) u
  -\nabla P_m -\frac{1}{c}\int_0^\infty \int_{S^2}\overline{C}_r\Omega \text{d}\Omega \text{d}v,
\end{equation}
and the elliptic regularity estimate in Lemma \ref{tvd1}, we have $
(u,\theta)\in C([0,T^*]; D^2)
$.
\end{proof}
\subsection{Proof of Theorem \ref{th1}} Our proof  is based on the classical iteration scheme and the existence results for the linearized problem with vacuum in  Section 4.2. Let us denote as in Section 4.1 that
\begin{equation*}\begin{split}
&2+\|I_0\|_{L^2(\mathbb{R}^+\times S^2;  W^{1,q})}+\|\rho_0\|_{ W^{1,q}}+\|(\theta_0,u_0)\|_{2}+|(g_1,g_2)|_2\\
& +\|S(v,t,x)\|_{ L^2(\mathbb{R}^+;C^1([0,T]; W^{1,q}))\cap C^1([0,T]; L^1(\mathbb{R}^+; L^2))}\leq c_0.
\end{split}
\end{equation*}

Let $u^0\in C([0,T];H^2)\cap  L^2([0,T];D^{2,q}) $, $\theta^0\in C([0,T];H^2)\cap  L^2([0,T];D^{2,q}) $ and $I^0\in L^2(\mathbb{R}^+\times S^2;C([0,T]; W^{1,q}))$ be the solutions to the following linear  problems
\begin{equation*}\begin{split}
&u^0_t-\triangle u^0=0 ;  \ u^0(0)=u_0 \ \text{in} \ \mathbb{V};\ (u\cdot n, (\nabla \times u) \cdot n)|_{\partial \mathbb{V}}=(0,0),\\
&\theta^0_t-\triangle \theta^0=0 ;\ \theta^0(0)=\theta_0 \ \text{in} \ \mathbb{V};\ \nabla \theta \cdot n |_{\partial \mathbb{V}}=0.\\[4pt]
&I^0_t+c\Omega\cdot\nabla I^0=0;\ I^0(0)=I_0 \ \text{in} \ \mathbb{R}^+\times S^2\times \mathbb{V};\ I^0|_{\partial \mathbb{V}}=0,\ \Omega \cdot n\leq 0.
\end{split}
\end{equation*}
Then we can choose a  time $T^{**}\in (0,T^*)$ such that (\ref{mini}) still  holds with $(T^*, \psi, w,\phi)$ replaced by $(T^{**},I^0,u^0,\theta^0)$.

\begin{proof}
\underline{Step 1}. Existence.
Let $(w,\phi,\psi)=(u^0,\theta^0,I'^0)$,  we can get $(I^1, \rho^1, u^1,\theta^1)$ as a strong solution to  (\ref{eq:1.weeer})-(\ref{py4}). Then we construct approximate solutions $(I^{k+1}, \rho^{k+1}, u^{k+1},\theta^{k+1})$ inductively as follows: assuming  $(I^{k}, u^{k},\theta^{k})$ was defined for $k\geq 1$, let $(I^{k+1}, \rho^{k+1}, u^{k+1},\theta^{k+1})$  be the  solution to (\ref{eq:1.weeer})-(\ref{py4}) with $(\psi,w,\phi)$ replaced by $(I'^{k}, u^{k},\theta^{k})$ as following:
\begin{equation}
\label{eq:1.wgo}
\begin{cases}
\displaystyle
\rho^{k+1}_t+\text{div}(\rho^{k+1} u^k)=0,\\[4pt]
\displaystyle
\frac{1}{c}I^{k+1}_t+\Omega\cdot\nabla I^{k+1}=\overline{A}^{k}_r,\\[4pt]
\displaystyle
\rho^{k+1} \theta^{k+1}_t+
\rho^{k+1} u^{k}\cdot \nabla \theta^{k+1} +\frac{1}{c_v}(P^{k+1}_m\text{div} u^{k}-k\triangle \theta^{k+1})
=\frac{1}{c_v}(Q(\nabla u^{k})+\overline{N}^{k}_r),\\[6pt]
\displaystyle
\rho^{k+1} u^{k+1}_t+\rho^{k+1} u^{k}\cdot \nabla  u^{k+1}
  +\nabla P^{k+1}_m +Lu^{k+1}=-\frac{1}{c}\int_0^\infty \int_{S^2}\overline{C}^{k}_r\Omega \text{d}\Omega \text{d}v,
\end{cases}
\end{equation}
where
\begin{equation*}
\begin{split}
&\overline{A}^{k}_r=S-\sigma^{k+1,k}_aI^{k+1}+\int_0^\infty \int_{S^2}\Big( \frac{v}{v'}\sigma^{k+1}_sI'^{k} -(\sigma'_s)^{k+1}I^{k+1} \Big)\text{d}\Omega' \text{d}v',\\
&\overline{B}^{k}_r=S-\sigma^{k+1,k}_aI^{k+1}+\int_0^\infty \int_{S^2}\Big( \frac{v}{v'}\sigma^{k+1}_sI'^{k+1} -(\sigma'_s)^{k+1}I^{k+1} \Big)\text{d}\Omega' \text{d}v',\\
&\overline{C}^{k}_r=S-\sigma^{k+1}_aI^{k+1}+\int_0^\infty \int_{S^2}\Big( \frac{v}{v'}\sigma^{k+1}_sI'^{k+1} -(\sigma'_s)^{k+1}I^{k+1} \Big)\text{d}\Omega' \text{d}v',\\
&P^{k+1}_m=R\rho^{k+1}\theta^{k+1},\ \overline{N}^k_r=\int_0^\infty \int_{S^2} \Big(1-\frac{u^k\cdot \Omega}{c}\Big) \overline{B}^k_r \text{d}\Omega \text{d}v,\\
& \sigma^{k+1,k}_a=\sigma(v,\Omega,t,x,\rho^{k+1},\theta^k)\rho^{k+1}=\sigma^{k+1,k}\rho^{k+1},\ \sigma^{k+1}_s=\overline{\sigma}_s\rho^{k+1},\\
&\sigma^{k+1}_a=\sigma(v,\Omega,t,x,\rho^{k+1},\theta^{k+1})\rho^{k+1}=\sigma^{k+1}\rho^{k+1},  \ (\sigma'_s)^{k+1}=\overline{\sigma}'_s\rho^{k+1},
\end{split}
\end{equation*}
and 
$
(I^{k+1}, \rho^{k+1}, u^{k+1},\theta^{k+1})|_{t=0}=(I_0, \rho_0, u_0,\theta_0)
$. Via Section $4.2$,  we know $(I^k,\rho^k,u^k,\theta^k)$ also satisfy the estimate (\ref{pri}).

Next, we show that $(I^k, \rho^k, u^k,\theta^k)$ converges to a limit in a strong sense which will be used to prove the existence of the strong solution. Letting
$$
\overline{I}^{k+1}=I^{k+1}-I^k,\ \overline{\rho}^{k+1}=\rho^{k+1}-\rho^k,\ \overline{u}^{k+1}=u^{k+1}-u^k,\ \overline{\theta}^{k+1}=\theta^{k+1}-\theta^k,
$$
we have
\begin{equation}
\label{eq:1.2w}
\begin{split}
\displaystyle
&\overline{\rho}^{k+1}_t+\text{div}(\overline{\rho}^{k+1} u^k)+\text{div}(\rho^{k} \overline{u}^k)=0,\\[6pt]
\displaystyle
&\frac{1}{c}\overline{I}^{k+1}_t+\Omega\cdot\nabla \overline{I}^{k+1}+\Big(\sigma^{k+1,k}_a+\int_0^\infty \int_{S^2}(\sigma'_s)^{k+1}\text{d}\Omega' \text{d}v'\Big)\overline{I}^{k+1}
=-I^k(\sigma^{k+1,k}_a-\sigma^{k,k-1}_a)\\
&-\int_0^\infty \int_{S^2}
\Big(    \big((\sigma'_s)^{k+1}-(\sigma'_s)^{k}\big)I^{k}
- \Big(\frac{v}{v'}
\big(\sigma^{k}_s\overline{I}'^{k}+I'^k(\sigma^{k+1}_s-\sigma^{k}_s)\big)\Big) \Big)\text{d}\Omega' \text{d}v',\\
&\rho^{k+1}\overline{\theta}^{k+1}_t+
\rho^{k+1}u^k\cdot \nabla\overline{\theta}^{k+1}-\frac{\kappa}{c_v}\triangle \overline{\theta}^{k+1}\\
=&\frac{1}{c_v}(Q(\nabla u^k)-Q(\nabla u^{k-1}))-\overline{\rho}^{k+1}\theta^k_t-\overline{\rho}^{k+1}(u^{k-1}\cdot \nabla \theta^k+\frac{1}{c_v}R\theta^k \text{div}u^{k-1})\\
&-\rho^{k+1}(\overline{u}^k\cdot \nabla \theta^k+\frac{1}{c_v}R\overline{\theta}^{k+1}\text{div}u^{k}+\frac{1}{c_v}R\theta^k \text{div}\overline{u}^{k})+\frac{1}{c_v}L_1,\\
&\rho^{k+1} \overline{u}^{k+1}_t+\rho^{k+1} u^k\cdot\nabla \overline{u}^{k+1}+L\overline{u}^{k+1}\\
=&\overline{\rho}^{k+1}(-u^k_t-u^{k-1}\cdot\nabla u^k)
-\rho^{k+1}\overline{u}^{k}\cdot\nabla u^k-R\nabla (\rho^{k+1}\overline{\theta}^{k+1}+\overline{\rho}^{k+1}\theta^{k})-\frac{1}{c}L_2,
\end{split}
\end{equation}
where $L_1$ and  $L_2$ are given via
\begin{equation*}\begin{split}
L_1&=\int_0^\infty \int_{S^2}\Big(1-\frac{u^k\cdot \Omega}{c}\Big)\Big(-\sigma^{k+1,k}_a\overline{I}^{k+1}-I^k(\sigma^{k+1,k}_a-\sigma^{k,k-1}_a)\Big) \text{d}\Omega \text{d}v\\
&+\int_\mathbb{I}\frac{v}{v'}\Big(1-\frac{u^k\cdot \Omega}{c}\Big)\Big(I'^k(\sigma^{k+1}_s-\sigma^{k}_s)+\sigma^{k+1}_s\overline{I}'^{k+1}\Big)\text{d}\mathbb{I}\\
&-\int_\mathbb{I}\Big(1-\frac{u^k\cdot \Omega}{c}\Big)\Big(I^{k}((\sigma'_s)^{k+1}-(\sigma'_s)^{k})+(\sigma'_s)^{k+1}\overline{I}^{k+1}\Big)\text{d}\mathbb{I}\\
&-\int_0^\infty \int_{S^2}\Big(\frac{\overline{u}^k\cdot \Omega}{c}\Big)\Big(S-\sigma^{k,k-1}_aI^{k}
+\int_0^\infty \int_{S^2}\Big(\frac{v}{v'}\sigma^{k}_sI'^{k-1}-(\sigma'_s)^kI^k\Big)\text{d}\Omega' \text{d}v'\Big) \text{d}\Omega \text{d}v,\\
L_2&=\int_0^\infty \int_{S^2}\Omega\Big(-\sigma^{k+1}_a\overline{I}^{k+1}-I^k(\sigma^{k+1}_a-\sigma^{k}_a)\Big) \text{d}\Omega \text{d}v\\
&+\int_\mathbb{I}\frac{v}{v'}\Big(\sigma^{k+1}_s\overline{I}'^{k+1}+I'^k(\sigma^{k+1}_s-\sigma^{k}_s)\Big)\text{d}\mathbb{I}-\int_\mathbb{I}\Big(I^{k}\big((\sigma'_s)^{k+1}-(\sigma'_s)^{k}\big)+(\sigma'_s)^{k+1}\overline{I}^{k+1}\Big)\text{d}\mathbb{I}.
\end{split}
\end{equation*}

First, we consider the mass density $\rho$. Multiplying $ (\ref{eq:1.2w})_1$ by $\overline{\rho}^{k+1}$ and integrating over $\mathbb{V}$, from (\ref{pri}) we have ($0<\eta \leq \frac{1}{10}$ is a constant which will be determined later)
\begin{equation}\label{go64}\begin{cases}
\displaystyle
\frac{d}{dt}|\overline{\rho}^{k+1}|^2_2\leq A^k_\eta(t)|\overline{\rho}^{k+1}|^2_2+\eta |\nabla\overline{u}^k|^2_2,\\[8pt]
\displaystyle
A^k_\eta(t)=C\Big(|\nabla u^k|^2_{W^{1,q}}+1/\eta|\nabla\rho^{k}|^2_{3} +1/\eta|\rho^{k}|^2_{\infty}\Big),\ \text{and} \ \int_0^t A^k_\eta(s)\text{d}s\leq C+C_{\eta}t
\end{cases}
\end{equation}
for $t\in [0,T^{**}]$, where $C_{\eta}>0$ is a  constant only  depending on $1/\eta$ and   parameters of  $C$.

Next, we consider the specific radiation intensity $I$. Multiplying $ (\ref{eq:1.2w})_2 $  by $\overline{I}^{k+1}$ and integrating over $\mathbb{R}^+\times S^2\times\mathbb{V}$, from  assumptions (\ref{zhen1})-(\ref{jia345}) and Lemma \ref{pang}, we have
\begin{equation}\label{go66}\begin{split}
&\frac{d}{dt}\|\overline{I}^{k+1}\|^2_{L^2(\mathbb{R}^+\times S^2 ;L^2(\mathbb{V}))}\\
\leq&  \int_0^\infty \int_{S^2} \big(|\sigma^{k,k-1}|_{\infty}+|\overline{\sigma}|_{\infty}| \rho^{k+1}|_\infty\big)\|I^k\|_{W^{1,q}}| \overline{\rho}^{k+1}|_2|\overline{I}^{k+1}|_2\text{d}\Omega \text{d}v\\
& +\int_0^\infty \int_{S^2} \big( |\overline{\sigma}|_{\infty}| \rho^{k+1}|_\infty\|I^k\|_{ W^{1,q}}| \overline{\theta}^k|_6|\overline{I}^{k+1}|_2\big)\text{d}\Omega \text{d}v\\
& +\int_\mathbb{I} |\overline{I}^{k+1}|_2\Big(\frac{v}{v'}|\overline{\sigma}_s|_\infty \big(|\rho^k|_\infty|\overline{I}'^k|_2+\|I'^k|_{W^{1,q}}|\overline{\rho}^{k+1}|_2\big) + |\overline{\sigma}'_s|_\infty |I^k|_{W^{1,q}}|\overline{\rho}^{k+1}|_2\Big) \text{d}\mathbb{I}\\
 \leq& D^k_\eta(t)\|\overline{I}^{k+1}\|^2_{L^2(\mathbb{R}^+\times S^2;L^2(\mathbb{V}))}+C|\overline{\rho}^{k+1}|^2_2\\
& +\eta\big(|\nabla \overline{\theta}^k|^2_2+|\sqrt{\rho^k} \overline{\theta}^k|^2_2+\|\overline{I}^{k}\|^2_{L^2(\mathbb{R}^+\times S^2;L^2(\mathbb{V}))}\big),
\end{split}
\end{equation}
where we have used the fact
\begin{equation}\label{opq}
\begin{split}
&\sigma^{k+1,k}_a-\sigma^{k,k-1}_a=\rho^{k+1}\sigma^{k+1,k}-\rho^k\sigma^{k,k-1}\\
=&\rho^{k+1}(\sigma^{k+1,k}-\sigma^{k,k})+\rho^{k+1}(\sigma^{k,k}-\sigma^{k,k-1})+\overline{\rho}^{k+1}\sigma^{k,k-1},
\end{split}
\end{equation}
and 
$D^k_\eta(t)$ is defined via
\begin{equation*}\begin{split}
D^k_\eta(t)=&C\big(1+1/\eta\big)|\rho^{k+1}|^2_\infty\|I^{k}\|^2_{L^2(\mathbb{R}^+\times S^2;  W^{1,q}(\mathbb{V})}\|\overline{\sigma}\|^2_{L^\infty(\mathbb{R}^+\times S^2;L^\infty(\mathbb{V}))}\\
&+C\alpha^2/\eta|\rho^k|^2_{\infty}+C(\alpha^2+\beta^2)\|I^{k}\|^2_{L^2(\mathbb{R}^+\times S^2;W^{1,q}(\mathbb{V})}.
\end{split}
\end{equation*}
From  (\ref{pri}), we also have  $\int_0^t D^k_\eta(s)\text{d}s\leq C+C_{\eta}t$, for $t\in[0,T^{**}]$.

Then considering   $\theta$. Multiplying $ (\ref{eq:1.2w})_3$ by $\overline{\theta}^{k+1}$ and integrating over $\mathbb{V}$, we have
\begin{equation*}\begin{split}
&\frac{1}{2}\frac{d}{dt}|\sqrt{\rho}^{k+1}\overline{\theta}^{k+1}|^2_2+\frac{\kappa}{c_v}|\nabla\overline{\theta}^{k+1} |^2_2\\
=& \int_{\mathbb{V}}\Big(\frac{1}{c_v}(Q(\nabla u^k)-Q(\nabla u^{k-1}))\overline{\theta}^{k+1}-\overline{\rho}^{k+1}\theta^k_t\overline{\theta}^{k+1}-\overline{\rho}^{k+1}u^{k-1}\cdot \nabla \theta^k\overline{\theta}^{k+1}\Big)\text{d}x\\
&+ \int_{\mathbb{V}}\Big(-\frac{1}{c_v}R\overline{\rho}^{k+1}\theta^k \text{div}u^{k-1}\overline{\theta}^{k+1}-\frac{1}{c_v}R\overline{\theta}^{k+1}\rho^{k+1}\overline{\theta}^{k+1}\text{div}u^{k}-\rho^{k+1}\overline{u}^k\cdot \nabla \theta^k\overline{\theta}^{k+1}\Big)\text{d}x\\
&+ \int_{\mathbb{V}}\Big(-\frac{1}{c_v}R\rho^{k+1}\theta^k \text{div}\overline{u}^{k}\overline{\theta}^{k+1}+L_1\overline{\theta}^{k+1}\Big)\text{d}x
\equiv:\sum_{i=11}^{26} I_i.
\end{split}
\end{equation*}
According to Gagliardo-Nirenberg inequality and  Minkowski's inequality we have
\begin{equation*}\begin{split}
& I_{11}\leq C|\nabla \overline{u}^k|_2|\overline{\theta}^{k+1}|_6(|\nabla u^k|_3+|\nabla u^{k-1}|_3), \ I_{12}\leq C |\overline{\rho}^{k+1}|_2|\theta^k_t|_3|\overline{\theta}^{k+1}|_6,\\
&I_{13}+I_{14}\leq C|\overline{\rho}^{k+1}|_{2}|\overline{\theta}^{k+1}|_6\|\theta^{k}\|_2\| u^{k-1}\|_2,\\ & I_{15}\leq C|\sqrt{\rho}^{k+1}\overline{\theta}^{k+1}|_2|\rho^{k+1}|^{\frac{1}{2}}_{\infty}|\overline{\theta}^{k+1}|_6\|\nabla u^{k}\|_1,\\
& I_{16}+I_{17}\leq C|\sqrt{\rho}^{k+1}\overline{\theta}^{k+1}|_2|\rho^{k+1}|^{\frac{1}{2}}_{\infty}| \nabla \overline{u}^k|_2\|\theta^{k}\|_2,
\end{split}
\end{equation*}
and from the assumptions (\ref{zhen1})-(\ref{jia345}) and the definition of $L_1$,
\begin{equation*}\begin{split}
I_{18}=&\frac{1}{c_v} \int_{\mathbb{V}}\int_0^\infty\int_{S^2}\Big(1-\frac{u^k\cdot\Omega}{c}\Big)\overline{\theta}^{k+1}\Big(-\sigma^{k+1,k}_a\overline{I}^{k+1}\Big)\text{d}\Omega\text{d}v\text{d}x\\
\leq & C(1+||\nabla u^k||_1)|\sqrt{\rho}^{k+1}\overline{\theta}^{k+1}|_2|\rho^{k+1}|^{\frac{1}{2}}_\infty\|\overline{I}^{k+1}\|_{L^2(\mathbb{R}^+\times S^2;L^2)}\|\sigma^{k+1,k}\|_{L^2(\mathbb{R}^+\times S^2;L^\infty)},\\
 I_{19}=&\frac{1}{c_v} \int_{\mathbb{V}}\int_0^\infty \int_{S^2}\Big(1-\frac{u^k\cdot\Omega}{c}\Big)\overline{\theta}^{k+1}\Big(-I^k(\sigma^{k+1,k}_a-\sigma^{k,k-1}_a)\Big) \text{d}\Omega \text{d}v\text{d}x\\
\leq &C(1+||\nabla u^k||_1)|\sqrt{\rho}^{k+1}\overline{\theta}^{k+1}|_2|\rho^{k+1}|^{1/2}_\infty\|\overline{\sigma}\|_{L^2(\mathbb{R}^+\times S^2;L^\infty)}\\
&\cdot \big(| \overline{\rho}^{k+1}|_{2}\|I^{k}\|_{L^2(\mathbb{R}^+\times S^2;W^{1,q})}
+| \overline{\theta}^{k}|_{6}\|I^{k}\|_{L^2(\mathbb{R}^+\times S^2;H^1)}\big)\\
&+C(1+||\nabla u^k||_1)|\overline{\rho}^{k+1}|_2 |  \overline{\theta}^{k+1}|_{6} \| I^{k}\|_{L^2(\mathbb{R}^+\times S^2;H^1)}\|\sigma^{k,k-1}\|_{L^2(\mathbb{R}^+\times S^2;L^\infty)},\\
I_{20}=&\frac{1}{c_v} \int_{\mathbb{V}}\int_\mathbb{I}\frac{v}{v'}\Big(1-\frac{u^k\cdot\Omega}{c}\Big)\overline{\theta}^{k+1}I'^k(\sigma^{k+1}_s-\sigma^{k}_s)\text{d}\mathbb{I}\text{d}x\\
 \leq& C\alpha(1+||\nabla u^k||_1)|\overline{\theta}^{k+1}|_6|\overline{\rho}^{k+1}|_{2}\|I^k\|_{L^2(\mathbb{R}^+\times S^2;H^1)},\\
 I_{21}=&\frac{1}{c_v} \int_{\mathbb{V}}\int_\mathbb{I}\frac{v}{v'}\Big(1-\frac{u^k\cdot\Omega}{c}\Big)\overline{\theta}^{k+1}\sigma^{k+1}_s\overline{I}'^{k+1}\text{d}\mathbb{I}\text{d}x\\
 \leq&  C\alpha(1+||\nabla u^k||_1)|\sqrt{\rho}^{k+1}\overline{\theta}^{k+1}|_2|\rho^{k+1}|^{\frac{1}{2}}_\infty\|\overline{I}^{k+1}\|_{L^2(\mathbb{R}^+\times S^2;L^2)},\\
 I_{22}=&\frac{1}{c_v} \int_{\mathbb{V}}\int_\mathbb{I}\overline{\theta}^{k+1}\Big(1-\frac{u^k\cdot \Omega}{c}\Big)I^{k}\Big((\sigma'_s)^{k+1}-(\sigma'_s)^{k}\Big)\text{d}\mathbb{I}\text{d}x\\
 \leq& C\alpha^{\frac{1}{2}}_2(1+||\nabla u^k||_1)| \overline{\theta}^{k+1}|_6|\overline{\rho}^{k+1}|_{2}\|I^{k}\|_{L^2(\mathbb{R}^+\times S^2;H^1)},\\
 I_{23}=&\frac{1}{c_v} \int_{\mathbb{V}}\int_0^\infty \int_{S^2}\Big(-\frac{\overline{u}^k\cdot \Omega}{c}\Big)S\overline{\theta}^{k+1}\text{d}\Omega \text{d}v\text{d}x\leq C|\overline{\theta}^{k+1}|_6|\nabla\overline{u}^{k}|_2\|S\|_{L^1(\mathbb{R}^+;L^{\frac{3}{2}}(\mathbb{V}))},\\
\end{split}
\end{equation*}
where in $I_{19}$ we have used the fact (\ref{opq}).  Similarly we have
\begin{equation*}
\begin{split}
 I_{24}=&\frac{1}{c_v} \int_{\mathbb{V}}\int_0^\infty \int_{S^2}\overline{\theta}^{k+1}\Big(-\frac{\overline{u}^k\cdot \Omega}{c}\Big)\Big(-\sigma^{k,k-1}_aI^{k}
\Big) \text{d}\Omega \text{d}v\text{d}x\\
\leq &C|\overline{\theta}^{k+1}|_6|\sqrt{\rho}^k\overline{u}^{k}|_2|\rho^k|^{1/2}_\infty\|I^{k}\|_{L^2(\mathbb{R}^+\times S^2;H^1)}\|\sigma^{k+1,k}\|^2_{L^2(\mathbb{R}^+\times S^2;L^\infty)},\\
 I_{25}=&\frac{1}{c_v} \int_{\mathbb{V}}\int_0^\infty \int_{S^2}\overline{\theta}^{k+1}\Big(-\frac{\overline{u}^k\cdot \Omega}{c}\Big)\Big(\int_0^\infty \int_{S^2}\frac{v}{v'}\sigma^{k}_sI'^{k}\text{d}\Omega' \text{d}v'\Big) \text{d}\Omega \text{d}v\text{d}x\\
\leq & C\alpha|\rho^k|^{\frac{1}{2}}_{\infty}|\overline{\theta}^{k+1}|_6|\sqrt{\rho}^k\overline{u}^{k}|_2\|I^{k}\|_{L^2(\mathbb{R}^+\times S^2;H^1)},\\
%
 I_{26}=&\frac{1}{c_v} \int_{\mathbb{V}}\int_0^\infty \int_{S^2}\Big(-\frac{\overline{u}^k\cdot \Omega}{c}\Big)\overline{\theta}^{k+1}\Big(\int_0^\infty \int_{S^2}(\sigma'_s)^{k}I^{k}\text{d}\Omega' \text{d}v'\Big) \text{d}\Omega \text{d}v\text{d}x\\
\leq & C\alpha^{\frac{1}{2}}_2|\rho^k|^{\frac{1}{2}}_{\infty}|\overline{\theta}^{k+1}|_6|\sqrt{\rho}^k\overline{u}^{k}|_2\|I^{k}\|_{L^2(\mathbb{R}^+\times S^2;H^1)}.
\end{split}
\end{equation*}
Then combining the above estimates,  from  Lemma \ref{pang} and (\ref{pri}),  we have
\begin{equation}\label{go100}\begin{split}
&\frac{d}{dt}|\sqrt{\rho}^{k+1}\overline{\theta}^{k+1}|^2_2+|\nabla\overline{\theta}^{k+1} |^2_2
\leq E^k_\eta(t)|\sqrt{\rho}^{k+1}\overline{\theta}^{k+1}|^2_2+E^k_2(t)|\overline{\rho}^{k+1}|^2_{2}\\
&\ \ +E^k_3(t)\|\overline{I}^{k+1}\|^2_{L^2(\mathbb{R}^+\times S^2;L^2(\mathbb{V}))}+C(|\nabla\overline{u}^{k}|^2_2+|\sqrt{\rho}^k\overline{u}^{k}|^2_2)+\eta(|\nabla\overline{\theta}^{k}|^2_2+|\sqrt{\rho}^k\overline{\theta}^{k} |^2_2),
\end{split}
\end{equation}
where $E^k_{\eta}(t)$, $E^k_2(t)$ and $E^k_3(t)$ satisfying:
\begin{equation*}
\begin{cases}
\displaystyle
\ E^k_\eta(t)=C+C\|\rho^{k+1}|_{\infty}(\|\theta^{k}\|^2_2+\| u^{k}\|^2_2)\\[6pt]
\displaystyle
 \qquad \ \ \ +1/\eta(1+\|\nabla u^k\|^2_1)|\rho^{k+1}|_\infty\|I^k\|^2_{L^2(\mathbb{R}^+\times S^2; W^{1,q})}\|\overline{\sigma}\|^2_{L^2(\mathbb{R}^+\times S^2;L^\infty)},\\[6pt]
\displaystyle
\ \int_0^t E^k_\eta(s)\leq C+C_\eta t, \quad \int_0^t (E^k_2(s)+E^k_3(s))\text{d}s\leq C, \quad \text{for} \quad  t\in[0,T^{**}].
\end{cases}
\end{equation*}

Finally, multiplying $ (\ref{eq:1.2w})_4$ by $\overline{u}^{k+1}$ and integrating over $\mathbb{V}$, we have
\begin{equation*}\begin{split}
&\frac{1}{2}\frac{d}{dt}|\sqrt{\rho}^{k+1}\overline{u}^{k+1}|^2_2+\mu|\nabla\times \overline{u}^{k+1} |^2_2+(\lambda+\mu)|\text{div}\overline{u}^{k+1} |^2_2\\
=& \int_{\mathbb{V}}\Big(-\overline{\rho}^{k+1}u^k_t\cdot\overline{u}^{k+1}-\overline{\rho}^{k+1}(u^{k-1}\cdot\nabla u^k)\cdot\overline{u}^{k+1}
-\rho^{k+1}(\overline{u}^{k}\cdot\nabla u^k)\cdot\overline{u}^{k+1}\\
&-R\nabla(\rho^{k+1}\overline{\theta}^{k+1})\cdot\overline{u}^{k+1}-R\nabla(\overline{\rho}^{k+1}\theta^{k})\cdot\overline{u}^{k+1}+L_2\overline{u}^{k+1}\Big)\text{d}x
\equiv:\sum_{i=27}^{37} I_i.
\end{split}
\end{equation*}

According to the Gagliardo-Nirenberg inequality, Minkowski's inequality and H\"older's inequality, we easily have
\begin{equation*}\begin{split}
I_{27}\leq& C |\overline{\rho}^{k+1}|_2|u^k_t|_3|\overline{u}^{k+1}|_6, \ I_{28}\leq C|\overline{\rho}^{k+1}|_{2}|\nabla\overline{u}^{k+1}|_2\|\nabla u^k\|_1\|\nabla u^{k-1}\|_1,\\
I_{29}\leq& C|\rho^{k+1}|^{\frac{1}{2}}_{\infty}|\sqrt{\rho}^{k+1}\overline{u}^{k+1}|_2\|\nabla u^k\|_1|\nabla \overline{u}^k|_2,\\
 I_{30}\leq& C|\rho^{k+1}|^{\frac{1}{2}}_{\infty}|\sqrt{\rho}^{k+1}\overline{\theta}^{k+1}|_2|\nabla\overline{u}^{k+1}|_2,\  I_{31}\leq C|\overline{\rho}^{k+1}|_{2}|\nabla\overline{u}^{k+1}|_2| \theta^k|_\infty,\\
I_{32}=&-\frac{1}{c} \int_{\mathbb{V}}\int_0^\infty\int_{S^2}\Omega\cdot\overline{u}^{k+1}\Big(-\sigma^{k+1}_a\overline{I}^{k+1}\Big)\text{d}\Omega\text{d}v\text{d}x\\
\leq & C|\sqrt{\rho}^{k+1}\overline{u}^{k+1}|_2|\rho^{k+1}|^{\frac{1}{2}}_\infty\|\overline{I}^{k+1}\|^2_{L^2(\mathbb{R}^+\times S^2;L^2(\mathbb{V}))}\|\sigma^{k+1,k}\|^2_{L^2(\mathbb{R}^+\times S^2;L^\infty)},\\
 I_{33}=&-\frac{1}{c} \int_{\mathbb{V}}\int_0^\infty \int_{S^2}\Omega\cdot\overline{u}^{k+1}\Big(-I^k(\sigma^{k+1}_a-\sigma^{k}_a)\Big) \text{d}\Omega \text{d}v\text{d}x\\
\leq &C|\rho^{k+1}|^{\frac{1}{2}}_{\infty}|\overline{\rho}^{k+1}|_{2}|\sqrt{\rho}^{k+1}\overline{u}^{k+1}|_2\|I^{k}\|_{L^2(\mathbb{R}^+\times S^2;W^{1,q}(\mathbb{V})}\|\overline{\sigma}\|_{L^2(\mathbb{R}^+\times S^2;L^\infty(\mathbb{V}))}\\
&+C|\sqrt{\rho}^{k+1}\overline{u}^{k+1}|_2|\sqrt{\rho}^{k+1}\overline{\theta}^{k+1}|_2\|I^{k}\|_{L^2(\mathbb{R}^+\times S^2;W^{1,q}(\mathbb{V})}\|\overline{\sigma}\|_{L^2(\mathbb{R}^+\times S^2;L^\infty(\mathbb{V}))}\\
%
&+C|\overline{\rho}^{k+1}|_2|\nabla\overline{u}^{k+1}|_2\|I^{k}\|_{L^2(\mathbb{R}^+\times S^2;H^1(\mathbb{V}))}\|\sigma^{k}\|_{L^2(\mathbb{R}^+\times S^2;L^\infty(\mathbb{V}))},\\
\end{split}
\end{equation*}
where in $I_{33}$ we have used the fact
\begin{equation*}
\begin{split}
&\sigma^{k+1}_a-\sigma^{k}_a=\rho^{k+1}\sigma^{k+1}-\rho^k\sigma^{k}
=\rho^{k+1}[(\sigma^{k+1}-\sigma^{k,k+1})+(\sigma^{k,k+1}-\sigma^{k,k})]+\overline{\rho}^{k+1}\sigma^{k,k},
\end{split}
\end{equation*}
and similarly, we have
\begin{equation*}\begin{split}
 I_{34}=&-\frac{1}{c} \int_{\mathbb{V}}\int_\mathbb{I}\frac{v}{v'}\Omega\cdot\overline{u}^{k+1}\sigma^{k+1}_s\overline{I}'^{k+1}\text{d}\mathbb{I}\text{d}x\\
 \leq &  C\alpha|\sqrt{\rho}^{k+1}\overline{u}^{k+1}|_2|\rho^{k+1}|^{\frac{1}{2}}_\infty\|\overline{I}^{k+1}\|_{L^2(\mathbb{R}^+\times S^2;L^2(\mathbb{V}))},\\
 I_{35}=&-\frac{1}{c} \int_{\mathbb{V}}\int_\mathbb{I}\frac{v}{v'}\Omega\cdot\overline{u}^{k+1}I'^k(\sigma^{k+1}_s-\sigma^{k}_s)\text{d}\mathbb{I}\text{d}x
 \leq C\alpha|\nabla\overline{u}^{k+1}|_2|\overline{\rho}^{k+1}|_{2}\|I^k\|_{L^2(\mathbb{R}^+\times S^2;H^1(\mathbb{V}))},\\
\end{split}
\end{equation*}

\begin{equation*}\begin{split}
 I_{36}=&\frac{1}{c} \int_{\mathbb{V}}\int_\mathbb{I}\overline{u}^{k+1}I^{k}\Big((\sigma'_s)^{k+1}-(\sigma'_s)^{k}\Big)\text{d}\mathbb{I}\text{d}x
 \leq C\alpha^{\frac{1}{2}}_2|\nabla \overline{u}^{k+1}|_2|\overline{\rho}^{k+1}|_{2}\|I^{k}\|_{L^2(\mathbb{R}^+\times S^2;H^1)},\\
%
 I_{37}=&\frac{1}{c} \int_{\mathbb{V}}\int_\mathbb{I}\overline{u}^{k+1} (\sigma'_s)^{k+1}\overline{I}^{k+1}\text{d}\mathbb{I}\text{d}x
 \leq C\alpha^{\frac{1}{2}}_2|\sqrt{\rho}^{k+1}\overline{u}^{k+1}|_2|\rho^{k+1}|^{\frac{1}{2}}_\infty\|\overline{I}^{k+1}\|_{L^2(\mathbb{R}^+\times S^2;L^2)}.
\end{split}
\end{equation*}
Then combining   the above estimates for $I_i$ $(i=27,...,37)$, from Lemma \ref{gag22} and (\ref{pri}),  we have
\begin{equation}\label{gogo1}\begin{split}
&\frac{d}{dt}|\sqrt{\rho}^{k+1}\overline{u}^{k+1}|^2_2+|\nabla\overline{u}^{k+1} |^2_2
\leq F^k_\eta(t)|\sqrt{\rho}^{k+1}\overline{u}^{k+1}|^2_2+F^k_2(t)|\overline{\rho}^{k+1}|^2_{2}\\
&\qquad+F^k_3(t)\|\overline{I}^{k+1}\|^2_{L^2(\mathbb{R}^+\times S^2;L^2(\mathbb{V}))}+F^k_4(t)|\sqrt{\rho}^{k+1}\overline{\theta}^{k+1}|^2_2+\eta|\nabla\overline{u}^{k}|^2_2,
\end{split}
\end{equation}
where $F^k_{\eta}(t)$, $F^k_2(t)$, $F^k_3(t)$ and $F^k_4(t)$ satisfying:
\begin{equation*}
\begin{cases}
\displaystyle
F^k_\eta(t)=C\Big(1+1/\eta|\rho^{k+1}|_{\infty}\|\nabla u^{k}\|^2_1+|\rho^{k+1}|_{\infty}\|I^{k}\|^2_{L^2(\mathbb{R}^+\times S^2;W^{1,q}(\mathbb{V})}\|\overline{\sigma}\|^2_{L^2(\mathbb{R}^+\times S^2;L^\infty(\mathbb{V}))}\Big),\\[6pt]
\displaystyle
\int_0^t F^k_\eta(s)\text{d}s\leq C+C_\eta t,\quad \int_0^t (F^k_2(s)+F^k_3(s)+F^k_4(s)\big)\text{d}s\leq C,\quad \text{for}\quad t\in [0,T^{**}].
\end{cases}
\end{equation*}

Next, we denote that 
\begin{equation*}\begin{split}
\Lambda^{k+1}(T^{**},\epsilon)=&\sup_{0\leq t \leq T^{**}}\|\overline{I}^{k+1}(t)\|^2_{L^2(\mathbb{R}^+\times S^2;L^2(\mathbb{V}))}+\sup_{0\leq
t \leq T^{**}}|\overline{\rho}^{k+1}(t)|^2_{
 2}\\
&+\epsilon \sup_{0\leq t \leq T^{**}}|\sqrt{\rho}^{k+1}\overline{\theta}^{k+1}(t)|^2_2+\sup_{0\leq t \leq T^{**}}|\sqrt{\rho}^{k+1}\overline{u}^{k+1}(t)|^2_2,
\end{split}
\end{equation*}
then from (\ref{go64})-(\ref{gogo1}), we have
\begin{equation*}\begin{split}
&\Lambda^{k+1}(T^{**},\epsilon)+\int_0^{T^{**}} (\epsilon  |\nabla\overline{\theta}^{k+1} |^2_2+|\nabla\overline{u}^{k+1} |^2_2)\text{d}t\\
\leq& \int_0^{T^{**}} G^k_{\epsilon,\eta} \Lambda^{k+1}(t,\epsilon)\text{d}t+\int_0^{T^{**}} C\Big((\eta+\epsilon \eta) |\nabla\overline{\theta}^{k} |^2_2+(\epsilon+\eta)|\nabla\overline{u}^{k} |^2_2\Big)\text{d}t\\
&+\epsilon T^{**}\sup_{0\leq t \leq T^{**}}|\sqrt{\rho}^{k}\overline{u}^{k}(t)|^2_2+ \eta T^{**} \sup_{0\leq t \leq T^{**}}\|\overline{I}^{k}(t)\|^2_{L^2(\mathbb{R}^+\times S^2;L^2(\mathbb{V}))}
\end{split}
\end{equation*}
for some $G^k_{\epsilon,\eta}$ such that  $\int_{0}^{t}G^k_{\epsilon,\eta}(s)\text{d}s\leq (1+\epsilon)(C+C_{\eta} t)$ for $0\leq t \leq T^{**}$.
According to Gronwall's inequality, we have
\begin{equation*}\begin{split}
&\Lambda^{k+1}(T^{**},\epsilon)+\int_{0}^{T^{**}}\Big(\epsilon|\nabla\overline{\theta}^{k+1} |^2_2+|\nabla\overline{u}^{k+1}|^2_2\Big)\text{d}s\\
\leq & \Big( \int_0^{T^{**}} C\big((\eta+\epsilon \eta) |\nabla\overline{\theta}^{k} |^2_2
+(\epsilon+\eta)|\nabla\overline{u}^{k} |^2_2\big)\text{d}t
+\epsilon T^{**}\sup_{0\leq t \leq T^{**}}|\sqrt{\rho}^{k}\overline{u}^{k}(t)|^2_2\\
&+\epsilon \eta T^{**} \sup_{0\leq t \leq T^{**}}|\sqrt{\rho}^{k}\overline{\theta}^{k}(t)|^2_2+\eta T^{**} \sup_{0\leq t \leq T^{**}}\|\overline{I}^{k}(t)\|^2_{L^2(\mathbb{R}^+\times S^2;L^2(\mathbb{V}))}\Big)\exp{\big((1+\epsilon)(C+C_{\eta} t)\big)}.
\end{split}
\end{equation*}
Due to $0< T^{**}\leq 1$, first, we  can choose $0<\epsilon=\epsilon_0<1$ small enough such that
$$
(1+C) \epsilon_0\exp\big((1+\epsilon_0)C\big)\leq \frac{1}{8};
$$
second, we can choose $\eta=\eta_0$ small enough such that
$$
(C+1)(\epsilon_0\eta_0+\eta_0)  \exp\big((1+\epsilon_0)C\big)\leq \frac{1}{8};
$$
then finally, we can choose $T^{**}=T_*$ small enough such that
$$ \text{exp}\big((1+\epsilon_0)C_{\eta_0}T_*\big)\leq 2.$$
Then when $\Lambda^{k+1}=\Lambda^{k+1}(T_*,\epsilon_0,\eta_0)$, we have
\begin{equation*}\begin{split}
\sum_{k=1}^{\infty}\Big(  \Lambda^{k+1}+\int_{0}^{T_*}\Big(|\nabla\overline{\theta}^{k+1} |^2_2+|\nabla\overline{u}^{k+1}|^2_2\Big)\text{d}t\Big)\leq C<+\infty.
\end{split}
\end{equation*}
Thus we know that the full consequence $(I^k,\rho^k,u^k,\theta^k)$ converges to a limit $(I,\rho,u,\theta)$ in the following strong sense:
\begin{equation}\label{stronga}
\begin{split}
&I^k\rightarrow I \ \text{in}\ L^\infty([0,T_*];L^2(R^+\times S^2;L^2(\mathbb{V}))),\\
&\rho^k\rightarrow\rho \ \text{in}\ L^\infty([0,T_*];L^2(\mathbb{V})),\ (\theta^k,u^k)\rightarrow (\theta,u)\ \text{in}\ L^2([0,T_*];D^1(\mathbb{V}),
\end{split}
\end{equation}
wihch,   along with  (\ref{pri}), show that $(I,\rho,u,\theta)$ satisfies the regularities $(\ref{yuo})$ and estimates (\ref{pri}).
Then it is easy to see $(I,\rho,u,\theta)$ is a weak solution in the sense of distribution. 

\underline{Step 2}. The uniqueness.   Let $(I_1,\rho_1,u_1,\theta_1)$ and $(I_2,\rho_2,u_2,\theta_2)$ be two strong solutions to IBVP (\ref{eq:1.2})-(\ref{eq:2.2hh}) with (\ref{fan2}) satisfying the regularity (\ref{yuo}). We denote that
$$
\overline{I}=I_1-I_2,\quad  \overline{\rho}=\rho_1-\rho_2,\quad  \overline{u}=u_1-u_2, \quad \overline{\theta}=\theta_1-\theta_2.
$$
Via the same method as in the derivations of (\ref{go64})-(\ref{gogo1}), letting
$$
\Lambda(t)=\|\overline{I}\|^2_{L^2(\mathbb{R}^+\times S^2;L^2(\mathbb{V}))}+|\overline{\rho}|^2_{ 2}+|\sqrt{\rho}_1\overline{\theta}|^2_2+|\sqrt{\rho}_1\overline{u}|^2_2,
$$
we similarly have
\begin{equation}\label{gonm}\begin{split}
\frac{d}{dt}\Lambda(t)+|\nabla \overline{u}|^2_2+|\nabla \overline{\theta}|^2_2\leq H(t)\Lambda (t),
\end{split}
\end{equation}
where $ \int_{0}^{t}H(s)ds\leq C$, for $t\in [0,T_*]$.  Then from the Gronwall's inequality and $u\cdot n|_{\partial \mathbb{V}}=0$, we conclude that
$\overline{I}=\overline{\rho}=\overline{u}=\overline{\theta}=0$, then the uniqueness is obtained.

\underline{Step 3}. The time-continuity can be obtained by the same method as in  Lemma \ref{lem1}.
\end{proof}

\section{ necessity and sufficiency of the compatibility condition}

\textbf{Now we show the proof for Theorem \ref{th2}.}

\begin{proof}\underline{Step}:1.
Necessity. Let $(I,\rho,u, \theta)$ be a strong solution to  (\ref{eq:1.2})-(\ref{eq:2.2hh}) with (\ref{fan1}) or (\ref{fan2}) and the regularity shown in Definition \ref{strong}. Then due to $(\ref{eq:1.2})$, we have
\begin{equation}\label{da9}\begin{split}
Lu(t)+\nabla P_{m}(t)+\frac{1}{c}\int_0^\infty \int_{S^2}A_r(t)\Omega \text{d}\Omega \text{d}v=&\sqrt{\rho}(t) G^1(t),\\
-\frac{1}{c_v}(\kappa\triangle \theta+Q( u))-\int_0^\infty \int_{S^{2}}\frac{1}{c_v} \Big(1-\frac{u\cdot \Omega}{c}\Big)A_r \text{d}\Omega \text{d}v=&\sqrt{\rho}(t) G^2(t),
\end{split}
\end{equation}
for  $0\leq t \leq T_*$, where 
$$G^1(t)=\sqrt{\rho} (-u_t-u\cdot\nabla u), \ G^2(t)=\sqrt{\rho} (-\theta_t-u\cdot\nabla \theta-R\theta \text{div}u). $$
 Since
$$
(\sqrt{\rho}u_t,\sqrt{\rho}\theta_t, \sqrt{\rho}u\cdot\nabla u,\sqrt{\rho}u\cdot\nabla \theta, \sqrt{\rho}\theta \text{div}u) \in  L^\infty([0,T_*];L^2),
$$
we have $(G^1,G^2)\in  L^\infty([0,T];L^2)$. So there exists a sequence $\{t_k\}$ ($t_k\rightarrow 0$) such that
$$
(G^1(t_k),G^2(t_k))\rightharpoonup (f,g) \quad  \text{weak-* \ in} \quad L^2 \quad \text{for some} \quad (f,g) \in L^2.
$$
So, let $t=t_k\rightarrow 0$ in (\ref{da9}), we obtain
\begin{equation}\label{jojo}\begin{split}
Lu(0)+\nabla P_{m}(\rho(0))+\frac{1}{c}\int_0^\infty \int_{S^2}A_r(0)\Omega \text{d}\Omega \text{d}v=&\sqrt{\rho}(0) f,\\
-\frac{1}{c_v}(\kappa\triangle \theta(0)+Q(u(0))-\int_0^\infty \int_{S^{2}}\frac{1}{c_v} \Big(1-\frac{u(0)\cdot \Omega}{c}\Big)A_r(0) \text{d}\Omega \text{d}v=&\sqrt{\rho}(0) g.
\end{split}
\end{equation}
Combining with the strong convergence (\ref{coco}) and (\ref{jojo}), we know that the necessity of the compatibility condition is obtained. Moreover, from the construction of our strong solutions in Section $4$, we easily deduce that $f=g_1$ and $g=g_2$.

\underline{Step}:2.
Sufficiency.  Let $(I_0,\rho_0,u_0,\theta_0)$ be the initial data satisfying (\ref{gogo})-(\ref{kkk}). Then there exists a unique strong  solution $(I,\rho,u,\theta)$ on $\mathbb{R}^+\times S^2\times [0,T_*]\times\mathbb{V} $ to (\ref{eq:1.2})-(\ref{eq:2.2hh}) with (\ref{fan1}) or (\ref{fan2}).
Then we only need to make sure that
$$
I(v,\Omega,0,x)=I_0,\ \rho(0,x)=\rho_0,\ u(0,x)=u_0(x), \   \theta(0,x)=\theta_0, \ x\in \mathbb{V}.
$$
From Remark \ref{initial},
it remains to prove that $u(0,x)=u_0(x)$ and $ \theta(0,x)=\theta_0(x)$ when $ x\in V$. Let $\overline{u}_0=u_0-u(0,x)$ and $\overline{\theta}_0=\theta_0-\theta(0,x)$. According to the proof of the necessity, we know that $(I(v,\Omega,0,x),\rho(0,x),u(0,x),\theta(0,x))$ also satisfies the relation (\ref{kkk}) for $(g_1,g_2)\in L^2$. Then we quickly know that
$(\overline{\theta}_0,\overline{u}_0)\in D^1_0(V)\cap D^2(V)$ is the unique solution of the elliptic problem (\ref{zhen101}) in $V$, and thus $\overline{u}_0=0$ and $\overline{\theta}_0=0$ in $V$, which implies that
$u(0,x)=u_0(x),\ \theta(0,x)=\theta_0(x),  \ x\in V$.
\end{proof}

\section{Beal-Kato-Majda blow-up criterion.}

Now we prove (\ref{eq:2.91}). Let $(I, \rho, u,\theta)$ be the strong solution obtained in Theorem \ref{th1} to   (\ref{eq:1.2})-(\ref{eq:2.2hh}) with (\ref{fan1}) in $\mathbb{R}^+S^2 \times [0,\overline{T})\times \mathbb{V}$. We assume the opposite of (\ref{eq:2.91}) holds, i.e.,
\begin{equation}\label{we11*}
\begin{split}
\lim \sup_{T\mapsto \overline{T}}\big(\|\nabla I\|_{L^2(\mathbb{R}^+\times S^2; L^\infty([0,T]; L^2(\mathbb{V})))}+ |\rho|_{L^\infty([0,T]\times \mathbb{V})}+ |\theta|_{L^\infty([0,T]\times \mathbb{V})}\big)=C_0<\infty.
\end{split}
\end{equation}
In this section, $C\geq 1$  denotes  a generic  constant   depending  only on $(I_0,\rho_0,u_0,\theta_0)$, $C_0$,  $\alpha$, $\beta$, $\kappa$, q, $R$, $c_v$, $\mu$, $\lambda$,  $c$,  $|\mathbb{V}| $ and $T$.

\subsection{The lower order  estimate for $|u|_{L^\infty([0,\overline{T}]; D^1(\mathbb{V})}$}
We first have the $L^\infty$ bound of $I$ and some classical energy estimates.
 \begin{lemma}\label{s2}
\begin{equation*}
\begin{split}
\|I\|_{L^2(\mathbb{R}^+\times S^2; L^\infty([0,T]\times \Omega))} +\|I_t\|_{L^2(\mathbb{R}^+\times S^2; L^\infty([0,T];L^2( \mathbb{V})))}  \leq& C,\quad 0\leq t<  T,\\
\int_0^\infty \int_{S^2}\|A_r(t)\|_{ L^\infty(\mathbb{V})} \text{d}\Omega \text{d}v+|\sqrt\rho (u, \theta)(t)|_2+\|(\nabla u,\nabla \theta)\|_{L^2([0,T]\times \mathbb{V})}\leq& C,\quad 0\leq t<  T.
\end{split}
\end{equation*}
 \end{lemma}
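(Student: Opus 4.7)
The plan is to establish the bounds in cascade: use the method of characteristics to control $I$, pass algebraically to $I_t$ and $A_r$, and then run a classical energy estimate for the fluid part that draws only on the $A_r$-bound just obtained.

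\textbf{Step 1 (radiation bounds).} Writing $(\ref{eq:1.2})_1$ along the photon paths $x(s)=x-c\Omega(t-s)$ as in (\ref{eq:**}), the nonnegativity of $\mathbb{H}=\sigma_a+\int\sigma_s'\,\text{d}\Omega'\text{d}v'$ gives pointwise
\begin{equation*}
0\le I(v,\Omega,t,x)\le I_0(x-c\Omega t)+\int_0^t F(v,\Omega,s,x(s))\,\text{d}s,\quad F=S+\int\frac{v}{v'}\bar\sigma_s\rho I'\,\text{d}\Omega'\text{d}v'.
\end{equation*}
Taking $L^\infty_x$, applying Cauchy--Schwarz to the scattering integral against the weight $(v/v')^2|\bar\sigma_s|^2_{L^\infty}$, squaring, and integrating in $(v,\Omega)$, assumption (\ref{zhen1}) reduces the scattering kernel to a multiple of $\alpha$, while $|\rho|_\infty\le C_0$ from (\ref{we11*}) and the embedding $W^{1,q}(\mathbb V)\hookrightarrow L^\infty$ ($q>3$) give $\|I_0\|_{L^2(\mathbb R^+\times S^2;L^\infty_x)}<\infty$. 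Gronwall then closes the $L^2_{v,\Omega}L^\infty_{t,x}$-bound on $I$ for $t\in[0,\overline T)$. The bound on $\int_0^\infty\!\int_{S^2}\|A_r(t)\|_{L^\infty(\mathbb V)}\,\text{d}\Omega\text{d}v$ follows directly from (\ref{fan222}), (\ref{kll}), and (\ref{zhen1}) by Cauchy--Schwarz and the $I$-bound just obtained. For $I_t$, the identity $I_t=-c\Omega\cdot\nabla I+cA_r$ together with Minkowski's inequality, the $\nabla I$ control in (\ref{we11*}), and the $A_r$-estimate gives the last part of line~1, mirroring the end of the proof of Lemma~\ref{lem:3}.

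\textbf{Step 2 (fluid energy estimate).} Multiply $(\ref{eq:1.2})_3$ by $u$ and integrate over $\mathbb V$; since $u|_{\partial\mathbb V}=0$, one has $\int Lu\cdot u\,\text{d}x=\int Q(u)\,\text{d}x$, yielding
\begin{equation*}
\frac12\frac{d}{dt}\int_\mathbb V\rho|u|^2\,\text{d}x+\int_\mathbb V Q(u)\,\text{d}x=\int_\mathbb V P_m\,\text{div}\,u\,\text{d}x-\frac1c\int_\mathbb V u\cdot\int_0^\infty\int_{S^2} A_r\Omega\,\text{d}\Omega\text{d}v\,\text{d}x.
\end{equation*}
Using $|P_m|_\infty\le R C_0^2$, Step~1, Poincar\'e's inequality (valid since $u|_{\partial\mathbb V}=0$), and Young's inequality, the right--hand side is absorbed into $C+\frac12\int Q(u)\,\text{d}x$, and Gronwall gives $|\sqrt\rho u|_2^2\le C$ and $\int_0^{\overline T}|\nabla u|_2^2\,\text{d}t\le C$. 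Analogously, multiply $(\ref{eq:1.2})_4$ by $\theta$; the continuity equation cancels the $\rho_t\theta^2/2$ terms, $\nabla\theta\cdot n|_{\partial\mathbb V}=0$ produces $\int|\nabla\theta|^2\,\text{d}x$ from the Laplacian, and $|\theta|_\infty\le C_0$ together with the already--controlled $\int_0^{\overline T}\!\int Q(u)\,\text{d}x\text{d}t$ absorbs the $\int Q(u)\theta\,\text{d}x$ source; $\int N_r\theta\,\text{d}x$ is bounded exactly as the radiative term in the momentum estimate via Step~1. Summing and Gronwall give the remaining $\theta$-bounds.

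\textbf{Main obstacle.} The only delicate point is arranging the cascade so that the radiative source in the fluid equations is estimated using only norms of $I$ actually available under the blow-up hypothesis (\ref{we11*}), namely $\|I\|_{L^2_{v,\Omega}L^\infty_x}$ (supplied by Step~1) and $\|\nabla I\|_{L^2_{v,\Omega,x}}$ (assumed); no pointwise bound on $\nabla I$ is available. The pipeline $I\to A_r\to (u,\theta)$ sidesteps any circular dependence, and the quadratic $Q(u)\theta$ term in the temperature estimate is handled by running the momentum estimate first so that $\int_0^{\overline T}\!\int Q(u)\,\text{d}x\text{d}t$ is known to be finite before $\theta$ is tested against the energy equation.
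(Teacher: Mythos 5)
Your proposal is correct, and the fluid part (Step 2) is essentially identical to the paper's proof: the paper also tests $(\ref{eq:1.2})_3$ by $u$ and $(\ref{eq:1.2})_4$ by $\theta$, uses $|\rho|_\infty,|\theta|_\infty\le C_0$ from (\ref{we11*}) and the integrated $A_r$-bound to reduce the right-hand sides to $C|\nabla u|_2^2+C$, and closes with the momentum estimate first so that $\int_0^T|\nabla u|_2^2\,\text{d}t$ is available for the temperature equation; your observation that only $\|I\|_{L^2_{v,\Omega}L^\infty_{t,x}}$ and the assumed $\|\nabla I\|$ enter (the latter only through $I_t=-c\Omega\cdot\nabla I+cA_r$) matches the paper exactly. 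The only genuine difference is how the $L^2_{v,\Omega}L^\infty_{t,x}$ bound on $I$ is produced: the paper multiplies $(\ref{eq:1.2})_1$ by $r|I|^{r-2}I$, derives an $L^r$ Gronwall inequality with a constant independent of $r$, and lets $r\to\infty$, whereas you integrate along photon characteristics and use the nonnegativity of $\mathbb{H}=\sigma_a+\int\sigma_s'\,\text{d}\Omega'\text{d}v'$ (the same Duhamel representation the paper uses in (\ref{eq:**}) to prove $I\ge0$). Both routes rest on (\ref{zhen1}) plus $|\rho|_\infty\le C_0$ and close by Gronwall after Cauchy--Schwarz in $(v',\Omega')$; your route avoids the $r$-uniformity bookkeeping and the boundary term $\int_{\partial\mathbb{V}}|I|^r\,n\cdot\Omega$, while the paper's avoids tracking characteristics. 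One small imprecision in yours: in the bounded domain the backward characteristic may exit through the inflow part of $\partial\mathbb{V}$ before reaching $t=0$, so the foot value is either $I_0$ or $0$ (by $I|_{\partial\mathbb{V}}=0$ for $\Omega\cdot n\le0$), not literally $I_0(x-c\Omega t)$; since the boundary value is $0$ this only improves the bound $\|I(t)\|_{L^\infty_x}\le\|I_0\|_{L^\infty_x}+\int_0^t\|F(s)\|_{L^\infty_x}\,\text{d}s$, and the argument goes through unchanged.
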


\begin{proof}First, let $2\leq r$, multiplying $ (\ref{eq:1.2})_1$ by $r|I|^{r-2}I$ and integrating over $\mathbb{V}$, we have
\begin{equation}\label{kthyt}
\begin{split}
\frac{d}{dt}|I|^r_{r}
\leq  Cr|I|^{r-1}_r\Big(|S|_{r}
+|\rho|_\infty\int_0^\infty \int_{S^2} \frac{v}{v'}|I'|_{r} | \overline{\sigma}_s|_\infty \text{d}\Omega' \text{d}v'\Big).
\end{split}
\end{equation}
According to  assumptions (\ref{zhen1})-(\ref{jia345})  and  (\ref{kthyt}), we deduce
\begin{equation}\label{kthyt1}
\begin{split}
\frac{d}{dt}|I|^2_{r}
\leq& C\Big(|I|^2_{r}+|S|^2_{r}+ |I|^2_{L^2(\mathbb{R}^+\times S^2; L^r))}  \int_0^\infty \int_{S^2}\Big|\frac{v}{v'}\Big|^2|\overline{\sigma}_s|^2_\infty\text{d}\Omega' \text{d}v'\Big).
\end{split}
\end{equation}
From Gronwall's inequality, we have
\begin{equation*}
\begin{split}
&\|I(v,\Omega,t,x)\|^2_{C([0,T];L^r)}
\leq  \exp (CT)\Big(|I_0|^2_{r}+\int_0^{T}|S|^2_{r}\text{d}s+T\int_0^\infty \int_{S^2} \Big|\frac{v}{v'}\Big|^2 |\overline{\sigma}_s|^2_\infty \text{d}\Omega' \text{d}v'\Big).
\end{split}
\end{equation*}
Integrating the above inequality in $\mathbb{R}^+\times S^2$ with respect to $(v,\Omega)$,  we have
$$
\|I\|^2_{L^2(\mathbb{R}^+\times S^2;C([0,T];L^r))}\leq C,\quad 0\leq t<  T,
$$
where  $C$ is independent of $r$. Letting $r\rightarrow \infty$, we obtain the  bound of $\|I\|^2_{L^2(\mathbb{R}^+\times S^2;C([0,T];L^\infty))}$.
Moreover, the estimate for $I_t$ follows quickly from $ I_t=-c\Omega\cdot\nabla I+cA_r$. 

Secondly,   multiplying  $(\ref{eq:1.2})_3$ by $u$,     $(\ref{eq:1.2})_4$ by $\theta$ respectively and integrating the resulting equations over $\mathbb{V}$ by parts, we have

\begin{equation}\label{zhu2s}
\begin{split}
&\frac{1}{2}\frac{d}{dt}\int_{\mathbb{V}} \rho |u|^2 \text{d}x+\int_{\mathbb{V}} \big(\mu |\nabla u|^2+(\lambda+\mu)(\text{div}u)^2\big)\text{d}x\\
= &\int_{\mathbb{V}}  P_m \text{div}u\text{d}x-\frac{1}{c}\int_{\mathbb{V}}\int_0^\infty \int_{S^2}A_r\Omega\cdot u \text{d}\Omega \text{d}v\text{d}x\leq \frac{\mu}{2}|\nabla u|^2_2+C,  \\
&\frac{1}{2}\frac{d}{dt}\int_{\mathbb{V}} \rho |\theta|^2\text{d}x+\int_{\mathbb{V}} \kappa |\nabla \theta|^2\text{d}x\\
\leq &\int_{\mathbb{V}}C \big(  \rho \theta^2 |\text{div}u|+ |\nabla u|^2 |\theta|+N_r \theta\big)\text{d}x\leq C|\nabla u|^2_2+C,
\end{split}
\end{equation}
which immediately implies the desired conclusions.
\end{proof}


Now we improve the energy estimate obtained in Lemma \ref{s2}.
  \begin{lemma}\label{abs3}
If $\lambda <3 \mu$, it holds that 
\begin{equation}\label{keyq}
\begin{split}
\int_{\mathbb{V} }\rho|u(t)|^4 \text{dx}+\int_0^T \int_{\mathbb{V}} |u|^2 |\nabla u|^2\text{d}x\text{d}t\leq C,\quad 0\leq t< T.
\end{split}
\end{equation}
 \end{lemma}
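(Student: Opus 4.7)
The plan is to test the momentum equation $(\ref{eq:1.2})_3$ against $|u|^2 u$ and integrate over $\mathbb{V}$, exploiting the Dirichlet condition $u|_{\partial \mathbb{V}}=0$ to discard boundary terms. Using the continuity equation $(\ref{eq:1.2})_2$ on the transport pair $(\rho u)_t + \text{div}(\rho u\otimes u)$, the time and convective terms combine into $\frac{1}{4}\frac{d}{dt}\int_{\mathbb{V}}\rho|u|^4\,dx$. The remaining terms to process are (a) the viscous contribution $\int_{\mathbb{V}} Lu\cdot |u|^2 u\,dx$, (b) the pressure term $\int_{\mathbb{V}}\nabla P_m\cdot |u|^2 u\,dx$, and (c) the radiation source $-\frac{1}{c}\int_{\mathbb{V}}\int_0^\infty\!\!\int_{S^2}A_r\Omega\cdot |u|^2 u\,d\Omega dv\,dx$. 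The pressure and radiation terms are harmless given the standing hypothesis (\ref{we11*}): since $|P_m|_\infty\le R|\rho|_\infty|\theta|_\infty\le C$, integration by parts yields $|\int P_m\,\text{div}(|u|^2 u)|\le \varepsilon\int|u|^2|\nabla u|^2+C\int|u|^2$, and Lemma \ref{s2} together with $\int_0^\infty\!\!\int_{S^2}\|A_r\|_{L^\infty}\,d\Omega dv\le C$ gives a radiation bound of the form $C(\int|u|^4)^{3/4}|\mathbb{V}|^{1/4}$, both of which can be absorbed or controlled via the basic energy bound $\int\rho|u|^2\le C$.

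The heart of the argument, and its only real obstacle, is extracting a coercive lower bound on the viscous term of the form $c_\mu\int_{\mathbb{V}}|u|^2|\nabla u|^2\,dx$ with $c_\mu>0$ under the single structural assumption $\lambda<3\mu$ (together with $3\lambda+2\mu\ge 0$, which forces $\lambda+\mu\ge \mu/3>0$). Integrating by parts produces
\begin{equation*}
\int_{\mathbb{V}}Lu\cdot|u|^2u\,dx
=\mu\!\int|u|^2|\nabla u|^2+2\mu\!\int|u|^2|\nabla|u||^2
+(\lambda+\mu)\!\int|u|^2(\text{div}\,u)^2
+2(\lambda+\mu)\!\int|u|(\text{div}\,u)(u\cdot\nabla|u|),
\end{equation*}
and the cross term of indefinite sign is the offender: a naive Young bound only recovers $c_\mu>0$ when $\lambda<\mu$, which is strictly weaker than the allowed range.

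To close the gap up to $\lambda<3\mu$, I will invoke the geometric identity recalled in Remark \ref{rrr2},
\begin{equation*}
|\nabla u|^2=|u|^2\bigl|\nabla(u/|u|)\bigr|^2+\bigl|\nabla|u|\bigr|^2\quad\text{on }\{|u|>0\},
\end{equation*}
which substituted into $\mu\!\int|u|^2|\nabla u|^2+2\mu\!\int|u|^2|\nabla|u||^2$ exposes a reserve term $\mu\!\int|u|^4|\nabla(u/|u|)|^2$. Using the auxiliary inequality $(\ref{ghukk11})$ of Remark \ref{rrr2} with $r=4$, this reserve dominates $\mu\phi\int|u|^2|\nabla|u||^2$, equivalently $\int|u|^2|\nabla|u||^2\le (1+\phi)^{-1}\int|u|^2|\nabla u|^2$. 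Applying Cauchy--Schwarz with a free weight $\varepsilon$ to the cross term $2(\lambda+\mu)\!\int|u|(\text{div}\,u)(u\cdot\nabla|u|)$ and optimizing $\varepsilon$ together with the $\phi$-gain gives a net coefficient in front of $\int|u|^2|\nabla u|^2$ that is strictly positive precisely when $\lambda<3\mu$; this step is the single place where the hypothesis of the lemma enters.

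With the coercivity $c_\mu\int|u|^2|\nabla u|^2$ in hand, I absorb the $\varepsilon$-small contribution from the pressure term, and for the remaining lower-order pieces I will use $|u|^3\le \frac12(|u|^2+|u|^4)$ and the already-known bound $\int\rho|u|^2\le C$. This yields a differential inequality of the form
\begin{equation*}
\frac{d}{dt}\int_{\mathbb{V}}\rho|u|^4\,dx+c\int_{\mathbb{V}}|u|^2|\nabla u|^2\,dx\le C\int_{\mathbb{V}}\rho|u|^4\,dx+C,
\end{equation*}
and Gronwall's inequality on $[0,T]$ delivers (\ref{keyq}). The main technical point is thus not the Gronwall closure but the precise book-keeping in the viscous term that makes $c_\mu>0$ for every $\lambda<3\mu$; I will handle the set $\{|u|=0\}$ by standard truncation so that the identity and $(\ref{ghukk11})$ apply only where they are meaningful.
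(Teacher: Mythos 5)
Your overall frame (testing the momentum equation with $|u|^2u$, the integration by parts producing $\mu\int|u|^2|\nabla u|^2+2\mu\int|u|^2\big|\nabla|u|\big|^2+(\lambda+\mu)\int|u|^2(\text{div}u)^2$ plus the indefinite cross term, and the soft treatment of the pressure and radiation terms under (\ref{we11*})) is the same as the paper's, but the coercivity step has a genuine gap: you invoke (\ref{ghukk11}) as if it were an available inequality, whereas in Remark \ref{rrr2} --- and in the paper's proof --- it is only an \emph{assumption} defining one half of a dichotomy. There is no unconditional bound $\int|u|^4\big|\nabla(u/|u|)\big|^2\ge\phi\int|u|^2\big|\nabla|u|\big|^2$: for a unidirectional field $u=f(x)e$ with $e$ a fixed unit vector, the left-hand side vanishes identically while the right-hand side is arbitrary. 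When the angular part is small your ``reserve term'' gives no gain, Cauchy--Schwarz on the cross term only returns the naive range (roughly $\lambda<2\mu$), and your argument stalls short of $\lambda<3\mu$. The paper closes exactly this case in Step 2 of its proof: under the reverse inequality (\ref{ghu11}) one writes $\text{div}u=|u|\,\text{div}(u/|u|)+u\cdot\nabla|u|/|u|$, completes the square as in (\ref{wang2mm})--(\ref{wang3}), uses $\big|\text{div}(u/|u|)\big|^2\le3\big|\nabla(u/|u|)\big|^2$, and then uses the assumed smallness of the angular part to absorb the leftover negative term proportional to $\int|u|^{r}\big(\text{div}(u/|u|)\big)^2$. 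Moreover the threshold $\phi(\epsilon_0,\epsilon_1,r)$ is not free: it is calibrated so that the \emph{same} $\phi$ makes both halves work (in Step 2 it turns the negative coefficient into exactly $-\mu\epsilon_1 r(r-1)$, see (\ref{xuchendd})), and even the favorable case (\ref{ghu}) splits into subcases according to the sign of $\lambda+\epsilon_0\mu$, with the restriction $\lambda<3\mu$ entering only through the root $a_1(\epsilon_0)=1-\epsilon_0+\sqrt{4-3\epsilon_0}\to3$ in (\ref{dulan})--(\ref{dulann1}). Asserting that ``optimizing $\varepsilon$ together with the $\phi$-gain gives positivity precisely when $\lambda<3\mu$'' skips all of this, and the missing complementary case is not a technicality: it is where the structural link between $\text{div}u$ and the angular derivative does the work.

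A smaller point: with vacuum, $\int|u|^2\,\text{d}x$ and $\int|u|^4\,\text{d}x$ are not controlled by $\int\rho|u|^2\,\text{d}x$ or $\int\rho|u|^4\,\text{d}x$, so your closing Gronwall inequality needs either the Dirichlet condition plus Sobolev/Poincar\'e to absorb these terms into the dissipation (this is what the paper does through $\big||u|^{r/2}\big|_6$ in (\ref{zhu2s}) of Section 6), or the time-integrability of $|\nabla u|^2_2$ from Lemma \ref{s2}; invoking only the energy bound $\int\rho|u|^2\le C$ is not enough as stated. This part is repairable, but the missing dichotomy in the viscous estimate is not, without importing the paper's Step 2.
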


\begin{proof} For any $\lambda$ satisfying that $\lambda <3\mu$, there must exist a sufficiently small constant $\alpha_{\lambda\mu} >0$ such that:
\begin{equation}\label{zheng9}
\lambda <(3-\alpha_{\lambda\mu})\mu<3\mu.
\end{equation}
So we only need to show that (\ref{abs3}) holds under the assumption (\ref{zheng9}).

 First, multiplying $ (\ref{eq:1.2})_3$ by $r|u|^{r-2}u$ $(r\geq 3)$ and integrating the resulting equation over $\mathbb{V}$ by parts, then we have
\begin{equation}\label{lz1}
\begin{split}
 \frac{d}{dt}\int_{\mathbb{V}}& \rho |u|^r\text{d}x+\int_{\mathbb{V} }H_r\text{d}x
=-r(r-2)(\mu+\lambda)\int_{\mathbb{V}} \text{div}u |u|^{r-3}u\cdot \nabla |u|\text{d}x\\
&+\int_{\mathbb{V}} r P_m\text{div }(|u|^{r-2}u)\text{d}x-\frac{1}{c}\int_{\mathbb{V} }\int_0^\infty \int_{S^2}r|u|^{r-2}A_ru\cdot \Omega \text{d}\Omega \text{d}v\text{d}x,
\end{split}
\end{equation}
where
$$
H_r=r|u|^{r-2}\big(\mu|\nabla u|^2+(\mu+\lambda)|\text{div}u|^2+\mu(r-2)|\nabla |u||^2\big).
$$

For any given $\epsilon_1\in (0,1)$, we define a nonnegative function which will be determined in $\textbf{Step}\ 2$ as follows
$$
\phi(\epsilon_0,\epsilon_1,r)=\left\{ \begin{array}{llll}
\frac{\mu \epsilon_1(r-1)}{3\big(-\frac{\mu(4-\epsilon_0)}{3}-\lambda+\frac{r^2(\lambda+\mu)}{4(r-1)}\big)}, \quad \text{if}\quad \frac{r^2(\mu+\lambda)}{4(r-1)} -\frac{\mu(4-\epsilon_0)}{3}-\lambda>0, \\[12pt]
\displaystyle
0,\quad \text{otherwise}. \end{array}\right.
$$

$\textbf{Step}\ 1$: We assume that 
\begin{equation}\label{ghu}
\begin{split}
&\int_{\mathbb{V} \cap |u|>0}  |u|^r \Big| \nabla \Big(\frac{u}{|u|}\Big)\Big|^2\text{d}x> \phi(\epsilon_0,\epsilon_1,r)\int_{\mathbb{V} \cap |u|>0}  |u|^{r-2} \big| \nabla  |u|\big|^2\text{d}x.
\end{split}
\end{equation}
 A direct calculation gives for $|u|>0$ that (\ref{gpkk}) holds.
By (\ref{lz1}) and  the Cauchy's inequality, we have

\begin{equation}\label{lz3}
\begin{split}
& \frac{d}{dt}\int_{\mathbb{V}} \rho |u|^r\text{d}x+\int_{\mathbb{V} \cap |u|>0} H_r\text{d}x\\
=&-r(r-2)(\mu+\lambda)\int_{\mathbb{V} \cap |u|>0}  \text{div}u |u|^{\frac{r-2}{2}} |u|^{\frac{r-4}{2}}u\cdot \nabla |u|\text{d}x\\
&+\int_{\mathbb{V}} r P_m\text{div }(|u|^{r-2}u)\text{d}x-\frac{1}{c}\int_{\mathbb{V} }\int_0^\infty \int_{S^2}r|u|^{r-2}A_ru\cdot \Omega \text{d}\Omega \text{d}v\text{d}x\\
\leq & r(\mu+\lambda)\int_{\mathbb{V} \cap |u|>0}   |u|^{r-2} |\text{div}u|^2\text{d}x
+\frac{r(r-2)^2(\mu+\lambda)}{4}\int_{\mathbb{V} \cap |u|>0}   |u|^{r-2} |\nabla |u||^2\text{d}x\\
&+\int_{\mathbb{V}} r P_m\text{div }(|u|^{r-2}u)\text{d}x-\frac{1}{c}\int_{\mathbb{V} }\int_0^\infty \int_{S^2}r|u|^{r-2}A_ru\cdot \Omega \text{d}\Omega \text{d}v\text{d}x.
\end{split}
\end{equation}
Via Holder's inequaity, Gagliardo-Nirenberg inequality and Young's inequality, we have
\begin{equation}\label{zhu2s}
\begin{split}
M_1=& r\int_{\mathbb{V}} P_m|u|^{r-2}|\nabla u|  \text{d}x
\leq C \Big( \int_{\mathbb{V} }|u|^{r-2}|\nabla u|^2\text{d}x\Big)^{\frac{1}{2}}\Big(\int_{\mathbb{V} }|u|^{r-2}P_m\text{d}x\Big)^{\frac{1}{2}}\\
\leq&C\Big( \int_{\mathbb{V} }|u|^{r-2}|\nabla u|^2\text{d}x\Big)^{\frac{1}{2}}||u|^{\frac{r}{2}}|^{1-\frac{2}{r}}_6|P_m|^{\frac{1}{2}}_{\frac{6r}{2r+2}}\\
\leq &
\frac{1}{4}\mu r \epsilon_0 \int_{\mathbb{V} }|u|^{r-2}|\nabla u|^2\text{d}x+C(\mu, r,\epsilon_0),\\
M_2=&-\frac{1}{c}\int_{\mathbb{V} }\int_0^\infty \int_{S^2}r|u|^{r-2}A_ru\cdot \Omega \text{d}\Omega \text{d}v\text{d}x\\
\leq & C||u|^{\frac{r}{2}}|^{2-\frac{2}{r}}_6\int_0^\infty \int_{S^2}|A_r|_{\frac{3r}{2r+1}} \text{d}\Omega \text{d}v
\leq  \frac{1}{4}\mu r \epsilon_0 \int_{\mathbb{V} }|u|^{r-2}|\nabla u|^2\text{d}x+C(\mu, r,\epsilon_0),
\end{split}
\end{equation}
where $\epsilon_0\in (0,\frac{1}{4})$  is independent of $r$.
Then combining (\ref{ghu})-(\ref{zhu2s}), we quickly have
\begin{equation}\label{lz4}
\begin{split}
& \frac{d}{dt}\int_{\mathbb{V}} \rho |u|^r\text{d}x+rf(\epsilon_0,\epsilon_1,\epsilon_2,r)\int_{\mathbb{V} \cap |u|>0} |u|^{r-2}|\nabla |u||^2\text{d}x\\
&+\int_{\mathbb{R}^3 \cap \{|u|>0\}}  \mu r(1-\epsilon_0)\epsilon_2|u|^{r}\Big| \nabla \Big(\frac{u}{|u|}\Big)\Big|^2\text{d}x
\leq C(\mu, r,\epsilon_0),
\end{split}
\end{equation}
where
\begin{equation}\label{xuchen}
\begin{split}
f(\epsilon_0,\epsilon_1,\epsilon_2, r)=\mu (1-\epsilon_0)(1-\epsilon_2)\phi(\epsilon_0,\epsilon_1,r)+\mu(r-1-\epsilon_0)-\frac{(r-2)^2(\mu+\lambda)}{4}.
\end{split}
\end{equation}

\textbf{Subcase} 1: If $4\in \Big\{ r\Big| \frac{r^2(\mu+\lambda)}{4(r-1)}-\frac{(4-\epsilon_0)\mu}{3}-\lambda>0\Big\}$, i,e, $\lambda+\epsilon_0 \mu>0$, it is  easy to get 
$$[4,+\infty)\in \Big\{ r\big| \frac{r^2(\mu+\lambda)}{4(r-1)}-\frac{(4-\epsilon_0)\mu}{3}-\lambda>0\Big\}.$$ Therefore, we have
\begin{equation}\label{lzyue}
\begin{split}
\phi(\epsilon_0, \epsilon_1, r)=
\frac{\mu \epsilon_1(r-1)}{3\big(-\frac{(4-\epsilon_0)\mu}{3}-\lambda+\frac{r^2(\lambda+\mu)}{4(r-1)}\big)},\quad \text{for}\quad  r\in [4,\infty).
\end{split}
\end{equation}
Substituting (\ref{lzyue}) into (\ref{xuchen}), for $r\in [4,\infty)$, we have
\begin{equation}\label{xuchendd}
\begin{split}
&f(\epsilon_0,\epsilon_1,\epsilon_2, r)\\
=&\frac{\mu^2 \epsilon_1(1-\epsilon_0)(1-\epsilon_2)(r-1)}{3\big(-\frac{(4-\epsilon_0)\mu}{3}-\lambda+\frac{r^2(\lambda+\mu)}{4(r-1)}\big)}+\mu(r-1-\epsilon_0)-\frac{(r-2)^2(\mu+\lambda)}{4}.
\end{split}
\end{equation}
For $(\epsilon_1,\epsilon_2,r)=(1,0,4)$, we have
\begin{equation}\label{dulan}
\begin{split}
f(\epsilon_0,1,0,4)=&\frac{3(1-\epsilon_0)\mu^2}{\lambda+\epsilon_0\mu}+2\mu-\lambda-\epsilon_0 \mu
=-C_1(\lambda-a_1 \mu)(\lambda-a_2\mu).
\end{split}
\end{equation}
Then according to $\lambda+\epsilon_0 \mu>0$, we have $C_1=\frac{1}{\lambda+\epsilon_0\mu}>0$ and 
\begin{equation}\label{dulann}
\begin{split}
a_1(\epsilon_0)=&1-\epsilon_0+\sqrt{4-3\epsilon_0},\quad a_2(\epsilon_0)=-1-\epsilon_0-\sqrt{4-3\epsilon_0}.
\end{split}
\end{equation}
So if we want to make sure that   $f(\epsilon_0,1,0,4)>0$, we have to asuume that 
\begin{equation}\label{dulann1}
\begin{split}
-\epsilon_0 \mu<\lambda< a_1(\epsilon_0) \mu.
\end{split}
\end{equation}

Due to $a_1(0)=3$ and $a'_1(\epsilon_0)<0$ for $\epsilon_0\in [0,1/4]$, then 
we can  choose  $\epsilon_0 \in (0,1/4)$ such that $a_1(\epsilon_0)\leq 3-\alpha_{\lambda\mu}$. 

Since $f(\epsilon_0,\epsilon_1,\epsilon_2, 4)$ is continuous w.r.t.  $(\epsilon_1,\epsilon_2)$ over $[0,1]\times [0,1]$, there exists $(\epsilon_1,\epsilon_2)\in (0,1) \times (0,1)$  such that 
$$
f(\epsilon_0,\epsilon_1,\epsilon_2,  4)>0,
$$
which, together with (\ref{lz4}), implies that 
\begin{equation}\label{lz411}
\begin{split}
& \frac{d}{dt}\int_{\mathbb{R}^3} \rho |u|^4\text{d}x+C\int_{\mathbb{R}^3 \cap \{|u|>0\}}|u|^{2}|\nabla u|^2\text{d}x
\leq C.
\end{split}
\end{equation}

\textbf{Subcase} 2: If $4\notin \{ r\big| \frac{r^2(\mu+\lambda)}{4(r-1)}-\frac{(4-\epsilon_0)\mu}{3}-\lambda>0\}$, i.e., $\lambda <-\epsilon_0 \mu$. In this case, for $r=4$,  it is easy to get
\begin{equation}\label{peng}
\begin{split}
&r\Big[\mu (1-\epsilon_0)(1-\epsilon_2)\phi(\epsilon_0,\epsilon_1,r)+\mu(r-1-\epsilon_0)-\frac{(r-2)^2(\mu+\lambda)}{4}\Big]\\
>&4\Big(\frac{11}{4}\mu-(\mu+\lambda)\Big)=4\Big(\frac{7\mu}{4}-\lambda\Big)
\geq 4\Big(\frac{7\mu}{4}+\epsilon_0\mu\Big)>7\mu,
\end{split}
\end{equation}
which, together with (\ref{lz4})-(\ref{xuchen}),  implies that
\begin{equation}\label{lz422}
\begin{split}& \frac{d}{dt}\int_{\mathbb{R}^3} \rho |u|^4\text{d}x+C\int_{\mathbb{R}^3 \cap \{|u|>0\}}|u|^{2}|\nabla u|^2\text{d}x
\leq C.
\end{split}
\end{equation}

$\textbf{Step}$ 2 : we assume that
\begin{equation}\label{ghu11}
\begin{split}
&\int_{\mathbb{R}^3 \cap |u|>0}  |u|^r \Big| \nabla \Big(\frac{u}{|u|}\Big)\Big|^2\text{d}x\leq  \phi(\epsilon_0,\epsilon_1,r)\int_{\mathbb{R}^3 \cap |u|>0}  |u|^{r-2} \big| \nabla  |u|\big|^2\text{d}x.
\end{split}
\end{equation}
A direct calculation gives for $|u|>0$,
\begin{equation}\label{ghu22}
\begin{split}
\text{div}u=|u|\text{div}\Big(\frac{u}{|u|}\Big)+\frac{u\cdot \nabla |u|}{|u|}.
\end{split}
\end{equation}
Then combining (\ref{ghu22}) and (\ref{lz3})-(\ref{zhu2s}), we quickly have
\begin{equation}\label{lz77}
\begin{split}
& \frac{d}{dt}\int_{\mathbb{R}^3} \rho |u|^r\text{d}x+\int_{\mathbb{R}^3 \cap \{|u|>0\}}\mu r(1-\epsilon_0)|u|^{r-2}|\nabla u|^2\text{d}x\\
&+\int_{\mathbb{R}^3 \cap \{|u|>0\}}r(\lambda+\mu) |u|^{r-2}|\text{div}u|^2\text{d}x+\int_{\mathbb{R}^3 \cap \{|u|>0\}}\mu r(r-2)|u|^{r-2}\big| \nabla  |u| \big|^2\text{d}x\\
=&-r(r-2)(\mu+\lambda)\int_{\mathbb{R}^3 \cap \{|u|>0\}} \Big( |u|^{r-2} u\cdot \nabla |u| \text{div}\Big(\frac{u}{|u|}\Big)+ |u|^{r-4} |u\cdot \nabla |u| |^2\Big)\text{d}x.
\end{split}
\end{equation}
This gives
\begin{equation}\label{lz88}
\begin{split}
& \frac{d}{dt}\int_{\mathbb{R}^3} \rho |u|^r\text{d}x+\int_{\mathbb{R}^3 \cap \{|u|>0\}}r|u|^{r-4}G\text{d}x\leq C(\mu,r,\epsilon_0),
\end{split}
\end{equation}
where
\begin{equation}\label{wang1}
\begin{split}
G=&\mu (1-\epsilon_0)|u|^{2} |\nabla u|^2+(\mu+\lambda)|u|^{2}|\text{div}u|^2+\mu(r-2)|u|^{2}\big| \nabla  |u| \big|^2\\
&+(r-2)(\mu+\lambda)|u|^{2} u\cdot \nabla |u|\text{div}\Big(\frac{u}{|u|}\Big)+(r-2)(\mu+\lambda)|u \cdot \nabla |u||^2.
\end{split}
\end{equation}
Now we consider how to make sure that $G\geq 0$.
\begin{equation}\label{wang2mm}
\begin{split}
G=&\mu (1-\epsilon_0)|u|^{2} \Big( |u|^2\Big| \nabla \Big(\frac{u}{|u|}\Big)\Big|^2+\big| \nabla  |u|\big|^2\Big)\\
&+(\mu+\lambda)|u|^{2}|\Big(|u|\text{div}\Big(\frac{u}{|u|}\Big)+\frac{u\cdot \nabla |u|}{|u|}\Big)^2+\mu(r-2)|u|^{2}\big| \nabla  |u| \big|^2\\
&+(r-2)(\mu+\lambda)|u|^{2} u\cdot \nabla |u|\text{div}\Big(\frac{u}{|u|}\Big)+(r-2)(\mu+\lambda)|u \cdot \nabla |u||^2\\
=&\mu(1-\epsilon_0) |u|^{4} \Big| \nabla \Big(\frac{u}{|u|}\Big)\Big|^2+\mu(r-1-\epsilon_0)|u|^{2}\big| \nabla  |u| \big|^2\\
&+(r-1)(\mu+\lambda)\Big(u \cdot \nabla |u|+\frac{r}{2(r-1)}|u|^{2}\Big(\text{div}\frac{u}{|u|}\Big)\Big)^2\\
&+(\mu+\lambda)|u|^{4}\Big(\text{div}\frac{u}{|u|}\Big)^2-\frac{r^2(\mu+\lambda)}{4(r-1)}|u|^{4}\Big(\text{div}\Big(\frac{u}{|u|}\Big)\Big)^2,
\end{split}
\end{equation}
which, combining with the fact
$
\Big|\text{div}\Big(\frac{u}{|u|}\Big)\Big|^2\leq 3\Big|\nabla \Big(\frac{u}{|u|}\Big)\Big|^2
$,
implies that
\begin{equation}\label{wang3}
\begin{split}
G
\geq &\mu(r-1-\epsilon_0)|u|^{2}\big| \nabla  |u| \big|^2+\Big(\frac{(4-\epsilon_0)\mu}{3}+\lambda-\frac{r^2(\mu+\lambda)}{4(r-1)}\Big)|u|^{4}\Big(\text{div}\Big(\frac{u}{|u|}\Big)\Big)^2.
\end{split}
\end{equation}
Thus
\begin{equation}\label{wang4}
\begin{split}
&\int_{\mathbb{R}^3 \cap \{|u|>0\}} r|u|^{r-4} G\text{d}x\geq \mu r(r-1-\epsilon_0)\int_{\mathbb{R}^3 \cap \{|u|>0\}} |u|^{r-2} \big| \nabla  |u| \big|^2\text{d}x\\
&\qquad + r \Big(\frac{(4-\epsilon_0)\mu}{3}+\lambda-\frac{r^2(\mu+\lambda)}{4(r-1)}\Big) \int_{\mathbb{R}^3 \cap \{|u|>0\}} |u|^{r} \Big(\text{div}\Big(\frac{u}{|u|}\Big)\Big)^2\text{d}x\\
&\qquad \geq  g(\epsilon_0,\epsilon_1,r)\int_{\mathbb{R}^3 \cap \{|u|>0\}}  |u|^{r-2} \big| \nabla  |u| \big|^2\text{d}x,
\end{split}
\end{equation}
where
\begin{equation}\label{xuchendd}
\begin{split}
g(\epsilon_0,\epsilon_1,r)=\Big[3r \Big(\frac{(4-\epsilon_0)\mu}{3}+\lambda-\frac{r^2(\mu+\lambda)}{4(r-1)}\Big)\phi(\epsilon_0,\epsilon_1,r)+\mu r(r-1-\epsilon_0)\Big)\Big].
\end{split}
\end{equation}
Here we need that $\epsilon_0$ is sufficiently small such that $\epsilon_0<(r-1)(1-\epsilon_1)$.
Then combining (\ref{gpkk}),  (\ref{lz88}) and  (\ref{wang4})-(\ref{xuchendd}), when $r=4$, we quickly have
\begin{equation}\label{lz4mm}
\begin{split}
& \frac{d}{dt}\int_{\mathbb{R}^3} \rho |u|^4\text{d}x+C\int_{\mathbb{R}^3 \cap \{|u|>0\}}|u|^{2}|\nabla u|^2\text{d}x \leq C.
\end{split}
\end{equation}

So combining (\ref{lz411})-(\ref{lz422}) and (\ref{lz4mm}) for $\textbf{Step}$:1 and $\textbf{Step}$:2, we conclude that if $\lambda <(3-\alpha_{\lambda\mu})\mu$, there exits some constants $C>0$ such that (\ref{abs3}) holds.

\end{proof}

The next lemma will give a key estimate on $\nabla u$.
  \begin{lemma}\label{abs4}
\begin{equation*}
\begin{split}
&|\nabla u(t)|_{ 2}++\int_0^T |\sqrt{\rho} u_t|^2_2\text{d}t
 \leq C,\quad 0\leq t< T.
\end{split}
\end{equation*}
 \end{lemma}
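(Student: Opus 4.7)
The strategy is the standard energy estimate obtained by testing the momentum equation $(\ref{eq:1.2})_3$ with $u_t$ and integrating over $\mathbb{V}$; using $u|_{\partial\mathbb{V}}=0$ (so $u_t|_{\partial\mathbb{V}}=0$), this yields
\begin{equation*}
\int_{\mathbb{V}}\rho|u_t|^2\,\text{d}x+\tfrac{1}{2}\tfrac{\text{d}}{\text{d}t}\bigl(\mu|\nabla u|_2^2+(\mu+\lambda)|\text{div}\,u|_2^2\bigr)=-\!\int\rho u\cdot\nabla u\cdot u_t\,\text{d}x-\!\int\nabla P_m\cdot u_t\,\text{d}x+\!\int R_r\cdot u_t\,\text{d}x,
\end{equation*}
where $R_r=-\tfrac{1}{c}\int_0^\infty\!\int_{S^2}A_r\Omega\,\text{d}\Omega\,\text{d}v$. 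The convection term is controlled via Lemma \ref{abs3}: Cauchy--Schwarz with $|\rho|_\infty\leq C_0$ gives $|\!\int\rho u\cdot\nabla u\cdot u_t|\leq \tfrac14|\sqrt\rho u_t|_2^2+C\int|u|^2|\nabla u|^2\,\text{d}x$, whose last factor lies in $L^1(0,T)$ by Lemma \ref{abs3}. The radiation term is controlled using the $L^\infty$ bound on $I$ from Lemma \ref{s2} together with assumption (\ref{kll}).

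The main obstacle is the pressure term $-\int\nabla P_m\cdot u_t$, and I would attack it via the effective-viscous-flux idea of Remark \ref{rrr2}(III). Setting $G=(2\mu+\lambda)\text{div}\,u-P_m$ and $\omega=\text{curl}\,u$, I would rewrite the momentum equation as $\rho u_t+\rho u\cdot\nabla u=\nabla G-\mu\,\text{curl}\,\omega+R_r$; testing against $u_t$, using $\text{div}\,u_t=(G+P_m)_t/(2\mu+\lambda)$, and integrating by parts the curl term (exploiting $u_t|_{\partial\mathbb{V}}=0$ and $\omega_t=\text{curl}\,u_t$) produces
\begin{equation*}
\int\rho|u_t|^2\,\text{d}x+\tfrac{1}{2(2\mu+\lambda)}\tfrac{\text{d}}{\text{d}t}|G|_2^2+\tfrac{\mu}{2}\tfrac{\text{d}}{\text{d}t}|\omega|_2^2=-\!\int\rho u\cdot\nabla u\cdot u_t\,\text{d}x-\tfrac{1}{2\mu+\lambda}\!\int G(P_m)_t\,\text{d}x+\!\int R_r\cdot u_t\,\text{d}x.
\end{equation*}
For $\int G(P_m)_t$, I would write $(P_m)_t=(\rho E_m)_t-\tfrac12(\rho|u|^2)_t$ and substitute the total energy equation $(\ref{eq:1.1})_4$ together with the kinetic-energy identity obtained from $u\cdot(\ref{eq:1.2})_3$; the troublesome contribution $\int G\,\text{div}(u\mathbb{T})$, after integration by parts (with $u\mathbb{T}\cdot n|_{\partial\mathbb{V}}=0$ thanks to $u|_{\partial\mathbb{V}}=0$), becomes $-\int(u\mathbb{T})\cdot\nabla G$, bounded by $C\bigl(\int|u|^2|\nabla u|^2\bigr)^{1/2}|\nabla G|_2$. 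The remaining pieces involving $\rho e u$, $F_r$, $E_r$ and the convective kinetic-energy terms are controlled directly using $|\rho|_\infty,|\theta|_\infty\leq C_0$ and the bounds on $I$ from Lemma \ref{s2}. Since $\text{div}\,\text{curl}\,\omega=0$, one has $\Delta G=\text{div}(\rho u_t+\rho u\cdot\nabla u-R_r)$, so elliptic regularity gives $|\nabla G|_2\leq C\bigl(|\sqrt\rho u_t|_2+\bigl(\int|u|^2|\nabla u|^2\bigr)^{1/2}+\text{l.o.t.}\bigr)$, and Young's inequality absorbs the $|\sqrt\rho u_t|_2^2$ contribution into the left-hand side.

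Finally, because $|\nabla u|_2^2=|\text{div}\,u|_2^2+|\omega|_2^2$ for $u|_{\partial\mathbb{V}}=0$ and $|\text{div}\,u|_2\leq C(|G|_2+|P_m|_2)$ with $|P_m|_2\leq C$ under the hypothesis $|\rho|_\infty,|\theta|_\infty\leq C_0$, one concludes $|\nabla u|_2^2\leq C(|G|_2^2+|\omega|_2^2+1)$. Gronwall's inequality applied to the resulting differential inequality, with the integrable factors $\int|u|^2|\nabla u|^2\in L^1(0,T)$ from Lemma \ref{abs3} and $|\nabla\theta|_2^2\in L^1(0,T)$ from Lemma \ref{s2}, then closes the estimate. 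The principal obstacle will be extracting cleanly the $Q(u)$-cancellation inside $\int G(P_m)_t$ via the total-energy/kinetic-energy splitting and simultaneously controlling all auxiliary radiation contributions, especially the boundary piece arising from $\int G\,\text{div}\,F_r$, where the outward photon flux is bounded via the transparency condition $I|_{n\cdot\Omega\leq 0}=0$ combined with the $L^2(\mathbb{R}^+\times S^2;L^\infty)$ estimate on $I$ established in Lemma \ref{s2}.
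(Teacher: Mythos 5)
Your skeleton is essentially the paper's: test the momentum equation with $u_t$, pass the pressure work through the effective viscous flux $G=(2\mu+\lambda)\text{div}u-P_m$, use $(P_m)_t=(\gamma-1)\big[(\rho E_m)_t-\tfrac12(\rho|u|^2)_t\big]$ together with the total energy equation so that $\int_{\mathbb{V}} G\,\text{div}(u\mathbb{T})\,\text{d}x$ becomes $-\int_{\mathbb{V}}(u\mathbb{T})\cdot\nabla G\,\text{d}x$, control $\nabla G,\nabla\omega$ by the elliptic estimates, and close with Lemma \ref{abs3} and Gronwall (your $|G|_2^2,|\omega|_2^2$ bookkeeping on the left is an equivalent rearrangement of the paper's (\ref{yue3})). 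The genuine gap is in your treatment of the kinetic half $\tfrac12(\rho|u|^2)_t$. If you substitute the pointwise kinetic-energy identity $u\cdot(\ref{eq:1.2})_3$, the convective cubic terms $\pm\tfrac12\text{div}(\rho|u|^2u)$ do cancel, but the identity also carries $u\cdot Lu$; since $\text{div}(u\mathbb{T})$ has been set aside for the integration by parts, you are left with $\int_{\mathbb{V}} G\,u\cdot Lu\,\text{d}x$, which after integration by parts contains $\mu\int_{\mathbb{V}} G|\nabla u|^2\text{d}x$ (the harmless piece $\int u\cdot\nabla G\,\nabla u$ is fine). This is precisely the $Q(u)$-difficulty the detour through the total energy equation is meant to avoid (Remark \ref{rrr2}, III): with only $|\sqrt\rho u_t|_2$, $|G|_2\leq C(|\nabla u|_2+1)$ and $\int_0^T|\nabla u|_2^2\text{d}t\leq C$ at hand, $\int G|\nabla u|^2$ can only be bounded by super-linear expressions of the type $(|\nabla u|_2^2+1)^3$, and the resulting Riccati-type inequality does not close on $[0,\overline T)$.

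Relatedly, your claim that ``the convective kinetic-energy terms are controlled directly using $|\rho|_\infty,|\theta|_\infty\leq C_0$'' does not hold if the cancellation is not exploited: the cubic term $\int_{\mathbb{V}}\rho|u|^2u\cdot\nabla G\,\text{d}x$ would require a bound on $\int\rho^2|u|^6$, which does not follow from Lemma \ref{abs3} ($\int\rho|u|^4\leq C$) and $|u|_6\leq C|\nabla u|_2$ without again producing uncontrollable powers of $|\nabla u|_2$. The paper never estimates this term: it splits $A_2$ into $A_{21}$ (total energy equation, (\ref{yuej4qq})) and $A_{22}$, where $\tfrac12(\rho|u|^2)_t$ is expanded as $\tfrac12\rho_t|u|^2+\rho u\cdot u_t$ with the continuity equation, so that $\int\rho u\cdot u_t\,G$ is absorbed into $\epsilon|\sqrt\rho u_t|_2^2$ and the cubic terms cancel exactly between (\ref{yuej4qq}) and (\ref{yuej88}); you need either this splitting or an explicit use of the cancellation plus a different treatment of $u\cdot Lu$. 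Two smaller points: the emission part $S$ of the radiation term carries no $\rho$-weight, so $\int S\,\Omega\cdot u_t$ cannot be paired with $\sqrt\rho u_t$ on the vacuum set; the paper integrates it by parts in time, see (\ref{shanna1}). And there is no boundary issue with $\text{div}F_r$: keeping $(E_r)_t+\text{div}F_r=\int_0^\infty\int_{S^2}A_r\,\text{d}\Omega\,\text{d}v$ together, Lemma \ref{s2} bounds it directly.
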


\begin{proof}
Via the momentum equations $(\ref{eq:1.2})_2$, we have
\begin{equation*}
\begin{split}
\triangle G=\text{div}\Big(\rho \dot{u}+\frac{1}{c}\int_0^\infty \int_{S^2}A_r\Omega \text{d}\Omega \text{d}v\Big),\ \mu \triangle \omega=\nabla \times \Big(\rho \dot{u}+\frac{1}{c}\int_0^\infty \int_{S^2}A_r\Omega \text{d}\Omega \text{d}v\Big),
\end{split}
\end{equation*}
where 
\begin{equation*}
\dot{f}=f_t+u\cdot \nabla f=f_t+\text{div}(fu)-f\text{div}u, \ G=(2\mu+\lambda)\text{div}u-P_m, \ \text{and}\ \omega=\nabla \times u,
\end{equation*}
are the material derivative of $f$, the effective viscous flux, and the vorticity, respectively.
It follows from Lemmas \ref{tvd1} and \ref{s2} that
\begin{equation}\label{yue11}
\begin{split}
|\nabla G|_2+|\nabla \omega|_2\leq & C (|\rho u_t|_2+|\rho u \cdot \nabla u|_2+1)\leq C (|\sqrt{\rho} u_t|_2+|\sqrt{\rho} |u| |\nabla u| |_2+1).
\end{split}
\end{equation}

Multiplying   $(\ref{eq:1.2})_2$ by $u_t$ and integrating  over $\Omega$ gives 
\begin{equation}\label{yue1}
\begin{split}
&\frac{1}{2}\frac{d}{dt}\int_{\mathbb{V}} (\mu |\nabla u|^2+(\lambda+\mu)(\text{div}u)^2) \text{d}x+\int_{\mathbb{V} }\rho |u_t|^2\text{d}x\\
=&\int_{\mathbb{V}} \Big(P_m\text{div}u_t-\rho u \cdot \nabla u \cdot u_t-\frac{1}{c}\int_0^\infty \int_{S^2}A_r\Omega\cdot u_t \text{d}\Omega \text{d}v\Big) \text{d}x=A+B+C.
\end{split}
\end{equation}
For the first term on the right-hand side of (\ref{yue1}), one has
\begin{equation}\label{yue3}
\begin{split}
A=&\int_{\mathbb{V}} P_m \text{div}u_t \text{d}x=\frac{d}{dt} \int_{\mathbb{V}}  P_m \text{div}u \text{d}x-\int_{\mathbb{V}}  (P_m)_t \text{div}u \text{d}x\\
=&\frac{d}{dt} \int_{\mathbb{V}}  P_m \text{div}u \text{d}x-\frac{1}{2\mu+\lambda}\int_{\mathbb{V}}  (P_m)_t G \text{d}x-\frac{1}{2(2\mu+\lambda)}\frac{d}{dt} \int_{\mathbb{V}}  P^2_m  \text{d}x\\
=&A_1+A_2+A_3.
\end{split}
\end{equation}
We first consider the second term on the right-hand side of (\ref{yue3}) that
\begin{equation}\label{yuej4}
\begin{split}
A_2=&-\frac{1}{2\mu+\lambda}\int_{\mathbb{V}}  (P_m)_t G \text{d}x=-\frac{\gamma-1}{2\mu+\lambda}\int_{\mathbb{V}}  \Big( (\rho E_m)_t-\big(\frac{1}{2}\rho |u|^2\big)_t\Big) G \text{d}x\\
=&A_{21}+A_{22},\\
\end{split}
\end{equation}
\begin{equation} \label{yuej4qq}\begin{split}
A_{21}=&-\frac{\gamma-1}{2\mu+\lambda}\int_{\mathbb{V}}\Big(  \frac{1}{2}\rho |u|^2 u\cdot \nabla G + P_mu\cdot \nabla G + \frac{P_m}{\gamma-1} u\cdot \nabla G\Big) \text{d}x\\
&-\frac{\gamma-1}{2\mu+\lambda}\int_{\mathbb{V}}  (N_r G-\kappa \nabla \theta \cdot \nabla G) \text{d}x+\frac{\gamma-1}{2\mu+\lambda}\int_{\mathbb{V}} (u\mathbb{T})\cdot \nabla G \text{d}x\\
\leq & -\frac{\gamma-1}{2\mu+\lambda}\int_{\mathbb{V}}  \frac{1}{2}\rho |u|^2 u\cdot \nabla G \text{d}x+C|G|_2|N_r|_2\\
&+C|\nabla G|_2(|uP_m|_2+||u||\nabla u||_2+|\nabla \theta|_2).
\end{split}
\end{equation}
For the second term on the right-hand side of (\ref{yue1}),  Cauchy's inequality yields
\begin{equation}\label{yue4}
\begin{split}
B=-\int_{\mathbb{V} }\rho u \cdot \nabla u \cdot u_t \text{d}x  \leq \frac{1}{6}| \sqrt{\rho} u_t|^2_2+\int_{\mathbb{V}} \rho |u \cdot \nabla u|^2 \text{d}x.
\end{split}
\end{equation}
Then according to (\ref{yue11}), we obtain that 
\begin{equation}\label{yue5}
\begin{split}
|\sqrt{\rho}|u| |\nabla u| |_2\leq& C |\rho^{\frac{1}{4}}u |_4 |\nabla u|_{4}
\leq  C\big( |G|_{4}+|\omega|_{4}+1\big)\\
\leq & C( |G|^{\frac{1}{4}}_{2} | G|^{\frac{3}{4}}_{6}+|\omega|^{\frac{1}{4}}_{2} | \omega|^{\frac{3}{4}}_{6}+1)\\
\leq & \epsilon ( |\nabla G|_{2}+ |\nabla \omega|_{2})+C(\epsilon) (  G|_{2}+ |\omega|_{2})+C\\
\leq &C \epsilon(|\sqrt{\rho} u_t|_2+|\sqrt{\rho} |u| |\nabla u||_2)+C(\epsilon)(|\nabla u|_2+1),
\end{split}
\end{equation}
which immediately means that 
\begin{equation}\label{yue6}
\begin{split}
|\sqrt{\rho}u \cdot \nabla u|_2
\leq &C \epsilon|\sqrt{\rho} u_t|_2+C(\epsilon)(|\nabla u|_2+1).
\end{split}
\end{equation}
Then subsitituing (\ref{yue6}) into (\ref{yue11}), we have
\begin{equation}\label{yuekk}
\begin{split}
|\nabla G|_2+|\nabla \omega|_2\leq  C (|\sqrt{\rho} u_t|_2+|\nabla u |_2+1),
\end{split}
\end{equation}
which, together with (\ref{yuej4}), implies that
\begin{equation}\label{yuekk11}
\begin{split}
A_{21} \leq & -\frac{1}{4\mu+2\lambda}\int_{\mathbb{V}}  \rho |u|^2 u\cdot \nabla G \text{d}x+C(|\nabla u|^2_2+|\nabla \theta|^2_2+||u||\nabla u||^2_2+1)+\epsilon |\sqrt{\rho} u_t|^2_2,
\end{split}
\end{equation}
where we  used the fact that $|N_r|_2\leq C(1+|\nabla u|_2)$ via Lemma \ref{s2}.
Next we consider $A_{22}$,
\begin{equation}\label{yuej88}
\begin{split}
A_{22}=&\frac{\gamma-1}{2\mu+\lambda}\int_{\mathbb{V}}  \frac{1}{2}\rho_t |u|^2  G \text{d}x+\frac{\gamma-1}{2\mu+\lambda}\int_{\mathbb{V}} \rho u\cdot u_t G \text{d}x\\
\leq & -\frac{\gamma-1}{2\mu+\lambda}\int_{\mathbb{V}}  \frac{1}{2}\text{div}(\rho u) |u|^2  G \text{d}x+\epsilon |\sqrt{\rho} u_t|^2_2+C(\epsilon)\int_{\mathbb{V}} \rho |u|^2|G|^2 \text{d}x\\
\leq & \frac{\gamma-1}{2\mu+\lambda}\int_{\mathbb{V}}  \rho u\cdot \nabla u\cdot u  G \text{d}x+\frac{\gamma-1}{2\mu+\lambda}\int_{\mathbb{V}} \frac{1}{2} \rho |u|^2 u\cdot \nabla  G \text{d}x\\
&+\epsilon |\sqrt{\rho} u_t|^2_2+C\int_{\mathbb{V}} \rho |u|^2|\nabla u|^2 \text{d}x+C\\
\leq & \epsilon |\sqrt{\rho} u_t|_2+C|\sqrt{\rho} |u| |\nabla u||_2
+\frac{\gamma-1}{2\mu+\lambda}\int_{\mathbb{V}} \frac{1}{2} \rho |u|^2 u\cdot \nabla  G \text{d}x+C| |u| |\nabla u||^2_2+C,
\end{split}
\end{equation}
which, together with (\ref{yue6}), implies that 
\begin{equation}\label{yuej8ll}
\begin{split}
A_{22}
\leq & \epsilon |\sqrt{\rho} u_t|_2
+\frac{\gamma-1}{2\mu+\lambda}\int_{\mathbb{V}} \frac{1}{2} \rho |u|^2 u\cdot \nabla  G \text{d}x+C|\nabla u|^2_2+C| |u| |\nabla u||^2_2+C.
\end{split}
\end{equation}
Then combining (\ref{yuekk11}) and (\ref{yuej8ll}), we deduce that 
\begin{equation}\label{yllp1}
\begin{split}
A_2
\leq & \epsilon |\sqrt{\rho} u_t|_2
+C|\nabla u|^2_2+C|\nabla \theta|^2_2+C| |u| |\nabla u||^2_2+C.
\end{split}
\end{equation}
Next we consider the term $B$ and $C$. From (\ref{yue4}) and (\ref{yue6}), we have
\begin{equation}\label{tie1}
\begin{split}
B=&\int_{\mathbb{V}} -\rho u \cdot \nabla u \cdot u_t \text{d}x\leq C|\sqrt{\rho} |u| |\nabla u||_2+\epsilon |\sqrt{\rho} u_t|^2_2
\leq \epsilon |\sqrt{\rho} u_t|^2_2+C|\nabla u|^2_2+C,\\
\end{split}
\end{equation}
\begin{equation}\label{shanna1}
\begin{split}
C=&
-\frac{1}{c}\int_{\mathbb{V}} \int_0^\infty \int_{S^2}A_r\Omega\cdot u_t \text{d}\Omega \text{d}v \text{d}x\\
\leq & -\frac{1}{c}\frac{d}{dt}\int_{\mathbb{V}} \int_0^\infty \int_{S^2}S\Omega\cdot u\text{d}\Omega \text{d}v \text{d}x+\frac{1}{c}\int_{\mathbb{V}} \int_0^\infty \int_{S^2}S_t\Omega\cdot u\text{d}\Omega \text{d}v \text{d}x\\
&+C|\rho|^{\frac{1}{2}}_\infty|\sqrt{\rho} u_t|_2|I|_{L^2(\mathbb{R}^+\times S^2;L^2(\mathbb{V}))}\big(|\sigma|_{L^2(\mathbb{R}^+\times S^2;L^\infty(\mathbb{V}))}+\alpha+\alpha\big)\\
\leq & -\frac{1}{c}\frac{d}{dt}\int_{\mathbb{V}} \int_0^\infty \int_{S^2}S\Omega\cdot u\text{d}\Omega \text{d}v\text{d}x+\epsilon |\sqrt{\rho} u_t|^2_2+C|\nabla u|^2_2+C.
\end{split}
\end{equation}
Then from (\ref{yue1})-(\ref{yue3}) and  (\ref{yllp1})-(\ref{shanna1}), letting $\epsilon$ be sufficiently small, we have
\begin{equation}\label{yueop1}
\begin{split}
&\frac{1}{2}\frac{d}{dt}\int_{\mathbb{V}} (\mu |\nabla u|^2+(\lambda+\mu)(\text{div}u)^2) \text{d}x+\int_{\mathbb{V} }\rho |u_t|^2\text{d}x\\
\leq & C(|\nabla u|^2_2+|\nabla \theta|^2_2)+C| |u| |\nabla u||^2_2+C.
\end{split}
\end{equation}
From Gronwall's inequality and Lemma \ref{abs3}, we obtain the desired conclusions.

\end{proof}

\subsection{The lower order estimate for $|\theta|_{L^\infty([0,\overline{T}]; D^1(\mathbb{V})}$}
\begin{lemma}\label{ablem:4-1} 
\begin{equation*}
\begin{split}
| \theta(t)|_{D^1}+ \int_{0}^{T}(|\theta|^2_{D^2}+ |\dot{u}|^2_{D^1}+|\sqrt{\rho} \dot{\theta}|^2_2+|\text{div}u|_\infty)\text{d}t\leq& C, \quad 0\leq t \leq T,\\
|u(t)|_\infty+|\nabla u(t)|_6+|(G,\omega)(t)|_{D^1}+|(\sqrt{\rho} \dot{u},\sqrt{\rho}u_t)(t)|_{2} \leq& C, \quad 0\leq t \leq T.
\end{split}
\end{equation*}
 \end{lemma}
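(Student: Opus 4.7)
The plan is to couple an $L^\infty_t L^2_x$ estimate for $\sqrt{\rho}\,\dot u$ with one for $\nabla\theta$, and then to upgrade to the remaining pointwise and integrated quantities by elliptic regularity of the effective viscous flux $G=(2\mu+\lambda)\text{div}\,u-P_m$ and the vorticity $\omega=\nabla\times u$, finishing with an $L^1_t$ bound for $|\text{div}\,u|_\infty$. As in the proof of Lemma \ref{abs4}, one writes $\triangle G=\text{div}\bigl(\rho\dot u+c^{-1}\int_0^\infty\!\int_{S^2}A_r\Omega\,\mathrm{d}\Omega\,\mathrm{d}v\bigr)$ and $\mu\triangle\omega=\nabla\times\bigl(\rho\dot u+c^{-1}\int_0^\infty\!\int_{S^2}A_r\Omega\,\mathrm{d}\Omega\,\mathrm{d}v\bigr)$, so that Lemma \ref{tvd1} yields $|\nabla G|_p+|\nabla\omega|_p\le C(|\rho\dot u|_p+1)$ and Lemma \ref{gag22} controls $|\nabla u|_p$ by $|G|_p+|\omega|_p+|P_m|_p$.

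For the main coupled estimate I first apply $\partial_t+\text{div}(\,\cdot\otimes u)$ to $(\ref{eq:1.2})_3$ to get a transport--diffusion equation for $\dot u$, test with $\dot u$, and integrate by parts (using the Dirichlet condition $\dot u|_{\partial\mathbb{V}}=0$), which yields
\[
\tfrac{1}{2}\tfrac{d}{dt}\!\!\int_\mathbb{V}\!\rho|\dot u|^2\,\mathrm{d}x+\mu\!\!\int_\mathbb{V}\!|\nabla\dot u|^2\,\mathrm{d}x\le \mathcal{J}_P+\mathcal{J}_N+\mathcal{J}_R.
\]
Here $\mathcal{J}_N$ collects the cubic-in-$\nabla u$ commutator terms, controllable via $\int_0^T\!\!\int|u|^2|\nabla u|^2\,\mathrm{d}x\,\mathrm{d}t\le C$ from Lemma \ref{abs3}; $\mathcal{J}_R$ is the radiation remainder handled in the spirit of the $J_9$--$J_{12}$ bounds of Lemma \ref{lem:4}; and the pressure contribution $\mathcal{J}_P$ is where the device of Remark \ref{rrr2}\,III enters: rewriting $(P_m)_t=(\rho E_m)_t-\tfrac{1}{2}(\rho|u|^2)_t$ and substituting $(\rho E_m)_t$ from the total-energy balance $(\ref{eq:1.1})_4$ converts the otherwise unmanageable $\int Q(u)\,\dot\theta\,\mathrm{d}x$ into pairings $\int u\mathbb{T}{:}\nabla\dot u\,\mathrm{d}x$ and $\kappa\int\nabla\theta\cdot\nabla\dot u\,\mathrm{d}x$, both absorbable by $\epsilon|\nabla\dot u|_2^2$ plus lower-order data from Lemmas \ref{s2}--\ref{abs4}.

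In parallel, testing $(\ref{eq:1.2})_4$ with $\theta_t$ and bounding $|Q(u)|_2\le|\nabla u|_3|\nabla u|_6$ (with $|\nabla u|_3$ interpolated using the $L^2_tL^2_x$ bound for $|u||\nabla u|$ of Lemma \ref{abs3}) gives a companion inequality
\[
\tfrac{d}{dt}|\nabla\theta|_2^2+|\sqrt\rho\,\dot\theta|_2^2\le C\bigl(1+|\nabla u|_6^2\bigr)\bigl(|\nabla\theta|_2^2+|\sqrt\rho\,\dot u|_2^2+1\bigr).
\]
Combining the two inequalities and invoking the elliptic estimate $|\nabla u|_6\le C(|\sqrt\rho\,\dot u|_2+1)$, a single Gronwall argument controls $|\sqrt\rho\,\dot u(t)|_2$ and $|\nabla\theta(t)|_2$ uniformly on $[0,\overline T)$ along with $\int_0^T(|\nabla\dot u|_2^2+|\sqrt\rho\,\dot\theta|_2^2)\,\mathrm{d}t$. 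Elliptic regularity applied to $-\kappa\triangle\theta=c_v(\rho\dot\theta-N_r)-Q(u)-R\rho\theta\,\text{div}\,u$ then yields $\int_0^T|\theta|_{D^2}^2\,\mathrm{d}t\le C$, and the Sobolev embedding in $\mathbb{V}\subset\mathbb{R}^3$ upgrades $|\nabla u|_6$ to $|u|_\infty$. The bound on $|\sqrt\rho\,u_t|_2$ follows from $\sqrt\rho\,u_t=\sqrt\rho\,\dot u-\sqrt\rho\,u\cdot\nabla u$ together with the estimate $|\sqrt\rho\,u\cdot\nabla u|_2\le C\epsilon|\sqrt\rho\,\dot u|_2+C(\epsilon)(|\nabla u|_2+1)$ established in Lemma \ref{abs4}. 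Finally, $\int_0^T|\text{div}\,u|_\infty\,\mathrm{d}t$ comes from $(2\mu+\lambda)\text{div}\,u=G+P_m$, the $L^6$ elliptic bound $|\nabla G|_6\le C(|\nabla\dot u|_2+1)$, the Sobolev embedding $W^{1,6}\hookrightarrow L^\infty$, and Cauchy--Schwarz in time.

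The main obstacle is the nonlinear viscous dissipation $Q(u)$ in the internal-energy equation, which at the current level of regularity is only $L^2_x$. The device of Remark \ref{rrr2}\,III is essential: trading $(P_m)_t$ for $(\rho E_m)_t-\tfrac{1}{2}(\rho|u|^2)_t$ and substituting from the total-energy balance pairs the $Q(u)$ contribution with $\nabla\dot u$, so that it is absorbed by the coercive term $\mu|\nabla\dot u|_2^2$ rather than having to be estimated pointwise in time.
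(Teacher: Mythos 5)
Your overall architecture (a coupled energy estimate for $\sqrt{\rho}\,\dot u$ and $\nabla\theta$, Lemma \ref{abs3} to control the cubic commutator terms, the elliptic estimates for $G$ and $\omega$, a Gronwall closure using $\int_0^T|\sqrt{\rho}u_t|^2_2\,\mathrm{d}t\le C$ from Lemma \ref{abs4}, and finally $|\mathrm{div}\,u|_\infty$ through $G+P_m$ and $W^{1,6}\hookrightarrow L^\infty$) is the same as the paper's. But the step you lean on at the crux is not justified. The device of Remark \ref{rrr2}\,III is used in the paper only in Lemma \ref{abs4}, where the test function is $u_t$ and $(P_m)_t$ ends up paired with the effective viscous flux $G$, whose gradient is controlled by $|\rho\dot u|_2$ via the elliptic system; that is what makes $\int(u\mathbb{T})\cdot\nabla G\,\mathrm{d}x$ harmless. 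At the $\dot u$ level of the present lemma the pressure pairs with $\mathrm{div}\,\dot u$, and substituting the total energy balance into $(P_m)_t$ produces $\int \mathrm{div}(u\mathbb{T})\,\mathrm{div}\,\dot u\,\mathrm{d}x$ and $\kappa\int\triangle\theta\,\mathrm{div}\,\dot u\,\mathrm{d}x$; to reach your claimed pairings you would have to integrate by parts once more, which places a second derivative on $\dot u$ that is not controlled (only $|\nabla\dot u|_2$ is coercive). Moreover no such device is needed there: in the paper the pressure contribution reduces, via the continuity equation, to $R\int\rho\dot\theta\,\mathrm{div}\,\dot u\,\mathrm{d}x\le C|\sqrt{\rho}\dot\theta|_2|\nabla\dot u|_2$, which the coupling with the $\dot\theta$ estimate absorbs directly (see $M_3$ and (\ref{mou5})).

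The genuine gap is in your temperature estimate. The real difficulty of this lemma is the term $\int Q(u)\theta_t\,\mathrm{d}x$ (equivalently the $Q(u)$ part of $\int Q(u)\dot\theta$), and your ``companion inequality'' with coefficient $C(1+|\nabla u|_6^2)$ silently assumes it can be bounded by $|Q(u)|_2|\theta_t|_2$; but in the presence of vacuum $|\theta_t|_2$ is not controlled by $|\sqrt{\rho}\theta_t|_2$ or $|\sqrt{\rho}\dot\theta|_2$, and no dissipation in $\theta_t$ is available at this stage ($|\nabla\theta_t|_2$ only appears in Lemma \ref{sk4nn}). The energy-balance substitution you invoke does not remove this term, since it concerns the momentum estimate, not the internal-energy one. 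The paper resolves it by writing $\int Q(u)\theta_t=\frac{d}{dt}\int Q(u)\theta-\int Q(u)_t\theta$, replacing $D(u_t)$ by $D(\dot u)-D(u\cdot\nabla u)$, and using the hypothesis $|\theta|_{L^\infty([0,T]\times\mathbb{V})}\le C_0$, so that everything is absorbed by $\epsilon|\nabla\dot u|^2_2$ plus $|\nabla u|^4_4\le C(1+|(\nabla G,\nabla\omega)|^3_2)$; without this (or an equivalent) device your coupled Gronwall inequality is not established, and the rest of your argument, which is otherwise aligned with the paper, does not close.
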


 \begin{proof}
\underline{Step 1}. Applying $\dot{u}[\partial / \partial t+\text{div}(u\cdot)]$ to $(\ref{eq:1.2})_3$  and integrating by parts give
\begin{equation}\label{bzhen4}
\begin{split}
&\frac{1}{2}\frac{d}{dt}\int_{\mathbb{V} }\rho |\dot{u}|^2 \text{d}x
=  -\int_{\mathbb{V} }   \dot{u}\cdot \big(\nabla (P_m)_t+\text{div}(\nabla P_m\otimes u)\big)\text{d}x\\
&-\int_{\mathbb{V} } \dot{u}\cdot \big(\triangle u_t+\text{div}(\triangle u\otimes u)\big)  \text{d}x
+ (\lambda+\mu)\int_{\mathbb{V}} \dot{u}\cdot  \big(\nabla \text{div}u_t+\text{div}(\nabla  \text{div}u\otimes u)\big) \text{d}x\\
&
-\frac{1}{c}\int_{\mathbb{V}} \int_0^\infty \int_{S^2}\dot{u}\cdot \big((A_r)_t \Omega+\text{div}(A_r \Omega\otimes u)\big) \text{d}\Omega \text{d}v \text{d}x
\equiv:\sum_{i=3}^{6}M_i.
\end{split}
\end{equation}

According to the continuity equation $(\ref{eq:1.2})_2$,  Lemmas \ref{s2}-\ref{abs4}, H\"older's inequality, Gagliardo-Nirenberg inequality and Young's inequality,  we deduce that
\begin{equation}\label{zhou6}
\begin{split}
M_3
=& \int_{\mathbb{V} }\big(R\text{div}\dot{u} (\rho \theta)_t-P_m(\nabla u)^\top:\nabla \dot{u}-R\rho \theta u \cdot \nabla \text{div}\dot{u}\big)\text{d}x\\
=& \int_{\mathbb{V} }\big(R[(\rho \theta)_t+\text{div}(\rho \theta u)]\text{div}\dot{u}-P_m(\nabla u)^\top:\nabla \dot{u} \big)\text{d}x\\
\leq&  C|\rho|^{\frac{1}{2}}_\infty|\sqrt{\rho}\dot{\theta}|_2|\text{div}\dot{u}|_2+C|P_m|_4|\nabla u|_4|\nabla \dot{u}|_2
\leq  \frac{\mu}{20} |\nabla \dot{u}|^2_2+C|\sqrt{\rho}\dot{\theta}|^2_2+C|\nabla u|^2_4,\\
\end{split}
\end{equation}
\begin{equation}\label{zhou6pl}
\begin{split}
M_4
=& -\int_{\mathbb{V} }\mu\big(\partial_i\dot{u}^j\partial_i u^j_t+\triangle u^j u\cdot \nabla \dot{u}^j\big)\text{d}x\\
=& -\int_{\mathbb{V} }\mu\big(|\nabla \dot{u}|^2- \partial_i \dot{u}^j \partial_ku^k \partial_i u^j- \partial_i \dot{u}^j \partial_i u^k\partial_k u^j-\partial_iu^j \partial_i u^k \partial_k \dot{u}^j\big)\text{d}x\\
\leq &-\frac{\mu}{2}|\nabla \dot{u}|^2_{2}+C|\nabla u|^4_4,
\end{split}
\end{equation}
and similarly, we have
\begin{equation}\label{ppy6}
\begin{split}
M_5=&  (\lambda+\mu)\int_{\mathbb{V}} \big(\dot{u}\cdot  \big(\nabla \text{div}u_t+\text{div}(\nabla  \text{div}u\otimes u\big)\big)\text{d}x
\leq  -\frac{\mu+\lambda}{2}|\nabla \dot{u}|^2_2 +C |\nabla u|^4_4.
\end{split}
\end{equation}
Next we consider the radiation term $M_6$:
\begin{equation}\label{ppy6}
\begin{split}
M_6=& -\frac{1}{c}\int_{\mathbb{V}} \int_0^\infty \int_{S^2}\dot{u}\cdot \big((A_r)_t \Omega+\text{div}(A_r \Omega\otimes u)\big) \text{d}\Omega \text{d}v \text{d}x\equiv:\sum_{i=1}^{5}-\frac{1}{c}M_{6i}.
\end{split}
\end{equation}
Then via (\ref{kll}) and $\sigma_a=\sigma(v,\Omega,\theta) \rho$, we have
\begin{equation}\label{ppyy1}
\begin{split}
M_{61}=&\int_0^\infty \int_{S^2} \int_{\mathbb{V}} \dot{u}\cdot \Omega S_t \text{d}x \text{d}\Omega \text{d}v
\leq |\dot{u}|_{2}\int_0^\infty \int_{S^2} |S_t |_{2}\text{d}\Omega \text{d}v\leq \frac{\mu}{20}|\nabla \dot{u}|^2_{2}+C,\\
M_{62}
=&\int_0^\infty \int_{S^2} \int_{\mathbb{V}} \dot{u}\cdot \Omega (\sigma_\theta \theta_t \rho I-\sigma \text{div}(\rho u) I+\sigma \rho I_t) \text{d}x \text{d}\Omega \text{d}v\\
\leq & CC_r\big( |\sqrt{\rho}\dot{\theta}|_{2}|\sqrt{\rho}\dot{u}|_{2}+|\nabla \dot{u}|_2|u|_6|\rho|_3+(|\nabla \theta|_2+1)|\dot{u}|_6|u|_6|\rho|_6+|\rho|^{\frac{1}{2}}_\infty|\sqrt{\rho}\dot{u}|_{2}\big)\\
\leq &C(|\sqrt{\rho}\dot{u}|^2_{2}+|\sqrt{\rho}\dot{\theta}|^2_{2})+\frac{\mu}{20}|\nabla \dot{u}|^2_2+C|\nabla \theta|^2_2+C,\\
M_{63}
=&\int_0^\infty \int_{S^2} \int_{\mathbb{V}}  \int_0^\infty \int_{S^2}\frac{v}{v'}\dot{u}\cdot \Omega(\overline{\sigma}_s \rho I'_t+(\overline{\sigma}_s)_t \rho I'-\overline{\sigma}_s \text{div}(\rho u) I')\text{d}\Omega \text{d}v\text{d}x \text{d}\Omega \text{d}v\\
\leq& CD_r(|\rho|^{\frac{1}{2}}_\infty|\sqrt{\rho}\dot{u}|_{2}+ |\nabla \dot{u}|_2|\nabla u|_2|\rho|_3)\leq C|\sqrt{\rho}\dot{u}|^2_{2}+\frac{\mu}{20}|\nabla \dot{u}|^2_2+C,
\end{split}
\end{equation}
where we  used the fact  $(\sigma_a)_t =\sigma_\theta \theta_t \rho +\sigma \rho_t$ and 
\begin{equation*}
\begin{split}
\displaystyle
C_r=&\int_0^\infty \int_{S^2}( |\sigma_\theta|_{\infty}|I|_{\infty}+|\sigma|_{\infty}|I|_{\infty}+|\sigma|_{\infty}|I_t|_{2}+|\sigma|_{\infty}|\nabla I|_2)\text{d}\Omega \text{d}v\leq C,\\
\displaystyle
D_r=&\int_\mathbb{I}\frac{v}{v'}(|\overline{\sigma}_s|_\infty( | I'|_\infty+ | I'_t|_2+ |\nabla I'|_2)+|\nabla \overline{\sigma}_s|_\infty |I'|_6+|(\overline{\sigma}_s)_t|_\infty | I'|_2) \text{d}\mathbb{I}\leq C.
\end{split}
\end{equation*}
Similarly, we have
\begin{equation}\label{ppyy2}
\begin{split}
M_{64}=&-\frac{1}{c}\int_0^\infty \int_{S^2} \int_{\mathbb{V}} \Big( \int_0^\infty \int_{S^2}\dot{u}\cdot \Omega(\overline{\sigma}'_s \rho I_t+(\overline{\sigma}'_s)_t \rho I+\overline{\sigma}'_s \rho_t I)\text{d}\Omega' \text{d}v'\Big)\text{d}x \text{d}\Omega \text{d}v\\
\leq& CE_r(|\rho|^{\frac{1}{2}}_\infty|\sqrt{\rho}\dot{u}|_{2}+ |\nabla \dot{u}|_2|\nabla u|_2|\rho|_3)\leq C|\sqrt{\rho}\dot{u}|^2_{2}+\frac{\mu}{20}|\nabla \dot{u}|^2_2+C,\\
M_{65}\leq & C|\nabla \dot{u}|_2 |u|_6 \int_0^\infty \int_{S^2}|A_r|_3\text{d}\Omega \text{d}v\leq C|\nabla u|^2_2+\frac{\mu}{20}|\nabla \dot{u}|^2_2,
\end{split}
\end{equation}
where
\begin{equation*}
\begin{split}
E_r=&\int_\mathbb{I}\big(|\overline{\sigma}'_s|_\infty( | I|_\infty+| I_t|_2+|\nabla I|_2)+|\nabla \overline{\sigma}'_s|_\infty |I|_6 +|(\overline{\sigma}'_s)_t|_\infty | I|_2 \big) \text{d}\mathbb{I}\leq C.
\end{split}
\end{equation*}
Together with (\ref{bzhen4})-(\ref{ppyy2}), we deduce that
\begin{equation}\label{mou5}
\begin{split}
\frac{1}{2}\frac{d}{dt}  & \int_{\mathbb{V} }\rho |\dot{u}|^2 \text{d}x+|\dot{u}|^2_{D^1}
\leq  C|\nabla u|^4_4+C|\sqrt{\rho}\dot{u}|^2_{2}+C|\sqrt{\rho}\dot{\theta}|^2_{2}+C|\nabla \theta|^2_2+C.
\end{split}
\end{equation}

\underline{Step 2}.  Multiplying $(\ref{eq:1.2})_4$ by $\dot{\theta}$, and integrating over $\mathbb{V}$, we have
\begin{equation}\label{ppyue1jkk}
\begin{split}
&\frac{\kappa}{2c_v}\frac{d}{dt}\int_{\mathbb{V}} |\nabla \theta|^2\text{d}x+\int_{\mathbb{V} }\rho |\dot{\theta}|^2\text{d}x\\
=&\frac{1}{c_v}\int_{\mathbb{V}} \Big(-P_m\text{div}u \dot{\theta}+Q(u)\theta_t+Q(u)u\cdot \nabla \theta+\kappa \triangle \theta u \cdot \nabla \theta+N_r \dot{\theta}\Big) \text{d}x=\sum_{i=7}^{11} M_i.
\end{split}
\end{equation}
Then from H\"older's inequality and Young's inequality, we have
\begin{equation}\label{ppyue1}
\begin{split}
M_7\leq& C|\rho|^{\frac{1}{2}}_\infty|\theta|_\infty|\sqrt{\rho}\dot{\theta}|_2|\nabla u|_2\leq \frac{1}{20}|\sqrt{\rho}\dot{\theta}|^2_2+C,\\
M_8=&\frac{1}{c_v}\frac{d}{dt}\int_{\mathbb{V}} Q(u)\theta \text{d}x-\frac{4\mu}{c_v}\int_{\mathbb{V}} D(u):D(u_t) \theta \text{d}x-\frac{2\lambda}{c_v} \int_{\mathbb{V}} \text{div}u \text{div}u_t \theta \text{d}x\\
\leq &\frac{1}{c_v}\frac{d}{dt}\int_{\mathbb{V}} Q(u)\theta \text{d}x+C|\nabla u|_2|\nabla \dot{u}|_2\\
&+\frac{1}{c_v}\int_{\mathbb{V}} \Big(4\mu D(u):D(u\cdot \nabla u) \theta+2\lambda \text{div}u \text{div}(u\cdot \nabla u\big) \theta\Big) \text{d}x\\
\leq & \frac{1}{c_v}\frac{d}{dt}\int_{\mathbb{V}} Q(u)\theta \text{d}x+C|\nabla \dot{u}|_2+\frac{4\mu}{c_v}\int_{\mathbb{V}} \Big( D(u):(\nabla u \cdot \nabla u+(\nabla u \cdot \nabla u)^\top)\Big)\theta\text{d}x\\
&+\frac{4\mu}{c_v}\int_{\mathbb{V}}  D(u):u\cdot \nabla D(u)\theta\text{d}x+\frac{2\lambda}{c_v} \int_{\mathbb{V}}\Big(  (\nabla u)^\top:\nabla u  + u\cdot  \nabla \text{div}u  \Big)\text{div}u\theta \text{d}x\\
\leq & \frac{1}{c_v}\frac{d}{dt}\int_{\mathbb{V}} Q(u)\theta \text{d}x+C|\nabla \dot{u}|_2+C|\nabla u|^3_3+\frac{2\mu}{c_v}\int_{\mathbb{V}}  |D(u)|^2 \text{div}u\theta\text{d}x\\
&+\frac{2\mu}{c_v}\int_{\mathbb{V}}  |D(u)|^2  u\cdot \nabla  \theta \text{d}x+\lambda \int_{\mathbb{V}} (|\text{div}u|^3\theta+|\text{div}u|^2 u \cdot \nabla \theta) \text{d}x\\
\leq & \frac{1}{c_v}\frac{d}{dt}\int_{\mathbb{V}} Q(u)\theta \text{d}x+C|\nabla \dot{u}|_2+C|\nabla u|^3_3+C\int_{\mathbb{V}} |\nabla u|^2|u||\nabla \theta|\text{d}x.
\end{split}
\end{equation}
From Lemmas \ref{tvd1} and \ref{s2}, we quickly have 
 \begin{equation}\label{zhu54}\begin{split}
|\theta|_{D^2}\leq C(|\rho \dot{\theta}|_2+|\rho \theta \text{div}u|_2+|\nabla u|^2_4+1)\leq C|\sqrt{\rho} \dot{\theta}|_2+C|\nabla u|^2_4+C.
\end{split}
\end{equation}

Then via H\"older's inequality, Young's inequality and Gagliardo-Nirenberg inequality, 
 \begin{equation}\label{ppy10}
\begin{split}
\int_{\mathbb{V}} |\nabla u|^2|u||\nabla \theta|\text{d}x\leq& C|\nabla u|^2_4|u|_6|\nabla \theta|^{\frac{1}{2}}_2|\nabla \theta|^{\frac{1}{2}}_6
\leq  \frac{1}{20}|\sqrt{\rho} \dot{\theta}|^2_2+C|\nabla u|^4_4+C|\nabla \theta|^2_2+C,
\end{split}
\end{equation}
which, together with (\ref{ppyue1}), implies that 
\begin{equation}\label{ppyue10}
\begin{split}
M_{8}
\leq & \frac{1}{c_v}\frac{d}{dt}\int_{\mathbb{V}} Q(u)\theta \text{d}x+C|\nabla u|^4_4+C|\nabla \dot{u}|_2+C|\nabla \theta|^2_2+\frac{1}{20}|\sqrt{\rho} \dot{\theta}|^2_2+C.
\end{split}
\end{equation}
Similarly, we have
\begin{equation}\label{ppyue11}
\begin{split}
M_9
\leq &C\int_{\mathbb{V}} |\nabla u|^2 |u| |\nabla \theta|\text{d}x\leq \frac{1}{20}|\sqrt{\rho} \dot{\theta}|^2_2+C|\nabla u|^4_4+C|\nabla \theta|^2_2+C,\\
M_{10}
\leq & C |\triangle \theta|_2 |u|_6 |\nabla \theta|_3\leq C |\triangle \theta|_2 |\nabla u|_2 |\nabla \theta|^{\frac{1}{2}}_2\|\nabla \theta\|^{\frac{1}{2}}_1\\
\leq & C |\nabla \theta|^{\frac{1}{2}}_2\|\nabla \theta\|^{\frac{3}{2}}_1
\leq \frac{1}{20}|\sqrt{\rho} \dot{\theta}|^2_2+C|\nabla u|^4_4+C|\nabla \theta|^2_2+C.
\end{split}
\end{equation}
Next, we consider the radiation terms 
\begin{equation}\label{ppyue111}\begin{split}
M_{11}=&\frac{1}{c_v}\int_0^\infty \int_{S^2}  \int_{\mathbb{V}} \dot{\theta}N_r \text{d}x\text{d}\Omega \text{d}v 
\leq \frac{1}{c_v} \frac{d}{dt}\int_0^\infty \int_{S^2}  \int_{\mathbb{V}} \Big(1-\frac{u\cdot \Omega}{c}\Big)S \theta \text{d}x\text{d}\Omega \text{d}v\\
&+\int_0^\infty \int_{S^2}  \int_{\mathbb{V}} \frac{(\dot{u}-u\cdot \nabla u)\cdot \Omega}{cc_v}S \theta \text{d}x\text{d}\Omega \text{d}v \\
&+\frac{1}{c_v}\int_0^\infty \int_{S^2}  \int_{\mathbb{V}} \Big(1-\frac{u\cdot \Omega}{c}\Big)\big(-S_t \theta+S u\cdot \nabla \theta\big)\text{d}x\text{d}\Omega \text{d}v\\
& +C|\rho|^{\frac{1}{2}}_\infty |\sqrt{\rho}\dot{\theta}|_2\|I\|_{L^2(\mathbb{R}^+\times S^2; L^2(\mathbb{V}))}\big(\alpha+\alpha+\|\sigma\|_{L^2(\mathbb{R}^+\times S^2; L^2(\mathbb{V}))}\big)\\
\leq&\frac{1}{c_v} \frac{d}{dt}\int_0^\infty \int_{S^2}  \int_{\mathbb{V}} \Big(1-\frac{u\cdot \Omega}{c}\Big)S \theta \text{d}x\text{d}\Omega \text{d}v+C |\sqrt{\rho}\dot{\theta}|_2\\
&+C\big(|\dot{u}|_6|\theta|_3+|u\cdot \nabla u|_2|\theta|_\infty\big)\|S\|_{L^1(\mathbb{R}^+\times S^2; L^2(\mathbb{V}))}\\
&+C(1+\|u\|_1)\big(|\theta|_\infty\|S_t\|_{L^1(\mathbb{R}^+\times S^2; L^2(\mathbb{V}))}+|u\cdot \nabla \theta|_3\|S\|_{L^1(\mathbb{R}^+\times S^2; H^1(\mathbb{V}))}\big)\\
\leq &\frac{1}{c_v} \frac{d}{dt}\int_0^\infty \int_{S^2}  \int_{\mathbb{V}} \Big(1-\frac{u\cdot \Omega}{c}\Big)S \theta \text{d}x\text{d}\Omega \text{d}v\\
&+C|\nabla \dot{u}|_2+ C|\sqrt{\rho}\dot{\theta}|_2+C|\nabla \theta|^2_2+C|u\cdot \nabla u|_2+C.
\end{split}
\end{equation}
Then combining (\ref{ppyue1jkk})-(\ref{ppyue111}), we have
\begin{equation}\label{ppyue20}
\begin{split}
&\frac{1}{c_v}\frac{d}{dt}\int_{\mathbb{V}} \Big(\frac{\kappa}{2}|\nabla \theta|^2-Q(u)\theta+\int_0^\infty \int_{S^2}   \Big(1-\frac{u\cdot \Omega}{c}\Big)S \theta \text{d}\Omega \text{d}v\Big)\text{d}x+\int_{\mathbb{V} }\rho |\dot{\theta}|^2\text{d}x\\
\leq&C(|\nabla u|^4_4+|\nabla \theta|^2_2+|\nabla \dot{u}|_2+ |\sqrt{\rho}\dot{\theta}|_2+|u\cdot \nabla u|^2_2+1).
\end{split}
\end{equation}
Now multiplying (\ref{ppyue20}) by $2C$, and adding the resulting inequality into (\ref{mou5}), we have
\begin{equation}\label{ppyue21}
\begin{split}
&\frac{d}{dt}\int_{\mathbb{V}} \Big(\frac{\kappa}{2c_v}|\nabla \theta|^2-\frac{1}{c_v}Q(u)\theta+\frac{1}{c_v}\int_0^\infty \int_{S^2}   \Big(1-\frac{u\cdot \Omega}{c}\Big)S \theta \text{d}\Omega \text{d}v+\frac{1}{2}\rho |\dot{u}|^2\Big)\text{d}x\\
&+\int_{\mathbb{V} }(\rho |\dot{\theta}|^2+|\nabla \dot {u}|^2_2)\text{d}x
\leq C(|\nabla u|^4_4+|\nabla \theta|^2_2+|\nabla u|^3_3+|u\cdot \nabla u|^2_2+1).
\end{split}
\end{equation}
Similarly to (\ref{yue5}), from H\"older's inequality we get
\begin{equation}\label{ppyue22}
\begin{split}
&|\nabla u|^4_4+|\nabla u|^3_3\leq C|\nabla u|^4_4
\leq C(|G|^4_4+|\omega|^4_4+1)
\leq C(1+|(\nabla G,\nabla \omega)|^3_2),
\end{split}
\end{equation}
(\ref{yue11}), (\ref{yuekk}) and (\ref{ppyue21}), we quickly have 
\begin{equation}\label{ppyue21kkk}
\begin{split}
|\nabla \theta|^2_2+|\sqrt{\rho} \dot{u}|^2_2+
+\int_0^t( |\sqrt{\rho}\dot{\theta}|^2_2+|\nabla \dot {u}|^2_2)\text{d}s\leq & C\int_0^t |\sqrt{\rho}u_t|^3_2\text{d}s+C.
\end{split}
\end{equation}
According to (\ref{yue6}), we have
\begin{equation}\label{ppyue22}
\begin{split}
 |\sqrt{\rho}u_t|_2 \leq C(|\sqrt{\rho}\dot{u}|_2+|\sqrt{\rho}u\cdot \nabla u|)\leq C(|\sqrt{\rho}\dot{u}|_2+|\nabla u|_2+1)+\epsilon |\sqrt{\rho}u_t|_2.
\end{split}
\end{equation}

Then substituting (\ref{ppyue22}) into (\ref{ppyue21}), via Gronwall's inequality, we obtain
\begin{equation*}
\begin{split}
|\theta(t)|^2_{D^1}+|\sqrt{\rho} \dot{u}(t)|^2_{2} + \int_{0}^{t}( |\dot{u}|^2_{D^1}+|\sqrt{\rho} \dot{\theta}|^2_2)\text{d}t\leq C, \quad 0\leq t \leq T,
\end{split}
\end{equation*}
which, together with Sobolev imbedding theorem,  (\ref{yuekk}), (\ref{zhu54}), (\ref{ppyue22}) and 
\begin{equation*}
\begin{cases}
|\text{div} u|_\infty\leq C(|G|_\infty+1)\leq C(\|G\|_{W^{1,6}}+1)\leq C(|\nabla u|_6+|\nabla \dot{u}|_2+1),\\[4pt]
|\omega|_\infty\leq C(\|\omega\|_{W^{1,6}}+1)\leq C(|\nabla u|_6+|\nabla \dot{u}|_2+1),\\[4pt]
|\nabla u|_6\leq C(|\text{div}u|_6+|\omega|_6)\leq C(\|G\|_1+\|\omega\|_1+1),
\end{cases}
\end{equation*}
implies the desired conclusions for $  0\leq t \leq T$.

\end{proof}

\subsection{The higher order estimate for $|(u,\theta)|_{L^\infty([0,\overline{T}]; D^2(\mathbb{V})}$}
 \begin{lemma}\label{sk4nn}
\begin{equation*}
\begin{split}
|\theta(t)|_{D^2}+|\sqrt{\rho}\theta_t(t)|_{2}+\int_0^T  |\theta_t|^2_{D^1} \text{d}t\leq C,\quad 0\leq t<  T.
\end{split}
\end{equation*}
\end{lemma}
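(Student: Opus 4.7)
The plan is to run the standard higher–order energy estimate for $\theta_t$, modeled on the argument used for Lemma \ref{lem:3e}, but with all nonlinear factors absorbed by the already–established bounds from Lemma \ref{ablem:4-1} and Lemma \ref{s2}. First I would differentiate $(\ref{eq:1.2})_4$ in time, obtaining
\begin{equation*}
\rho\theta_{tt}-\frac{\kappa}{c_v}\triangle\theta_t=-\rho_t\theta_t-(\rho u\cdot\nabla\theta)_t-\frac{1}{c_v}\bigl((P_m\text{div}u)_t-Q(u)_t-(N_r)_t\bigr),
\end{equation*}
multiply by $\theta_t$, and integrate over $\mathbb{V}$. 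The boundary condition $\nabla\theta\cdot n|_{\partial\mathbb{V}}=0$ gives $\nabla\theta_t\cdot n|_{\partial\mathbb{V}}=0$, so integration by parts yields
\begin{equation*}
\tfrac12\tfrac{d}{dt}|\sqrt{\rho}\theta_t|_2^{2}+\tfrac{\kappa}{c_v}|\nabla\theta_t|_2^{2}=\sum I_i+E,
\end{equation*}
where $\sum I_i$ collects the fluid terms and $E$ collects the radiation terms coming from $(N_r)_t$.

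Next I would bound each term individually. Using $\rho_t=-\text{div}(\rho u)$ and Lemmas \ref{s2}, \ref{abs4}, \ref{ablem:4-1}, one obtains
\begin{equation*}
|I_1|+|I_2|+|I_3|\leq C|\sqrt{\rho}\theta_t|_2^{2}+\tfrac{\kappa}{20c_v}|\nabla\theta_t|_2^{2}+C|\nabla u_t|_2^{2}+C,
\end{equation*}
after using the Gagliardo–Nirenberg chain $|\theta_t|_6\leq C(|\sqrt{\rho}\theta_t|_2+|\nabla\theta_t|_2)$ (Lemma \ref{pang}) together with the uniform $L^\infty$ bounds on $\rho,\theta$ and the bounds on $|u|_\infty,|\nabla u|_6$. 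For the viscous dissipation derivative, I write
\begin{equation*}
\int_{\mathbb{V}} Q(u)_t\theta_t\,dx\leq C|\nabla u|_\infty|\nabla u_t|_2|\theta_t|_2\leq \tfrac{\kappa}{20c_v}|\nabla\theta_t|_2^{2}+C|\nabla u_t|_2^{2}+C|\sqrt{\rho}\theta_t|_2^{2},
\end{equation*}
and recall that $|\nabla u_t|_2\leq|\nabla\dot u|_2+C|u|_\infty|\nabla^2 u|_2+C|\nabla u|_4^{2}$ is $L^2$ in time by Lemma \ref{ablem:4-1}. For the radiation piece $E$ I would expand $(N_r)_t$ exactly as in (\ref{op11})–(\ref{op3}), using the simplified form (\ref{kll}) (so that $\sigma_t=\sigma_\theta\theta_t$) together with Lemma \ref{s2} and the uniform bound on $\|I\|_{L^2(\mathbb{R}^+\times S^2;L^\infty\cap H^1)}$, obtaining
\begin{equation*}
|E|\leq \tfrac{\kappa}{20c_v}|\nabla\theta_t|_2^{2}+C(1+|\sqrt{\rho}\theta_t|_2^{2})+C|\nabla u_t|_2^{2}.
\end{equation*}

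Collecting these estimates and invoking Gronwall, since $\int_0^T|\nabla u_t|_2^{2}\,dt\leq C$ is provided by Lemma \ref{ablem:4-1} and the initial value $|\sqrt{\rho}\theta_t(0)|_2\leq C$ comes from the compatibility condition (\ref{kkk}) as in (\ref{nv6}), I conclude
\begin{equation*}
\sup_{0\leq t<T}|\sqrt{\rho}\theta_t(t)|_2^{2}+\int_0^{T}|\nabla\theta_t|_2^{2}\,dt\leq C.
\end{equation*}
The $D^2$–estimate for $\theta$ then follows from the elliptic regularity in Lemma \ref{tvd1} applied to
\begin{equation*}
-\tfrac{\kappa}{c_v}\triangle\theta=-\rho\theta_t-\rho u\cdot\nabla\theta+\tfrac{1}{c_v}\bigl(P_m\text{div}u+Q(u)+N_r\bigr),
\end{equation*}
whose right–hand side is bounded in $L^2$ by $C(|\sqrt{\rho}\theta_t|_2+1)$ using the previously–established bounds on $|u|_\infty,|\nabla u|_6,|\theta|_\infty,|\theta|_{D^1}$ and the Minkowski estimate for $|N_r|_2$. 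The main obstacle in this program is controlling $(N_r)_t$ in a way that does not spoil the Gronwall structure: the term involving $I_t$ forces one to use the $L^2(\mathbb{R}^+\times S^2;L^\infty([0,T];L^2))$ bound on $I_t$ from Lemma \ref{s2}, and the term involving $\sigma_\theta\theta_t$ is exactly where the special structural assumption (\ref{kll}) is essential, since under the general assumption (\ref{jia345}) one would pick up an additional $|\sqrt{\rho}\theta_t|_6$ factor that could not be closed.
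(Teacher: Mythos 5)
Your overall skeleton (differentiate $(\ref{eq:1.2})_4$ in time, test with $\theta_t$, Gronwall, then elliptic regularity for the $D^2$ bound) is the same as the paper's, but there is a genuine gap in how you close the Gronwall argument: you repeatedly invoke $\int_0^T|\nabla u_t|^2_2\,\text{d}t\leq C$ as "provided by Lemma \ref{ablem:4-1}" through the chain $|\nabla u_t|_2\leq|\nabla\dot u|_2+C|u|_\infty|\nabla^2u|_2+C|\nabla u|_4^2$. At this stage of the bootstrap $|\nabla^2u|_2$ is \emph{not} available: the Lam\'e (or effective-flux) estimate gives only $|\nabla^2u|_2\leq C(|\nabla G|_2+|\nabla\omega|_2+|\nabla P_m|_2)$, and $|\nabla P_m|_2$ requires $|\nabla\rho|_2$, which is estimated only later, in Lemma \ref{sk4nnpppy} (see (\ref{zhu55a}), where $|\nabla^2u|_2\leq C(|\nabla\rho|_2+1)$ is stated \emph{after} the density-gradient bound). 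Lemma \ref{ablem:4-1} controls $\int_0^T|\dot u|^2_{D^1}\,\text{d}t$, not $\int_0^T|\nabla u_t|^2_2\,\text{d}t$, and the two are not interchangeable without $\nabla^2u$. The same objection applies to your bound $\int Q(u)_t\theta_t\leq C|\nabla u|_\infty|\nabla u_t|_2|\theta_t|_2$: the factor $|\nabla u|_\infty$ is not bounded (nor even time-integrable) before the logarithmic estimate in Lemma \ref{sk4nnpppy}; only $|\text{div}u|_\infty$ is integrable at this point.

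This is precisely why the paper's proof of Lemma \ref{sk4nn} is organized around $\dot u$ and the effective viscous flux rather than $u_t$ and $\nabla^2u$: every occurrence of $\nabla\text{div}u$ (arising when $\rho_t=-\text{div}(\rho u)$ or $u_t=\dot u-u\cdot\nabla u$ is inserted) is rewritten as $\frac{1}{2\mu+\lambda}(\nabla G+\nabla P_m)$, the $\nabla G$, $\nabla\omega$ pieces are controlled by Lemma \ref{ablem:4-1}, and the dangerous $\nabla(\rho\theta)$ pieces are removed by a further integration by parts against $\theta_t$, with the resulting $\nabla\theta_t$ absorbed into the dissipation (see the treatment of $M_{12}$, $M_{14}$ and (\ref{yufang4})); for $Q(u)_t$ one uses $|\nabla u|_3$ and $|\nabla\dot u|_2$, never $|\nabla u|_\infty$ or $|\nabla u_t|_2$. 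Your radiation estimate and the final elliptic step for $|\theta|_{D^2}$ are fine (for the $u_t$ factors in the radiation terms it suffices to write $|u_t|_6\leq|\dot u|_6+|u|_\infty|\nabla u|_6\leq C(|\nabla\dot u|_2+1)$), and your diagnosis that $(N_r)_t$ is the main obstacle is actually backwards: the radiation terms go through as in Lemma \ref{lem:3e}, while the real difficulty is avoiding $\nabla\rho$ and $\nabla^2u$ in the pressure–convection terms. To repair the proof you must either reproduce the effective-flux/integration-by-parts mechanism of the paper, or first establish the $|\nabla\rho|_q$ bound independently and then justify $\int_0^T|\nabla u_t|^2_2\,\text{d}t\leq C$; as written, the key time-integrability claims are unsupported.
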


\begin{proof}
Differentiating $(\ref{eq:1.2})_4$ with respect to $t$,  multiplying  by $\theta_t$ and integrating over $\mathbb{V}$,
\begin{equation}\label{zhouppy}
\begin{split}
&\frac{1}{2}\frac{d}{dt} \int_{\mathbb{V}}\rho |\theta_t|^2 \text{d}x+\frac{\kappa}{c_v}\int_{\mathbb{V}}|\nabla \theta_t|^2 \text{d}x\\
=& \int_{\mathbb{V}} \Big(-\rho_t\Big( \frac{\theta_t}{2}+u\cdot \nabla \theta+R\theta \text{div}u\Big)-\rho(u_t\cdot \nabla \theta+u\cdot \nabla \theta_t+ R\theta_t \text{div}u\big) \theta_t\\
&- \frac{1}{c_v}P_m\text{div} u_t \theta_t+ \frac{1}{c_v}Q(u)_t\theta_t+\frac{1}{c_v}(N_r)_t\theta_t\Big)   \text{d}x
\equiv:\sum_{i=12}^{15}M_i+E^*.
\end{split}
\end{equation}

For $M_{12}$, via (\ref{yue11}), (\ref{yuekk}) and Lemma \ref{ablem:4-1}, we have
\begin{equation}\label{yufang1}
\begin{split}
M_{12}=&\int_{\mathbb{V}} \text{div}(\rho u)\Big( \frac{\theta_t}{2}+u\cdot \nabla \theta+R\theta \text{div}u\Big) \theta_t \text{d}x\\
=&- \int_{\mathbb{V}} \rho u\cdot \nabla \theta_t\Big( \frac{\theta_t}{2}+u\cdot \nabla \theta+R\theta \text{div}u\Big)  \text{d}x-\int_{\mathbb{V}}\rho u\cdot \frac{\nabla \theta_t}{2}\theta_t \text{d}x\\
&-\int_{\mathbb{V}} \rho u\cdot  (\nabla u\cdot \nabla \theta+u \cdot \nabla \nabla \theta)\theta_t 
\text{d}x-R\int_{\mathbb{V}} \rho u\cdot (\nabla \theta \text{div}u+\theta \nabla \text{div}u) \theta_t \text{d}x\\
\leq & \frac{\kappa}{20c_v}|\theta_t|^2_{D^1}+C|\rho u \theta_t|^2_2+C|\rho |u|^2\nabla \theta|^2_2+C|\rho u \theta \text{div}u|^2_2+C|\rho \theta_t|^2_2\\
&+C||\nabla u| |\nabla \theta||^2_2+C|u\cdot \nabla \nabla \theta|^2_2-\frac{R}{2\mu+\lambda}\int_{\mathbb{V}} \rho \theta u\cdot \big(\nabla G+\nabla (R\rho \theta)\big) \theta_t \text{d}x\\
\leq & \frac{\kappa}{10c_v}|\theta_t|^2_{D^1}+\frac{R^2}{2\mu+\lambda}\int_{\mathbb{V}} \frac{1}{2}\rho^2 \theta^2( \text{div}u \theta_t +u\cdot \nabla \theta_t)\text{d}x+C(1+|\sqrt{\rho}  \theta_t|^2_2)\\
&+C(|\nabla^2 \theta|^2_2+|\nabla G|^2_2)
\leq C+ \frac{\kappa}{10c_v}|\theta_t|^2_{D^1}+C|\sqrt{\rho}  \theta_t|^2_2+C|\nabla^2 \theta|^2_2.
\end{split}
\end{equation}
For $M_{13}$ and $M_{14}$, we have
\begin{equation}\label{yufang22}
\begin{split}
M_{13}=&\int_{\mathbb{V}} -\rho(u_t\cdot \nabla \theta+u\cdot \nabla \theta_t+ R\theta_t \text{div}u\big) \theta_t\text{d}x\\
\leq &\int_{\mathbb{V}}\rho \theta_t \big(- \dot{u}\cdot \nabla \theta  + (u\cdot \nabla)u\cdot \nabla \theta \big) \text{d}x +\frac{\kappa}{20c_v}|\theta_t|^2_{D^1}+(|\text{div}u|_\infty+1)|\sqrt{\rho}  \theta_t|^2_2\\
\leq &C( |\dot{u}|_6+ |\nabla u|_6)|\sqrt{\rho} \theta_t|_2|\nabla \theta|_3+\frac{\kappa}{20c_v}|\theta_t|^2_{D^1}+(|\text{div}u|_\infty+1)|\sqrt{\rho}  \theta_t|^2_2\\
\leq & \frac{\kappa}{10c_v}|\theta_t|^2_{D^1}+C(|\text{div}u|_\infty+|\nabla \dot{u}|^2_2+1)|\sqrt{\rho}  \theta_t|^2_2+C|\theta|^2_{D^2}+C,\\
M_{14}=&\int_{\mathbb{V}} -\frac{R}{c_v}\rho  \theta \text{div}\dot{u}\theta_t\text{d}x+\int_{\mathbb{V}} \frac{R}{c_v}\rho  \theta \text{div}(u\cdot \nabla u)\theta_t\text{d}x,\\
\leq & C|\rho \theta \theta_t|^2_2+C|\text{div}\dot{u}|^2_2+C\int_{\mathbb{V}} \rho \theta |\nabla u|^2|\theta_t|\text{d}x+\int_{\mathbb{V}} \frac{R}{c_v}\rho \theta \theta_t u\cdot \nabla \text{div}u\text{d}x\\
\leq & C|\sqrt{\rho}  \theta_t|^2_2+C|\text{div} \dot{u}|^2_2+C|\nabla u|^4_4+\frac{R}{(2\mu+\lambda)c_v}\int_{\mathbb{V}} \rho \theta \theta_t u\cdot \nabla G\text{d}x\\
&+\frac{R}{(2\mu+\lambda)c_v}\int_{\mathbb{V}} \rho^2 \theta \theta_t u\cdot \nabla \theta\text{d}x+\frac{R}{(2\mu+\lambda)c_v}\int_{\mathbb{V}} \rho \theta^2 \theta_t u\cdot \nabla \rho \text{d}x\\
\leq &C+ C|\sqrt{\rho}  \theta_t|^2_2+C|\text{div} \dot{u}|^2_2+\frac{R}{(2\mu+\lambda)c_v}\int_{\mathbb{V}} \rho \theta^2 \theta_t u\cdot \nabla \rho\text{d}x.
\end{split}
\end{equation}
Now we consider the last term in (\ref{yufang22}) that
\begin{equation}\label{yufang4}
\begin{split}
\int_{\mathbb{V}} \rho \theta^2 \theta_t u\cdot \nabla \rho\text{d}x
=&-\int_{\mathbb{V}} \frac{1}{2}\Big(\rho^2 \theta^2 \theta_t \text{div}u+ \rho^2 \theta^2 u\cdot \nabla \theta_t\text{d}x+2 \rho^2 \theta \theta_t u\cdot \nabla \theta\Big) \text{d}x\\
\leq& C|\sqrt{\rho}  \theta_t|^2_2+\frac{c_v\kappa(2\mu+\lambda)}{20R}|\nabla \theta_t|^2_2+C,
\end{split}
\end{equation}
which, together with (\ref{yufang22}), implies that 
\begin{equation}\label{yufang5}
\begin{split}
M_{14}
\leq & C|\sqrt{\rho}  \theta_t|^2_2+\frac{\kappa}{20c_v}|\nabla \theta_t|^2_2+C|\text{div} \dot{u}|^2_2+C.
\end{split}
\end{equation}
For $M_{15}$, from Lemma \ref{pang},  we have
\begin{equation}\label{yufang6}
\begin{split}
M_{15}=& \int_{\mathbb{V}} \frac{1}{c_v}Q(u)_t\theta_t  \text{d}x=\frac{1}{c_v}\int_{\mathbb{V}} \Big(4\mu D(u):D(\dot{u}) +2\lambda \text{div}u \text{div}\dot{u}\Big)  \theta_t \text{d}x\\
&-\frac{1}{c_v}\int_{\mathbb{V}} \Big(4\mu D(u):D(u\cdot \nabla u) \theta+2\lambda \text{div}u \text{div}(u\cdot \nabla u\big) \theta\Big) \text{d}x\\
\leq&  C |\nabla \dot{u}|_2 |\nabla u|_3|\theta_t|_6+C ||\nabla u|^3\theta_t|_1+C|\nabla u|^4_4+\frac{\kappa}{20c_v}|\nabla \theta_t|^2_2\\
\leq& \frac{\kappa}{10c_v}|\nabla \theta_t|^2_2+C|\sqrt{\rho}  \theta_t|^2_2+ C|\nabla \dot{u}|^2_2+C.
\end{split}
\end{equation}
Next considering $E^*$:
\begin{equation*}
\begin{split}
E^*=\frac{1}{c_v}\int_0^\infty \int_{S^2} \int_{\mathbb{V}}\Big( \Big(1-\frac{u\cdot \Omega}{c}\Big) (A_r)_t \theta_t+ \Big(-\frac{u_t\cdot \Omega}{c}\Big) A_r\theta_t\Big) \text{d}x \text{d}\Omega \text{d}v=\sum_{j=1}^{8} G_j.
\end{split}
\end{equation*}
From H\"older's inequality,  (\ref{jia345}) and Young's inequality, we have
\begin{equation}\label{op11}
\begin{split}
G_1=&\frac{1}{c_v}\int_0^\infty \int_{S^2} \int_{\mathbb{V}} \Big(1-\frac{u\cdot \Omega}{c}\Big) S_t\theta_t \text{d}x \text{d}\Omega \text{d}v\\
\leq &C\big(1+|u|_{\infty}\big)|\theta_t|_{2}\int_0^\infty \int_{S^2} |S_t |_{L^{2}}\text{d}\Omega \text{d}v
\leq  \frac{\kappa}{20c_v}(|\sqrt{\rho}\theta_t|^2_2+|\nabla \theta_t|^2_2)+C,\\
\end{split}
\end{equation}
\begin{equation}\label{op1}
\begin{split}
G_2=&-\frac{1}{c_v}\int_0^\infty \int_{S^2} \int_{\mathbb{V}} \Big(1-\frac{u\cdot \Omega}{c}\Big) (\sigma_\theta \theta_t \rho I+\sigma \rho_t I+\sigma \rho I_t)\theta_t \text{d}x \text{d}\Omega \text{d}v\\
=&-\frac{1}{c_v}\int_0^\infty \int_{S^2} \int_{\mathbb{V}} \Big(1-\frac{u\cdot \Omega}{c}\Big) (\sigma_\theta \theta_t \rho I-\text{div}(\rho u)\sigma  I+\sigma \rho I_t)\theta_t \text{d}x \text{d}\Omega \text{d}v\\
\leq &  F_r(1+|u|^2_\infty)(1+|\sqrt{\rho}\theta_t|^2_2+|\nabla \theta_t|_2),
\end{split}
\end{equation}
where 
$$
F_r=\int_0^\infty \int_{S^2} \big(|\sigma_\theta|_\infty|I|_\infty+|\sigma|_\infty(|I_t|_2+|\nabla I|_2+|I|_\infty)+|\nabla I|_2|\sigma|_\infty\big)\text{d}\Omega \text{d}v\leq C.
$$
Similarly, via the similar argument used for $G_2$, we have
\begin{equation}\label{op2}
\begin{split}
G_3=&\frac{1}{c_v}\int_\mathbb{I} \int_{\mathbb{V}}\frac{v}{v'}\Big(1-\frac{u\cdot \Omega}{c}\Big)(\overline{\sigma}_s \rho I'_t+(\overline{\sigma}_s)_t \rho I'+\overline{\sigma}_s \rho_t I')
\theta_t\text{d}x \text{d}\mathbb{I}\\
\leq& G_r(1+|u|^2_\infty)(1+|\sqrt{\rho}\theta_t|^2_2+|\nabla \theta_t|_2),\\
G_4=&\frac{1}{c_v}\int_\mathbb{I} \int_{\mathbb{V}}-\Big(1-\frac{u\cdot \Omega}{c}\Big)(\overline{\sigma}'_s \rho I_t+(\overline{\sigma}'_s)_t \rho I+\overline{\sigma}'_s \rho_t I)
\theta_t\text{d}x \text{d}\mathbb{I}\\
\leq& J_r(1+|u|^2_\infty)(1+|\sqrt{\rho}\theta_t|^2_2+|\nabla \theta_t|_2),
\end{split}
\end{equation}
where 
\begin{equation}\label{opfang3}
\begin{split}
\displaystyle
G_r=&\int_\mathbb{I} \frac{v}{v'}\big(\overline{\sigma}_s(|I'_t|_2+|I'|_2+|\nabla I'|_2+|I'|_\infty)+|I'|_2(|(\overline{\sigma}_s)_t|_\infty+|\nabla \overline{\sigma}_s|_2)\big) \text{d}\mathbb{I}, \\
\displaystyle
J_r=&\int_\mathbb{I} \big(\overline{\sigma}_s(|I_t|_2+|I|_2+|\nabla I|_2+|I|_\infty)+|I|_2(|(\overline{\sigma}_s)_t|_\infty+|\nabla \overline{\sigma}_s|_2)\big)\text{d}\mathbb{I}.
\end{split}
\end{equation}
Finally, we consider the terms $G_5$-$G_8$:
\begin{equation}\label{opfang2}
\begin{split}
G_5=&\frac{1}{c_v}\int_0^\infty \int_{S^2} \int_{\mathbb{V}} \frac{-u_t \cdot \Omega}{c} S\theta_t \text{d}x \text{d}\Omega \text{d}v\\
\leq&  C|u_t|_{6}|\theta_t|_{6}\int_0^\infty \int_{S^2} | S|_{\frac{3}{2}}\text{d}\Omega \text{d}v
 \leq \frac{\kappa}{20c_v}(|\theta_t|^2_{D^1}+|\sqrt{\rho}\theta_t|^2_{2})+C|u_t|^2_{D^1},\\
G_6=&\frac{1}{c_v}\int_0^\infty \int_{S^2} \int_{\mathbb{V}} \frac{u_t \cdot \Omega}{c}  \sigma_a I\theta_t \text{d}x \text{d}\Omega \text{d}v\\
\leq& C|u_t|_{6}|\rho|^{\frac{1}{2}}_\infty|\sqrt{\rho}\theta_t|_{2}\int_0^\infty \int_{S^2} |\sigma|_\infty|I|_{3}\text{d}\Omega \text{d}v
\leq C|\sqrt{\rho}\theta_t|^2_{2}+C|u_t|^2_{D^1},\\
G_7=&\frac{1}{c_v}\int_\mathbb{I} \int_{\mathbb{V}} -\frac{v}{v'}\frac{u_t \cdot \Omega}{c}
\sigma_s I'\theta_t\text{d}x \text{d}\mathbb{I}\\
\leq& C|u_t|_{6}|\rho|^{\frac{1}{2}}_\infty|\sqrt{\rho}\theta_t|_{2}
\int_\mathbb{I} \frac{v}{v'}
|\overline{\sigma}_s|_\infty |I'|_3\text{d}\mathbb{I}
\leq  C|\sqrt{\rho}\theta_t|^2_{2}+C|u_t|^2_{D^1},\\
\end{split}
\end{equation}
\begin{equation}\label{op3}
\begin{split}
G_8=&\frac{1}{c_v}\int_\mathbb{I} \int_{\mathbb{V}} \frac{u_t \cdot \Omega}{c}
\sigma'_s I\theta_t\text{d}x \text{d}\mathbb{I}
\leq C|u_t|_{6}|\rho|^{\frac{1}{2}}_\infty|\sqrt{\rho}\theta_t|_{2}
\int_\mathbb{I}
|\overline{\sigma}'_s|_\infty |I|_3\text{d}\mathbb{I}
\leq  C|\sqrt{\rho}\theta_t|^2_{2}+C|u_t|^2_{D^1}.
\end{split}
\end{equation}

Then combining (\ref{zhouppy})-(\ref{opfang2}), via Gronwall's inequality, we quickly have
\begin{equation}\label{zhouppyas}
\begin{split}
|\theta(t)|^2_{D^2}+|\sqrt{\rho}\theta_t(t)|_{2}+\int_0^T  |\theta_t|^2_{D^1} \text{d}t\leq C,\quad 0\leq t<  T.
\end{split}
\end{equation}

\end{proof}

Finally, we give the upper bounds of  $| (\rho,I)|_{D^{1,q}}$, $|(u,\theta)|_{D^{2,q}}$, $ |u_t|^2_{D^1}$  and so on.
  \begin{lemma}\label{sk4nnpppy}
\begin{equation*}
\begin{split}
|( \rho,  I)(t)|_{D^{1,q}}+|(\rho_t,I_t)(t)|_q
+|u(t)|_{D^2}+\int_0^T  (|u_t|^2_{D^1} +|(u,\theta)|^2_{D^{2,q}})\text{d}t\leq& C,
\end{split}
\end{equation*}
 \end{lemma}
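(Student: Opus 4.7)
The plan is to close a coupled estimate for $|\nabla\rho|_q$, $|u|_{D^{2,q}}$ and $|\theta|_{D^{2,q}}$ via elliptic regularity applied to the momentum/temperature equations, a transport estimate for $\nabla\rho$, and the Beale--Kato--Majda logarithmic inequality of Lemma \ref{tvd10}, following the scheme of \cite{hup}\cite{jiangzhu} adapted to the radiation terms. The quantities $I$, $I_t$, $\rho_t$, $u_t$ will be recovered as by-products once $\nabla\rho$, $\nabla u$ and $\nabla\theta$ are under control.

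First, I would apply Lemma \ref{tvd1} to the elliptic systems $Lu=-\rho u_t-\rho u\cdot\nabla u-\nabla P_m-\tfrac1c\int_0^\infty\int_{S^2}A_r\Omega\,d\Omega\,dv$ and $-\kappa\triangle\theta=-c_v\rho\theta_t-c_v\rho u\cdot\nabla\theta-P_m\text{div}\,u+Q(u)+N_r$, giving
\begin{equation*}
|u|_{D^{2,q}}+|\theta|_{D^{2,q}}\leq C\big(|\sqrt\rho u_t|_2+|\sqrt\rho\theta_t|_2+|\nabla P_m|_q+|\theta_t|_{D^1}+\text{radiation}\big),
\end{equation*}
and because $|\nabla P_m|_q\leq C(|\theta|_\infty|\nabla\rho|_q+|\rho|_\infty|\nabla\theta|_q)$, the bound on $|u|_{D^{2,q}}+|\theta|_{D^{2,q}}$ reduces, after using Lemmas \ref{ablem:4-1}--\ref{sk4nn} and Sobolev embedding for $|\nabla\theta|_q$, to controlling $|\nabla\rho|_q$ in $L^\infty_t$. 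For $|u|_{D^2}$ specifically the same elliptic bound with $q=2$ and the estimates already in Lemma \ref{ablem:4-1} give the result immediately.

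Second, differentiating the continuity equation $\rho_t+\text{div}(\rho u)=0$ and testing against $q|\nabla\rho|^{q-2}\nabla\rho$ yields
\begin{equation*}
\tfrac{d}{dt}|\nabla\rho|_q\leq C|\nabla u|_\infty|\nabla\rho|_q+C|\rho|_\infty|\nabla^2 u|_q,
\end{equation*}
which must be combined with the Beale--Kato--Majda inequality of Lemma \ref{tvd10},
\begin{equation*}
|\nabla u|_\infty\leq C\big(|\text{div}\,u|_\infty+|\nabla\times u|_\infty\big)\ln(e+|u|_{D^{2,q}})+C|\nabla u|_2+C.
\end{equation*}
Lemma \ref{ablem:4-1} already supplies $\int_0^T|\text{div}\,u|_\infty\,dt\leq C$, and the same argument applied to the vorticity equation (using the bounds on $\omega$ in $D^1$ and $L^6$) gives $\int_0^T|\nabla\times u|_\infty\,dt\leq C$. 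Setting $f(t):=e+|\nabla\rho|_q(t)+|u|_{D^{2,q}}(t)$ and combining the elliptic bound from Step~1 with the transport bound, one obtains an inequality of the form
\begin{equation*}
\tfrac{d}{dt}\ln f(t)\leq C\big(1+(|\text{div}\,u|_\infty+|\nabla\times u|_\infty)\ln f(t)\big)+h(t),\qquad h\in L^2(0,T),
\end{equation*}
to which the logarithmic Gronwall lemma applies and yields $|\nabla\rho|_q(t)\leq C$ on $[0,\overline T)$. Feeding this back into the elliptic bound gives $|u(t)|_{D^2}\leq C$ and $\int_0^T|(u,\theta)|_{D^{2,q}}^2\,dt\leq C$. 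This coupled closure is the main obstacle.

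Third, once $|\nabla\rho|_q$ is bounded, $|\rho_t|_q\leq|u|_\infty|\nabla\rho|_q+|\rho|_\infty|\text{div}\,u|_q$ is automatic. For $|u_t|_{D^1}$ in $L^2_t$, write $u_t=\dot u-u\cdot\nabla u$, so $|\nabla u_t|_2\leq|\nabla\dot u|_2+C|\nabla u|_3|\nabla u|_6+C|u|_\infty|\nabla^2 u|_2$, and integrate in time using $\int_0^T|\nabla\dot u|_2^2\,dt\leq C$ from Lemma \ref{ablem:4-1} together with the already established bounds on $|u|_{D^2}$. Finally, $|I|_{D^{1,q}}$ and $|I_t|_q$ are handled exactly as in Lemma \ref{lem:3}: differentiate the radiation transfer equation in $x$, multiply by $q|\nabla I|^{q-2}\nabla I$, use the boundary identities (\ref{ppy1})--(\ref{ppy3}) to kill the boundary term, apply the assumptions (\ref{zhen1})--(\ref{jia345}) and the now-controlled $|\nabla\rho|_q$, and conclude by Gronwall, while $|I_t|_q$ follows from $I_t=-c\Omega\cdot\nabla I+cA_r$ and Minkowski's inequality.
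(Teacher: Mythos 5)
Your proposal follows essentially the same route as the paper's proof: the $L^q$ elliptic (Lam\'e and Laplace) estimates expressing $|\nabla^2 u|_q$ and $|\theta|_{D^{2,q}}$ through $|\nabla\rho|_q$, $|\nabla\dot u|_2$, $|\nabla\theta_t|_2$, the Beale--Kato--Majda inequality of Lemma \ref{tvd10} for $|\nabla u|_\infty$, the $L^q$ transport estimate for $\nabla\rho$, and a logarithmic Gronwall closure, after which $I$, $I_t$, $\rho_t$, $|u_t|_{D^1}$ and $|u|_{D^2}$ are recovered exactly as in the paper (via the Lemma \ref{lem:3}-type argument for $I$ and $u_t=\dot u-u\cdot\nabla u$). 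The only cosmetic difference is that the paper runs Gronwall on $f=e+|\nabla\rho|_q$ alone, substituting the elliptic bound $|\nabla^2 u|_q\leq C(|\nabla\rho|_q+|\nabla\dot u|_2+1)$ into the logarithm rather than differentiating a quantity containing $|u|_{D^{2,q}}$ in time, which is the cleaner way to justify your Step 2 (and note that the $L^\infty_t$ bound for $|u|_{D^2}$ also needs $|\nabla\rho|_2$ through $\nabla P_m$, so it comes after, not before, the density estimate --- as you in fact do when ``feeding back'').
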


\begin{proof}

In the following estimates we will use 
 \begin{equation}\label{zhu55a}
\begin{split}
|\nabla^2 u|_q \leq &  C(|\nabla \rho |_q+|\nabla \dot{u}|_2+1),\  \   |\nabla^2 u|_2 \leq C(|\nabla \rho |_2+1), \\
|\nabla u|_{\infty} \leq &   C\big(     |\text{div}u|_\infty+|\omega|_\infty \big) \text{ln} (e+|\nabla^2  u|_q) +C|\nabla u|_2+C\\
 \leq &  C\big(     |\text{div}u|_\infty+|\omega|_\infty \big) (\text{ln} (e+|\nabla \rho |_q)+   \text{ln} (e+|\nabla \dot{u}|_2)+1),\\
|\theta|_{D^{2,q}}\leq&  C(|\nabla \theta_t|_2+|u|_{D^2}+|\nabla u |_\infty|\nabla u|_q+1),
\end{split}
\end{equation}
where we have used the equations $(\ref{eq:1.2})_3$-$(\ref{eq:1.2})_4$,  Lemmas \ref{tvd1} and   \ref{s2}-\ref{sk4nn}.

First,  applying $\nabla$ to  $(\ref{eq:1.2})_2$, multiplying the resulting equations by $q|\nabla \rho|^{q-2} \nabla \rho$,
and  integrating  over $\mathbb{V}$,  via (\ref{zhu55a}) we immediately obtain
\begin{equation}\label{zhu200}
\begin{split}
\frac{d}{dt}|\nabla \rho|_q
\leq& C|\nabla u|_\infty|\nabla \rho|_q+C|\nabla^2 u|_q\leq C(1+|\nabla u|_\infty)|\nabla \rho|_q+C|\nabla \dot{u}|_2+C.
\end{split}
\end{equation}
Via   (\ref{zhu55a}), (\ref{zhu200}) and notations:
$$
f=e+|\nabla \rho|_q,\quad g=1+\big(     |\text{div}u|_\infty+|\omega |_\infty \big) \text{ln} (e+|\nabla \dot{u}|_2),
$$
 we quickly have
$$
f_t\leq Cgf +Cf\ln f+Cg,
$$
which, together with Lemma \ref{ablem:4-1} and Gronwall's inequality,  implies that 
$$
\ln f(t)\leq C, \quad 0\leq t<  T.
$$
So we obtained the desired estimate for $|\nabla \rho|_q$, and the upper  bound of  $|\nabla I|_q$  can be deduced easily via the similar argument used in Lemma \ref{lem:3}.
The estimates for $\rho_t$ and  $I_t$  can be obtained  via equations (\ref{eq:1.2}).
Finally, via (\ref{zhu55a}), we only need to check that
$$ |\nabla u_t|_2\leq |\nabla \dot{u}|_2+|\nabla (u\cdot \nabla u)|_2\leq C(|\nabla \dot{u}|_2+1),$$
which, combining with Lemma \ref{ablem:4-1}, implies the desired conclusions.
 \end{proof}

These estimates will be enough to extend the strong solution $(I,\rho,u,\theta)$ beyond $t\geq \overline{T}$.

In truth, via the estimates obtained in  Lemmas \ref{s2}-\ref{sk4nnpppy}, we quickly know that the functions $(I,\rho,u, \theta)|_{t=\overline{T}} =\lim_{t\rightarrow \overline{T}}(I,\rho,u, \theta)$ satisfy the conditions imposed on the initial data $(\ref{gogo})$-$(\ref{kkk})$. Therefore, we can take $(I,\rho,u, \theta)|_{t=\overline{T}}$ as the initial data and apply  Theorem \ref{th1} to extend the local solution beyond $t\geq \overline{T}$. This contradicts the assumption on $\overline{T}$.

\bigskip

{\bf Acknowledgement:} The research of  Y. Li and S. Zhu were supported in part
by National Natural Science Foundation of China under grant 11231006 and Natural Science Foundation of Shanghai under grant 14ZR1423100. S. Zhu was also supported by China Scholarship Council.

\bigskip

\end{document}